\DeclareRobustCommand{\greektext}{%
  \fontencoding{LGR}\selectfont\def\encodingdefault{LGR}}
\DeclareRobustCommand{\textgreek}[1]{\leavevmode{\greektext #1}}
\newcommand{\lyxmathsym}[1]{\ifmmode\begingroup\def\b@ld{bold}
  \text{\ifx\math@version\b@ld\bfseries\fi#1}\endgroup\else#1\fi}
\numberwithin{equation}{section}
\numberwithin{figure}{section}
\newcommand{\lyxaddress}[1]{
\par {\raggedright #1
\vspace{1.4em}
\noindent\par}
}
  \theoremstyle{plain}
  \newtheorem{thm}{\protect\theoremname}[section]
  \theoremstyle{remark}
  \newtheorem*{rem*}{\protect\remarkname}
  \theoremstyle{plain}
  \newtheorem{cor}{\protect\corollaryname}[section]
  \theoremstyle{plain}
  \newtheorem{lem}{\protect\lemmaname}[section]
  \theoremstyle{plain}
  \newtheorem{prop}{\protect\propositionname}[section]
  \providecommand{\lemmaname}{Lemma}
  \providecommand{\propositionname}{Proposition}
  \providecommand{\remarkname}{Remark}
\providecommand{\corollaryname}{Corollary}
\providecommand{\theoremname}{Theorem}
\begin{document}

\title{A proof of Friedman's ergosphere instability for scalar waves}

\author{Georgios Moschidis}

\maketitle

\lyxaddress{Princeton University, Department of Mathematics, Fine Hall, Washington
Road, Princeton, NJ 08544, United States, \tt gm6@math.princeton.edu}
\begin{abstract}
Let $(\mathcal{M}^{3+1},g)$ be a real analytic, stationary and asymptotically
flat spacetime with a non-empty ergoregion $\mathscr{E}$ and no future
event horizon $\mathcal{H}^{+}$. In \cite{Friedman1978}, Friedman
observed that, on such spacetimes, there exist solutions $\text{\textgreek{f}}$
to the wave equation $\square_{g}\text{\textgreek{f}}=0$ such that
their local energy does not decay to $0$ as time increases. In addition,
Friedman provided a heuristic argument that the energy of such solutions
actually grows to $+\infty$. In this paper, we provide a rigorous
proof of Friedman's instability. Our setting is, in fact, more general.
We consider smooth spacetimes $(\mathcal{M}^{d+1},g)$, for any $d\ge2$,
not necessarily globally real analytic. We impose only a unique continuation
condition for the wave equation across the boundary $\partial\mathscr{E}$
of $\mathscr{E}$ on a small neighborhood of a point $p\in\partial\mathscr{E}$.
This condition always holds if $(\mathcal{M},g)$ is analytic in that
neighborhood of $p$, but it can also be inferred in the case when
$(\mathcal{M},g)$ possesses a second Killing field $\text{\textgreek{F}}$
such that the span of $\text{\textgreek{F}}$ and the stationary Killing
field $T$ is timelike on $\partial\mathscr{E}$. We also allow the
spacetimes $(\mathcal{M},g)$ under consideration to possess a (possibly
empty) future event horizon $\mathcal{H}^{+}$, such that, however,
$\mathcal{H}^{+}\cap\mathscr{E}=\emptyset$ (excluding, thus, the
Kerr exterior family). As an application of our theorem, we infer
an instability result for the acoustical wave equation on the hydrodynamic
vortex, a phenomenon first investigated numerically by Oliveira, Cardoso
and Crispino in \cite{Oliveira2014}. Furthermore, as a side benefit
of our proof, we provide a derivation, based entirely on the vector
field method, of a Carleman-type estimate on the exterior of the ergoregion
for a general class of stationary and asymptotically flat spacetimes.
Applications of this estimate include a Morawetz-type bound for solutions
$\text{\textgreek{f}}$ of $\square_{g}\text{\textgreek{f}}=0$ with
frequency support bounded away from $\text{\textgreek{w}}=0$ and
$\text{\textgreek{w}}=\pm\infty$. 
\end{abstract}
\tableofcontents{}

\section{Introduction}

In the field of general relativity, stationary and asymptotically
flat spacetimes $(\mathcal{M},g)$ arise naturally as models of the
asymptotic state of isolated self-gravitating systems. In this context,
questions on the stability properties of such spacetimes as solutions
to the initial value problem for the Einstein equations 
\begin{equation}
Ric_{\text{\textgreek{m}\textgreek{n}}}(g)-\frac{1}{2}R(g)g_{\text{\textgreek{m}\textgreek{n}}}=8\text{\textgreek{p}}T_{\text{\textgreek{m}\textgreek{n}}}\label{eq:EinsteinEquations}
\end{equation}
(where $T_{\text{\textgreek{m}\textgreek{n}}}$ is the stress-energy
tensor associated to the matter fields, with $T_{\text{\textgreek{m}\textgreek{n}}}=0$
in the vacuum case) are of particular importance, being directly related
to the physical relevance of the spacetimes themselves. 

The stability of Minkowski spacetime $(\mathbb{R}^{3+1},\text{\textgreek{h}})$
as a solution to the vacuum Einstein equations was established in
the monumental work of Christodoulou--Klainerman \cite{Christodoulou1993}.
Until today, Minkowski spacetime is the only stationary and asymptotically
flat vacuum spacetime which is known to be non-linearly stable. A
more complicated example of a family of stationary and asymptotically
flat spacetimes expected to be stable are the subextremal Kerr exterior
spacetimes $(\mathcal{M}_{M,a},g_{M,a})$, with mass $M$ and angular
momentum $a$ satisfying $0\le|a|<M$ (for a detailed formulation
of the Kerr stability conjecture, see \cite{Dafermos2016}). While
the non-linear stability of the family $(\mathcal{M}_{M,a},g_{M,a})$
has not been established so far, the \emph{linear }stability of the
Schwarzschild exterior (i.\,e.~$(\mathcal{M}_{M,a},g_{M,a})$ for
$a=0$) was recently obtained by Dafermos--Holzegel--Rodnianski (see
\cite{Dafermos2016}). 

Owing to the fact that the wave equation 
\begin{equation}
\square_{g}\text{\textgreek{f}}=0\label{eq:WaveEquation}
\end{equation}
can be viewed as a simple model of the linearised vacuum Einstein
equations (\ref{eq:EinsteinEquations}) around $(\mathcal{M}_{M,a},g_{M,a})$,
the stability properties of equation (\ref{eq:WaveEquation}) in the
case $0\le|a|<M$ had been extensively studied in the years preceding
\cite{Dafermos2016}, culminating in the proof of polynomial decay
estimates for solutions $\text{\textgreek{f}}$ to (\ref{eq:WaveEquation})
on $(\mathcal{M}_{M,a},g_{M,a})$ in the full subextremal case $0\le|a|<M$
in \cite{DafRodSchlap,Shlap}. For earlier results in the Schwarzschild
case $a=0$ and the very slowly rotating case $|a|\ll M$, see \cite{KayWald,DafRod1,DafRod2,DafRod4,BlueSof1,BlueSterb}
and \cite{DafRod5,DafRod6,DafRod9,TatToh1,AndBlue1} respectively. 

One important aspect of the geometry of $(\mathcal{M}_{M,a},g_{M,a})$
in the case $a\neq0$ is the existence of an \emph{ergoregion} (or
``ergosphere'') $\mathscr{E}$; recall that $\mathscr{E}\subset\mathcal{M}_{M,a}$
is defined as 
\begin{equation}
\mathscr{E}\doteq\overline{\{p\in\mathcal{M}_{M,a}|\, g(T_{p},T_{p})>0\}},
\end{equation}
where $T$ is the stationary Killing vector field on $(\mathcal{M}_{M,a},g_{M,a})$.
The fact that $\mathscr{E}$ is non-empty when $a\neq0$ gives rise
to the phenomenon of \emph{superradiance} for solutions to (\ref{eq:WaveEquation})
on $(\mathcal{M}_{M,a},g_{M,a})$, $a\neq0$: there exist solutions
$\text{\textgreek{f}}$ to (\ref{eq:WaveEquation}) such that their
$T$-energy flux through future null infinity $\mathcal{I}^{+}$ is
\underline{greater} than their $T$-energy flux initially. In general,
superradiance poses a serious difficulty in obtaining stability results
for equation (\ref{eq:WaveEquation}). In the case of $(\mathcal{M}_{M,a},g_{M,a})$,
superradiance does not eventually render equation (\ref{eq:WaveEquation})
unstable, owing, partly, to the presence of the future \emph{event
horizon} $\mathcal{H}^{+}$, allowing for part of the energy of solutions
of (\ref{eq:WaveEquation}) to ``leave'' the black hole exterior.
Notice, however, that superradiance-related mode instabilities do
appear on $(\mathcal{M}_{M,a},g_{M,a})$ for the Klein--Gordon equation
(see \cite{Shlapentokh-Rothman2013}), or even for the wave equation
with a (well-chosen) short-range non-negative potential (see \cite{Moschidisa}). 

Stationary and asymptotically flat spacetimes $(\mathcal{M},g)$ with
a non-empty ergoregion $\mathscr{E}$ but lacking a future event horizon
$\mathcal{H}^{+}$ appear in the literature as models for rapidly
rotating self-gravitating objects, for instance, as models of self-gravitating
dense rotating fluids (see \cite{Butterworth1976}). In \cite{Friedman1978},
Friedman studied the instability properties of equation (\ref{eq:WaveEquation})
on such spacetimes, making the following observation: There exist
smooth solutions $\text{\textgreek{f}}$ to (\ref{eq:WaveEquation})
with negative $T$-energy flux initially, i.\,e. 
\begin{equation}
\int_{\text{\textgreek{S}}}J_{\text{\textgreek{m}}}^{T}(\text{\textgreek{f}})n_{\text{\textgreek{S}}}^{\text{\textgreek{m}}}<0
\end{equation}
on a Cauchy hypersurface $\text{\textgreek{S}}$ of $(\mathcal{M},g)$
(see Section \ref{sec:Notational-conventions} for our notations on
vector field currents), and, in view of the conservation of the $T$-energy
flux, the absence of a future event horizon $\mathcal{H}^{+}$ and
the non-negativity of $J_{\text{\textgreek{m}}}^{T}(\cdot)n_{\text{\textgreek{S}}}^{\text{\textgreek{m}}}$outside
$\mathscr{E}$, any such function $\text{\textgreek{f}}$ satisfies
for all $\text{\textgreek{t}}\ge0$: 
\begin{equation}
\int_{\text{\textgreek{S}}_{\text{\textgreek{t}}}\cap\mathscr{E}}J_{\text{\textgreek{m}}}^{T}(\text{\textgreek{f}})n_{\text{\textgreek{S}}_{\text{\textgreek{t}}}}^{\text{\textgreek{m}}}\le\int_{\text{\textgreek{S}}}J_{\text{\textgreek{m}}}^{T}(\text{\textgreek{f}})n_{\text{\textgreek{S}}}^{\text{\textgreek{m}}}<0
\end{equation}
(where $\text{\textgreek{S}}_{\text{\textgreek{t}}}$ denotes the
image of $\text{\textgreek{S}}$ under the flow of $T$ for time $\text{\textgreek{t}}$).
Therefore, the local energy of $\text{\textgreek{f}}$ can not decay
to $0$ with time. 

Based on the above observation, Friedman provided a heuristic argument
suggesting that, under the additional assumption that the spacetime
$(\mathcal{M},g)$ is real analytic, any such solution $\text{\textgreek{f}}$
satisfies 
\begin{equation}
\limsup_{\text{\textgreek{t}}\rightarrow+\infty}\int_{\text{\textgreek{S}}_{\text{\textgreek{t}}}}J_{\text{\textgreek{m}}}^{N}(\text{\textgreek{f}})n_{\text{\textgreek{S}}_{\text{\textgreek{t}}}}^{\text{\textgreek{m}}}=+\infty
\end{equation}
 for a globally timelike $T$-invariant vector field $N$. In view
of the aforementioned connection between equation (\ref{eq:WaveEquation})
and the Einstein equations (\ref{eq:EinsteinEquations}), Friedman
suggested that such spacetimes can not appear as the final state of
the evolution of a self-gravitating system. See \cite{Friedman1978}
for more details. For a numerical investigation of Friedman's instability,
see \cite{Comins1978,Yoshida1996,Cardoso2008}.

In this paper, we will provide a rigorous proof of Friedman's instability
for equation (\ref{eq:WaveEquation}). Our proof will in fact not
require that $(\mathcal{M},g)$ is real analytic, but we will assume,
instead, a substantially weaker unique continuation condition for
equation (\ref{eq:WaveEquation}) through a subset of the boundary
$\partial\mathscr{E}_{ext}$ of the ``extended'' ergoregion $\mathscr{E}_{ext}$,
where we define $\mathscr{E}_{ext}$ to be equal to the union of the
ergoregion $\mathscr{E}$ with the connected components of $\mathcal{M}\backslash\mathscr{E}$
which intersect neither $\mathcal{H}^{+}$ nor the asymptotically
flat region of $\mathcal{M}$.%
\footnote{Notice that $\partial\mathscr{E}_{ext}\subset\partial\mathscr{E}$.%
} Note that, in the case when $\mathcal{M}\backslash\mathscr{E}$ is
connected, $\mathscr{E}_{ext}$ coincides with $\mathscr{E}$. In
particular, we will establish the following result:
\begin{thm}
\label{thm:TheoremIntroduction}Let $(\mathcal{M}^{d+1},g)$, $d\ge2$,
be a smooth, globally hyperbolic, stationary and asymptotically flat
spacetime with a non-empty ergoregion $\mathscr{E}$ and a future
event horizon $\mathcal{H}^{+}$ which is either empty or satisfies
$\mathscr{E}\cap\mathcal{H}^{+}=\emptyset$. Assume, in addition,
that the following unique continuation condition through the boundary
$\partial\mathscr{E}_{ext}$ of the ``extended'' ergoregion $\mathscr{E}_{ext}$
holds:

\medskip{}

\noindent \textbf{\emph{Unique continuation condition:}}\textbf{ }There
exists a point $p\in\partial\mathscr{E}_{ext}$ and an open neighborhood
$\mathcal{U}$ of $p$ in $\mathcal{M}$ such that, for any solution
$\text{\textgreek{f}}$ to equation (\ref{eq:WaveEquation}) on $\mathcal{M}$
with $\text{\textgreek{f}}\equiv0$ on $\mathcal{M}\backslash\mathscr{E}_{ext}$,
we have $\text{\textgreek{f}}=0$ also on $\mathscr{E}_{ext}\cap\mathcal{U}$.

\medskip{}

Then, there exists a smooth solution $\text{\textgreek{f}}$ to (\ref{eq:WaveEquation})
with compactly supported initial data on a Cauchy hypersurface $\text{\textgreek{S}}$
of $(\mathcal{M},g)$, such that 
\begin{equation}
\limsup_{\text{\textgreek{t}}\rightarrow+\infty}\int_{\text{\textgreek{S}}_{\text{\textgreek{t}}}}J_{\text{\textgreek{m}}}^{N}(\text{\textgreek{f}})n^{\text{\textgreek{m}}}=+\infty,
\end{equation}
where $T$ is the stationary Killing field of $(\mathcal{M},g)$,
$N$ is a globally timelike and $T$-invariant vector field on $\mathcal{M}$,
coinciding with $T$ in the asymptotically flat region of $\mathcal{M}$,
and $\text{\textgreek{S}}_{\text{\textgreek{t}}}$ is the image of
$\text{\textgreek{S}}$ under the flow of $T$ for time $\text{\textgreek{t}}$. \end{thm}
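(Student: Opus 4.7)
The plan is to argue by contradiction: suppose that for every smooth, compactly supported initial datum on $\Sigma$, the corresponding solution $\phi$ of $\square_g \phi = 0$ satisfies $\sup_{\tau \ge 0} \int_{\Sigma_\tau} J^N_\mu(\phi) n_{\Sigma_\tau}^\mu < +\infty$. Since the $T$-flow $S_\tau : \phi_0 \mapsto \phi(\tau)$ is a one-parameter group of bounded operators on the $N$-energy space of initial data, the Banach--Steinhaus theorem upgrades such pointwise-in-$\tau$ boundedness to a uniform operator-norm bound for $S_\tau$. The strategy is then to combine this bound with Friedman's negative-$T$-energy observation to produce a nontrivial stationary ``trapped mode'' $\phi_\infty$ supported in $\overline{\mathscr{E}_{ext}}$, and to rule out its existence using the unique continuation hypothesis at $p \in \partial \mathscr{E}_{ext}$.

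Concretely, the nonemptiness of $\mathscr{E}$ together with the spacelike character of $T$ inside $\mathscr{E}$ allows one to construct smooth compactly supported data $\phi_0$ with strictly negative $T$-energy, $E_T(\phi_0) < 0$. Conservation of $E_T$, non-negativity of $J^T_\mu n^\mu$ outside $\mathscr{E}$, and the disjointness $\mathcal{H}^+ \cap \mathscr{E} = \emptyset$ then give the uniform bound $\int_{\Sigma_\tau \cap \mathscr{E}} J^T_\mu(\phi) n^\mu \le E_T(\phi_0) < 0$ for every $\tau \ge 0$. Using the uniform $N$-energy bound together with Rellich-type compactness on the bounded region $\Sigma \cap \mathscr{E}$, I would extract -- for example by forming time averages $\tau^{-1}\int_0^\tau S_s \phi_0\, ds$ and passing to a weak subsequential limit in the energy space -- a $T$-invariant solution $\phi_\infty$ whose restriction to $\mathscr{E}$ still carries strictly negative $T$-energy. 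In particular $\phi_\infty \not\equiv 0$ on $\mathscr{E}$.

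The main step, and what I expect to be the principal obstacle, is to show that $\phi_\infty$ must vanish identically on $\mathcal{M} \setminus \overline{\mathscr{E}_{ext}}$. On this region $T$ is timelike, and by $T$-invariance combined with standard redshift bounds no flux of $\phi_\infty$ escapes through $\mathcal{H}^+$, while asymptotic flatness and $T$-invariance preclude any radiation towards null infinity; the equation $\square_g \phi_\infty = 0$ therefore reduces to a stationary, essentially elliptic (transverse to $T$) problem whose finite-energy, sufficiently decaying solutions ought to be trivial. I would implement the vanishing via a vector-field/Carleman-type multiplier estimate adapted to the stationary exterior of the ergoregion -- exactly the Carleman estimate advertised in the abstract -- dealing with the boundary terms at infinity by $r^p$-weighted identities and those near $\mathcal{H}^+$ by redshift estimates.

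Granted $\phi_\infty \equiv 0$ on $\mathcal{M} \setminus \overline{\mathscr{E}_{ext}}$, the unique continuation hypothesis at $p$ immediately yields $\phi_\infty \equiv 0$ on the neighborhood $\mathcal{U} \cap \mathscr{E}_{ext}$. I would then propagate this vanishing throughout $\mathscr{E}_{ext}$ by exploiting the $T$-invariance of $\phi_\infty$ (to spread the zero set along integral curves of $T$) and a Holmgren-type unique continuation argument across directions transverse to $T$ inside $\mathscr{E}_{ext}$. The resulting $\phi_\infty \equiv 0$ contradicts the strict negativity of its $T$-energy on $\mathscr{E}$ established above, closing the argument.
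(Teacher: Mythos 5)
Your proposal correctly identifies the broad architecture — proof by contradiction, construction of negative-$T$-energy data in the ergoregion, extraction of a limit supported in $\mathscr{E}_{ext}$ via Carleman estimates, and invocation of unique continuation — but two steps diverge from the paper in ways that I believe constitute genuine gaps.

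First, the time-averaging step. You propose forming $\tau^{-1}\int_0^\tau S_s\phi_0\,ds$ and extracting a weak subsequential limit $\phi_\infty$ which is then $T$-invariant and still carries strictly negative $T$-energy on the ergoregion. The $T$-invariance is an appealing structural gain, but it is not clear why the $T$-energy of the limit should remain strictly negative. The $T$-energy is a quadratic form, so $E_T$ of the time average is not the average of the (conserved) $T$-energies; cross terms can spoil the sign, and a superposition of modes could average to something small or even zero in $\mathscr{E}$. The paper avoids this entirely: it takes a subsequential weak limit $\tilde\psi$ of the time-translates $\psi_{\tau_n}$ themselves (Lemma \ref{lem:DecayToATrappedSolution}), which is \emph{not} $T$-invariant, and then works with that. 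To make your time-averaging version rigorous you would need a separate argument that the average retains negative $T$-energy in $\mathscr{E}$, and this seems nontrivial.

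Second, and more seriously, the propagation of vanishing throughout $\mathscr{E}_{ext}$. After obtaining $\phi_\infty\equiv 0$ on $\mathcal{U}\cap\mathscr{E}_{ext}$, you propose to spread this zero set through all of $\mathscr{E}_{ext}$ via $T$-invariance plus a Holmgren-type argument in directions transverse to $T$. But the stated unique continuation hypothesis is deliberately local — it is assumed only near a single point $p\in\partial\mathscr{E}_{ext}$ — and inside $\mathscr{E}_{ext}$ the vector $T$ is spacelike, so the stationary reduction of $\square_g$ there is not elliptic but genuinely hyperbolic. Holmgren's theorem would additionally require analyticity, which the theorem deliberately avoids assuming. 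The paper sidesteps this entirely by a strategic maneuver you do not include: it \emph{chooses the initial data} $(\phi,T\phi)|_{\Sigma}$ to be supported inside $\Sigma\cap\mathcal{U}$ (Lemma \ref{lem:InitialDataNegativeTEenergy}). Then, because $\mathcal{U}$ is $T$-invariant, the evolved solution $\psi=T\phi$ remains supported in $\mathcal{U}$ for a short time interval, whereas the limit $\tilde\psi$ vanishes on $\mathcal{U}$. These two facts force the indefinite $T$-inner product pairing $\langle\psi,\mathcal{F}_\tau^*\tilde\psi\rangle_T$ (which is conserved along the flow) to vanish; letting $\tau=-\tau_n$ and $n\to\infty$ then yields $\langle\tilde\psi,\tilde\psi\rangle_T=0$, contradicting the negative $T$-energy of $\tilde\psi$ on $\mathscr{E}$ — without ever needing to show that $\tilde\psi$ vanishes identically on $\mathscr{E}_{ext}$. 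Without this localization of the initial data to $\mathcal{U}$ and the conservation-of-inner-product argument, the proposal cannot close.

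A smaller remark: the appeal to Banach--Steinhaus is harmless but does not buy anything the paper uses; the contradiction hypothesis already gives a per-solution uniform-in-time bound, which is what feeds into Proposition \ref{prop:QuantitativeDecayOutsideErgoregion}.
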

\begin{rem*}
Note that the assumption $\mathscr{E}\cap\mathcal{H}^{+}=\emptyset$
excludes the Kerr exterior family with angular momentum $a\neq0$.
\end{rem*}
For a more detailed statement of Theorem \ref{thm:TheoremIntroduction}
and the assumptions on the spacetimes under consideration, see Section
\ref{sec:StatementAssumptionsResults}. In the detailed statement
of Theorem \ref{thm:TheoremIntroduction}, we will introduce an additional
restricion on the class of spacetimes $(\mathcal{M},g)$ under consideration,
namely the condition that every connected component of $\mathcal{M}\backslash\mathscr{E}$
intersecting $\mathcal{H}^{+}$ also intersects the asymptotically
flat region of $(\mathcal{M},g)$. However, our proof of Theorem \ref{thm:TheoremIntroduction}
can be adapted to the case when this condition does not hold. For
a comparison between the heuristics of Friedman in \cite{Friedman1978}
and the results of this paper, see Section \ref{sec:DiscussionFriedmanHeuristics}.

We should remark that the unique continuation condition through an
open subset of $\partial\mathscr{E}_{ext}$, appearing in the statement
of Theorem \ref{thm:TheoremIntroduction}, is always satisfied in
the case when $(\mathcal{M},g)$ possesses an axisymmetric Killing
field $\text{\textgreek{F}}$ such that the span of $T,\text{\textgreek{F}}$
on $\partial\mathscr{E}_{ext}$ contains a timelike direction, or
in the case when the spacetime $(\mathcal{M},g)$ is real analytic
in an open subset $\mathcal{U}\subset\mathcal{M}$ such that $\mathcal{U}\cap\partial\mathscr{E}_{ext}\neq\emptyset$;
see the discussion in Section \ref{sub:Discussion On the unique continuation assumption}.
It would be natural to expect that this condition can be completely
removed from the statement of Theorem \ref{thm:TheoremIntroduction},
but we have not succeeded so far in doing so.

The proof of Theorem \ref{thm:TheoremIntroduction}, presented in
Section \ref{sec:Proof-of-Theorem}, proceeds by contradiction. In
particular, assuming that every smooth solution $\text{\textgreek{f}}$
of equation (\ref{eq:WaveEquation}) on $(\mathcal{M},g)$ with compactly
supported initial data satisfies 
\begin{equation}
\limsup_{\text{\textgreek{t}}\rightarrow+\infty}\int_{\text{\textgreek{S}}_{\text{\textgreek{t}}}}J_{\text{\textgreek{m}}}^{N}(\text{\textgreek{f}})n^{\text{\textgreek{m}}}<+\infty,\label{eq:EnergyBoundContradictionIntroduction}
\end{equation}
it is shown that $\text{\textgreek{f}}$ decays in time on $\mathcal{M}\backslash\mathscr{E}$.
This fact is then shown to lead to a contradiction after a suitable
choice of the initial data for $\text{\textgreek{f}}$, combined with
the unique continuation assumption of Theorem \ref{thm:TheoremIntroduction}.
See Section \ref{sec:Proof-of-Theorem} for more details. The decay
of $\text{\textgreek{f}}$ on $\mathcal{M}\backslash\mathscr{E}$
is established through some suitable Carleman-type estimates, derived
in Section \ref{sec:Carleman}. These estimates could have been obtained
by methods similar to the ones implemented in \cite{Moschidisb},
but we chose instead to provide an alternative proof, based entirely
on the method of first order multipliers for equation (\ref{eq:WaveEquation}).
For more details on this, see Section \ref{sub:CarlemanEstimatesIntroduction}.

The instability mechanism proposed by Friedman is of interest not
only in general relativity, but also in all areas of mathematical
physics where stationary and asymptotically flat Lorentzian manifolds
$(\mathcal{M},g)$, and the associated wave equation (\ref{eq:WaveEquation}),
arise. For instance, in the field of fluid mechanics, the steady flow
of a (locally) irrotational, inviscid and barotropic fluid on an open
subset $\mathcal{V}$ of $\mathbb{R}^{3}$ gives rise to a stationary
Lorentzian metric $g$ on $\mathcal{M}=\mathbb{R}\times\mathcal{V}$,
the so called \emph{acoustical} metric, and the wave equation (\ref{eq:WaveEquation})
associated to $g$ governs the evolution of small perturbations of
the flow. In \cite{Oliveira2014}, the authors investigate numerically
the Friedman instability for the acoustic wave equation on the hydrodynamic
vortex $(\mathbb{R}\times\mathcal{V}_{hyd,\text{\textgreek{d}}},g_{hyd})$,
where $\mathcal{V}_{hyd,\text{\textgreek{d}}}=\mathbb{R}^{3}\backslash\{\bar{r}\le\text{\textgreek{d}}\}$
for some $\text{\textgreek{d}}\ll1$ (in the cylindrical $(\bar{r},\text{\textgreek{j}},z)$
coordinate system) and 
\begin{equation}
g_{hyd}=-\big(1-\frac{C^{2}}{\bar{r}^{2}}\big)dt^{2}+d\bar{r}^{2}-2Cdtd\text{\textgreek{j}}+\bar{r}^{2}d\text{\textgreek{j}}^{2}+dz^{2},\label{eq:HydrodynamicVortex}
\end{equation}
with suitable boundary conditions imposed for (\ref{eq:WaveEquation})
at $\bar{r}=\text{\textgreek{d}}$. Note that the quotient of $(\mathbb{R}\times\mathcal{V}_{hyd},g_{hyd})$
by the group of translations in the $z$ direction is asymptotically
flat, possesses a non-empty ergoregion $\mathscr{E}=\{\text{\textgreek{d}}<\bar{r}\le C\}$
(corresponding to the region where the fluid velocity exceeds the
speed of sound) and has no event horizon.

As a straightforward application of Theorem \ref{thm:TheoremIntroduction},
we will establish a Friedman-type instability for the acoustical wave
equation on the hydronamic vortex:
\begin{cor}
\label{cor:VortexIntroduction}For any $\text{\textgreek{d}}<1$,
there exist smooth and $z$-invariant solutions $\text{\textgreek{f}}_{D},\text{\textgreek{f}}_{N}$
to the acoustical wave equation (\ref{eq:WaveEquation}) on $(\mathbb{R}\times\mathcal{V}_{hyd,\text{\textgreek{d}}},g_{hyd})$,
satisfying Dirichlet and Neumann boundary conditions, respectively,
on $\{\bar{r}=\text{\textgreek{d}}\}$, with smooth initial data at
time $t=0$ which are compactly supported when restricted on $\{z=0\}$,
such that (in the $(t,\bar{r},\text{\textgreek{j}},z)$ coordinate
chart on $\mathbb{R}\times\mathcal{V}_{hyd,\text{\textgreek{d}}}$):
\begin{equation}
\limsup_{\text{\textgreek{t}}\rightarrow+\infty}\int_{\{t=\text{\textgreek{t}}\}\cap\{z=0\}\cap\{\bar{r}\ge\text{\textgreek{d}}\}}\big(|\partial_{t}\text{\textgreek{f}}_{D}|^{2}+|\nabla_{\mathbb{R}^{3}}\text{\textgreek{f}}_{D}|^{2}\big)\,\bar{r}d\bar{r}d\text{\textgreek{j}}=+\infty
\end{equation}
and 
\begin{equation}
\limsup_{\text{\textgreek{t}}\rightarrow+\infty}\int_{\{t=\text{\textgreek{t}}\}\cap\{z=0\}\cap\{\bar{r}\ge\text{\textgreek{d}}\}}\big(|\partial_{t}\text{\textgreek{f}}_{N}|^{2}+|\nabla_{\mathbb{R}^{3}}\text{\textgreek{f}}_{N}|^{2}\big)\,\bar{r}d\bar{r}d\text{\textgreek{j}}=+\infty.
\end{equation}

\end{cor}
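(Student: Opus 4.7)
The plan is to deduce Corollary \ref{cor:VortexIntroduction} from Theorem \ref{thm:TheoremIntroduction} via two successive reductions: first, exploit the $z$-invariance of $g_{hyd}$ to reduce to a $(2+1)$-dimensional problem with a timelike inner boundary; second, double across that boundary to produce a boundaryless spacetime to which Theorem \ref{thm:TheoremIntroduction} applies. Since $\partial_z$ is Killing, any $z$-invariant solution of the acoustical wave equation on $\mathbb{R}\times\mathcal{V}_{hyd,\delta}$ is equivalent to a solution on the $(2+1)$-dimensional quotient $(\mathcal{M}_{0},g_{0}):=\mathbb{R}_{t}\times\{(\bar{r},\varphi):\bar{r}\ge\delta\}$ carrying the restriction of $g_{hyd}$. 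It thus suffices to exhibit $\phi_{D},\phi_{N}$ on $(\mathcal{M}_{0},g_{0})$ with compactly supported initial data and unbounded local energy, satisfying homogeneous Dirichlet respectively Neumann conditions at $\{\bar{r}=\delta\}$.

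To double, I set $\widetilde{\mathcal{M}}:=\mathbb{R}_{t}\times S^{1}_{\varphi}\times\mathbb{R}_{s}$ with $\bar{r}=|s|+\delta$ and let $\tilde{g}$ be the pullback of $g_{0}$ under $s\mapsto(|s|+\delta,\varphi)$. This metric is smooth for $s\ne 0$, is $\mathbb{Z}_{2}$-symmetric under $s\mapsto -s$, and Lipschitz across $\{s=0\}$. After an arbitrarily small smoothing inside the collar $\{|s|<\varepsilon\}$ preserving the two Killing fields $T=\partial_{t}$ and $\Phi=\partial_{\varphi}$, we obtain a smooth stationary spacetime $(\widetilde{\mathcal{M}},\tilde{g}_{\varepsilon})$ with two asymptotically flat ends, no event horizon, and an ergoregion containing $\{0<|s|<C-\delta\}$ on each sheet. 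A direct computation yields $g_{0}(T+C^{-1}\Phi,T+C^{-1}\Phi)=-1$ at $\{\bar{r}=C\}=\partial\mathscr{E}$, so the span of $T$ and $\Phi$ is timelike there; by the discussion in Section \ref{sub:Discussion On the unique continuation assumption}, the unique continuation hypothesis of Theorem \ref{thm:TheoremIntroduction} is automatically satisfied via the second Killing field. Applying the theorem provides a growing-energy solution $\tilde{\phi}$ on $(\widetilde{\mathcal{M}},\tilde{g}_{\varepsilon})$.

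Decomposing $\tilde{\phi}=\tilde{\phi}_{+}+\tilde{\phi}_{-}$ into its $\mathbb{Z}_{2}$-even and odd parts under $s\mapsto -s$ --- both of which solve $\square_{\tilde{g}_{\varepsilon}}\psi=0$ by the symmetry of $\tilde{g}_{\varepsilon}$ --- at least one component inherits the unbounded $N$-energy. By construction $\partial_{s}\tilde{\phi}_{+}|_{s=0}=0$ and $\tilde{\phi}_{-}|_{s=0}=0$, so restricting to the half-sheet $\{s\ge 0\}$ and re-identifying $s=\bar{r}-\delta$ produces candidates for $\phi_{N}$ and $\phi_{D}$, respectively, with the prescribed boundary conditions; lifting them back to $z$-invariant functions on $\mathbb{R}\times\mathcal{V}_{hyd,\delta}$ then completes the construction.

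The main obstacle is that on the half-sheet the candidates satisfy $\square_{\tilde{g}_{\varepsilon}}\phi=0$, which differs from $\square_{g_{0}}\phi=0$ inside the collar $\{\bar{r}<\delta+\varepsilon\}$. I expect to handle this either by sending $\varepsilon\to 0$ and extracting a limit (exploiting that the growing-energy mechanism underlying Theorem \ref{thm:TheoremIntroduction} operates in a fixed compact subset of the ergoregion, well away from the smoothing collar), or, more directly, by reinspecting the proof of Theorem \ref{thm:TheoremIntroduction} in the setting of a stationary spacetime with timelike inner boundary: under either homogeneous Dirichlet or Neumann conditions the boundary terms in the Carleman and energy identities either vanish outright or carry a favorable sign, so the theorem should admit a ``with-boundary'' variant directly applicable to $(\mathcal{M}_{0},g_{0})$.
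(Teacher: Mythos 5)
Your overall reduction — pass to the $(2+1)$-dimensional quotient, double across $\{\bar{r}=\delta\}$, and use the axisymmetric Killing field $\Phi=\partial_{\varphi}$ together with $T$ to verify the unique continuation hypothesis — is exactly the skeleton of the paper's proof of Corollary \ref{cor:VortexMain}. Your computation that $g_{hyd}(T+C^{-1}\Phi,T+C^{-1}\Phi)=-1$ on $\partial\mathscr{E}$ is correct and is the right way to satisfy Assumption \hyperref[Assumption 4]{A1} via Lemma \ref{lem:UniqueContinuationAxisymmetry}.

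However, your \emph{primary} route through a smoothed metric $\tilde{g}_{\varepsilon}$ followed by an $\varepsilon\to 0$ limit has a genuine gap that you acknowledge but that I do not think is repairable along those lines. Theorem \ref{thm:TheoremIntroduction} is a purely \emph{qualitative} statement: it is proved by contradiction and produces, for each fixed $\varepsilon$, a solution $\tilde{\phi}_{\varepsilon}$ whose $N$-energy is unbounded, with no rate, no lower bound uniform in $\varepsilon$, and no control on the initial data norm as $\varepsilon\to 0$. There is therefore no mechanism by which the energy growth survives the limit: the growth could occur along a sequence of times $\tau_{n}(\varepsilon)\to\infty$ with $\tau_{n}(\varepsilon)\to\infty$ as $\varepsilon\to 0$ for each fixed $n$, and the solutions $\tilde{\phi}_{\varepsilon}$ could degenerate. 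Moreover your heuristic that the growth ``operates in a fixed compact subset of the ergoregion, well away from the smoothing collar'' is not correct here: the ergoregion of the doubled vortex is $\{2\delta-C<\bar{r}<C\}$, which contains the matching surface $\{\bar{r}=\delta\}$; the collar $\{|\bar{r}-\delta|<\varepsilon\}$ sits strictly inside the ergoregion, precisely where the negative $T$-energy lives, so the smoothing perturbs the structure responsible for the instability.

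Your \emph{secondary} route — ``reinspect the proof of Theorem \ref{thm:TheoremIntroduction} in the setting of a stationary spacetime with timelike inner boundary, where the boundary terms in the Carleman and energy identities either vanish or carry a favorable sign'' — is in fact the approach the paper takes, but it is substantially more than a reinspection. The paper never smooths the doubled metric: the Lipschitz doubled spacetime $(\widetilde{\mathcal{M}},\tilde{g})$ is used \emph{only} to formulate the geometric Assumptions \hyperref[Assumption 1]{G1}--\hyperref[Assumption 3]{G3}, \hyperref[Assumption 4]{A1} and to invoke Lemma \ref{lem:UniqueContinuationAxisymmetry}; the Carleman estimate and all the integration-by-parts identities are then carried out \emph{on the original manifold with boundary}, with the additional boundary flux $\mathcal{B}^{(b)}_{f,h}$ tracked explicitly. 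This is the content of Proposition \ref{prop:CarlemanBoundary} in Section \ref{sub:ProofWithBoundaryConditions}: under Dirichlet conditions the new boundary integral (\ref{eq:AdditionalBoundaryTerms}) is shown to be nonnegative once $\bar{w}$ is chosen with $n_{\partial_{tim}\mathcal{M}}(\bar{w})>0$ and $\bar{w}$ constant on $\partial_{tim}\mathcal{M}$; under Neumann conditions the term $n_{\partial_{tim}\mathcal{M}}(f)\,\nabla^{\mu}\phi\nabla_{\mu}\bar{\phi}$ does not have a sign and must be cancelled by inserting the auxiliary identity (\ref{eq:AddedIdentityForNeumann}) built from the vector field $\breve{f}\,n_{\partial_{tim}\mathcal{M}}$. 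In particular the Neumann case is not a triviality, and neither case follows from the boundaryless theorem by a limiting argument. Carrying out this with-boundary Carleman estimate, and then rerunning Sections \ref{sec:Proof-of-Theorem} and \ref{sec:Proof-of-Proposition} in the with-boundary setting (as the remark after Proposition \ref{prop:QuantitativeDecayOutsideErgoregion} permits), is the missing ingredient of your proposal.

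One further small point: by applying the with-boundary version of Theorem \ref{thm:FriedmanInstability} once with Dirichlet and once with Neumann data, the paper produces both $\phi_{D}$ and $\phi_{N}$. Your even/odd decomposition of a single $\tilde{\phi}_{\varepsilon}$ only guarantees that \emph{at least one} of the two parts inherits the growth, so it would only give one of the two boundary conditions unless you add an argument (e.g.\ starting from $\mathbb{Z}_{2}$-even or $\mathbb{Z}_{2}$-odd initial data in Lemma \ref{lem:InitialDataNegativeTEenergy}) to handle each parity separately.
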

For a more detailed statement of Corollary \ref{cor:VortexIntroduction},
see Section \ref{sub:The-main-theorem}.

\section{\label{sec:StatementAssumptionsResults}Statement of the main results}

In this section, we will outline in detail the assumptions on the
spacetimes $(\mathcal{M},g)$ under consideration, and we will state
the main results of this paper.

\subsection{\label{sub:Assumptions}Assumptions on the spacetimes under consideration}

Let $(\mathcal{M}^{d+1},g)$, $d\ge2$, be a smooth, globally hyperbolic
Lorentzian manifold with piecewise smooth boundary $\partial\mathcal{M}$
(allowed to be empty). Before stating our main results, we will need
to introduce a number of assumptions on the structure of $(\mathcal{M},g)$.
In Section \ref{sub:ExampleSpacetime}, we will present some explicit
examples of spacetims $(\mathcal{M},g)$ satisfying all the assumptions
that will be introduced in this section.

\subsubsection{\emph{Assumpti}on \emph{G1} \label{Assumption 1} \emph{(Asymptotic
flatness and stationarity).}}

We will assume that $(\mathcal{M},g)$ satisfies the following conditions:
\begin{itemize}
\item There exists a Killing field $T$ on $(\mathcal{M},g)$ with complete
orbits which is tangential to $\partial\mathcal{M}$, as well as a
smooth Cauchy hypersurface $\tilde{\text{\textgreek{S}}}\subset\mathcal{M}$,
such that $T|_{\tilde{\text{\textgreek{S}}}}$ is everywhere transversal
to $\tilde{\text{\textgreek{S}}}\backslash\partial\mathcal{M}$ and
timelike outside a compact subset of $\tilde{\text{\textgreek{S}}}$.
\item The triad $(\tilde{\text{\textgreek{S}}},g_{\tilde{\text{\textgreek{S}}}},k_{\tilde{\text{\textgreek{S}}}})$,
where $g_{\tilde{\text{\textgreek{S}}}}$ is the induced (Riemannian)
metric on $\tilde{\text{\textgreek{S}}}$ and $k_{\tilde{\text{\textgreek{S}}}}$
its second fundamental form, defines an asymptotically flat Riemannian
manifold (possibly with boundary $\tilde{\text{\textgreek{S}}}\cap\partial\mathcal{M}$
), with a finite number of asymptotically flat ends (possibly more
than one); see also the definition in Section 2.1.1 of \cite{Moschidisb}.
Let $\mathcal{I}_{as}$ be the asymptotically flat region of $\mathcal{M}$
(see \cite{Moschidisb} for the relevant definition). Expressed in
a polar coordinate chart of the form $(t,r,\text{\textgreek{sv}})$
in each conected component of $\mathcal{I}_{as}$, $g$ has the following
form: 
\begin{equation}
g=-\Big(1+O_{4}(r^{-1})\Big)dt^{2}+\Big(1+O_{4}(r^{-1})\Big)dr^{2}+r^{2}\cdot\Big(g_{\mathbb{S}^{d-1}}+O_{4}^{\mathbb{S}^{d-1}}(r^{-1})\Big)+O_{4}(1)dtd\text{\textgreek{sv}},\label{eq:metric}
\end{equation}
where $O_{4}^{\mathbb{S}^{d-1}}(\text{\textgreek{r}}^{-1})$ is a
symmetric $(0,2)$-tensor field on the coordinate sphere $\{r=\text{\textgreek{r}}\}\simeq\mathbb{S}^{d-1}$
with $O_{4}(\text{\textgreek{r}}^{-1})$ asymptotics as $\text{\textgreek{r}}\rightarrow+\infty$.
See Section \ref{sub:BigONotation} for the $O_{k}(\cdot),O_{k}^{\mathbb{S}^{d-1}}(\cdot)$
notation and Section \ref{sub:DerivativesOntheSphere} for the $\text{\textgreek{sv}}$
notation on the angular variables of a polar coordinate chart.
\item Let $\mathcal{H}=\partial\big(J^{+}(\mathcal{I}_{as})\cap J^{-}(\mathcal{I}_{as})\big)$
be the horizon of $\mathcal{M}$, split as $\mathcal{H}=\mathcal{H}^{+}\cup\mathcal{H}^{-}$,
with\emph{ }$\mathcal{H}^{+}=\overline{J^{+}(\mathcal{I}_{as})}\cap\partial J^{-}(\mathcal{I}_{as})$
and $\mathcal{H}^{-}=\overline{J^{-}(\mathcal{I}_{as})}\cap\partial J^{+}(\mathcal{I}_{as})$.
Then $\mathcal{H}$ coincides with $\partial\mathcal{M}$, and $\mathcal{H}^{+}$
and $\mathcal{H}^{-}$ are smooth null hypersurfaces with smooth boundary
$\mathcal{H}^{+}\cap\mathcal{H}^{-}$, with $T\neq0$ on $\mathcal{H}\backslash\mathcal{H}^{-}$
(the case $\mathcal{H}^{+}=\emptyset$ or $\mathcal{H}^{-}=\emptyset$
is also trivially included in this condition).
\end{itemize}
See Assumption $1$ in Section 2.1.1 of \cite{Moschidisb} for a detailed
statement of these conditions and their related geometric constructions,
as well as their implications on the geometry of $\mathcal{M}$. Notice
that the domain of outer communications of the asymptotically flat
region $\mathcal{I}_{as}$ of $\mathcal{M}$ is the whole of $\mathcal{M}\backslash\mathcal{H}$.
In view of the remarks in Section 2.1.1 of \cite{Moschidisb}, $\mathcal{H}^{+}$
and $\mathcal{H}^{-}$ are invariant under the flow of the stationary
Killing field $T$.

\begin{figure}[t] 
\centering 
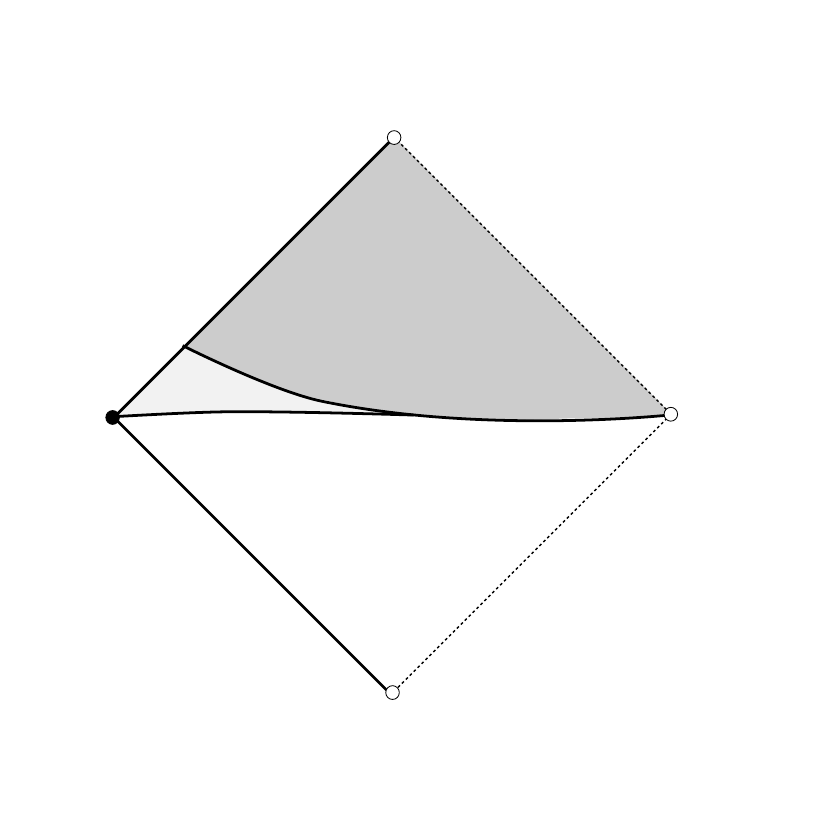 
\caption{The subextremal Kerr exterior spacetime $(\mathcal{M}_{M,a},g_{M,a})$ satisfies Assumptions G1 and G2, but not Assumption G3. In the case of $(\mathcal{M}_{M,a},g_{M,a})$, the intersection of the hypersurfaces $\tilde{\text{\textgreek{S}}}, \text{\textgreek{S}}$, defined in Assumption G1,  with the $1+1$ dimensional slice $\{\theta=\pi/2,\phi=0\}\subset\mathcal{M}_{M,a}$ are schematically as depicted above. }
\end{figure}

Let $\text{\textgreek{S}}$ be a spacelike hypersurface intersecting
$\mathcal{H}^{+}$ transversally (if $\mathcal{H}^{+}\neq\emptyset$)
and satisfying $\text{\textgreek{S}}\cap\mathcal{H}^{-}=\emptyset$,
such that $\text{\textgreek{S}}$ coincides with $\tilde{\text{\textgreek{S}}}$
outside a small neighborhood of $\mathcal{H}^{+}$. In view of the
remarks in Section 2.1.1 of \cite{Moschidisb}, our assumption on
the structure of $(\mathcal{M},g)$ implies that $\text{\textgreek{S}}\cap\mathcal{H}^{+}$
is compact. Notice that, in case $\mathcal{H}^{+}\neq\emptyset$,
$\text{\textgreek{S}}$ will not be a Cauchy hypersurface of $\mathcal{M}$. 

We will also fix a smooth spacelike hyperboloidal hypersurface $\mathcal{S}\subset\mathcal{M}$
terminating at future null infinity $\mathcal{I}^{+}$ as in Section
2.1.1 of \cite{Moschidisb}, such that $\mathcal{S}|_{\{r\le R_{1}\}}\equiv\text{\textgreek{S}}|_{\{r\le R_{1}\}}$
for some fixed constant $R_{1}\gg1$. 
\begin{rem*}
In view of the remarks of Section 2.1.1 of \cite{Moschidisb}, the
causal future sets $J^{+}(\text{\textgreek{S}}),J^{+}(\mathcal{S})$
of $\text{\textgreek{S}},\mathcal{S}$, respectively, in $\mathcal{M}$
coincide with the future domains of dependence $\mathcal{D}^{+}(\text{\textgreek{S}}),\mathcal{D}^{+}(\mathcal{S})$
of $\text{\textgreek{S}},\mathcal{S}$. Furthermore, the images of
$\text{\textgreek{S}},\mathcal{S}$ under the flow of $T$ covers
the whole of $\mathcal{M}\backslash\mathcal{H}^{-}$. 
\end{rem*}
As in Section 2.1.1 of \cite{Moschidisb}, we will extend the polar
radius coordinate function $r:\mathcal{I}_{as}\rightarrow(0,+\infty)$
as a non-negative, smooth and $T$-invariant function on the whole
of $\mathcal{M}\backslash\mathcal{H}^{-}$, such that $r>0$ on $\mathcal{M}\backslash\mathcal{H}$
and $r|_{\mathcal{H}^{+}\backslash\mathcal{H}^{-}}=0$, $dr|_{\mathcal{H}^{+}\backslash\mathcal{H}^{-}}\neq0$.
We will also define the function $t:\mathcal{M}\backslash\mathcal{H}^{-}\rightarrow\mathbb{R}$
by the relations 
\begin{equation}
t|_{\text{\textgreek{S}}}=0\mbox{ and }T(t)=1,
\end{equation}
as well as the function $\bar{t}:\mathcal{M}\backslash\mathcal{H}^{-}\rightarrow\mathbb{R}$
by 
\begin{equation}
\bar{t}|_{\mathcal{S}}=0\mbox{ and }T(\bar{t})=1.
\end{equation}
Note that $t=\bar{t}$ on $\{r\le R_{1}\}$. 

We will introduce the reference Riemannian metric 
\begin{equation}
g_{ref}\doteq dt^{2}+g_{\text{\textgreek{S}}}\label{eq:ReferenceRiemannianMetric}
\end{equation}
 on $\mathcal{M}\backslash\mathcal{H}^{-}\simeq\mathbb{R}\times\text{\textgreek{S}}$.
We will denote the natural extension of $g_{ref}$ on $\oplus_{l_{1},l_{2}\in\mathbb{N}}\big(\otimes^{l_{1}}T(\mathcal{M}\backslash\mathcal{H}^{-})\otimes^{l_{2}}T^{*}(\mathcal{M}\backslash\mathcal{H}^{-})\big)$
also as $g_{ref}$.

\subsubsection{\emph{Assumption G2}\label{Assumption 2} \emph{(Killing horizon
with positive surface gravity)}.}

In the case $\mathcal{H}^{+}\neq\emptyset$, we will assume that the
Killing field $T$, when restricted to $\mathcal{H}^{+}\backslash\mathcal{H}^{-}$,
is parallel to the null generators of $\mathcal{H}^{+}\backslash\mathcal{H}^{-}$.
Furthermore, we will assume that there exists a $T$-invariant strictly
timelike vector field $N$ on $J^{+}(\text{\textgreek{S}})$, which,
when restricted on $J^{+}(\text{\textgreek{S}})\cap\mathcal{H}$,
satisfies 
\begin{equation}
K^{N}(\text{\textgreek{y}})\ge cJ_{\text{\textgreek{m}}}^{N}(\text{\textgreek{y}})N^{\text{\textgreek{m}}}\label{eq:PositiveKN}
\end{equation}
 for some $c>0$ and any $\text{\textgreek{y}}\in C^{1}(\mathcal{M})$
(see Section \ref{sec:Notational-conventions} for the notation on
vector field currents). We will extend $N$ on the whole of $\mathcal{M}\backslash\mathcal{H}^{-}$
by the condition $[T,N]=0$.

We will call the vector field $N$ the \emph{red shift} vector field.
The reason for this name is that a vector field of that form was shown
to exist for a general class of Killing horizons with positive surface
gravity by Dafermos and Rodnianski in \cite{DafRod2}. However, here
we will just assume the existence of such a vector field without specifying
the geometric origin of it. 

Note that we can modify the vector field $N$ away from the horizon
$\mathcal{H}$, so that in the asymptotically flat region $\{r\gg1\}$
(i.\,e.~$\mathcal{I}_{as}$) it coincides with $T$, and still retain
the bound (\ref{eq:PositiveKN}) on $J^{+}(\text{\textgreek{S}})\cap\mathcal{H}$.%
\footnote{The convexity of the cone of the future timelike vectors over each
point of $\mathcal{M}$ is used in this argument.%
} We will hence assume without loss of generality that $N$ has been
chosen so that $N\equiv T$ in the region $\{r\gg1\}$.

Due to the smoothness of $N$, there exists an $r_{0}>0$, such that
(\ref{eq:PositiveKN}) also holds (possibly with a smaller constant
$c$ on the right hand side) in a neighborhood of $\mathcal{H}^{+}\backslash(\mathcal{H}^{+}\cap\mathcal{H}^{-})$
in $\mathcal{M}$ of the form $\{r\le r_{0}\}$. For $r\gg1$, since
$N\equiv T$ there, we have $K^{T}(\text{\textgreek{y}})\equiv0$.
Hence, due to the $T-$invariance of $N$ and the compactness of the
sets of the form $\{r\le R\}\cap\text{\textgreek{S}}$, there exists
a (possibly large) constant $C>0$ such that 
\begin{equation}
|K^{N}(\text{\textgreek{y}})|\le C\cdot J_{\text{\textgreek{m}}}^{N}(\text{\textgreek{y}})N^{\text{\textgreek{m}}}\label{eq:UpperBoundKN}
\end{equation}
 everywhere on $\mathcal{M}$ for any $\text{\textgreek{y}}\in C^{\infty}(\mathcal{M})$.

Without loss of generality, we will also assume that $r_{0}$ is sufficiently
small so that 
\begin{equation}
dr\neq0\label{eq:NonVanishingNormalDerivativeR}
\end{equation}
 on $\{r\le3r_{0}\}$. This is possible, since $dr|_{\mathcal{H}^{+}\backslash\mathcal{H}^{+}\cap\mathcal{H}^{-}}\neq0$
and $\text{\textgreek{S}}\cap\mathcal{H}^{+}$ is compact.

In the case $\mathcal{H}^{+}=\emptyset$, we will fix $N$ to be an
arbitrary $T$-invariant timelike vector field on $\mathcal{M}\backslash\mathcal{H}^{-}$,
such that $N\equiv T$ for $r\gg1$, and we will set $r_{0}=\frac{1}{4}\inf_{\text{\textgreek{S}}}r$
(which is possible since $r>0$ on $\text{\textgreek{S}}$ when $\mathcal{H}^{+}=\emptyset$),
so that $\{r\le3r_{0}\}=\emptyset$. In this case, (\ref{eq:PositiveKN}),
(\ref{eq:UpperBoundKN}) and (\ref{eq:NonVanishingNormalDerivativeR})
are trivially satisfied.

\subsubsection{\emph{Assumption G3\label{Assumption 3} (Non-empty ergoregion avoiding
the future event horizon).}}

We will assume that the ergoregion of $(\mathcal{M},g)$ is non-empty,
i.\,e. 
\begin{equation}
\mathscr{E}\doteq\overline{\{g(T,T)>0\}}\neq\emptyset,\label{eq:Ergoregion}
\end{equation}
and furthermore
\begin{equation}
\mathscr{E}\cap\mathcal{H}^{+}=\emptyset.\label{eq:NonIntersectionErgoregionHorizon}
\end{equation}
Notice that the condition (\ref{eq:NonIntersectionErgoregionHorizon})
is trivially satisfied when $\mathcal{H}^{+}=\emptyset$. Note also
that the subextremal Kerr exterior family with $a\neq0$ has a non-empty
ergoregion, but does not satisfy (\ref{eq:NonIntersectionErgoregionHorizon}).

In the case when $\mathcal{H}^{+}\neq\emptyset$, we will also assume
that every connected component of $\mathcal{M}\backslash\mathscr{E}$
that intersects $\mathcal{H}^{+}$ also intersects the asymptotically
flat region $\mathcal{I}_{as}$ of $(\mathcal{M},g)$.%
\footnote{Note that $\mathcal{I}_{as}$ and $\mathcal{H}^{+}$ might have several
components.%
}
\begin{rem*}
The assumption that every component of $\mathcal{M}\backslash\mathscr{E}$
intersecting $\mathcal{H}^{+}$ also intersects $\mathcal{I}_{as}$
is not necessary for the results of this paper, which can also be
established without this condition. The reason for adopting this assumption
is that it leads to considerable simplifications in the proof of the
Carleman-type estimates in Section \ref{sec:Carleman}.
\end{rem*}
We will assume that $T$ is strictly timelike on the complement of
$\mathcal{H}\cup\mathscr{E}$, i.\,e.: 
\begin{equation}
g(T,T)<0\mbox{ on }\mathcal{M}\backslash(\mathscr{E}\cup\mathcal{H}).\label{eq:NondegenrateTOutsideErgoregion}
\end{equation}
Furthermore, we will assume that the boundary $\partial\mathscr{E}$
of $\mathscr{E}$ is a smooth hypersurface of $\mathcal{M}$. 

The complement $\mathcal{M}\backslash\mathscr{E}$ of $\mathscr{E}$
might consist of more than one components. In view of our assumption
that every component of $\mathcal{M}\backslash\mathscr{E}$ intersecting
$\mathcal{H}^{+}$ also intersects $\mathcal{I}_{as}$, the connected
components of $\mathcal{M}\backslash\mathscr{E}$ fall into two disjoint
categories: The ones that intersect the asymptotically flat region
$\mathcal{I}_{as}$ and the future event horizon $\mathcal{H}^{+}$,
and the ones that intersect neither $\mathcal{I}_{as}$ nor $\mathcal{H}^{+}$.
Let us call the union of the components of $\mathcal{M}\backslash\mathscr{E}$
falling into the last category the \emph{enclosed} region of $\mathcal{M}$,
and denote it by $\mathcal{M}_{enc}$.%
\footnote{The reason for calling $\mathcal{M}_{enc}$ enclosed is because $\partial\mathcal{M}_{enc}\subset\partial\mathscr{E}$,
i.\,e.~$\mathcal{M}_{enc}$ is enclosed by the ergoregion.%
} We will also introduce the notion of the \emph{extended }ergoregion
of $\mathcal{M}$ defined by 
\begin{equation}
\mathscr{E}_{ext}\doteq\mathscr{E}\cup\mathcal{M}_{enc}.\label{eq:ExtendedErgoregion}
\end{equation}

Note that, since $\mathscr{E}_{ext}\cap\mathcal{H}^{+}=\emptyset$,
we have $r>0$ on $\mathscr{E}_{ext}$. Thus, in view of the $T$-invariance
of $\mathscr{E}_{ext}$, we can assume without loss of generality
that $r_{0}$ has been fixed sufficiently small so that $\{r\le r_{0}\}\cap\mathscr{E}_{ext}=\emptyset$.
Note also that $\partial\mathscr{E}_{ext}\subseteq\partial\mathscr{E}$.

\subsubsection{\emph{Assumption A1\label{Assumption 4} (Unique continuation around
$p\in\partial\mathscr{E}_{ext}$).}}

We will assume that there exists a point $p$ on the boundary $\partial\mathscr{E}_{ext}$
of $\mathscr{E}_{ext}$ and an open neighborhood $\mathcal{U}$ of
$p$ in $\mathcal{M}$, such that for any $\text{\textgreek{y}}\in H_{loc}^{1}\big(\mathcal{M}\backslash\mathcal{H}^{-}\big)$
solving the wave equation (\ref{eq:WaveEquation}) and satisfying
$\text{\textgreek{y}}\equiv0$ on $\mathcal{M}\backslash\mathscr{E}_{ext}$,
we also have $\text{\textgreek{y}}\equiv0$ on $\mathcal{U}$. Since
$T$ is a Killing field of $\mathcal{M}$, the same result also holds
on any $T$-translate of $\mathcal{U}$, and, for this reason, we
will assume without loss of generality that $\mathcal{U}$ is $T$-invariant.
Furthermore, since $\partial\mathscr{E}_{ext}\subset\partial\mathscr{E}$,
we will assume without loss of generality that $\mathcal{U}$ is small
enough so that $\mathcal{U}\cap\mathscr{E}_{ext}\subset\mathscr{E}$.
\begin{rem*}
Assumption \hyperref[Assumption 4]{A1} is satisfied in the case when
$\mathcal{M}$ is axisymmetric with axisymmetric Killing field $\text{\textgreek{F}}$,
such that $[\text{\textgreek{F}},T]=0$ and the span of $\{\text{\textgreek{F}},T\}$
is timelike, or in the case when there exists a point $p\in\partial\mathscr{E}$
such that $g$ is real analytic on an open neighborhhod of $p$ in
$\mathcal{M}$. See Section \ref{sub:Discussion On the unique continuation assumption}. 
\end{rem*}

\subsection{\label{sub:The-main-theorem}The main results}

The main result of this paper is the following:
\begin{thm}
\label{thm:FriedmanInstability}Let $(\mathcal{M}^{d+1},g)$, $d\ge2$,
be a globally hyperbolic Lorentzian manifold satisfying Assumptions
\hyperref[Assumption 1]{G1}, \hyperref[Assumption 2]{G2}, \hyperref[Assumption 3]{G3}
and \hyperref[Assumption 4]{A1}, and let the vector field $T$, $N$
and the spacelike hypersurface $\text{\textgreek{S}}$ be as described
in Assumptions \hyperref[Assumption 1]{G1}-\hyperref[Assumption 2]{G2}.
Then, there exists a smooth function $\text{\textgreek{f}}:J^{+}(\text{\textgreek{S}})\rightarrow\mathbb{C}$
solving the wave equation (\ref{eq:WaveEquation}) on $J^{+}(\text{\textgreek{S}})$
with compactly supported initial data on $\text{\textgreek{S}}$,
such that 
\begin{equation}
\limsup_{\text{\textgreek{t}}\rightarrow+\infty}\int_{\text{\textgreek{S}}_{\text{\textgreek{t}}}}J_{\text{\textgreek{m}}}^{N}(\text{\textgreek{f}})n^{\text{\textgreek{m}}}=+\infty.\label{eq:EnergyGoingToInfinity}
\end{equation}

\end{thm}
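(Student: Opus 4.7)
The plan is to argue by contradiction. Suppose that every smooth solution $\phi$ of (\ref{eq:WaveEquation}) with compactly supported initial data on $\text{\textgreek{S}}$ satisfies $\limsup_{\tau\to\infty}\int_{\text{\textgreek{S}}_\tau}J^N_\mu(\phi)n^\mu<+\infty$. Because the wave equation is linear and the map sending initial data to the $N$-energy at time $\tau$ is a bounded quadratic form on $H^1(\text{\textgreek{S}})\times L^2(\text{\textgreek{S}})$, the Banach--Steinhaus principle upgrades this pointwise-in-$\tau$ hypothesis to a uniform energy bound
\[
\sup_{\tau\ge 0}\int_{\text{\textgreek{S}}_\tau}J^N_\mu(\phi)n^\mu \;\le\; C\cdot\int_{\text{\textgreek{S}}} J^N_\mu(\phi)n^\mu,
\]
with $C$ independent of the initial data.

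The central analytic step is a \emph{decay statement outside the extended ergoregion}: from the uniform bound I would deduce
\[
\int_{\text{\textgreek{S}}_\tau\cap(\mathcal{M}\setminus\mathscr{E}_{ext})}J^N_\mu(\phi)n^\mu \;\xrightarrow{\tau\to\infty}\; 0
\]
by invoking the Carleman-type estimates developed in Section~\ref{sec:Carleman}. In $\mathcal{M}\setminus\mathscr{E}_{ext}$ the Killing field $T$ is timelike and $J^T n$ is coercive, and these estimates provide weighted spacetime bounds on $|\phi|^2+|\partial\phi|^2$ in this region by fluxes through two distant spacelike slices; iterating in $\tau$ against the uniform energy bound forces the flux through $\text{\textgreek{S}}_\tau\cap(\mathcal{M}\setminus\mathscr{E}_{ext})$ to vanish.

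I would then specialize to a Friedman-type solution. Choose smooth, compactly supported initial data on $\text{\textgreek{S}}$ with strictly negative $T$-energy (possible because $T$ is spacelike on the non-empty open set $\mathscr{E}$); call the corresponding solution $\phi_\star$. Conservation of the $J^T$-flux and $J^T n\ge 0$ on $\text{\textgreek{S}}_\tau\setminus\mathscr{E}$ give
\[
\int_{\text{\textgreek{S}}_\tau\cap\mathscr{E}}J^T_\mu(\phi_\star)n^\mu \;\le\; \int_{\text{\textgreek{S}}} J^T_\mu(\phi_\star)n^\mu \;=:\; -\delta \;<\; 0,\qquad\tau\ge 0,
\]
so the $T$-energy of $\phi_\star$ is persistently concentrated in $\mathscr{E}$. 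Form the $T$-translates $\phi_k:=\phi_\star\circ\Phi^T_k$, where $\Phi^T_s$ is the flow of $T$. By the uniform bound, $\{\phi_k\}$ is bounded in $H^1_{loc}(\mathcal{M}\setminus\mathcal{H}^-)$; extract a weak subsequential limit $\phi_{k_j}\rightharpoonup\phi_\infty$, which solves (\ref{eq:WaveEquation}), is $T$-invariant (since $\Phi^T_s$ acts by isometries and a diagonal argument makes weak limits commute with this action), and vanishes identically on $\mathcal{M}\setminus\mathscr{E}_{ext}$ (combining the decay above with Rellich compactness). Assumption~\hyperref[Assumption 4]{A1} then forces $\phi_\infty\equiv 0$ on the neighborhood $\mathcal{U}$ of $p$; combining $T$-invariance of $\phi_\infty$ with Holmgren-type unique continuation across $\partial\mathscr{E}_{ext}$ propagates this vanishing throughout $\mathscr{E}_{ext}$, so $\phi_\infty\equiv 0$ globally. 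Finally, the compact support of the initial data for $\phi_\star$ together with the decay from the previous step confine the $J^T$-mass of $\phi_k|_{\text{\textgreek{S}}}$ to a fixed compact subset of $\overline{\mathscr{E}}$ up to vanishing error; strong $L^2$-convergence on that compact set, together with a defect-measure analysis for the derivative terms, suffices to pass to the limit in the $T$-energy identity, yielding $0 \le -\delta < 0$, the desired contradiction.

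\textbf{Main obstacle.} The decisive difficulty is the decay step: given only a uniform $N$-energy bound, and without any coercive bulk multiplier across $\partial\mathscr{E}$, one must conjure quantitative time decay of the $N$-energy outside $\mathscr{E}_{ext}$. This is precisely the role of the Carleman-type estimates of Section~\ref{sec:Carleman}. Secondary technical subtleties are (i) passing the \emph{indefinite} quadratic form $J^T n$ to the weak limit on $\text{\textgreek{S}}\cap\mathscr{E}$, where one cannot rely on weak lower semicontinuity and must instead exploit strong local convergence together with control on where the $J^T$-mass lives (both rooted in the compact support of the data); (ii) preserving $T$-invariance in the weak-limit extraction; and (iii) propagating the vanishing of $\phi_\infty$ from the single neighborhood $\mathcal{U}$ provided by Assumption~\hyperref[Assumption 4]{A1} throughout $\mathscr{E}_{ext}$, which in the enclosed components $\mathcal{M}_{enc}$ (when present) demands supplementary unique continuation inputs beyond A1 itself.
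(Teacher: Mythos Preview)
Your overall architecture matches the paper's closely: contradiction, a special solution with negative $T$-energy, decay outside $\mathscr{E}_{ext}$ via the Carleman machinery, extraction of a weak limit of $T$-translates that vanishes on $\mathcal{M}\setminus\mathscr{E}_{ext}$. However, your closing argument contains a genuine gap, and the paper closes it by a device you have not identified.

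\medskip

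\textbf{The gap.} After Assumption~\hyperref[Assumption 4]{A1} forces your limit $\phi_\infty$ to vanish on $\mathcal{U}$, you propose to propagate this vanishing throughout $\mathscr{E}_{ext}$ by ``Holmgren-type unique continuation across $\partial\mathscr{E}_{ext}$'' and $T$-invariance. This step does not work: A1 only guarantees unique continuation across a single piece of $\partial\mathscr{E}_{ext}$ near one point $p$, and there is no unique continuation result available for $\square_g$ across a hypersurface on which $T$ is null (which is exactly the situation on $\partial\mathscr{E}$). You yourself flag this as obstacle~(iii), but you offer no resolution. Without $\phi_\infty\equiv 0$ globally, your final contradiction collapses: the indefinite $T$-energy of $\phi_\infty$ restricted to $\mathscr{E}_{ext}\setminus\mathcal{U}$ need not be zero. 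Incidentally, your claim that the weak limit is $T$-invariant is neither proved nor needed; the sequence $\tau_n$ provided by the decay proposition has no structure guaranteeing this.

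\medskip

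\textbf{How the paper closes it.} The paper never shows that $\tilde\psi$ vanishes inside $\mathscr{E}_{ext}$. Instead, it exploits the freedom in choosing initial data: in Lemma~\ref{lem:InitialDataNegativeTEenergy} the data are chosen to be supported in $\text{\textgreek{S}}\cap\mathcal{U}$, and one works with $\psi=T\phi$ (the extra $T$-derivative is needed for the low-frequency part of the decay argument). By finite speed of propagation, $(\psi,T\psi)|_{\text{\textgreek{S}}_{\bar\tau}}$ is supported in $\mathcal{U}$ for $0\le\bar\tau\le\tau_0$ small; since A1 gives $\tilde\psi\equiv 0$ on the $T$-invariant set $\mathcal{U}$, the bilinear form
\[
\langle\psi,\mathcal{F}^*_\tau\tilde\psi\rangle_{T,\bar\tau}
=\tfrac12\int_{\text{\textgreek{S}}_{\bar\tau}}\mathrm{Re}\{\,n\psi\cdot T\overline{\mathcal{F}^*_\tau\tilde\psi}+n(\mathcal{F}^*_\tau\tilde\psi)\cdot T\bar\psi-\psi\cdot n T\overline{\mathcal{F}^*_\tau\tilde\psi}-(\mathcal{F}^*_\tau\tilde\psi)\cdot n T\bar\psi\,\}
\]
vanishes for all $\tau$ and all such $\bar\tau$ (the supports are disjoint). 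This pairing is conserved in $\bar\tau$ for solutions of the wave equation supported away from $\mathcal{H}^+$, so in fact $\langle\psi_{\tau_n},\tilde\psi\rangle_{T,\bar\tau}=0$ for all $\bar\tau$. Passing to the weak limit (which is legitimate here because the pairing is linear in each slot with one factor fixed and compactly supported in $\mathscr{E}_{ext}$) yields $\langle\tilde\psi,\tilde\psi\rangle_{T,\bar\tau}=0$. The identity~(\ref{eq:Energy-SymplecticForm}) equates $\langle\cdot,\cdot\rangle_{T}$ with the $T$-energy for solutions of the wave equation, so this contradicts the persistent negativity~(\ref{eq:NegativeEnergyTrappedSolution}) of the $T$-energy of $\tilde\psi$ on $\mathscr{E}_{ext}$.

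\medskip

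In short: the missing idea is \emph{localizing the initial data inside $\mathcal{U}$} and replacing ``$\tilde\psi=0$ everywhere'' by ``$\tilde\psi$ is symplectically orthogonal to all $T$-translates of $\psi$'', together with the energy--symplectic identity~(\ref{eq:Energy-SymplecticForm}). Your Banach--Steinhaus step and the claimed $T$-invariance of the limit are both unnecessary; the ``defect-measure analysis'' you invoke for the indefinite form is replaced by the much softer observation that a bilinear pairing with one fixed compactly supported factor passes to weak limits.
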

The proof of Theorem \ref{thm:FriedmanInstability} will be presented
in Section \ref{sec:Proof-of-Theorem}.
\begin{rem*}
The proof of Theorem \ref{thm:FriedmanInstability} immediately generalises
to the case when the boundary of the spacetime $(\mathcal{M},g)$
has a smooth, timelike and $T$-invariant component $\partial_{tim}\mathcal{M}$,
such that $\text{\textgreek{S}}\cap\partial_{tim}\mathcal{M}$ is
compact and $\partial_{tim}\mathcal{M}\cap\mathcal{H}=\emptyset$,
assuming that Dirichlet or Neumann boundary conditions are imposed
for equation (\ref{eq:WaveEquation}) on $\partial_{tim}\mathcal{M}$.
In this case, we have to assume that the double $(\widetilde{\mathcal{M}},\tilde{g})$
of $(\mathcal{M},g)$ across $\partial_{tim}\mathcal{M}$ is a globally
hyperbolic spacetime satisfying Assumptions \hyperref[Assumption 1]{G1},
\hyperref[Assumption 2]{G2}, \hyperref[Assumption 3]{G3} and \hyperref[Assumption 4]{A1}
(see Section \ref{sub:ProofWithBoundaryConditions} for the relevant
constructions).

Let us also note that we can readily replace the qualitative instability
statement (\ref{eq:EnergyGoingToInfinity}) with the following quantitative
statement: For any $C>0$, there exists a solution $\text{\textgreek{f}}$
to equation (\ref{eq:WaveEquation}) as in the statement of Theorem
\ref{thm:FriedmanInstability}, such that 
\begin{equation}
\limsup_{\text{\textgreek{t}}\rightarrow+\infty}\Big(\big(\log(2+\text{\textgreek{t}})\big)^{-C}\int_{\text{\textgreek{S}}_{\text{\textgreek{t}}}}J_{\text{\textgreek{m}}}^{N}(\text{\textgreek{f}})n^{\text{\textgreek{m}}}\Big)=+\infty.\label{eq:LogInstability}
\end{equation}
See the remark at the beginning of Section \ref{sec:Proof-of-Theorem}.
However, we do not expect the logarithmic rate of growth in (\ref{eq:LogInstability})
to be sharp.
\end{rem*}
As a straightforward application of Theorem \ref{thm:FriedmanInstability},
we will obtain the following instability estimate for solutions to
the acoustical wave equation on the hydronamic vortex $(\mathbb{R}\times\mathcal{V}_{hyd,\text{\textgreek{d}}},g_{hyd})$,
where $\mathcal{V}_{hyd,\text{\textgreek{d}}}\subset\mathbb{R}^{3}$
is the set $\{\bar{r}\ge\text{\textgreek{d}}\}$ (in the cylindrical
$(\bar{r},\text{\textgreek{j}},z)$ coordinate system on $\mathbb{R}^{3}$)
and $g_{hyd}$ is given by the expression (\ref{eq:HydrodynamicVortex}):
\begin{cor}
\label{cor:VortexMain}For any $\text{\textgreek{d}}<1$, there exist
 smooth and $z$-invariant solutions $\text{\textgreek{f}}_{D},\text{\textgreek{f}}_{N}$
to (\ref{eq:WaveEquation}) on $(\mathbb{R}\times\mathcal{V}_{hyd,\text{\textgreek{d}}},g_{hyd})$,
satisfying the boundary conditions 
\begin{equation}
\text{\textgreek{f}}_{D}|_{\{\bar{r}=\text{\textgreek{d}}\}}=0
\end{equation}
and 
\begin{equation}
\partial_{\bar{r}}\text{\textgreek{f}}_{N}|_{\{\bar{r}=\text{\textgreek{d}}\}}=0
\end{equation}
 and having smooth initial data at time $t=0$ which are compactly
supported when restricted on $\{z=0\}$, such that (in the $(t,\bar{r},\text{\textgreek{j}},z)$
coordinate chart on $\mathbb{R}\times\mathcal{V}_{hyd,\text{\textgreek{d}}}$):
\begin{equation}
\limsup_{\text{\textgreek{t}}\rightarrow+\infty}\int_{\{t=\text{\textgreek{t}}\}\cap\{z=0\}\cap\{\bar{r}\ge\text{\textgreek{d}}\}}\big(|\partial_{t}\text{\textgreek{f}}_{D}|^{2}+|\nabla_{\mathbb{R}^{3}}\text{\textgreek{f}}_{D}|^{2}\big)\,\bar{r}d\bar{r}d\text{\textgreek{j}}=+\infty\label{eq:VortexFriedmanDirichlet}
\end{equation}
and 
\begin{equation}
\limsup_{\text{\textgreek{t}}\rightarrow+\infty}\int_{\{t=\text{\textgreek{t}}\}\cap\{z=0\}\cap\{\bar{r}\ge\text{\textgreek{d}}\}}\big(|\partial_{t}\text{\textgreek{f}}_{N}|^{2}+|\nabla_{\mathbb{R}^{3}}\text{\textgreek{f}}_{N}|^{2}\big)\,\bar{r}d\bar{r}d\text{\textgreek{j}}=+\infty.\label{eq:VortexFriedmanNeumann}
\end{equation}

\end{cor}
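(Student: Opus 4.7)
The plan is to reduce Corollary \ref{cor:VortexMain} to Theorem \ref{thm:FriedmanInstability} via dimensional reduction, followed by the doubling construction used to accommodate timelike boundaries. Since $g_{hyd}$ is independent of $z$, the field $\partial_{z}$ is Killing, and $z$-invariant solutions of $\square_{g_{hyd}}\text{\textgreek{f}}=0$ on $\mathbb{R}\times\mathcal{V}_{hyd,\text{\textgreek{d}}}$ are in bijection with solutions of the wave equation on the $2{+}1$-dimensional quotient $\mathcal{M}_{0}=\mathbb{R}_{t}\times\{\bar{r}\ge\text{\textgreek{d}}\}$ carrying the induced metric $g_{hyd}^{(2+1)}=-(1-C^{2}/\bar{r}^{2})dt^{2}+d\bar{r}^{2}-2C\,dt\,d\text{\textgreek{j}}+\bar{r}^{2}d\text{\textgreek{j}}^{2}$. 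For such $\text{\textgreek{f}}$ one has $|\nabla_{\mathbb{R}^{3}}\text{\textgreek{f}}|^{2}=|\partial_{\bar{r}}\text{\textgreek{f}}|^{2}+\bar{r}^{-2}|\partial_{\text{\textgreek{j}}}\text{\textgreek{f}}|^{2}$, so the integrals in (\ref{eq:VortexFriedmanDirichlet})--(\ref{eq:VortexFriedmanNeumann}) over $\{t=\text{\textgreek{t}},z=0\}$ are uniformly equivalent to the standard first-order $N$-energy on constant-$t$ slices of $\mathcal{M}_{0}$, with $N=\partial_{t}$ in the asymptotic region $\{\bar{r}\gg1\}$. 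The timelike boundary $\{\bar{r}=\text{\textgreek{d}}\}$ is then handled via the doubling construction of Section \ref{sub:ProofWithBoundaryConditions}: one forms $(\widetilde{\mathcal{M}}_{0},\tilde{g})$ by gluing two isometric copies of $\mathcal{M}_{0}$ along $\{\bar{r}=\text{\textgreek{d}}\}$, equipped with a $\mathbb{Z}_{2}$-involution $\text{\textgreek{i}}$ exchanging the two copies.

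I then verify Assumptions \hyperref[Assumption 1]{G1}--\hyperref[Assumption 3]{G3} and \hyperref[Assumption 4]{A1} for $(\widetilde{\mathcal{M}}_{0},\tilde{g})$. The Killing field $T=\partial_{t}$ extends to $\widetilde{\mathcal{M}}_{0}$, and each copy carries an asymptotically flat end of the form (\ref{eq:metric}) as $\bar{r}\to\infty$ (with $d=2$), yielding G1 with two asymptotic ends. Since the vortex has no event horizon, G2 is trivial. The ergoregion of each copy is $\mathscr{E}=\{\text{\textgreek{d}}\le\bar{r}\le C\}$, $T$ is strictly timelike outside it, both halves of the double are asymptotically flat so $\mathcal{M}_{enc}=\emptyset$, and $\mathscr{E}\cap\mathcal{H}^{+}=\emptyset$; these conditions give G3. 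For A1, the axisymmetric Killing field $\text{\textgreek{F}}=\partial_{\text{\textgreek{j}}}$ commutes with $T$, and on $\partial\mathscr{E}=\{\bar{r}=C\}$ one computes
\[
g(aT+b\text{\textgreek{F}},aT+b\text{\textgreek{F}})=-2abC+b^{2}C^{2},
\]
which is negative for any $0<b\ll a$, so $\mathrm{span}\{T,\text{\textgreek{F}}\}$ contains a timelike direction on $\partial\mathscr{E}$, and the axisymmetric unique continuation discussion in Section \ref{sub:Discussion On the unique continuation assumption} applies.

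Applying Theorem \ref{thm:FriedmanInstability} to $(\widetilde{\mathcal{M}}_{0},\tilde{g})$ then produces a smooth solution with compactly supported initial data whose $N$-energy tends to $+\infty$. To obtain both $\text{\textgreek{f}}_{D}$ and $\text{\textgreek{f}}_{N}$, I would start from initial data supported in a single copy of $\mathscr{E}$ with strictly negative $T$-energy (whose existence follows from the failure of $T$ to be timelike there, as in Friedman's original observation), and form its $\text{\textgreek{i}}$-symmetric and $\text{\textgreek{i}}$-antisymmetric extensions. Since the two copies of the support are disjoint, the $T$-energy of each extension equals twice that of the original, hence remains negative; and since $\square_{\tilde{g}}$ commutes with $\text{\textgreek{i}}$, the symmetry class is preserved under evolution. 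The Friedman mechanism applied within each symmetry class yields a solution whose $N$-energy grows to $+\infty$; restricting to one side of the double gives a Neumann solution (symmetric case, with $\partial_{\bar{r}}\text{\textgreek{f}}_{N}|_{\{\bar{r}=\text{\textgreek{d}}\}}=0$) and a Dirichlet solution (antisymmetric case, with $\text{\textgreek{f}}_{D}|_{\{\bar{r}=\text{\textgreek{d}}\}}=0$), and pulling these back to the $3{+}1$-dimensional vortex via $z$-invariance produces the desired functions. The main obstacle I anticipate is ensuring that the doubled metric $\tilde{g}$ is sufficiently regular across $\{\bar{r}=\text{\textgreek{d}}\}$ for the Carleman estimates of Section \ref{sec:Carleman} to remain valid, since the coefficients of $g_{hyd}^{(2+1)}$ are not even in $\bar{r}-\text{\textgreek{d}}$; this, however, is precisely the issue that the boundary construction of Section \ref{sub:ProofWithBoundaryConditions} is designed to handle.
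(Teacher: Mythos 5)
Your proposal follows essentially the same route as the paper: reduce to the $2+1$-dimensional quotient $(\mathbb{R}\times\bar{\mathcal{V}}_{hyd,\text{\textgreek{d}}},\bar{g}_{hyd})$, form the double across $\{\bar{r}=\text{\textgreek{d}}\}$, verify Assumptions \hyperref[Assumption 1]{G1}--\hyperref[Assumption 3]{G3} and \hyperref[Assumption 4]{A1} for the double (using the axisymmetric Killing field $\partial_{\text{\textgreek{j}}}$ for A1, exactly as you do), and invoke the boundary version of Theorem \ref{thm:FriedmanInstability} whose hypotheses are phrased in terms of the double. One small correction: the intermediate sentence proposing to apply Theorem \ref{thm:FriedmanInstability} directly to $(\widetilde{\mathcal{M}}_{0},\tilde{g})$ should be dropped, since $\tilde{g}$ is only continuous (not $C^{1}$) across the gluing locus --- an issue you yourself flag at the end --- and the paper instead works with the boundary-adapted Carleman estimate (Proposition \ref{prop:CarlemanBoundary}) directly on the spacetime-with-boundary, once for each boundary condition; your $\text{\textgreek{i}}$-symmetric/antisymmetric construction is a valid alternative way to phrase that same argument.
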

For the proof of Corollary \ref{cor:VortexMain}, see Section \ref{sec:Proof-of-Corollary}.

\subsection{\label{sub:Discussion On the unique continuation assumption}Discussion
on Assumption \hyperref[Assumption 4]{A1}}

There exists a class of natural geometric conditions, such that spacetimes
$(\mathcal{M},g)$ satisfying these conditions (in addition to Assumptions
\hyperref[Assumption 1]{G1}--\hyperref[Assumption 3]{G3}) automatically
satisfy Assumption \hyperref[Assumption 4]{A1}. Examples of such
conditions are the following:

\begin{itemize}

\item{ Assumption \hyperref[Assumption 4]{A1} is always satisfied
on spacetimes $(\mathcal{M},g)$ having an axisymmetric Killing field
$\text{\textgreek{F}}$, such that $[\text{\textgreek{F}},T]=0$ and
the span of $\text{\textgreek{F}},T$ on $\partial\mathscr{E}_{ext}$
contains a timelike direction. This is a consequence of Lemma \ref{lem:UniqueContinuationAxisymmetry}
at the end of this section (choosing $\mathcal{U}$ to be a suitable
small neighborhood of a point $p\in\partial\mathscr{E}_{ext}\backslash\{\text{\textgreek{F}}=0\}$
and $\mathcal{S}=\partial\mathscr{E}_{ext}\cap\mathcal{U}$ in the
statement of Lemma \ref{lem:UniqueContinuationAxisymmetry}).}

\item{ Assumption \hyperref[Assumption 4]{A1} is always satisfied
on spacetimes $(\mathcal{M},g)$ on which there exists a point $p\in\partial\mathscr{E}_{ext}$
and an open neighborhood $\mathcal{U}$ of $p$ such that $(\mathcal{U},g)$
is a real analytic Lorentzian manifold and $\partial\mathscr{E}_{ext}\cap\mathcal{U}$
is a real analytic hypersurface. This is a consequence of Holmgren's
uniqueness theorem (see \cite{Holmgren1901}).}

\end{itemize}

On the other hand, we believe that Assumption \hyperref[Assumption 4]{A1}
does not hold on all spacetimes satisfying Assumptions \hyperref[Assumption 1]{G1}--\hyperref[Assumption 3]{G3}.
In particular, by adjusting the arguments of \cite{Alinhac1995},
we were able to construct a $3+1$-dimensional spacetime $(\mathcal{M},g)$,
satisfying Assumptions \hyperref[Assumption 1]{G1}--\hyperref[Assumption 3]{G3},
as well as a suitable $T$-invariant smooth potential $V:\mathcal{M}\rightarrow\mathbb{C}$,
so that Assumption \hyperref[Assumption 4]{A1} for equation 
\begin{equation}
\square_{g}\text{\textgreek{f}}-V\text{\textgreek{f}}=0\label{eq:WaveEquationPotential}
\end{equation}
in place of (\ref{eq:WaveEquation}) is not satisfied. Note that such
a construction is non-trivial, in view of the requirement that $T(V)=0$;
for instance, Assumption \hyperref[Assumption 4]{A1} always holds
for equation (\ref{eq:WaveEquationPotential}) on stationary spacetimes
without an ergoregion (see \cite{Tataru1995}). We will not pursue
this issue any further in this paper.

Although we believe that Assumption \hyperref[Assumption 4]{A1} can
be removed from the statement of Theorem \ref{thm:FriedmanInstability},
we were not able to do so. 

The following lemma can be used to establish that Assumption \hyperref[Assumption 4]{A1}
always holds in the presence of a second Killing field \textgreek{F}
on $\mathcal{M}$ such that the span of $T,\text{\textgreek{F}}$
is timelike:
\begin{lem}
\label{lem:UniqueContinuationAxisymmetry}Let $\mathcal{U}$ be an
open subset of a smooth spacetime $(\mathcal{M},g)$ with two Killing
fields $T,\text{\textgreek{F}}$ such that $[T,\text{\textgreek{F}}]=0$,
$\text{\textgreek{F}}\neq0$ on $\mathcal{U}$ and the span of $T,\text{\textgreek{F}}$
contains a timelike direction everywhere on $\mathcal{U}$. Let also
$\mathcal{S}\subset\mathcal{U}$ be a $T,\text{\textgreek{F}}$-invariant
smooth hypersurface, separating $\mathcal{U}$ into two connected
components $\mathcal{U}_{1},\mathcal{U}_{2}$, so that $\mathcal{U}_{2}$
lies in the domain of dependence of $\mathcal{U}_{1}$. Then, any
$\text{\textgreek{y}}\in H_{loc}^{2}(\mathcal{U})$ solving (\ref{eq:WaveEquation})
on $(\mathcal{U},g)$ such that $\text{\textgreek{y}}\equiv0$ on
$\mathcal{U}_{1}$ must vanish everywhere on $\mathcal{U}$. \end{lem}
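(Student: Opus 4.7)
The plan is to exploit the two commuting Killing fields $T,\text{\textgreek{F}}$, whose span is timelike everywhere on $\mathcal{U}$, in order to reduce the wave equation $\square_{g}\text{\textgreek{y}}=0$ to a family of elliptic equations (parametrised by two frequencies) on the quotient of $\mathcal{U}$ by the local $\mathbb{R}^{2}$-action generated by $T$ and $\text{\textgreek{F}}$, and then apply Aronszajn's unique continuation theorem for elliptic operators.

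Around any point $p\in\mathcal{S}$ at which $T$ and $\text{\textgreek{F}}$ are linearly independent (the generic case; the exceptional case where their span is $1$-dimensional and timelike reduces to a single-Killing-field problem), I would introduce local coordinates $(\tau,s,y^{1},\ldots,y^{d-1})$ in which $T=\partial_{\tau}$, $\text{\textgreek{F}}=\partial_{s}$, and $\mathcal{S}\cap\{\text{chart}\}=\{y^{1}=0\}$; such coordinates exist because $[T,\text{\textgreek{F}}]=0$ and both flows preserve $\mathcal{S}$. All metric components $g_{ab}$ then depend only on $y$. The timelike-plane hypothesis means the $2\times 2$ block $g_{AB}$ (for $A,B\in\{\tau,s\}$) is Lorentzian, so its Schur complement
\[
\tilde{g}_{ij}\doteq g_{ij}-g_{iA}\,(g_{(2)}^{-1})^{AB}\,g_{Bj}
\]
is a Riemannian metric on the $y$-variables, corresponding geometrically to the induced metric on the spacelike orthogonal complement of $\mathrm{span}(T,\text{\textgreek{F}})$, and a standard block-inverse computation shows that the $(d-1)\times(d-1)$ block $g^{ij}$ of the inverse metric equals $\tilde{g}^{ij}$.

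For each $(\omega,\lambda)\in\mathbb{R}^{2}$ let $L_{\omega,\lambda}$ be the operator in the $y$-variables obtained by formally substituting $\partial_{\tau}\mapsto i\omega$ and $\partial_{s}\mapsto i\lambda$ in $\square_{g}$; its principal symbol in $y$ equals $-\tilde{g}^{ij}\xi_{i}\xi_{j}$, so $L_{\omega,\lambda}$ is elliptic. To pass from $\text{\textgreek{y}}$ to its modes, I would fix a target point $q_{0}\in\mathcal{U}_{2}$, pick a smooth cutoff $\chi(\tau,s)$ equal to $1$ on a $(\tau,s)$-neighbourhood of $q_{0}$ and of compact support, and define
\[
\widehat{\chi\text{\textgreek{y}}}(\omega,\lambda,y)=\iint e^{-i(\omega\tau+\lambda s)}\,\chi(\tau,s)\,\text{\textgreek{y}}(\tau,s,y)\,d\tau\,ds.
\]
Because $\chi\text{\textgreek{y}}$ is compactly supported in $(\tau,s)$ and the coefficients of $\square_{g}$ do not depend on $(\tau,s)$, one obtains
\[
L_{\omega,\lambda}\widehat{\chi\text{\textgreek{y}}}(\omega,\lambda,\cdot)=\widehat{[\square_{g},\chi]\text{\textgreek{y}}}(\omega,\lambda,\cdot),
\]
with the right-hand side supported where $\nabla_{\tau,s}\chi\neq 0$, hence away from $q_{0}$.

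Since $\text{\textgreek{y}}\equiv 0$ on $\mathcal{U}_{1}=\{y^{1}>0\}$, the function $\widehat{\chi\text{\textgreek{y}}}(\omega,\lambda,\cdot)$ vanishes on $\{y^{1}>0\}$ and, in a $y$-neighbourhood of the $y$-projection of $q_{0}$, satisfies the homogeneous elliptic equation $L_{\omega,\lambda}\widehat{\chi\text{\textgreek{y}}}=0$; Aronszajn's unique continuation theorem then forces $\widehat{\chi\text{\textgreek{y}}}(\omega,\lambda,\cdot)\equiv 0$ in a neighbourhood of the $y$-projection of $q_{0}$ for every $(\omega,\lambda)$, whence Fourier inversion yields $\text{\textgreek{y}}(q_{0})=0$. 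As $q_{0}\in\mathcal{U}_{2}$ was arbitrary, this produces the desired conclusion. The main obstacle is the cutoff bookkeeping: for each $q_{0}$ one must choose $\chi$ so that the commutator error $[\square_{g},\chi]\text{\textgreek{y}}$ is supported outside the $y$-region in which Aronszajn's theorem is to be propagated, and verify that this region connects $\{y^{1}>0\}$ to the $y$-coordinate of $q_{0}$ --- a standard connectedness argument ensured by the hypothesis that $\mathcal{U}_{2}$ is a connected component of $\mathcal{U}\setminus\mathcal{S}$.
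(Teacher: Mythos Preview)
Your reduction to an elliptic operator in the $y$-variables is correct, and the idea of passing to frequencies in the two Killing directions is natural. The gap is in the cutoff step. You claim that after Fourier transforming, $L_{\omega,\lambda}\widehat{\chi\psi}=0$ ``in a $y$-neighbourhood of the $y$-projection of $q_0$'' because the commutator $[\square_g,\chi]\psi$ is supported where $\nabla_{\tau,s}\chi\neq 0$, ``hence away from $q_0$''. But ``away from $q_0$'' is a statement about $(\tau,s)$-support, not $y$-support. Since $\chi$ depends only on $(\tau,s)$, the $y$-support of $[\square_g,\chi]\psi$ is the $y$-support of $\psi$ restricted to the annulus $\{\nabla\chi\neq 0\}$; after integrating out $(\tau,s)$, the right-hand side $\widehat{[\square_g,\chi]\psi}(\omega,\lambda,y_0)$ involves the values $\psi(\tau',s',y_0)$ for $(\tau',s')\in\mathrm{supp}\,\nabla\chi$, and these are precisely the values whose vanishing you are trying to prove. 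No choice of $\chi(\tau,s)$ can force this inhomogeneity to vanish at $y_0$, and a layer-by-layer connectedness argument does not bootstrap either: even to run Aronszajn on the first $\epsilon$-strip $\{-\epsilon<y^1<0\}$ you would already need the equation to be homogeneous there.

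This is exactly the obstruction that separates elliptic unique continuation from the hyperbolic case. The paper bypasses it by invoking Tataru's unique continuation theorem \cite{Tataru1995}, which applies to operators whose coefficients are analytic (here, independent) in the variables $(t,\varphi)$ and whose transverse part $\Delta_x$ is elliptic; the point of Tataru's proof is to replace compactly supported cutoffs by Gaussian weights (an FBI transform), so that the analogue of your commutator error becomes exponentially small rather than merely displaced in $(\tau,s)$. If you want to avoid citing Tataru, you would essentially have to reproduce that Gaussian-weight mechanism.
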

\begin{proof}
Since $[T,\text{\textgreek{F}}]=0$ and $\text{\textgreek{F}}\neq0$
on $\mathcal{U}$, we can assume without loss of generality (by shrinking
$\mathcal{U}$ if necessary) that $\mathcal{U}$ is covered by a coordinate
chart $(t,\text{\textgreek{f}},x^{2},\ldots,x^{d})$ such that:

\begin{enumerate}

\item $T(\text{\textgreek{f}}),T(x^{2}),\ldots T(x^{d})=0$,

\item $\text{\textgreek{F}}(t),\text{\textgreek{F}}(x^{2}),\ldots,\text{\textgreek{F}}(x^{d})=0$,

\item $T(t)=\text{\textgreek{F}}(\text{\textgreek{f}})=1$,

\item $\mathcal{S}=\{x^{1}=0\}$.

\end{enumerate}

In view of the fact that the span of $\text{\textgreek{F}},T$ contains
a timelike direction everywhere on $\mathcal{U}$, the wave operator
(\ref{eq:WaveEquation}) in the $(t,\text{\textgreek{f}},x^{2},\ldots,x^{d})$
coordinate system takes the form (using the shorthand notation $x=(x^{2},\ldots,x^{d})$):
\begin{align}
\square_{g}\text{\textgreek{y}}=\text{\textgreek{D}}_{x}\text{\textgreek{y}} & +\sum_{j=2}^{d}\big(a_{tj}^{(2)}(x)\partial_{x^{j}}\partial_{t}\text{\textgreek{y}}+a_{\text{\textgreek{f}}j}^{(2)}(x)\partial_{x^{j}}\partial_{\text{\textgreek{f}}}\text{\textgreek{y}}+a_{tj}^{(1)}(x)\partial_{t}\text{\textgreek{y}}+a_{\text{\textgreek{f}}j}^{(1)}(x)\partial_{\text{\textgreek{f}}}\text{\textgreek{y}}\big)+\label{eq:WaveOperator}\\
 & +a_{tt}(x)\partial_{t}^{2}\text{\textgreek{y}}+a_{t\text{\textgreek{f}}}(x)\partial_{t}\partial_{\text{\textgreek{f}}}\text{\textgreek{y}}+a_{\text{\textgreek{f}\textgreek{f}}}(x)\partial_{\text{\textgreek{f}}}^{2}\text{\textgreek{y}},\nonumber 
\end{align}
 where the operator $\text{\textgreek{D}}_{x}$ in the right hand
side of (\ref{eq:WaveOperator}) is a $t,\text{\textgreek{f}}$-invariant
second order elliptic operator in the $x^{2},\ldots,x^{d}$ variables.
Since the coefficients of (\ref{eq:WaveOperator}) are independent
of $t,\text{\textgreek{f}}$ and $\text{\textgreek{D}}_{x}$ is elliptic,
the proof of the Lemma follows readily by the unique continuation
result of Tataru \cite{Tataru1995}.
\end{proof}

\subsection{\label{sub:ExampleSpacetime}Examples of spacetimes satisfying Assumptions
\hyperref[Assumption 1]{G1}--\hyperref[Assumption 3]{G3} and \hyperref[Assumption 4]{A1}}

In this section, we will examine some explicit examples of spacetimes
satisfying all of the Assumptions \hyperref[Assumption 1]{G1}--\hyperref[Assumption 3]{G3}
and \hyperref[Assumption 4]{A1}.

\paragraph*{An example with $\mathcal{H}^{+}=\emptyset$.}

Our first example will be a simple spacetime with no event horizon.
Let $\mathcal{M}=\mathbb{R}^{3+1}$, and let us fix fix two smooth
functions $\text{\textgreek{q}}_{\bar{r}}:[0,+\infty)\rightarrow[0,1]$
and $\text{\textgreek{q}}_{\text{\textgreek{j}}}:[0,\text{\textgreek{p}}]\rightarrow[0,1]$,
satisfying $\text{\textgreek{q}}_{\bar{r}}\equiv0$ on $[0,3]\cup[6,+\infty)$,
$\text{\textgreek{q}}_{\bar{r}}\equiv1$ on $[4,5]$, $\text{\textgreek{q}}_{\text{\textgreek{j}}}\equiv0$
on $[0,\frac{\text{\textgreek{p}}}{6}]\cup[\frac{5\text{\textgreek{p}}}{6},\text{\textgreek{p}}]$
and $\text{\textgreek{q}}_{\text{\textgreek{j}}}\equiv1$ on $[\frac{\text{\textgreek{p}}}{4},\frac{3\text{\textgreek{p}}}{4}]$.
We will also assume that $\text{\textgreek{q}}_{\bar{r}},\text{\textgreek{q}}_{\text{\textgreek{j}}}$
have been chosen so that the set of zeros of the function 
\begin{equation}
f(\bar{r},\text{\textgreek{j}})\doteq1-(\text{\textgreek{q}}_{\bar{r}}(\bar{r})\text{\textgreek{q}}_{\text{\textgreek{j}}}(\text{\textgreek{j}}))^{2}\label{eq:FunctionForErgoregion-1}
\end{equation}
is a smooth curve without self-intersections in the open rectangle
$(3,6)\times(\frac{\text{\textgreek{p}}}{6},\frac{5\text{\textgreek{p}}}{\text{6}})$,
and the region $\{f\le0\}\subset(3,6)\times(\frac{\text{\textgreek{p}}}{6},\frac{5\text{\textgreek{p}}}{\text{6}})$
is simply connected.

We will consider the following metric on $\mathcal{M}$ in the usual
time-polar coordinate chart $(t,\bar{r},\text{\textgreek{j}},\text{\textgreek{f}})$
on $\mathbb{R}^{3+1}$: 
\begin{equation}
g=-\big(1-(\text{\textgreek{q}}_{\bar{r}}(\bar{r})\text{\textgreek{q}}_{\text{\textgreek{j}}}(\text{\textgreek{j}}))^{2}\big)dt^{2}-1000\text{\textgreek{q}}_{\bar{r}}(\bar{r})\text{\textgreek{q}}_{\text{\textgreek{j}}}(\text{\textgreek{j}})dtd\text{\textgreek{f}}+d\bar{r}^{2}+\bar{r}^{2}\big(d\text{\textgreek{j}}^{2}+\sin^{2}\text{\textgreek{j}}d\text{\textgreek{f}}^{2}\big).\label{eq:AlmostFlatExample}
\end{equation}
Note that $g$ is everywhere non-degenerate, and has Lorentzian signature.
Furthermore, $(\mathcal{M},g)$ is a globally hyperbolic spacetime,
with Cauchy hypersurface $\{t=0\}$, satisfying the following properties:

\begin{enumerate}

\item The vector field $T=\partial_{t}$ is a Killing field of $(\mathcal{M},g)$.
Furthermore, $(\mathcal{M},g)$ is asymptotically flat and satisfies
Assumption \hyperref[Assumption 1]{G1}. Notice that $(\mathcal{M},g)$
has no event horizon, since every point in $\mathcal{M}$ can be connected
with the asymptotically flat region $\mathcal{I}_{as}=\{\bar{r}\ge R_{0}\gg1\}$
through both a future directed and a past directed timelike curve,
by following the flow of the timelike vector fields $\partial_{\bar{r}}+C\big(\partial_{t}+\frac{1}{10}\text{\textgreek{q}}_{\bar{r}}(\bar{r})\text{\textgreek{q}}_{\text{\textgreek{j}}}(\text{\textgreek{j}})\partial_{\text{\textgreek{f}}}\big)$
and $\partial_{\bar{r}}-C\big(\partial_{t}+\frac{1}{10}\text{\textgreek{q}}_{\bar{r}}(\bar{r})\text{\textgreek{q}}_{\text{\textgreek{j}}}(\text{\textgreek{j}})\partial_{\text{\textgreek{f}}}\big)$,
respectively (for some fixed $C\gg1$). The function $r:\mathcal{M}\rightarrow[0,+\infty)$,
introduced in Assumption \hyperref[Assumption 1]{G1}, can be chosen
to be equal to $\big(1+\bar{r}^{2}\big)^{1/2}$.

\item The spacetime $(\mathcal{M},g)$ has no event horizon, and,
thus, it trivially satisfies Assumption \hyperref[Assumption 2]{G2}.

\item The ergoregion $\mathscr{E}=\{g(T,T)>0\}$ of $(\mathcal{M},g)$
is non-empty, and satisfies 
\begin{equation}
\{4\le\bar{r}\le5\}\cap\{\frac{\text{\textgreek{p}}}{4}\le\text{\textgreek{j}}\le\frac{3\text{\textgreek{p}}}{4}\}\subset\mathscr{E}\subset\{3\le\bar{r}\le6\}\cap\{\frac{\text{\textgreek{p}}}{6}\le\text{\textgreek{j}}\le\frac{5\text{\textgreek{p}}}{6}\}.\label{eq:ErgoregionExample-1}
\end{equation}
Since $\mathcal{H}^{+}=\emptyset$, we have $\mathscr{E}\cap\mathcal{H}^{+}=\emptyset$.
Furthermore, since we assumed that the region $\{f\le0\}\subset(3,6)\times(\frac{\text{\textgreek{p}}}{6},\frac{5\text{\textgreek{p}}}{\text{6}})$
for the function (\ref{eq:FunctionForErgoregion-1}) is simply connected,
we can readily infer that that $\mathcal{M}\backslash\mathscr{E}$
is connected and, thus, $\mathscr{E}_{ext}=\mathscr{E}$. Furthermore,
the boundary $\partial\mathscr{E}$ of $\mathscr{E}$ is a smooth
hypersurface of $\mathcal{M}$ and (\ref{eq:NondegenrateTOutsideErgoregion})
is satisfied, in view of our assumption on the set of zeros of the
function (\ref{eq:FunctionForErgoregion-1}). Therefore, $(\mathcal{M},g)$
satisfies Assumption \hyperref[Assumption 3]{G3}.

\item The spacetime $(\mathcal{M},g)$ possesses an additional Killing
field, i.\,e.~$\text{\textgreek{F}}=\partial_{\text{\textgreek{f}}}$.
The span of $T,\text{\textgreek{F}}$ contains the everywhere timelike
vector field $T+\frac{1}{10}\text{\textgreek{q}}_{\bar{r}}(\bar{r})\text{\textgreek{q}}_{\text{\textgreek{j}}}(\text{\textgreek{j}})\text{\textgreek{F}}$
and, thus, Lemma \ref{lem:UniqueContinuationAxisymmetry} implies
that $(\mathcal{M},g)$ satisfies Assumption \hyperref[Assumption 4]{A1}.
In particular, any point $p\in\partial\mathscr{E}_{ext}\subseteq\partial\mathscr{E}$
and any open neighborhood $\mathcal{U}$ of $p$ in $\mathcal{M}$
satisfy the unique continuation property of Assumption \hyperref[Assumption 4]{A1}.

\end{enumerate}

Therefore, $(\mathcal{M},g)$ satisfies all of the Asumptions \hyperref[Assumption 1]{G1}--\hyperref[Assumption 3]{G3}
and \hyperref[Assumption 4]{A1}. 
\begin{rem*}
The hydrodynamic vortex $(\mathbb{R}\times\mathcal{V}_{hyd,\text{\textgreek{d}}},g_{hyd})$
of Corollary \ref{cor:VortexMain} is not a globally hyperbolic spacetime,
since its boundary $\partial\big(\mathbb{R}\times\mathcal{V}_{hyd,\text{\textgreek{d}}}\big)=\{\bar{r}=\text{\textgreek{d}}\}$
is a timelike hypersurface. However, as we will show in the proof
of Corollary \ref{cor:VortexMain}, the double of $(\mathbb{R}\times\mathcal{V}_{hyd,\text{\textgreek{d}}},g_{hyd})$
across $\partial\big(\mathbb{R}\times\mathcal{V}_{hyd,\text{\textgreek{d}}}\big)$
is a globally hyperbolic spacetime without an event horizon, satisfying
Assumptions \hyperref[Assumption 1]{G1}--\hyperref[Assumption 3]{G3}
and \hyperref[Assumption 4]{A1}. In addition, the double of $(\mathbb{R}\times\mathcal{V}_{hyd,\text{\textgreek{d}}},g_{hyd})$
is an example of a spacetime having two asymptotically flat ends,
with $(\mathbb{R}\times\mathcal{V}_{hyd,\text{\textgreek{d}}},g_{hyd})\backslash\mathscr{E}$
having two connected components.
\end{rem*}

\paragraph*{An example with $\mathcal{H}^{+}\neq\emptyset$.}

We will now proceed to construct a slightly more complicated example
of a spacetime satisfying Assumptions \hyperref[Assumption 1]{G1}--\hyperref[Assumption 3]{G3}
and \hyperref[Assumption 4]{A1}, possessing in addition a non-empty
event horizon. Note that, as we mentioned in Section \ref{sub:Assumptions},
the subextremal Kerr exterior family $(\mathcal{M}_{M,a},g_{M,a})$
does not satisfy \hyperref[Assumption 3]{G3}, since the future event
horizon $\mathcal{H}^{+}$ and the ergoregion $\mathscr{E}$ of $(\mathcal{M}_{M,a},g_{M,a})$
have a non-empty intersection. 

For any $M>0$, let $\mathcal{M}_{M}$ be diffeomorphic to $\mathbb{R}\times(2M,+\infty)\times\mathbb{S}^{2}$.
Let $\text{\textgreek{q}}_{\bar{r}},\text{\textgreek{q}}_{\text{\textgreek{j}}}$
be as before, assuming, in addition, that they have been chosen so
that the set of zeros of the function 
\begin{equation}
f_{M}(\bar{r},\text{\textgreek{j}})\doteq\big(1-\frac{2M}{\bar{r}}-(\text{\textgreek{q}}_{\bar{r}}(M^{-1}\bar{r})\text{\textgreek{q}}_{\text{\textgreek{j}}}(\text{\textgreek{j}}))^{2}\big)\label{eq:FunctionForErgoregion}
\end{equation}
is a smooth curve without self-intersections in the open rectangle
$(3M,6M)\times(\frac{\text{\textgreek{p}}}{6},\frac{5\text{\textgreek{p}}}{\text{6}})$
and the region $\{f_{M}\le0\}\subset(3,6)\times(\frac{\text{\textgreek{p}}}{6},\frac{5\text{\textgreek{p}}}{\text{6}})$
is simply connected.

Let us consider the following metric in the $(t,\bar{r},\text{\textgreek{j}},\text{\textgreek{f}})$
coordinate chart on $\mathcal{M}_{M}$: 
\begin{equation}
g_{M}=-\big(1-\frac{2M}{\bar{r}}-(\text{\textgreek{q}}_{\bar{r}}(M^{-1}\bar{r})\text{\textgreek{q}}_{\text{\textgreek{j}}}(\text{\textgreek{j}}))^{2}\big)dt^{2}-1000M\text{\textgreek{q}}_{\bar{r}}(M^{-1}\bar{r})\text{\textgreek{q}}_{\text{\textgreek{j}}}(\text{\textgreek{j}})dtd\text{\textgreek{f}}+\big(1-\frac{2M}{\bar{r}}\big)^{-1}d\bar{r}^{2}+\bar{r}^{2}\big(d\text{\textgreek{j}}^{2}+\sin^{2}\text{\textgreek{j}}d\text{\textgreek{f}}^{2}\big).\label{eq:AlmostSchwarzschildExample}
\end{equation}
The metric $g_{M}$ is everywhere non-degenerate, and has Lorentzian
signature.

The spacetime $(\mathcal{M}_{M},g_{M})$ is isometric to the Schwarzschild
exterior spacetime $(\mathcal{M}_{M,Sch},g_{M,Sch})$ outside the
region $\{3M\le\bar{r}\le6M\}\cap\{\frac{\text{\textgreek{p}}}{6}\le\text{\textgreek{j}}\le\frac{5\text{\textgreek{p}}}{6}\}$
and, thus, it can be extended into a larger globally hyperbolic spacetime
$(\widetilde{\mathcal{M}}_{M},\tilde{g}_{M})$.%
\footnote{Of course, the coordinate chart $(t,\bar{r},\text{\textgreek{j}},\text{\textgreek{f}})$
will not be regular up to the boundary of $\mathcal{M}_{M}$ in this
extension.%
} This extension can be chosen to be the Schwarzschild maximal extension
across $\bar{r}=2M$ (see e.\,g.~Section 2 \cite{DafRod6}).%
\footnote{Note that the $\bar{r}$ coordinate function on $\mathcal{M}_{M}$
corresponds to the usual $r$ coordinate function on Schwarzschild
exterior $\mathcal{M}_{M,Sch}$%
} Let us denote 
\begin{equation}
\overline{\mathcal{M}}_{M}=i(\mathcal{M}_{M})\cup\partial\mathcal{M}_{M},
\end{equation}
where $i:\mathcal{M}_{M}\rightarrow\widetilde{\mathcal{M}}_{M}$ is
the natural inclusion of $(\mathcal{M}_{M},g_{M})$ into its extension
and $\partial\mathcal{M}_{M}$ is the boundary of $i(\mathcal{M}_{M})$
inside $\widetilde{\mathcal{M}}_{M}$. Note that, in view of the properties
of the maximally extended Schwarzschild spacetime, $(\overline{\mathcal{M}}_{M},g_{M})$
is a smooth Lorentzian manifold with piecewise smooth boundary $\partial\mathcal{M}_{M}$,
consisting of two intersecting smooth null hypersurfaces. The functions
$\bar{r},\text{\textgreek{j}},\text{\textgreek{f}}$ can be smothly
extended on $\partial\mathcal{M}_{M}$, with $\bar{r}|_{\partial\mathcal{M}_{M}}=2M$.

The spacetime $(\overline{\mathcal{M}}_{M},g_{M})$ is globally hyperbolic,
with $\text{\textgreek{S}}=\overline{\{t=0\}}$ being a smooth Cauchy
hypersurface, and satisfies the following properties: 

\begin{enumerate}

\item The vector field $T=\partial_{t}$ on $\mathcal{M}_{M}$ extends
smoothly on $\partial\mathcal{M}_{M}$ and is a Killing vector field
of $(\overline{\mathcal{M}}_{M},g_{M})$. Furthermore, the spacetime
$(\overline{\mathcal{M}}_{M},g_{M})$ is asymptotically flat and satisfies
Assumption \hyperref[Assumption 1]{G1}. Note that the event horizon
$\mathcal{H}$ of $(\overline{\mathcal{M}}_{M},g_{M})$ coincides
with $\partial\mathcal{M}_{M}$, since all the points in $\mathcal{M}_{M}$
can be joined with the asymptotically flat region $\mathcal{I}_{as}=\{\bar{r}\ge R_{0}\gg1\}$
through both a future directed and a past directed timelike curve,
by following the flow of the timelike vector fields $\partial_{\bar{r}}+C\big(\partial_{t}+\frac{1}{10M}\text{\textgreek{q}}_{\bar{r}}(M^{-1}\bar{r})\text{\textgreek{q}}_{\text{\textgreek{j}}}(\text{\textgreek{j}})\partial_{\text{\textgreek{f}}}\big)$
and $\partial_{\bar{r}}-C\big(\partial_{t}+\frac{1}{10M}\text{\textgreek{q}}_{\bar{r}}(M^{-1}\bar{r})\text{\textgreek{q}}_{\text{\textgreek{j}}}(\text{\textgreek{j}})\partial_{\text{\textgreek{f}}}\big)$,
respectively (for some fixed $C\gg1$). The function $r:\overline{\mathcal{M}}_{M}\rightarrow[0,+\infty)$,
introduced in Assumption \hyperref[Assumption 1]{G1}, can be chosen
to be equal to $\bar{r}-2M$.

\item There exists a $T$-invariant neighborhood $\mathcal{V}$ of
$\mathcal{H}=\partial\mathcal{M}_{M}$ in $\overline{\mathcal{M}}_{M}$,
so that $(\mathcal{V},g_{M})$ is isometric to a neighborhood of the
event horizon $\mathcal{H}_{M,Sch}$ of Schwarzschild exterior spacetime.
In particular, $T$ is parallel to the null generators of $\mathcal{H}^{+}\backslash\mathcal{H}^{-}$
and there exists a $T$-invariant timelike vector field $N$ on $\overline{\mathcal{M}}_{M}$
as in Assumption \hyperref[Assumption 2]{G2}, satisfying (\ref{eq:PositiveKN})
(see \cite{DafRod2,DafRod6}). In particular, $(\overline{\mathcal{M}}_{M},g_{M})$
satisfies Assumption \hyperref[Assumption 2]{G2}.

\item The ergoregion $\mathscr{E}=\{g(T,T)>0\}$ of $(\overline{\mathcal{M}}_{M},g_{M})$
is non-empty, and satisfies 
\begin{equation}
\{4M\le\bar{r}\le5M\}\cap\{\frac{\text{\textgreek{p}}}{4}\le\text{\textgreek{j}}\le\frac{3\text{\textgreek{p}}}{4}\}\subset\mathscr{E}\subset\{3M\le\bar{r}\le6M\}\cap\{\frac{\text{\textgreek{p}}}{6}\le\text{\textgreek{j}}\le\frac{5\text{\textgreek{p}}}{6}\}.\label{eq:ErgoregionExample}
\end{equation}
Thus, $\mathscr{E}\cap\mathcal{H}^{+}=\emptyset$, since $\bar{r}>2M$
on $\mathscr{E}$. Furthermore, since the function (\ref{eq:FunctionForErgoregion})
was assumed to have the property that the set $\{f_{M}\le0\}$ is
simply connected, we can readily infer that $\overline{\mathcal{M}}_{M}\backslash\mathscr{E}$
is connected. Thus, $\mathscr{E}_{ext}=\mathscr{E}$ and $\mathcal{H}^{+}$
lies in the same connected component of $\overline{\mathcal{M}}_{M}\backslash\mathscr{E}$
as the asymptotically flat region $\mathcal{I}_{as}$. The boundary
$\partial\mathscr{E}$ of $\mathscr{E}$ is a smooth hypersurface
of $\overline{\mathcal{M}}_{M}$ and (\ref{eq:NondegenrateTOutsideErgoregion})
is satisfied, in view of our assumption on the set of zeros of the
function (\ref{eq:FunctionForErgoregion}). Therefore, $(\overline{\mathcal{M}}_{M},g_{M})$
satisfies Assumption \hyperref[Assumption 3]{G3}.

\item In view of the fact that $(\mathcal{M}_{M},g_{M})$ possesses
an additional Killing field, namely $\text{\textgreek{F}}=\partial_{\text{\textgreek{f}}}$,
and the span of $T,\text{\textgreek{F}}$ contains the vector field
$T+\frac{1}{10M}\text{\textgreek{q}}_{\bar{r}}(M^{-1}\bar{r})\text{\textgreek{q}}_{\text{\textgreek{j}}}(\text{\textgreek{j}})\text{\textgreek{F}}$
which is everywhere timelike on $\mathcal{M}_{M}$, Lemma \ref{lem:UniqueContinuationAxisymmetry}
implies that $(\overline{\mathcal{M}}_{M},g_{M})$ satisfies Assumption
\hyperref[Assumption 4]{A1}. In particular, any point $p\in\partial\mathscr{E}_{ext}=\partial\mathscr{E}$
and any open neighborhood $\mathcal{U}$ of $p$ satisfy the unique
continuation property of Assumption \hyperref[Assumption 4]{A1}.

\end{enumerate}

Thus, $(\overline{\mathcal{M}}_{M},g_{M})$ satisfies Assumptions
\hyperref[Assumption 1]{G1}--\hyperref[Assumption 3]{G3} and \hyperref[Assumption 4]{A1},
and, in addition, $(\overline{\mathcal{M}}_{M},g_{M})$ has a non-empty
future event horizon.

\subsection{\label{sub:CarlemanEstimatesIntroduction}A remark on the Carleman-type
estimates in the proof of Theorem \ref{thm:FriedmanInstability}}

As we discussed in the introduction, a crucial step in the proof of
Theorem \ref{thm:FriedmanInstability} consists of showing that, under
the assumption that 
\begin{equation}
\limsup_{\text{\textgreek{t}}\rightarrow+\infty}\int_{\text{\textgreek{S}}_{\text{\textgreek{t}}}}J_{\text{\textgreek{m}}}^{N}(\text{\textgreek{f}})n^{\text{\textgreek{m}}}<+\infty\label{eq:EnergyGoingToInfinity-1}
\end{equation}
 holds for every smooth function $\text{\textgreek{f}}:J^{+}(\text{\textgreek{S}})\rightarrow\mathbb{C}$
solving the wave equation (\ref{eq:WaveEquation}) on $J^{+}(\text{\textgreek{S}})$
with compactly supported initial data on $\text{\textgreek{S}}$,
we also have that $\text{\textgreek{f}}$ decays on $\mathcal{M}\backslash\mathscr{E}$;
see Section \ref{sec:Proof-of-Theorem} for more details. This fact
is inferred using some suitable Carleman-type estimates on $(\mathcal{M}\backslash\mathscr{E},g)$
for \textgreek{f} which are particularly useful when $\text{\textgreek{f}}$
has localised frequency support in time (see Proposition \ref{prop:GeneralCarlemanEstimate}
in Section \ref{sec:Carleman}; for the technical details related
to the frequency decomposition of $\text{\textgreek{f}}$, see Section
\ref{sec:FrequencyDecomposition}). 

The aforementioned estimates could have been established using the
techniques of our previous \cite{Moschidisb}. However, we chose,
instead, to provide an alternative proof, based entirely on the use
of first order multipliers for equation (\ref{eq:WaveEquation}).
As a consequence, we obtain an alternative proof for the estimates
of Section 7.1 of \cite{Moschidisb}, as well as for the Carleman-type
estimates established in \cite{Rodnianski2011} for the inhomogeneous
Helmholtz equation 
\begin{equation}
\text{\textgreek{D}}_{\bar{g}}u+\text{\textgreek{w}}^{2}u-Vu=G,
\end{equation}
$0<Im(\text{\textgreek{w}})\ll1$, $Re(\text{\textgreek{w}})\neq0$,
on an asymptotically conic Riemannian manifold $(\text{\textgreek{S}},\bar{g})$,
where the potential $V:\text{\textgreek{S}}\rightarrow\mathbb{R}$
satisfies some suitable decay conditions on the asymptotically conic
end of $\text{\textgreek{S}}$. For a more detailed statement of these
results, see Section \ref{sec:Carleman}.

\section{\label{sec:Notational-conventions}Notational conventions and Hardy
inequalities}

In this section, we will introduce some conventions on denoting constants
and parameters that will appear throughout this paper. We will adopt
similar conventions as in \cite{Moschidisb}.

\subsection{Constants and dependence on parameters}

We will adopt the following convention for denoting constants appearing
in inequalities: We will use capital letters (e.\,g.~$C$) to denote
``large'' constants, typically appearing on the right hand side
of inequalities. (Such constants can be ``freely'' replaced by larger
ones without rendering the inequality invalid.) Lower case letters
(e.\,g. $c$) will be used to denote ``small'' constants (which
can similarly freely be replaced by smaller ones). The same characters
will be frequently used to denote different constants, even in adjacent
lines. 

We will assume that all non-explicit constants will depend on the
specific geometric aspects of $(\mathcal{M},g)$ and we will not keep
track of this dependence, except for some very specific cases. However,
since we will introduce a plethora of parameters throughout this paper,
we will always keep track of the dependence of all constants on each
of these parameters. Once a parameter is fixed (which will be clearly
stated in the text), the dependence of constants on it will be dropped.

\subsection{Inequality symbols}

We will use the notation $f_{1}\lesssim f_{2}$ for two real functions
$f_{1},f_{2}$ as usual to imply that there exists some $C>0$, such
that $f_{1}\le Cf_{2}$. This constant $C$ might depend on free parameters,
and these parameters will be stated clearly in each case. If nothing
is stated regarding the dependence of this constant on parameters,
it should be assumed that it only depends on the geometry of the spacetime
$(\mathcal{M},g)$ under consideration.

We will denote $f_{1}\sim f_{2}$ when we can bound $f_{1}\lesssim f_{2}$
and $f_{2}\lesssim f_{1}$. The notation $f_{1}\ll f_{2}$ will be
equivalent to the statement that $\frac{|f_{1}|}{|f_{2}|}$ can be
bounded by some sufficiently small positive constant, the magnitude
and the dependence of which on variable parameters will be clear in
each case from the context. For any function $f:\mathcal{M}\rightarrow[0,+\infty)$,
$\{f\gg1\}$ will denote the subset $\{f\ge C\}$ of $\mathcal{M}$
for some constant $C\gg1$.

For functions $f_{1},f_{2}:[x_{0},+\infty)\rightarrow\mathbb{R}$,
the notation $f_{1}=o(f_{2})$ will mean that $\frac{|f_{1}|}{|f_{2}|}$
can be bounded by some continuous function $h:[x_{0},+\infty)\rightarrow(0,+\infty)$
such that $h(x)\rightarrow0$ as $x\rightarrow+\infty$. This bound
$h$ might deppend on free parameters, and this fact will be clear
in each case from the context.

\subsection{\label{sub:SpecialSubsets}Some special subsets of $\mathcal{M}$}

The future event horizon of $\mathcal{M}$ will be denoted by $\mathcal{H}^{+}$,
and the past event horizon by $\mathcal{H}^{-}$, i.\,e.
\begin{gather*}
\mathcal{H}^{+}=\overline{J^{+}(\mathcal{I}_{as})}\cap\partial J^{-}(\mathcal{I}_{as}),\\
\mathcal{H}^{-}=\overline{J^{-}(\mathcal{I}_{as})}\cap\partial J^{+}(\mathcal{I}_{as}).
\end{gather*}

For any $\text{\textgreek{t}}_{1}\le\text{\textgreek{t}}_{2}$, we
will denote 
\begin{equation}
\mathcal{R}(\text{\textgreek{t}}_{1},\text{\textgreek{t}}_{2})\doteq\{\text{\textgreek{t}}_{1}\le t\le\text{\textgreek{t}}_{2}\}\subset\mathcal{M}\backslash\mathcal{H}^{-}
\end{equation}
and 
\begin{equation}
\text{\textgreek{S}}_{\text{\textgreek{t}}}\doteq\{t=\text{\textgreek{t}}\},
\end{equation}
where the function $t:\mathcal{M}\backslash\mathcal{H}^{-}\rightarrow\mathbb{R}$
is defined in Assumption \hyperref[Assumption 1]{G1}.

The ergoregion of $\mathcal{M}$, defined by (\ref{eq:Ergoregion}),
will be denoted by $\mathscr{E}$. The boundary of $\mathscr{E}$
(which is smooth, according to Assumption \hyperref[Assumption 3]{G3})
will be denoted by $\partial\mathscr{E}$ .We will fix a smooth $T$-invariant
spacelike vector field $n_{\partial\mathscr{E}}$ in a small $T$-invariant
neighborhood of $\partial\mathscr{E}$, such that $n_{\partial\mathscr{E}}|_{\partial\mathscr{E}}$
is the unit normal of $\partial\mathscr{E}$. We will denote with
$\mathscr{E}_{ext}$ the extended ergoregion of $(\mathcal{M},g)$,
defined by (\ref{eq:ExtendedErgoregion}). Notice that $\mathscr{E}\subseteq\mathscr{E}_{ext}$,
but $\partial\mathscr{E}_{ext}\subseteq\partial\mathscr{E}$. 

For any $\text{\textgreek{d}}>0$, we will denote 
\begin{equation}
\mathscr{E}_{\text{\textgreek{d}}}=\{x\in\mathcal{M}\backslash\mathcal{H}^{-}\,|\, dist_{g_{ref}}(x,\mathscr{E}_{ext})\le\text{\textgreek{d}}\}.
\end{equation}
Note that $\mathscr{E}_{ext}\subset\mathscr{E}_{\text{\textgreek{d}}}$
for any $\text{\textgreek{d}}>0$, and $\cap_{\text{\textgreek{d}}>0}\mathscr{E}_{\text{\textgreek{d}}}=\mathscr{E}_{ext}$.

\subsection{\label{sub:IntegrationConventions}Notations on metrics, connections
and integration}

For any pseudo-Riemannian manifold $(\mathcal{N},h_{\mathcal{N}})$
appearing in this paper, we will denote with $dh_{\mathcal{N}}$ the
natural volume form associated with $h_{\mathcal{N}}$. Recall that
in any local coordinate chart $(x^{1},x^{2},\ldots x^{k})$ on $\mathcal{N}$,
$dh_{\mathcal{N}}$ is expressed as 
\[
dh_{\mathcal{N}}=\sqrt{|det(h_{\mathcal{N}})|}dx^{1}\cdots dx^{k}.
\]
 We will also denote with $\nabla_{h_{\mathcal{N}}}$ the natural
connection associated to $h_{\mathcal{N}}$. When $(\mathcal{N},h_{\mathcal{N}})=(\mathcal{M},g)$,
we will denote $\nabla_{h_{\mathcal{N}}}$ simply as $\nabla$. If
$h_{\mathcal{N}}$ is Riemannian, $\big|\cdot\big|_{h_{\mathcal{N}}}$
will denote the associated norm on the tensor bundle of $\mathcal{N}$. 

For any integer $l\ge0$, we will denote with $\big(\nabla^{h_{\mathcal{N}}}\big)^{l}$
or $\nabla_{h_{\mathcal{N}}}^{l}$ the higher order operator 
\begin{equation}
\underbrace{\nabla_{h_{\mathcal{N}}}\cdots\nabla_{h_{\mathcal{N}}}}_{l\mbox{ times}}.\label{eq:ProductDerivatives}
\end{equation}
 Note that the product (\ref{eq:ProductDerivatives}) is not symmetrised.
We will also adopt the convention that we will always use Latin characters
to denote such powers of covariant derivative operators. On the other
hand, Greek characters will be used for the indices of a tensor in
an abstract index notation. 

For any smooth and spacelike hypersurface $\mathcal{S}\subset\mathcal{M}$,
$g_{\mathcal{S}}$ will denote the induced (Riemannian) metric on
$\mathcal{S}$, and $n_{\mathcal{S}}$ the future directed unit normal
to $\mathcal{S}$.

Some examples of pseudo-Riemannian manifolds that will appear throughout
this paper are $(\mathcal{M},g)$, $(\mathcal{M},g_{ref})$ and $(\text{\textgreek{S}}_{\text{\textgreek{t}}},g_{\text{\textgreek{S}}_{\text{\textgreek{t}}}})$,
where $g_{ref}$ is the reference Riemannian metric (\ref{eq:ReferenceRiemannianMetric}).
We will raise and lower indices of tensors on $\mathcal{M}$ \underline{only}
with the use of $g$.

In some cases, we will omit the volume form $dg$ or $dg_{\text{\textgreek{S}}_{\text{\textgreek{t}}}}$
when integrating over domains in $\mathcal{M}$ or the hypersurfaces
$\text{\textgreek{S}}_{\text{\textgreek{t}}}$, respectively.

In the case of a smooth null hypersurface $\mathscr{H}\subset\mathcal{M}$,
the volume form with which integration will be considered will as
usual depend on the choice of a future directed null generator $n_{\mathcal{\mathscr{H}}}$
for $\mathcal{\mathscr{H}}$. For any such choice of $n_{\mathcal{\mathscr{H}}}$,
selecting an arbitrary vecor field $X$ on $T_{\mathcal{\mathscr{H}}}M$
such that $g(X,n_{\mathscr{H}})=-1$ enables the construction of a
non-degenerate top dimensional form on $\mathcal{\mathscr{H}}$: $dvol_{\mathscr{H}}\doteq i_{X}dg$,
which depends on the on the precise choice of $n_{\mathcal{\mathscr{H}}}$,
but not on the choice for $X$. In that case, $dvol_{\mathscr{H}}$
(or $dvol_{n_{\mathscr{H}}}$) will be the volume form on $\mathscr{H}$
associated with $n_{\mathcal{\mathscr{H}}}$.

\subsection{\label{sub:CoordinateCharts}Coordinate charts on $\mathcal{M}\backslash\mathcal{H}^{-}$}

Using the function $t$ as a projection, we can identify $\mathcal{M}\backslash\mathcal{H}^{-}$
with $\mathbb{R}\times\text{\textgreek{S}}$. Under this identification,
any local coordinate chart $(x^{1},\ldots,x^{d})$ on a subset $\mathcal{V}$
of $\text{\textgreek{S}}$ can be extended to a coordinate chart $(t,x^{1},\ldots,x^{d})$
on $\mathbb{R}\times\mathcal{V}\subset\mathbb{R}\times\text{\textgreek{S}}$,
and in this chart, we have $\partial_{t}=T$. We will usually work
in such coordinate charts throughout this paper.

In view of the the flat asymptotics of $(\mathcal{M},g)$ and the
fact that $\text{\textgreek{S}}$ intersects $\mathcal{H}^{+}$ transversally,
the coarea formula yields that in the region $J^{+}(\text{\textgreek{S}})$,
the volume forms $dg$ and $dt\wedge dg_{\text{\textgreek{S}}}$ are
equivalent, i.\,e.~there exists a $C>0$, such that for any integrable
$\text{\textgreek{f}}:\mathcal{M}\rightarrow[0,+\infty)$ and any
$0\le\text{\textgreek{t}}_{1}\le\text{\textgreek{t}}_{2}$ (identifying
$\mathcal{M}\backslash\mathcal{H}^{-}$ with $\mathbb{R}\times\text{\textgreek{S}}$):
\begin{equation}
C^{-1}\int_{\mathcal{R}(\text{\textgreek{t}}_{1},\text{\textgreek{t}}_{2})}\text{\textgreek{f}}\, dg\le\int_{\text{\textgreek{t}}_{1}}^{\text{\textgreek{t}}_{2}}\Big(\int_{\text{\textgreek{S}}}\text{\textgreek{f}}(t,x)\, dg_{\text{\textgreek{S}}}\Big)\, dt\le C\int_{\mathcal{R}(\text{\textgreek{t}}_{1},\text{\textgreek{t}}_{2})}\text{\textgreek{f}}\, dg.
\end{equation}
Similarly, for any $\text{\textgreek{d}}>0$, there exists a $C_{\text{\textgreek{d}}}>0$
so that for any integrable $\text{\textgreek{f}}:\mathcal{M}\rightarrow[0,+\infty)$
and any $\text{\textgreek{t}}_{1}\le\text{\textgreek{t}}_{2}$ (not
necessarily non-negative): 
\begin{equation}
C_{\text{\textgreek{d}}}^{-1}\int_{\mathcal{R}(\text{\textgreek{t}}_{1},\text{\textgreek{t}}_{2})\cap\{r\ge\text{\textgreek{d}}\}}\text{\textgreek{f}}\, dg\le\int_{\text{\textgreek{t}}_{1}}^{\text{\textgreek{t}}_{2}}\Big(\int_{\text{\textgreek{S}}\cap\{r\ge\text{\textgreek{d}}\}}\text{\textgreek{f}}(t,x)\, dg_{\text{\textgreek{S}}}\Big)\, dt\le C_{\text{\textgreek{d}}}\int_{\mathcal{R}(\text{\textgreek{t}}_{1},\text{\textgreek{t}}_{2})\cap\{r\ge\text{\textgreek{d}}\}}\text{\textgreek{f}}\, dg.\label{eq:DegeneracyCoareaFormula}
\end{equation}

\subsection{\label{sub:DerivativesOntheSphere}Notations for derivatives on $\mathbb{S}^{d-1}$}

In this paper, we will frequently work in polar coordinates in the
asymptotically flat region of $(\mathcal{M},g)$ or $(\text{\textgreek{S}},g_{\text{\textgreek{S}}})$.
For this reason, we will adopt the same shorthand $\text{\textgreek{sv}}$-notation
for the angular variables in such a polar coordinate, as we did in
\cite{Moschidisb,Moschidisc}. See Section 3.6 of \cite{Moschidisb}
for a detailed statement of this convention.

As an example of this convention, on subset $\mathcal{U}$ of a spacetime
$\mathcal{M}$ covered by a polar coordinate chart $(u_{1},u_{2},\text{\textgreek{sv}}):\mathcal{U}\rightarrow\mathbb{R}_{+}\times\mathbb{R}_{+}\times\mathbb{S}^{d-1}$,
for any function $h:\mathcal{U}\rightarrow\mathbb{C}$ and any \underline{symmetric}
$(l,0)$-tensor $b$ on $\mathbb{S}^{d-1}$, the following schematic
notation for the contraction of the tensor $\big(\nabla_{g_{\mathbb{S}^{d-1}}}\big)^{l}h(u_{1},u_{2},\cdot)$
with $b$ will be frequently used:
\begin{equation}
b\cdot\partial_{\text{\textgreek{sv}}}^{l}h(u_{1},u_{2},\cdot)\doteq b^{\text{\textgreek{i}}_{1}\ldots\text{\textgreek{i}}_{l}}(\nabla_{g_{\mathbb{S}^{d-1}}}^{l})_{\text{\textgreek{i}}_{1}\ldots\text{\textgreek{i}}_{l}}h(u_{1},u_{2},\cdot),\label{eq:ContractionOneFunction-1}
\end{equation}
where $g_{\mathbb{S}^{d-1}}$ is the standard metric on the unit sphere
$\mathbb{S}^{d-1}$. Furthermore, we will also denote in this case
\begin{equation}
|\partial_{\text{\textgreek{sv}}}^{l}h(u_{1},u_{2},\cdot)|\doteq\big|\nabla_{g_{\mathbb{S}^{d-1}}}^{l}h(u_{1},u_{2},\cdot)\big|_{g_{\mathbb{S}^{d-1}}}.
\end{equation}
Notice, also, the following commutation relation holds: 
\begin{equation}
[\mathcal{L}_{\partial_{u_{i}}},\nabla^{\mathbb{S}^{d-1}}]=0,
\end{equation}
where $\partial_{u_{i}}$ is the coordinate vector field associated
to the coordinate function $u_{i}$, $i=1,2$. Therefore, we will
frequently denote for any function $h:\mathcal{U}\rightarrow\mathbb{C}$:
\begin{equation}
\mathcal{L}_{\partial_{u_{i}}}\nabla^{\mathbb{S}^{d-1}}h\doteq\partial_{u_{i}}\partial_{\text{\textgreek{sv}}}h,
\end{equation}
and, in this notation, we will be allowed to commute $\partial_{u_{i}}$
with $\partial_{\text{\textgreek{sv}}}$, as if $\partial_{\text{\textgreek{sv}}}$
was a regular coordinate vector field. See Section 3.6 of \cite{Moschidisb}
for more details.

\subsection{\label{sub:BigONotation}The $O_{k}(\cdot)$ notation}

For any integer $k\ge0$ and any $b\in\mathbb{R}$, the notation $h=O_{k}(r^{b})$
for some smooth function $h:\mathcal{M}\rightarrow\mathbb{C}$ will
be used to denote that, in the $(t,r,\text{\textgreek{sv}})$ polar
coordinate chart on each connected component of the region $\{r\gg1\}$
of $\mathcal{M}$ (see Assumption \hyperref[Assumption 1]{1}): 
\begin{equation}
\sum_{j=0}^{k}\sum_{j_{1}+j_{2}+j_{3}=j}r^{j_{2}+j_{3}}|\partial_{\text{\textgreek{sv}}}^{j_{1}}\partial_{t}^{j_{2}}\partial_{r}^{j_{3}}h|\le C\cdot r^{b}
\end{equation}
for some constant $C>0$ dependng on $k$ and $h$. The same notation
(omitting the $\partial_{t}$ derivatives) will also be used for functions
on regions of manifolds cover by an $(r,\text{\textgreek{sv}})$ polar
coordinate chart.

Similarly, the notation $h=O_{k}^{\mathbb{S}^{d-1}}(r^{b})$ will
be used to denote a smooth tensor field $h$ on $\mathcal{M}$ such
that, in the $(t,r,\text{\textgreek{sv}})$ polar coordinate chart
on each connected component of the region $\{r\gg1\}$ of $\mathcal{M}$,
$h$ is tangential to the $\{r=const\}$ coordinate spheres (i.~e.~$h$
contracted with $\partial_{r},\partial_{t}$ or $dr,d\text{\textgreek{sv}}$,
depending on its type, yields zero), and satisfies $|h|_{g_{\mathbb{S}^{d-1}}}=O_{k}(r^{b})$.
The type of the tensor $h$ will always be clear from the context.

\subsection{\noindent \label{sub:Currents}Vector field multipliers and currents}

In this paper, we will frequently use the language of Lagrangean currents
and vector field multipliers for equation (\ref{eq:WaveEquation}):
On any Lorentzian manifold $(\mathcal{M},g)$, associated to the wave
operator $\square_{g}=\frac{1}{\sqrt{-det(g)}}\partial_{\text{\textgreek{m}}}\Big(\sqrt{-det(g)}g^{\text{\textgreek{m}\textgreek{n}}}\partial_{\text{\textgreek{n}}}\Big)$
is a symmetric $(0,2)$-tensor called the \emph{energy momentum tensor}
$Q$. For any smooth function $\text{\textgreek{y}}:\mathcal{M}\rightarrow\mathbb{C}$,
the energy momentum tensor takes the form

\begin{equation}
Q_{\text{\textgreek{m}\textgreek{n}}}(\text{\textgreek{y}})=\frac{1}{2}\Big(\partial_{\text{\textgreek{m}}}\text{\textgreek{y}}\cdot\partial_{\text{\textgreek{n}}}\bar{\text{\textgreek{y}}}+\partial_{\text{\textgreek{m}}}\bar{\text{\textgreek{y}}}\cdot\partial_{\text{\textgreek{n}}}\text{\textgreek{y}}\Big)-\frac{1}{2}\big(\partial^{\text{\textgreek{l}}}\text{\textgreek{y}}\cdot\partial_{\text{\textgreek{l}}}\bar{\text{\textgreek{y}}}\big)g_{\text{\textgreek{m}\textgreek{n}}}.
\end{equation}

For any continuous and piecewise $C^{1}$ vector field $X$ on $\mathcal{M}$,
the following associated currents can be defined almost everywhere:

\begin{equation}
J_{\text{\textgreek{m}}}^{X}(\text{\textgreek{y}})=Q_{\text{\textgreek{m}\textgreek{n}}}(\text{\textgreek{y}})X^{\text{\textgreek{n}}},
\end{equation}
\begin{equation}
K^{X}(\text{\textgreek{y}})=Q_{\text{\textgreek{m}\textgreek{n}}}(\text{\textgreek{y}})\nabla^{\text{\textgreek{m}}}X^{\text{\textgreek{n}}}.
\end{equation}
 The following divergence identity then holds almost everywhere on
$\mathcal{M}$:

\begin{equation}
\nabla^{\text{\textgreek{m}}}J_{\text{\textgreek{m}}}^{X}(\text{\textgreek{y}})=K^{X}(\text{\textgreek{y}})+Re\Big\{(\square_{g}\text{\textgreek{y}})\cdot X\bar{\text{\textgreek{y}}}\Big\}.
\end{equation}

\subsection{\label{sub:HardyInequalities}Hardy-type inequalities}

Frequently throughout this paper, we will need to control the weighted
$L^{2}$ norm of some function $u$ by some weighted $L^{2}$ norm
of its derivative $\nabla u$. This will always be accomplished with
the use of some variant of the following Hardy-type inequality on
$\mathbb{R}^{d}$ (which is true for $d\ge1$, although we will only
need it for $d\ge2$):
\begin{lem}
\label{lem:HardyInequalities}For any $a>0$, there exists some $C_{a}>0$
such that for any smooth and compactly supported function $u:\mathbb{R}^{d}\rightarrow\mathbb{C}$
and any $0<R_{1}<R_{2}$ we can bound
\begin{align}
\int_{\mathbb{R}^{d}\cap\{R_{1}\le r\le R_{2}\}}r^{-d+a}|u|^{2}\, dx+ & \int_{\{r=R_{1}\}}R_{1}^{-(d-1)+a}|u|^{2}\, dg_{\{r=R_{1}\}}\le\label{eq:GeneralHardyPolynomial}\\
 & \le C_{a}\int_{\mathbb{R}^{d}\cap\{R_{1}\le r\le R_{2}\}}r^{-(d-2)+a}|\partial_{r}u|^{2}\, dx+\int_{\{r=R_{2}\}}R_{2}^{-(d-1)+a}|u|^{2}\, dg_{\{r=R_{2}\}}\nonumber 
\end{align}
and 
\begin{align}
\int_{\mathbb{R}^{d}\cap\{R_{1}\le r\le R_{2}\}}r^{-d}|u|^{2}\, dx+ & \int_{\{r=R_{1}\}}R_{1}^{-(d-1)}\log(R_{1})|u|^{2}\, dg_{\{r=R_{1}\}}\le\label{eq:GeneralHardyLogarithmic}\\
 & \le C\int_{\mathbb{R}^{d}\cap\{R_{1}\le r\le R_{2}\}}r^{-(d-2)}\big(\log(r)\big)^{2}|\partial_{r}u|^{2}\, dx+\int_{\{r=R_{2}\}}R_{2}^{-(d-1)}\log(R_{2})|u|^{2}\, dg_{\{r=R\}}.\nonumber 
\end{align}
In the above, $r$ is the polar distance on $\mathbb{R}^{d}$, $dx$
is the usual volume form on $\mathbb{R}^{d}$ and $dg_{\{r=R\}}$
is the volume form of the induced metric on the sphere $\{r=R\}\subset\mathbb{R}^{d}$.
\end{lem}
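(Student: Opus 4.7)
The plan is to reduce both inequalities to purely one-dimensional estimates in the radial variable and then integrate over the sphere. In polar coordinates $(r,\omega) \in (0,\infty) \times \mathbb{S}^{d-1}$ on $\mathbb{R}^d$ we have $dx = r^{d-1}\,dr\,d\omega$ and $dg_{\{r=R\}} = R^{d-1}\,d\omega$, so the integrals appearing in the lemma become angular integrals of the corresponding one-dimensional quantities. It thus suffices to prove, for each fixed $\omega \in \mathbb{S}^{d-1}$ and each smooth compactly supported $v(r) \doteq u(r,\omega)$, the 1D bounds
\begin{equation*}
\int_{R_1}^{R_2} r^{a-1} |v|^2 \, dr + R_1^a |v(R_1)|^2 \lesssim_a \int_{R_1}^{R_2} r^{a+1} |v'|^2 \, dr + R_2^a |v(R_2)|^2
\end{equation*}
and
\begin{equation*}
\int_{R_1}^{R_2} r^{-1} |v|^2 \, dr + \log(R_1)\, |v(R_1)|^2 \lesssim \int_{R_1}^{R_2} r\,(\log r)^2 |v'|^2 \, dr + \log(R_2)\, |v(R_2)|^2,
\end{equation*}
and then integrate over $\mathbb{S}^{d-1}$.

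For the first (polynomial) inequality, the key observation is that $r^{a-1} = a^{-1}\,\frac{d}{dr}(r^a)$, so integration by parts gives
\begin{equation*}
\int_{R_1}^{R_2} r^{a-1}|v|^2 \, dr = \frac{1}{a}\Bigl(R_2^a |v(R_2)|^2 - R_1^a |v(R_1)|^2\Bigr) - \frac{2}{a}\int_{R_1}^{R_2} r^a \,\mathrm{Re}(\bar v \, v') \, dr.
\end{equation*}
Moving $a^{-1} R_1^a |v(R_1)|^2$ to the left-hand side and estimating the bulk term by Cauchy--Schwarz applied to the splitting $r^a \bar v v' = (r^{(a-1)/2} \bar v)\cdot(r^{(a+1)/2} v')$, followed by AM-GM with an $\varepsilon \sim a$, allows one to absorb $\tfrac{1}{2}\int r^{a-1}|v|^2\,dr$ on the left. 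The resulting constant is $C_a \sim a^{-2}$.

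For the second (logarithmic) inequality, which corresponds to the borderline case $a=0$, one uses the analogous identity $r^{-1} = \frac{d}{dr}(\log r)$ in place of $a^{-1}\frac{d}{dr}(r^a)$. Integration by parts now produces
\begin{equation*}
\int_{R_1}^{R_2} r^{-1}|v|^2 \, dr + \log(R_1)\,|v(R_1)|^2 = \log(R_2)\,|v(R_2)|^2 - 2\int_{R_1}^{R_2} \log r \cdot \mathrm{Re}(\bar v \, v') \, dr,
\end{equation*}
and the bulk term is controlled by Cauchy--Schwarz applied to the decomposition $\log r \cdot \bar v \, v' = (r^{-1/2} \bar v)\cdot(r^{1/2}\log r \cdot v')$, followed by AM-GM with a universal parameter to absorb half of $\int r^{-1}|v|^2\,dr$ on the left. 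There is no real obstacle in either argument; the only point requiring care is the choice of AM-GM parameter so that the absorption is licit, which is precisely what produces the $a$-dependent constant $C_a$ in \eqref{eq:GeneralHardyPolynomial} and a universal constant in \eqref{eq:GeneralHardyLogarithmic}.
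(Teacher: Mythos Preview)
Your approach is correct and is the standard integration-by-parts argument for Hardy inequalities; the paper itself does not give a proof, merely remarking that it is ``straightforward'' (with a reference to \cite{Moschidisb}), so your reduction to a one-dimensional radial estimate followed by the identities $r^{a-1}=a^{-1}\frac{d}{dr}(r^a)$ and $r^{-1}=\frac{d}{dr}(\log r)$ is exactly what is intended.
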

The proof of Lemma \ref{lem:HardyInequalities} is straightforward
(see also Section 3.9 of \cite{Moschidisb}).

\section{\label{sec:Proof-of-Theorem}Proof of Theorem \ref{thm:FriedmanInstability} }

The proof of Theorem \ref{thm:FriedmanInstability} will proceed by
contradiction: We will assume that all smooth solutions $\text{\textgreek{f}}$
to (\ref{eq:WaveEquation}) on $\mathcal{D}(\text{\textgreek{S}})$
with compactly supported initial data on $\text{\textgreek{S}}$ satisfy
\begin{equation}
\mathcal{E}[\text{\textgreek{f}}]\doteq\sup_{\text{\textgreek{t}}\ge0}\int_{\text{\textgreek{S}}_{\text{\textgreek{t}}}}J_{\text{\textgreek{m}}}^{N}(\text{\textgreek{f}})n^{\text{\textgreek{m}}}<+\infty,\label{eq:BoundForContradiction}
\end{equation}
and we will reach a contradiction after choosing $\text{\textgreek{f}}$
appropriately. To this end, we will need to establish a decay without
a rate result outside the extended ergoregion $\mathscr{E}_{ext}$
for solutions $\text{\textgreek{f}}$ to (\ref{eq:WaveEquation}),
given the bound (\ref{eq:BoundForContradiction}); see Proposition
\ref{prop:QuantitativeDecayOutsideErgoregion} in Section \ref{sub:Decay-outside-the-ergoregion}.
This result is highly non-trivial and actually lies at the heart of
the proof of Theorem \ref{thm:FriedmanInstability}, with Sections
\ref{sec:FrequencyDecomposition}--\ref{sec:Carleman} being devoted
to the development of the necessary technical machinery for the proof
of Proposition \ref{prop:QuantitativeDecayOutsideErgoregion}. In
fact, the proof of Proposition \ref{prop:QuantitativeDecayOutsideErgoregion}
will be postponed until Section \ref{sec:Proof-of-Proposition}. 
\begin{rem*}
Instead of assuming (\ref{eq:BoundForContradiction}), our proof of
Theorem \ref{thm:FriedmanInstability} also applies under the weaker
assumption: 
\begin{equation}
\sup_{\text{\textgreek{t}}\ge0}\Big(\big(\log(2+\text{\textgreek{t}})\big)^{-C}\int_{\text{\textgreek{S}}_{\text{\textgreek{t}}}}J_{\text{\textgreek{m}}}^{N}(\text{\textgreek{f}})n^{\text{\textgreek{m}}}\Big)<+\infty\label{eq:LogBoundContradiction}
\end{equation}
for an arbitrary $C>0$. Furthermore, as a consequence of the discussion
in Section \ref{sub:ProofWithBoundaryConditions} (see also the remark
below Proposition \ref{prop:QuantitativeDecayOutsideErgoregion}),
the proof of Theorem \ref{thm:FriedmanInstability} also applies without
any significant change in the case when $(\mathcal{M},g)$ has a $T$-invariant
timelike boundary component $\partial_{tim}\mathcal{M}$, with $\partial_{tim}\mathcal{M}\cap\text{\textgreek{S}}$
compact and $\partial_{tim}\mathcal{M}\cap\mathcal{H}=\emptyset$,
and $\text{\textgreek{f}}$ is assumed to satisfy either Dirichlet
or Neumann boundary conditions on $\partial_{tim}\mathcal{M}$ (see
Section \ref{sub:ProofWithBoundaryConditions} for more details on
the assumptions on the geometry of $(\mathcal{M},g)$ in this case).
\end{rem*}
In Sections \ref{sub:InitialDataNegativeHigherOrderEnergy}--\ref{sub:LimitingBehaviour},
we will establish some auxiliary results concerning the behaviour
of solutions $\text{\textgreek{f}}$ to (\ref{eq:WaveEquation}),
that will be used in the Section \ref{sub:Finishing-the-proof} to
complete the proof of Theorem \ref{thm:FriedmanInstability}.

\subsection{\label{sub:InitialDataNegativeHigherOrderEnergy}Construction of
initial data on $\text{\textgreek{S}}$ with negative higher order
energy}

In this section, we will establish the following result:
\begin{lem}
\label{lem:InitialDataNegativeTEenergy}There exists a smooth initial
data set $(\text{\textgreek{f}}^{(0)},\text{\textgreek{f}}^{(1)}):\text{\textgreek{S}}\rightarrow\mathbb{C}^{2}$
supported in $\text{\textgreek{S}}\cap\mathcal{U}$ (where $\mathcal{U}\subset\mathcal{M}$
is the set described in Assumption \hyperref[Assumption 4]{A1}) so
that the function $\text{\textgreek{f}}:\mathcal{D}(\text{\textgreek{S}})\rightarrow\mathbb{C}$,
defined by solving 
\begin{equation}
\begin{cases}
\square_{g}\text{\textgreek{f}}=0 & \mbox{on }\mathcal{D}(\text{\textgreek{S}}),\\
(\text{\textgreek{f}}|_{\text{\textgreek{S}}},T\text{\textgreek{f}}|_{\text{\textgreek{S}}})=(\text{\textgreek{f}}^{(0)},\text{\textgreek{f}}^{(1)}),
\end{cases}\label{eq:InitialValueProblem}
\end{equation}
satisfies 
\begin{equation}
\int_{\text{\textgreek{S}}}J_{\text{\textgreek{m}}}^{T}(T\text{\textgreek{f}})n^{\text{\textgreek{m}}}=-1.\label{eq:NegativeInitialEnergy}
\end{equation}
\end{lem}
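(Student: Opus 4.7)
The plan is to construct the required initial data via a high-frequency WKB ansatz concentrated near a point $q \in \text{\textgreek{S}} \cap \mathcal{U} \cap \mathscr{E}$, exploiting the fact that $T$ is spacelike inside the ergoregion. First, since $p \in \partial\mathscr{E}_{ext}$ and $\mathcal{U}$ is an open $T$-invariant neighborhood of $p$ with $\mathcal{U} \cap \mathscr{E}_{ext} \subset \mathscr{E}$, the set $\mathcal{U} \cap \mathscr{E}$ is non-empty, open and $T$-invariant in $\mathcal{M}\setminus\mathcal{H}^{-}$; since the $T$-translates of $\text{\textgreek{S}}$ foliate $\mathcal{M}\setminus\mathcal{H}^{-}$, any $T$-orbit in $\mathcal{U}\cap\mathscr{E}$ meets $\text{\textgreek{S}}$, and I may fix such a point $q$. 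Decomposing $T|_{q} = \alpha n + X$ with $n$ the future unit normal to $\text{\textgreek{S}}$ and $X \in T_{q}\text{\textgreek{S}}$, the ergoregion condition $g(T,T)|_{q} > 0$ forces $|X|_{g_{\text{\textgreek{S}}}} > \alpha > 0$.

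I then construct a null covector $k_{q} \in T_{q}^{*}\mathcal{M}$ tuned to this geometry. Writing $k_{q} = n^{\flat} + \eta$ with $\eta \in T_{q}^{*}\text{\textgreek{S}}$, the null condition $g^{-1}(k_{q}, k_{q}) = 0$ becomes $|\eta|_{g_{\text{\textgreek{S}}}} = 1$; I take $\eta := X^{\flat}/|X|_{g_{\text{\textgreek{S}}}}$, which gives $\eta(X) = |X|_{g_{\text{\textgreek{S}}}}$ and hence
\begin{equation*}
\beta := k_{q}(T) = |X|_{g_{\text{\textgreek{S}}}} - \alpha > 0, \qquad k_{q}(n) = -1 < 0,
\end{equation*}
a configuration impossible when $T$ is timelike and precisely the mechanism responsible for the negative $T$-energy. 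Solving the eikonal equation $g^{-1}(dS, dS) = 0$ near $q$ by the method of characteristics with $dS(q) = k_{q}$ yields a real phase function $S$ smooth on a neighborhood of $q$. For $A \in C_{c}^{\infty}(\mathcal{M})$ a bump supported in a small neighborhood of $q$ contained in $\mathcal{U}$, I set
\begin{equation*}
\phi_{\omega} := \omega^{-1} A\, e^{i\omega S}, \qquad (\phi^{(0)}_{\omega}, \phi^{(1)}_{\omega}) := \bigl(\phi_{\omega}|_{\text{\textgreek{S}}},\, T\phi_{\omega}|_{\text{\textgreek{S}}}\bigr),
\end{equation*}
which are smooth and compactly supported in $\text{\textgreek{S}} \cap \mathcal{U}$, and let $\phi$ be the solution of \eqref{eq:InitialValueProblem} with these data.

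The integral $\int_{\text{\textgreek{S}}} J_{\mu}^{T}(T\phi)n^{\mu}$ depends algebraically on $\phi^{(0)}_{\omega}, \phi^{(1)}_{\omega}$, their $\text{\textgreek{S}}$-tangential derivatives, and $T^{2}\phi|_{\text{\textgreek{S}}}$, the last being determined from the wave equation by $T^{2}\phi|_{\text{\textgreek{S}}} = -(g^{tt})^{-1}\bigl(g^{ij}\partial_{i}\partial_{j}\phi^{(0)}_{\omega} + 2 g^{ti}\partial_{i}\phi^{(1)}_{\omega}\bigr) + \textrm{(lower order)}$. Substituting the WKB ansatz into the decomposition $Q(T,n)(\psi) = \tfrac{\alpha}{2}(|n\psi|^{2} + |\nabla_{\text{\textgreek{S}}}\psi|^{2}) + \mathrm{Re}(X\psi \cdot \overline{n\psi})$ for $\psi = T\phi$, and using the eikonal identity $g^{ij}\partial_{i}S\partial_{j}S + 2\beta g^{ti}\partial_{i}S = -g^{tt}\beta^{2}$ together with $\eta(X) = \beta + \alpha$ to collapse the $O(\omega^{2})$ contributions, I expect to obtain the pointwise identity
\begin{equation*}
Q(T,n)(T\phi)\big|_{\text{\textgreek{S}}} = -\omega^{2}\beta^{3}|A|^{2} + O(\omega)
\end{equation*}
near $q$. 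Integrating yields $\int_{\text{\textgreek{S}}} J_{\mu}^{T}(T\phi)n^{\mu} = -\omega^{2}\beta^{3}\int_{\text{\textgreek{S}}}|A|^{2}\,dg_{\text{\textgreek{S}}} + O(\omega) < 0$ for all sufficiently large $\omega$. Fixing such an $\omega$ and rescaling the data by $\bigl|\int_{\text{\textgreek{S}}} J_{\mu}^{T}(T\phi_{\omega})n^{\mu}\bigr|^{-1/2}$ then produces the desired normalization \eqref{eq:NegativeInitialEnergy}.

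The main obstacle is the algebraic bookkeeping of this principal-symbol calculation. A direct expansion of the WKB ansatz gives $|n\psi|^{2} \sim \omega^{2}\beta^{2}|A|^{2}$, $|\nabla_{\text{\textgreek{S}}}\psi|^{2} \sim \omega^{2}\beta^{2}|A|^{2}$ and $\mathrm{Re}(X\psi \cdot \overline{n\psi}) \sim -\omega^{2}\beta^{2}(\beta + \alpha)|A|^{2}$, so that the positive contribution $\alpha \omega^{2}\beta^{2}|A|^{2}$ from the first two terms is exactly cancelled at order $\omega^{2}$ by the indefinite cross term up to the residue $-\omega^{2}\beta^{3}|A|^{2}$. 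This cancellation is the precise manifestation of the indefiniteness of the principal symbol of $\psi \mapsto Q(T,n)(\psi)$ on real null covectors $k$ satisfying $k(T)k(n) < 0$, which exist only above points of the ergoregion.
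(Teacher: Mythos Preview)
Your approach is correct and essentially the same as the paper's: both build a WKB packet $A\,e^{i\omega S}$ near a point of $\mathcal{U}\cap\mathscr{E}$ with $S$ solving the eikonal equation and $dS(T)>0$, $dS(n)<0$, which is precisely the null-covector configuration available only where $T$ is spacelike. The paper differs only in bookkeeping---it computes the $J^{T}$-energy of $T\tilde\phi_{l}$ first (obtaining $-c_{0}l^{4}+O(l^{3})$ from the term $g(n,L)\,(g(T,L))^{3}$) and then controls the discrepancy on $\Sigma$ coming from $\square_{g}\tilde\phi_{l}=O(l)$, whereas you determine $T^{2}\phi|_{\Sigma}$ directly from the wave equation; both routes rest on the same eikonal cancellation, and your leading term $-\omega^{2}(TS)^{3}|A|^{2}$ (with the function $TS$ in place of the pointwise value $\beta$, a harmless imprecision since $TS>0$ on $\mathrm{supp}\,A$) is exactly the paper's.
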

\begin{rem*}
Notice that the initial value problem (\ref{eq:InitialValueProblem})
is well posed, since the vector field $T$, although not everywhere
timelike, is everywhere tranversal to $\text{\textgreek{S}}$. \end{rem*}
\begin{proof}
Since $\mathcal{U}$ is an open subset of $\mathcal{M}$ intersecting
$\mathscr{E}$ (according to Assumption \hyperref[Assumption 4]{A1}),
in view of the definition (\ref{eq:Ergoregion}) of $\mathscr{E}$
we infer that there exists a point $q\in\mathcal{U}\cap\text{\textgreek{S}}$
and a contractible open neighborhood $\mathcal{V}$ of $q$ in $\mathcal{M}$
such that $T$ is strictly spacelike on $\mathcal{V}$. Therefore,
provided $\mathcal{V}$ is sufficiently small, there exists a vector
field $L$ on $\mathcal{V}$ satisfying 
\begin{equation}
g(L,L)=0,\mbox{ }g(L,T)>0\mbox{ and }\nabla L=0.\label{eq:PropertiesL}
\end{equation}
The condition $\nabla L=0$ on $\mathcal{V}$ implies that there exists
a function $w:\mathcal{V}\rightarrow\mathbb{R}$ such that 
\begin{equation}
\nabla w=L.\label{eq:GradientW}
\end{equation}

Let us fix a smooth cut-off function $\text{\textgreek{q}}:\mathcal{M}\rightarrow[0,1]$
suported in $\mathcal{V}$ such that $\text{\textgreek{q}}(q)=1$.
Then, for any $l\gg1$, the function 
\begin{equation}
\tilde{\text{\textgreek{f}}}_{l}\doteq\text{\textgreek{q}}e^{ilw}
\end{equation}
on $\mathcal{M}$ is supported in $\mathcal{V}$ and satisfies (in
view of (\ref{eq:PropertiesL}) and (\ref{eq:GradientW})) 
\begin{equation}
\square_{g}\tilde{\text{\textgreek{f}}}_{l}=\text{\textgreek{q}}l^{2}\partial_{\text{\textgreek{m}}}w\partial^{\text{\textgreek{m}}}we^{ilw}+O(l)=O(l).\label{eq:InhomogeneousApproxiamtion}
\end{equation}
Furthermore, we compute: 
\begin{align}
\int_{\text{\textgreek{S}}}J^{T}(T\tilde{\text{\textgreek{f}}}_{l})n^{\text{\textgreek{m}}} & =\int_{\text{\textgreek{S}}}\big(n(T\tilde{\text{\textgreek{f}}}_{l})\cdot T^{2}\tilde{\text{\textgreek{f}}}_{l}-\frac{1}{2}g(n,T)\partial_{\text{\textgreek{m}}}T\tilde{\text{\textgreek{f}}}_{l}\partial^{\text{\textgreek{m}}}T\tilde{\text{\textgreek{f}}}_{l}\big)\, dg_{\text{\textgreek{S}}}=\\
 & =\int_{\text{\textgreek{S}}\cap\mathcal{V}}\Big(\big(\text{\textgreek{q}}l^{2}(nw)(Tw)e^{ilw}+O(l)\big)\big(\text{\textgreek{q}}l^{2}(Tw)^{2}e^{ilw}+O(l)\big)\nonumber \\
 & \hphantom{=\int_{\text{\textgreek{S}}\cap\mathcal{V}}\Big(\big(\text{\textgreek{q}}l^{2}(nw)(Tw)}-\frac{1}{2}g(n,T)\big(\text{\textgreek{q}}l^{2}(\partial_{\text{\textgreek{m}}}w)(Tw)e^{ilw}+O(l)\big)\big(\text{\textgreek{q}}l^{2}(\partial^{\text{\textgreek{m}}}w)(Tw)e^{ilw}+O(l)\big)\Big)\, dg_{\text{\textgreek{S}}}=\nonumber \\
 & =\int_{\text{\textgreek{S}}\cap\mathcal{V}}\Big(\text{\textgreek{q}}^{2}l^{4}\big(g(n,L)(g(T,L))^{3}-\frac{1}{2}g(n,T)g(L,L)(g(T,L))^{2}\big)+O(l^{3})\Big)\, dg_{\text{\textgreek{S}}},\nonumber 
\end{align}
which, in view of (\ref{eq:PropertiesL}) (and the fact that $g(n,L)<0$),
yields:
\begin{equation}
\int_{\text{\textgreek{S}}}J^{T}(T\tilde{\text{\textgreek{f}}}_{l})n^{\text{\textgreek{m}}}=-c_{0}l^{4}+O(l^{3})\label{eq:NegativeEnergyApproximation}
\end{equation}
for some $c_{0}>0$.

Let us set 
\begin{equation}
(\text{\textgreek{f}}^{(0)},\text{\textgreek{f}}^{(1)})\doteq(\tilde{\text{\textgreek{f}}}_{l}|_{\text{\textgreek{S}}},T\tilde{\text{\textgreek{f}}}_{l}|_{\text{\textgreek{S}}}).\label{eq:SameInitialData}
\end{equation}
Note that $(\text{\textgreek{f}}^{(0)},\text{\textgreek{f}}^{(1)})$
is supported in $\mathcal{V}\cap\text{\textgreek{S}}\subset\mathcal{U}\cap\text{\textgreek{S}}$.
Then, the function 
\begin{equation}
\check{\text{\textgreek{f}}}\doteq\text{\textgreek{f}}-\tilde{\text{\textgreek{f}}}_{l},\label{eq:DefinitionPhiCheck}
\end{equation}
where $\text{\textgreek{f}}$ is defined by (\ref{eq:InitialValueProblem}),
satisfies (in view of (\ref{eq:InitialValueProblem}), (\ref{eq:InhomogeneousApproxiamtion})
and (\ref{eq:SameInitialData})): 
\begin{equation}
\begin{cases}
\square_{g}\check{\text{\textgreek{f}}}=O(l) & \mbox{on }\mathcal{D}(\text{\textgreek{S}}),\\
(\check{\text{\textgreek{f}}}|_{\text{\textgreek{S}}},T\check{\text{\textgreek{f}}}|_{\text{\textgreek{S}}})=(0,0).
\end{cases}\label{eq:InitialValueProblem-1}
\end{equation}
In view of the fact that $\check{\text{\textgreek{f}}}|_{\text{\textgreek{S}}}=0$
and $\nabla\check{\text{\textgreek{f}}}|_{\text{\textgreek{S}}}=0$
(implying also that $\nabla_{g_{\text{\textgreek{S}}}}^{2}\check{\text{\textgreek{f}}}|_{\text{\textgreek{S}}}=0$
and $\nabla_{g_{\text{\textgreek{S}}}}T\check{\text{\textgreek{f}}}|_{\text{\textgreek{S}}}=0$),
the expression of the wave operator in a coordinate chart of the form
$(t,x)$ on $\mathcal{V}$ readily yields
\begin{equation}
(\square_{g}\check{\text{\textgreek{f}}})|_{\text{\textgreek{S}}}=\big(g^{00}T^{2}\check{\text{\textgreek{f}}}\big)|_{\text{\textgreek{S}}}=\big(g^{0\text{\textgreek{m}}}\partial_{\text{\textgreek{m}}}(T\check{\text{\textgreek{f}}})\big)|_{\text{\textgreek{S}}}=\big(\frac{1}{g(n,T)}n(T\check{\text{\textgreek{f}}})\big)|_{\text{\textgreek{S}}}.\label{eq:ExpressionFromWaveEquation}
\end{equation}
Thus, from (\ref{eq:InitialValueProblem-1}), (\ref{eq:ExpressionFromWaveEquation})
and the fact that $\square_{g}\check{\text{\textgreek{f}}}$ is supported
in $\mathcal{V}$, we can readily bound 
\begin{equation}
\int_{\text{\textgreek{S}}}J_{\text{\textgreek{m}}}^{N}(T\check{\text{\textgreek{f}}})n^{\text{\textgreek{m}}}=O(l^{2}).\label{eq:UpperBoundEnergyCheck}
\end{equation}

From (\ref{eq:DefinitionPhiCheck}), a Cauchy--Schwarz inequality
implies: 
\begin{equation}
\Big|\int_{\text{\textgreek{S}}}J_{\text{\textgreek{m}}}^{T}(T\text{\textgreek{f}})n^{\text{\textgreek{m}}}-\int_{\text{\textgreek{S}}}J_{\text{\textgreek{m}}}^{T}(T\tilde{\text{\textgreek{f}}}_{l})n^{\text{\textgreek{m}}}\Big|\le C\int_{\text{\textgreek{S}}}J_{\text{\textgreek{m}}}^{N}(T\check{\text{\textgreek{f}}})n^{\text{\textgreek{m}}},
\end{equation}
and thus, in view also of (\ref{eq:NegativeEnergyApproximation})
and (\ref{eq:UpperBoundEnergyCheck}): 
\begin{equation}
\int_{\text{\textgreek{S}}}J_{\text{\textgreek{m}}}^{T}(T\text{\textgreek{f}})n^{\text{\textgreek{m}}}=-c_{0}l^{4}+O(l^{3})<0,
\end{equation}
provided $l\gg1$. Multiplying $(\text{\textgreek{f}}^{(0)},\text{\textgreek{f}}^{(1)})$
with a suitable non-zero constant, we can therefore achieve (\ref{eq:NegativeInitialEnergy}),
and therefore the proof of the Lemma is complete. 
\end{proof}

\subsection{\label{sub:Decay-outside-the-ergoregion}Decay outside the extended
ergoregion}

The following proposition, establishing decay without a rate outside
the ergoregion for solutions to equation (\ref{eq:WaveEquation}),
lies at the heart of the proof of Theorem \ref{thm:FriedmanInstability}:
\begin{prop}
\label{prop:QuantitativeDecayOutsideErgoregion}Let $\text{\textgreek{f}}:\mathcal{D}(\text{\textgreek{S}})\rightarrow\mathbb{C}$
be a smooth function satisfying (\ref{eq:WaveEquation}) with compactly
supported initial data on $\text{\textgreek{S}}$, and let us set
$\text{\textgreek{y}}=T\text{\textgreek{f}}$. Assume that the energy
bound (\ref{eq:BoundForContradiction}) holds for $\text{\textgreek{f}}$,
$\text{\textgreek{y}}$, $T\text{\textgreek{y}}$ and $T^{2}\text{\textgreek{y}}$,%
\footnote{Note that, since $T$ is a Killing field of $(\mathcal{M},g)$, the
functions $\text{\textgreek{y}}$, $T\text{\textgreek{y}}$ and $T^{2}\text{\textgreek{y}}$
also solve (\ref{eq:WaveEquation}) with compactly supported initial
data on $\text{\textgreek{S}}$. %
} i.\,e.:
\begin{equation}
\mathcal{E}[\text{\textgreek{f}}]+\mathcal{E}[\text{\textgreek{y}}]+\mathcal{E}[T\text{\textgreek{y}}]+\mathcal{E}[T^{2}\text{\textgreek{y}}]<+\infty.\label{BoundednessEnergyPsi}
\end{equation}
Then for any $0<\text{\textgreek{e}}<1$, any $\text{\textgreek{d}}_{1}>0$,
any $R,\text{\textgreek{t}}_{*}\gg1$ and any $\bar{\text{\textgreek{t}}}_{0}\gg1$,
there exists a $\text{\textgreek{t}}_{\natural}\ge\bar{\text{\textgreek{t}}}_{0}+\text{\textgreek{t}}_{*}$
depending on $\text{\textgreek{e}},\text{\textgreek{d}}_{1},R,\text{\textgreek{t}}_{*},\bar{\text{\textgreek{t}}}_{0},\mathcal{E}_{log}[\text{\textgreek{f}}],\mathcal{E}_{log}[\text{\textgreek{y}}],\mathcal{E}_{log}[T\text{\textgreek{y}}]$
and $\mathcal{E}[T^{2}\text{\textgreek{y}}]$ such that 
\begin{equation}
\sum_{j=0}^{1}\int_{(\mathcal{R}(\text{\textgreek{t}}_{\natural}-\text{\textgreek{t}}_{*},\text{\textgreek{t}}_{\natural}+\text{\textgreek{t}}_{*})\backslash\mathscr{E}_{\text{\textgreek{d}}_{1}})\cap\{r\le R\}}\big(J_{\text{\textgreek{m}}}^{N}(T^{j}\text{\textgreek{y}})N^{\text{\textgreek{m}}}+|T^{j}\text{\textgreek{y}}|^{2}\big)<\text{\textgreek{e}}\label{eq:ForLocalConvergence}
\end{equation}
(see (\ref{eq:WeightedInitialEnergy}) for the definition of the quantity
$\mathcal{E}_{log}[\cdot]$)
\end{prop}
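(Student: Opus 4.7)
My plan is to combine a time-frequency decomposition adapted to the Killing field $T$ with the Carleman-type estimates of Section~\ref{sec:Carleman}, and to locate $\tau_\natural$ by a pigeonhole on a long time interval $[\bar\tau_0,\bar\tau_0+L]$ with $L$ chosen much larger than $\tau_*$. The starting observation is that $\mathcal{E}[\psi]+\mathcal{E}[T\psi]+\mathcal{E}[T^2\psi]<+\infty$, together with the coarea formula (\ref{eq:DegeneracyCoareaFormula}) and a Hardy inequality on $\Sigma_\tau$ from Lemma~\ref{lem:HardyInequalities}, gives the uniform bound
\[
\int_{\mathcal{R}(\bar\tau_0,\bar\tau_0+L)\cap\{r\le R\}}\sum_{j=0}^{2}\Big(|T^j\psi|^{2}+J^{N}_{\mu}(T^j\psi)N^{\mu}\Big)\;\le\; C_{R}\cdot L\cdot \mathbf{E},
\]
where $\mathbf{E}$ collects the finite energies.

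Using a smooth cutoff $\chi_L$ adapted to $[\bar\tau_0,\bar\tau_0+L]$ and Fourier-transforming $\chi_L\psi$ in the variable dual to $t$, I would split it as $\psi_{lo}+\psi_{med}+\psi_{hi}$, with frequency cutoffs $\omega_0\ll 1\ll\Omega$. The high-frequency piece is disposed of by Plancherel and the identity $\widehat{T^{2}\psi}(\omega)=-\omega^{2}\widehat\psi(\omega)$:
\[
\int_{\mathcal{R}\cap\{r\le R\}}J^{N}_\mu(\psi_{hi})N^{\mu}\;\lesssim\;\Omega^{-4}L\,\mathcal{E}[T^{2}\psi]\;<\;\varepsilon/6
\]
once $\Omega$ is large enough. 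Each $\psi_{med}$ has frequency support in $[\omega_0,\Omega]$ and is, up to a commutator with $\chi_L$ localised near $t=\bar\tau_0,\bar\tau_0+L$, a solution of the wave equation --- exactly the class of functions for which the Carleman estimate of Section~\ref{sec:Carleman} (Proposition~\ref{prop:GeneralCarlemanEstimate}) provides a quantitative upper bound on $\int_{\mathcal{R}\cap\{r\le R\}\setminus\mathscr{E}_{\delta_1}}J^{N}(\psi_{med})N^\mu$ in terms of the ambient energy (contributing $\lesssim_{\omega_0,\Omega,\delta_1,R} L\mathbf E$) and a boundary piece from the cutoff commutator (contributing $O(\mathbf E)$ uniformly in $L$).

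The low-frequency piece $\psi_{lo}$ is the delicate case: at $\omega=0$ the stationary reduction fails to be elliptic across $\partial\mathscr{E}$ and the Carleman weights of Section~\ref{sec:Carleman} degenerate. I would exploit the fact that on $\mathcal{M}\setminus\mathscr{E}$ the current $J^{T}n$ is non-negative and dominates $J^{N}n$ up to a uniform constant, whereas the finiteness of $\mathcal{E}[T\psi]$ forces $\int_{\Sigma_\tau}|T\psi_{lo}|^{2}\lesssim \omega_0^{2}\,\mathbf E$; combining this with the conservation of the $T$-flux and Assumption~\hyperref[Assumption 4]{A1} (used qualitatively to exclude non-trivial stationary limits supported away from $\mathscr{E}_{ext}$) delivers a bound of the form $\int J^{N}(\psi_{lo})N\lesssim o_{\omega_0\to 0}(1)\cdot L\,\mathbf E$. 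Summing the three pieces gives $\int_{\mathcal{R}(\bar\tau_0,\bar\tau_0+L)\cap\{r\le R\}\setminus\mathscr{E}_{\delta_1}}\sum_{j=0}^{1}(\cdots)\le A\cdot L$, where $A=A(\varepsilon,\delta_1,R,\mathbf E)$ can be made $\ll\varepsilon/(2\tau_*)$ after the frequency parameters are optimised. Partitioning $[\bar\tau_0,\bar\tau_0+L]$ into disjoint subintervals of length $2\tau_*$ and pigeonholing locates a subinterval on which the integral is $<\varepsilon$; its midpoint, which automatically satisfies $\tau_\natural\ge\bar\tau_0+\tau_*$, is the desired time. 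The $j=1$ contribution is obtained by rerunning the same argument with $\psi$ replaced by $T\psi$, whose relevant energies are controlled by $\mathcal{E}[T\psi]$ and $\mathcal{E}[T^{2}\psi]$.

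The principal difficulty I expect is the low-frequency step, which sits precisely at the frequency $\omega=0$ where the Friedman instability mechanism lives; quantitatively converting Assumption~\hyperref[Assumption 4]{A1} from a qualitative unique-continuation statement into an effective decay rate in $\omega_0$ is the non-trivial input. A secondary technical issue is ensuring that the Carleman constants in the medium-frequency regime depend on $[\omega_0,\Omega]$ in a controlled way, so that the low- and medium-frequency parameters can be tuned compatibly and the commutator errors from $\chi_L$ genuinely give only $O(\mathbf E)$ boundary contributions independent of $L$.
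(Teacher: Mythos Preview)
Your overall architecture---frequency decomposition in $t$, disposal of high frequencies via $\mathcal{E}[T^2\psi]$, Carleman estimates from Section~\ref{sec:Carleman} for medium frequencies, and a pigeonhole over a long time interval---matches the paper's proof. But your treatment of the low-frequency piece has a genuine gap, and it is exactly the gap you yourself flag as the ``principal difficulty''. You propose to handle $\psi_{lo}$ by invoking Assumption~\hyperref[Assumption 4]{A1} qualitatively; this cannot close, because a qualitative unique-continuation statement carries no quantitative rate in $\omega_0$. More importantly, the paper does \emph{not} use Assumption~A1 anywhere in the proof of this Proposition (the remark immediately following the statement notes that the result holds even when $\mathscr{E}=\emptyset$; A1 enters only later, in Section~\ref{sub:Finishing-the-proof}). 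What you have overlooked is the one structural hypothesis in the statement tailored to this step: $\psi=T\phi$. Lemma~\ref{lem:NearZeroFrequencyEstimateDerivatives} writes $\psi_0(t,\cdot)=\int\zeta_0(t-s)\,\partial_s\big(\theta_2(s_-)\phi(s,\cdot)\big)\,ds$ and integrates by parts in $s$, transferring the $T$-derivative onto the convolution kernel and gaining a factor $\omega_0$. This controls the \emph{full} density $J^N_\mu(\psi_0)N^\mu+|\psi_0|^2$ by $C\omega_0^2\big(J^N_\mu(\phi)N^\mu+|\phi|^2\big)$ plus lower-order errors---which is precisely why $\mathcal{E}_{log}[\phi]$ appears among the dependencies of $\tau_\natural$. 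Your bound $\int|T\psi_{lo}|^2\lesssim\omega_0^2\mathbf{E}$ handles only the time derivative, and your appeal to A1 to recover the spatial derivatives does not work.

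A secondary issue: in the medium-frequency step you describe the Carleman output as a bound $\lesssim_{\omega_0,\Omega,\delta_1,R} L\mathbf{E}$, but that alone would be useless (it is the trivial bound). The content of Corollary~\ref{cor:CarlemanForPsik} is that the integral outside $\mathscr{E}_{\delta_1}$ is bounded by an \emph{arbitrarily small} multiple $\delta_2$ of the integral over $\mathscr{E}_{\delta_1}$, plus an error of size $C(\omega_0,\omega_+,\delta_1,\delta_2)\big(\log\tau_2\big)^C\mathcal{E}_{log}[\psi]$ that is sublinear in $L$. It is this smallness parameter, tuned against the trivial $L\mathbf{E}$ bound inside $\mathscr{E}_{\delta_1}$, that makes the pigeonhole produce something below $\varepsilon$.
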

For the proof of Proposition \ref{prop:QuantitativeDecayOutsideErgoregion},
see Section \ref{sec:Proof-of-Proposition}.
\begin{rem*}
The proof of Proposition \ref{prop:QuantitativeDecayOutsideErgoregion}
also applies when $\mathscr{E}=\emptyset$. Furthermore, in view of
the discussion in Section \ref{sub:ProofWithBoundaryConditions},
the proof of Proposition \ref{prop:QuantitativeDecayOutsideErgoregion}
in Section \ref{sec:Proof-of-Proposition} also applies in the case
when $(\mathcal{M},g)$ has a $T$-invariant timelike boundary component
$\partial_{tim}\mathcal{M}$, with $\partial_{tim}\mathcal{M}\cap\text{\textgreek{S}}$
compact and $\partial_{tim}\mathcal{M}\cap\mathcal{H}=\emptyset$,
and $\text{\textgreek{f}}$ is assumed to satisfy either Dirichlet
or Neumann boundary conditions on $\partial_{tim}\mathcal{M}$.%
\footnote{In this case, we have to assume that the double $(\widetilde{\mathcal{M}},\tilde{g})$
of $(\mathcal{M},g)$ across $\partial_{tim}\mathcal{M}$ is a globally
hyperbolic spacetime satisfying Assumptions \hyperref[Assumption 1]{G1}--\hyperref[Assumption 3]{G3}
(note that Assumption \hyperref[Assumption 4]{A1} is not necessary
for the proof of Proposition \ref{prop:QuantitativeDecayOutsideErgoregion}).%
} As a consequence, the proof of Theorem \ref{thm:FriedmanInstability}
will also apply in this case as well (all the other steps in the proof
of Theorem \ref{thm:FriedmanInstability} immediately generalise in
this case without any change).
\end{rem*}

\subsection{\label{sub:LimitingBehaviour}Limiting behaviour for solutions of
(\ref{eq:WaveEquation})}

We will need the following lemma on the behaviour of $\text{\textgreek{y}}$
asymptotically as $t\rightarrow+\infty$, following essentially from
Proposition \ref{prop:QuantitativeDecayOutsideErgoregion}:
\begin{lem}
\label{lem:DecayToATrappedSolution}Let $\text{\textgreek{f}},\text{\textgreek{y}}:\mathcal{D}(\text{\textgreek{S}})\rightarrow\mathbb{C}$
be as in the statement of Proposition \ref{prop:QuantitativeDecayOutsideErgoregion},
and let us define, for any $\text{\textgreek{t}}\ge0$, the function
$\text{\textgreek{y}}_{\text{\textgreek{t}}}:\mathcal{M}\backslash\mathcal{H}^{-}\rightarrow\mathbb{C}$
as follows:
\begin{equation}
\text{\textgreek{y}}_{\text{\textgreek{t}}}(t,x)\doteq\begin{cases}
\text{\textgreek{y}}(t+\text{\textgreek{t}},x), & t\ge-\text{\textgreek{t}},\\
0, & t<-\text{\textgreek{t}}.
\end{cases}\label{eq:TranslatedSolution}
\end{equation}
 Then, there exists an increasing sequence $\{\text{\textgreek{t}}_{n}\}_{n\in\mathbb{N}}$
of non-negative numbers and a function $\tilde{\text{\textgreek{y}}}:\mathcal{M}\backslash\mathcal{H}^{-}\rightarrow\mathbb{C}$
with $\tilde{\text{\textgreek{y}}},T\tilde{\text{\textgreek{y}}}\in H_{loc}^{1}(\mathcal{M}\backslash\mathcal{H}^{-})$,
such that $\tilde{\text{\textgreek{y}}}$ solves (\ref{eq:WaveEquation})
on $\mathcal{M}\backslash\mathcal{H}^{-}$, satisfying in addition

\begin{equation}
\int_{-\text{\textgreek{t}}_{*}}^{+\text{\textgreek{t}}_{*}}\int_{\text{\textgreek{S}}_{\text{\textgreek{t}}}}\big(J_{\text{\textgreek{m}}}^{N}(\tilde{\text{\textgreek{y}}})+J_{\text{\textgreek{m}}}^{N}(T\tilde{\text{\textgreek{y}}})\big)n^{\text{\textgreek{m}}}\Big)\, d\text{\textgreek{t}}<+\infty\mbox{ for any }\text{\textgreek{t}}_{*}>0,\label{eq:UpperBoundEnergy}
\end{equation}

\begin{equation}
\tilde{\text{\textgreek{y}}}\equiv0\mbox{ on }\mathcal{M}\backslash(\mathscr{E}_{ext}\cup\mathcal{H}^{-})\label{eq:VanishingOutOfTheErgoregion}
\end{equation}
and $(\text{\textgreek{y}}_{\text{\textgreek{t}}_{n}},T\text{\textgreek{y}}_{\text{\textgreek{t}}_{n}})\rightarrow(\tilde{\text{\textgreek{y}}},T\tilde{\text{\textgreek{y}}})$
weakly in $H_{loc}^{1}(\mathcal{M}\backslash\mathcal{H}^{-})\times H_{loc}^{1}(\mathcal{M}\backslash\mathcal{H}^{-})$
and strongly in $H_{loc}^{1}(\mathcal{M}\backslash(\mathscr{E}_{ext}\cup\mathcal{H}^{-}))\times H_{loc}^{1}(\mathcal{M}\backslash(\mathscr{E}_{ext}\cup\mathcal{H}^{-}))$
and in $L_{loc}^{2}(\mathcal{M}\backslash\mathcal{H}^{-})\times L_{loc}^{2}(\mathcal{M}\backslash\mathcal{H}^{-})$
in the following sense:

\begin{itemize}

\item{ For any compactly supported test functions $\{\text{\textgreek{z}}_{j}\}_{j=0,1}\in L^{2}(\mathcal{M}\backslash\mathcal{H}^{-})$
and compactly supported vector fields $\{X_{j}\}_{j=0,1}$ on $\mathcal{M}\backslash\mathcal{H}^{-}$
such that $|X_{j}|_{g_{ref}}\in L^{2}(\mathcal{M}\backslash\mathcal{H}^{-})$:
\begin{equation}
\lim_{n\rightarrow+\infty}\sum_{j=0}^{1}\int_{\mathcal{M}\backslash\mathcal{H}^{-}}Re\Big\{ g_{ref}\big(\nabla(T^{j}\text{\textgreek{y}}_{\text{\textgreek{t}}_{n}}-T^{j}\tilde{\text{\textgreek{y}}}),X_{j}\big)+(T^{j}\text{\textgreek{y}}_{\text{\textgreek{t}}_{n}}-T^{j}\tilde{\text{\textgreek{y}}})\text{\textgreek{z}}_{j}\Big\}\, dg=0.\label{eq:WeakConvergence}
\end{equation}
}

\item{For any compact subset $\mathcal{K}\subset\mathcal{M}\backslash\mathcal{H}^{-}$
and any $\text{\textgreek{d}}>0$: 
\begin{equation}
\lim_{n\rightarrow+\infty}\Big(\sum_{j=0}^{1}\int_{\mathcal{K}}|T^{j}\text{\textgreek{y}}_{\text{\textgreek{t}}_{n}}-T^{j}\tilde{\text{\textgreek{y}}}|^{2}\, dg+\sum_{j=0}^{1}\int_{\mathcal{K}\backslash\mathscr{E}_{\text{\textgreek{d}}}}|\nabla(T^{j}\text{\textgreek{y}}_{\text{\textgreek{t}}_{n}})-\nabla(T^{j}\tilde{\text{\textgreek{y}}})|_{g_{ref}}^{2}\, dg\Big)=0.\label{eq:StrongConvergence}
\end{equation}
}

\end{itemize}\end{lem}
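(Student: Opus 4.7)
The plan is to combine a standard compactness extraction for the family $\{\text{\textgreek{y}}_{\text{\textgreek{t}}}\}_{\text{\textgreek{t}}\ge0}$ in $H_{loc}^{1}$ with the quantitative decay statement of Proposition \ref{prop:QuantitativeDecayOutsideErgoregion} in order to (a) produce a weak limit $\tilde{\text{\textgreek{y}}}$ solving (\ref{eq:WaveEquation}) with locally finite $N$-energy, and (b) arrange, along a suitably chosen subsequence of times $\text{\textgreek{t}}_{n}\to+\infty$, that $\tilde{\text{\textgreek{y}}}$ vanishes outside $\mathscr{E}_{ext}\cup\mathcal{H}^{-}$ and that the convergence is in fact strong in $H_{loc}^{1}$ there.

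First, since $T$ is Killing and $\text{\textgreek{y}}$, $T\text{\textgreek{y}}$ both satisfy (\ref{eq:WaveEquation}) with the energy bound (\ref{BoundednessEnergyPsi}), the translates $\text{\textgreek{y}}_{\text{\textgreek{t}}}$ and $T\text{\textgreek{y}}_{\text{\textgreek{t}}}$ have $\int_{\text{\textgreek{S}}_{s}}J_{\text{\textgreek{m}}}^{N}(T^{j}\text{\textgreek{y}}_{\text{\textgreek{t}}})n^{\text{\textgreek{m}}}\lesssim\mathcal{E}[T^{j}\text{\textgreek{y}}]$ uniformly in $\text{\textgreek{t}},s$. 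Combined with the coarea equivalence (\ref{eq:DegeneracyCoareaFormula}) and the fact that $N$ is uniformly comparable to $g_{ref}$ away from $\mathcal{H}^{-}$, this yields uniform $H_{loc}^{1}(\mathcal{M}\backslash\mathcal{H}^{-})$ bounds on both $\text{\textgreek{y}}_{\text{\textgreek{t}}}$ and $T\text{\textgreek{y}}_{\text{\textgreek{t}}}$ (the $L^{2}$ piece being obtained on each compact slab by a Hardy-type argument from Section \ref{sub:HardyInequalities} together with the boundary contribution at $t=-\text{\textgreek{t}}$ which vanishes once $\text{\textgreek{t}}$ exceeds the time-extent of the slab).

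Next, apply Proposition \ref{prop:QuantitativeDecayOutsideErgoregion} with a sequence of parameters $\text{\textgreek{e}}_{n}=R_{n}^{-1}=\text{\textgreek{t}}_{*,n}^{-1}=\text{\textgreek{d}}_{1,n}=\bar{\text{\textgreek{t}}}_{0,n}^{-1}=1/n$, obtaining from each $n$ a time $\text{\textgreek{t}}_{n}\doteq\text{\textgreek{t}}_{\natural,n}\ge n$, and set
\begin{equation}
\text{\textgreek{y}}_{\text{\textgreek{t}}_{n}}(t,x)=\begin{cases}
\text{\textgreek{y}}(t+\text{\textgreek{t}}_{n},x), & t\ge-\text{\textgreek{t}}_{n},\\
0, & t<-\text{\textgreek{t}}_{n}.
\end{cases}
\end{equation}
By a standard diagonal argument based on Banach--Alaoglu and Rellich--Kondrachov, passing to a subsequence (still denoted $\text{\textgreek{t}}_{n}$), we obtain $(\text{\textgreek{y}}_{\text{\textgreek{t}}_{n}},T\text{\textgreek{y}}_{\text{\textgreek{t}}_{n}})\rightharpoonup(\tilde{\text{\textgreek{y}}},T\tilde{\text{\textgreek{y}}})$ weakly in $H_{loc}^{1}(\mathcal{M}\backslash\mathcal{H}^{-})\times H_{loc}^{1}(\mathcal{M}\backslash\mathcal{H}^{-})$ and strongly in $L_{loc}^{2}\times L_{loc}^{2}$, giving (\ref{eq:WeakConvergence}) and the first term of (\ref{eq:StrongConvergence}). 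The bound (\ref{eq:UpperBoundEnergy}) then follows from weak lower semicontinuity, and $\tilde{\text{\textgreek{y}}}$ solves (\ref{eq:WaveEquation}) in the distributional sense since $\square_{g}$ is linear with smooth coefficients and weak $H^{1}$ convergence passes to the limit when tested against $C_{c}^{\infty}(\mathcal{M}\backslash\mathcal{H}^{-})$.

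The heart of the argument is (\ref{eq:VanishingOutOfTheErgoregion}) together with the strong $H^{1}$ part of (\ref{eq:StrongConvergence}). Fix a compact set $\mathcal{K}\subset\mathcal{M}\backslash\mathcal{H}^{-}$ and a parameter $\text{\textgreek{d}}>0$. By $T$-invariance of $\mathscr{E}_{\text{\textgreek{d}}}$ and the compactness of $\mathcal{K}$, there exist $R_{\mathcal{K}}\gg1$ and $\text{\textgreek{t}}_{\mathcal{K}}>0$ such that $\mathcal{K}\subset\mathcal{R}(-\text{\textgreek{t}}_{\mathcal{K}},\text{\textgreek{t}}_{\mathcal{K}})\cap\{r\le R_{\mathcal{K}}\}$. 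For $n$ large enough, $R_{n}>R_{\mathcal{K}}$, $\text{\textgreek{t}}_{*,n}>\text{\textgreek{t}}_{\mathcal{K}}$ and $\text{\textgreek{d}}_{1,n}<\text{\textgreek{d}}$, so that Proposition \ref{prop:QuantitativeDecayOutsideErgoregion} (applied to the original $\text{\textgreek{y}}$) gives
\begin{equation}
\sum_{j=0}^{1}\int_{\mathcal{K}\backslash\mathscr{E}_{\text{\textgreek{d}}}}\big(J_{\text{\textgreek{m}}}^{N}(T^{j}\text{\textgreek{y}}_{\text{\textgreek{t}}_{n}})N^{\text{\textgreek{m}}}+|T^{j}\text{\textgreek{y}}_{\text{\textgreek{t}}_{n}}|^{2}\big)\, dg<\text{\textgreek{e}}_{n}\xrightarrow{n\to\infty}0.
\end{equation}
This forces the strong $H^{1}$ convergence of $T^{j}\text{\textgreek{y}}_{\text{\textgreek{t}}_{n}}$ to $0$ on $\mathcal{K}\backslash\mathscr{E}_{\text{\textgreek{d}}}$; weak lower semicontinuity identifies the weak $H_{loc}^{1}$-limit $\tilde{\text{\textgreek{y}}}$ (and $T\tilde{\text{\textgreek{y}}}$) with zero on $\mathcal{K}\backslash\mathscr{E}_{\text{\textgreek{d}}}$. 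Since $\mathcal{K}$ and $\text{\textgreek{d}}>0$ were arbitrary and $\cap_{\text{\textgreek{d}}>0}\mathscr{E}_{\text{\textgreek{d}}}=\mathscr{E}_{ext}$, we conclude $\tilde{\text{\textgreek{y}}}\equiv0$ on $\mathcal{M}\backslash(\mathscr{E}_{ext}\cup\mathcal{H}^{-})$, completing both (\ref{eq:VanishingOutOfTheErgoregion}) and the second piece of (\ref{eq:StrongConvergence}).

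The main obstacle is the last step: a purely compactness-based argument would give weak $H_{loc}^{1}$ convergence to some limit $\tilde{\text{\textgreek{y}}}$, but there would be no reason for that limit to vanish outside $\mathscr{E}_{ext}$. The decisive input is the quantitative, parameter-dependent decay statement of Proposition \ref{prop:QuantitativeDecayOutsideErgoregion}, which supplies both the existence of times $\text{\textgreek{t}}_{n}$ where the $N$-flux on slabs outside a shrinking neighborhood of $\mathscr{E}_{ext}$ becomes arbitrarily small, and the strong (rather than merely weak) nature of the convergence outside any fixed neighborhood of $\mathscr{E}_{ext}$.
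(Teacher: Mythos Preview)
Your proof is correct and follows essentially the same approach as the paper: choose the times $\text{\textgreek{t}}_n$ via Proposition \ref{prop:QuantitativeDecayOutsideErgoregion} with parameters $\text{\textgreek{e}}_n,\text{\textgreek{d}}_{1,n}\to 0$ and $R_n,\text{\textgreek{t}}_{*,n}\to\infty$, obtain uniform $H^1_{loc}$ bounds on the translates, extract a weak $H^1_{loc}$/strong $L^2_{loc}$ limit by a diagonal argument, and then use the quantitative decay (\ref{eq:ForLocalConvergence}) itself to force $\tilde{\text{\textgreek{y}}}\equiv 0$ outside $\mathscr{E}_{ext}$ and to upgrade to strong $H^1$ convergence there. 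The only cosmetic difference is that the paper secures the monotonicity of $\{\text{\textgreek{t}}_n\}$ a priori by the inductive choice $\bar{\text{\textgreek{t}}}_{0,n}=\text{\textgreek{t}}_{n-1}$, whereas your choice $\bar{\text{\textgreek{t}}}_{0,n}=n$ gives $\text{\textgreek{t}}_n\to\infty$ but not monotonicity directly; you should note that an increasing subsequence can be extracted at the end (or adopt the inductive choice), and for the $L^2$ piece of the uniform bound the paper uses the Poincar\'e-type inequality together with (\ref{eq:ForLocalConvergence}) on an annulus outside $\mathscr{E}_{ext}$ rather than a pure Hardy argument, which is more transparent in dimension $d=2$.
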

\begin{proof}
Let us fix four sequences of positive numbers $\{\text{\textgreek{e}}_{n}\}_{n\in\mathbb{N}}$,
$\{\text{\textgreek{d}}_{n}\}_{n\in\mathbb{N}}$, $\{R_{n}\}_{n\in\mathbb{N}}$
and $\{\text{\textgreek{t}}_{n}^{*}\}_{n\in\mathbb{N}}$ such that
$\text{\textgreek{e}}_{n},\text{\textgreek{d}}_{n}\rightarrow0$ and
$R_{n},\text{\textgreek{t}}_{n}^{*}\rightarrow+\infty$ as $n\rightarrow+\infty$.
We then define the sequence $\{\text{\textgreek{t}}_{n}\}_{n\in\mathbb{N}}$
inductively: Setting $\text{\textgreek{t}}_{0}=0$, $\text{\textgreek{t}}_{n}$
is defined for any $n\ge1$ as the value $\text{\textgreek{t}}_{\natural}>0$
from Proposition \ref{prop:QuantitativeDecayOutsideErgoregion} for
$\text{\textgreek{e}}_{n}$ in place of $\text{\textgreek{e}}$, $\text{\textgreek{d}}_{n}$
in place of $\text{\textgreek{d}}$, $R_{n}$ in place of $R$, $\text{\textgreek{t}}_{n}^{*}$
in place of $\text{\textgreek{t}}_{*}$ and $\text{\textgreek{t}}_{n-1}$
in place of $\bar{\text{\textgreek{t}}}_{0}$ (notice that the last
condition guarantees that $\text{\textgreek{t}}_{n}$ is an increasing
sequence). Then, Proposition \ref{prop:QuantitativeDecayOutsideErgoregion}
applied for the pair $(\text{\textgreek{y}},T\text{\textgreek{y}})$
implies that the pair $(\text{\textgreek{y}}_{\text{\textgreek{t}}_{n}},T\text{\textgreek{y}}_{\text{\textgreek{t}}_{n}})$
(which is merely a $\text{\textgreek{t}}_{n}$-translate of $(\text{\textgreek{y}},T\text{\textgreek{y}})$
in the region $\{t\ge-\text{\textgreek{t}}_{n}\}$) satisfies the
following estimate for any $n\in\mathbb{N}$:
\begin{equation}
\sum_{j=0}^{1}\int_{-\text{\textgreek{t}}_{n}^{*}}^{+\text{\textgreek{t}}_{n}^{*}}\Big(\int_{(\text{\textgreek{S}}_{t}\backslash\mathscr{E}_{\text{\textgreek{d}}_{n}})\cap\{r\le R_{n}\}}\big(J_{\text{\textgreek{m}}}^{N}(T^{j}\text{\textgreek{y}}_{\text{\textgreek{t}}_{n}})n^{\text{\textgreek{m}}}+|T^{j}\text{\textgreek{y}}_{\text{\textgreek{t}}_{n}}|^{2}\big)\Big)\, dt\big)<\text{\textgreek{e}}_{n}.\label{eq:ForLocalConvergence-1}
\end{equation}

In view of the bounds (\ref{BoundednessEnergyPsi}) and \ref{eq:ForLocalConvergence-1},
as well as the Poincare-type inequality 
\begin{equation}
\int_{\mathcal{R}(\bar{\text{\textgreek{t}}}_{1},\bar{\text{\textgreek{t}}}_{2})\cap\{r\le R\}}|\text{\textgreek{y}}_{\text{\textgreek{t}}_{n}}|^{2}\le CR^{2}\int_{\mathcal{R}(\bar{\text{\textgreek{t}}}_{1},\bar{\text{\textgreek{t}}}_{2})\cap\{r\le2R\}}J_{\text{\textgreek{m}}}^{N}(\text{\textgreek{y}}_{\text{\textgreek{t}}_{n}})n^{\text{\textgreek{m}}}+C\int_{\mathcal{R}(\bar{\text{\textgreek{t}}}_{1},\bar{\text{\textgreek{t}}}_{2})\cap\{R\le r\le2R\}}|\text{\textgreek{y}}_{\text{\textgreek{t}}_{n}}|^{2}
\end{equation}
holding for any $\bar{\text{\textgreek{t}}}_{1}\le\bar{\text{\textgreek{t}}}_{2}$,
we infer that, for any compact subset $\mathcal{K}$ of $\mathcal{M}\backslash\mathcal{H}^{-}$,
setting 
\begin{equation}
n_{0}(\mathcal{K})=\min\Big\{ n\in\mathbb{N}:\,\mathcal{K}\mbox{ is contained in the set }\big\{\max\{-\text{\textgreek{t}}_{n},-\text{\textgreek{t}}_{n}^{*}\}<t<\text{\textgreek{t}}_{n}^{*}\}\cap\{r\le R_{n}\}\big\}\Big\},
\end{equation}
there exists a $C=C_{\mathcal{K}}$ such that:
\begin{equation}
\sup_{n\ge n_{0}(\mathcal{K})}\Big(\sum_{j=0}^{1}\int_{\mathcal{K}}\big(J_{\text{\textgreek{m}}}^{N}(T^{j}\text{\textgreek{y}}_{\text{\textgreek{t}}_{n}})N^{\text{\textgreek{m}}}+|T^{j}\text{\textgreek{y}}_{\text{\textgreek{t}}_{n}}|^{2}\big)\le C_{\mathcal{K}}(\mathcal{E}[\text{\textgreek{y}}]+\mathcal{E}[T\text{\textgreek{y}}])+\sup_{n\ge n_{0}(\mathcal{K})}\text{\textgreek{e}}_{n}<+\infty.\label{eq:UpperBoundEnergies}
\end{equation}

For any compact $\mathcal{K}\subset\mathcal{M}\backslash\mathcal{H}^{-}$,
Rellich--Kondrachov's theorem yields that the embedding $H^{1}(\mathcal{K})\times H^{1}(\mathcal{K})\hookrightarrow L^{2}(\mathcal{K})\times L^{2}(\mathcal{K})$
is compact. Thus, (\ref{eq:UpperBoundEnergies}) implies that for
any compact $\mathcal{K}\subset\mathcal{M}\backslash\mathcal{H}^{-}$
and any infinite subset $A\subseteq\mathbb{N}$, there exists an infinite
subset $B_{\mathcal{K},A}\subseteq A$ such that the subsequence $\{(\text{\textgreek{y}}_{\text{\textgreek{t}}_{n}},T\text{\textgreek{y}}_{\text{\textgreek{t}}_{n}})\}_{n\in B_{\mathcal{K},A}}$
of $\{(\text{\textgreek{y}}_{\text{\textgreek{t}}_{n}},T\text{\textgreek{y}}_{\text{\textgreek{t}}_{n}})\}_{n\in\mathbb{N}}$
converges weakly in the $H^{1}(\mathcal{K})\times H^{1}(\mathcal{K})$
norm and strongly in the $L^{2}(\mathcal{K})\times L^{2}(\mathcal{K})$
to some limit pair $(\tilde{\text{\textgreek{y}}}_{\mathcal{K}},\grave{\text{\textgreek{y}}}_{\mathcal{K}})$
in $H^{1}(\mathcal{K})\times H^{1}(\mathcal{K})$. Note that in this
case, we necessarily have $\grave{\text{\textgreek{y}}}_{\mathcal{K}}=T\tilde{\text{\textgreek{y}}}_{\mathcal{K}}$
in the sense of distributions.

Let $\{\mathcal{K}_{m}\}_{m\in\mathbb{N}}$ be a sequence of compact
subsets of $\mathcal{M}\backslash\mathcal{H}^{-}$ such that $\mathcal{K}_{m}\subset\mathcal{K}_{m+1}$
and $\cup_{m\in\mathbb{N}}\mathcal{K}_{m}=\mathcal{M}\backslash\mathcal{H}^{-}$.
Then, setting $A_{-1}=B_{\mathcal{K}_{0},\mathbb{N}}$, $A_{m}=B_{\mathcal{K}_{m},A_{m-1}}$
for $m\in\mathbb{N}$, and defining recursively 
\begin{equation}
A=\cup_{m\in\mathbb{N}}\big\{\min\big(A_{m}\backslash\{n:\, n<m\}\big)\big\},
\end{equation}
we infer that there exists a pair $(\tilde{\text{\textgreek{y}}},T\tilde{\text{\textgreek{y}}})\in H_{loc}^{1}(\mathcal{M}\backslash\mathcal{H}^{-})\times H_{loc}^{1}(\mathcal{M}\backslash\mathcal{H}^{-})$
such that the subsequence $\{\text{\textgreek{y}}_{\text{\textgreek{t}}_{n}},T\text{\textgreek{y}}_{\text{\textgreek{t}}_{n}}\}_{n\in A}$
satisfies (\ref{eq:WeakConvergence}) and, for any compact $\mathcal{K}\subset\mathcal{M}\backslash\mathcal{H}^{-}$
(after permanently renumbering the indices of $\{\text{\textgreek{y}}_{\text{\textgreek{t}}_{n}}\}_{n\in A}$
through a map $\mathbb{N}\rightarrow A$) : 
\begin{equation}
\lim_{n\rightarrow+\infty}\sum_{j=0}^{1}\int_{\mathcal{K}}|T^{j}\text{\textgreek{y}}_{\text{\textgreek{t}}_{n}}-T^{j}\tilde{\text{\textgreek{y}}}|^{2}\, dg=0.\label{eq:StrongConvergence-1}
\end{equation}

Since the functions $\text{\textgreek{y}}_{\text{\textgreek{t}}_{n}}$
solve (\ref{eq:WaveEquation}) on $\{t>-\text{\textgreek{t}}_{n}\}$,
$\tilde{\text{\textgreek{y}}}$ also solves (\ref{eq:WaveEquation})
on $\mathcal{M}\backslash\mathcal{H}^{-}$ in the sense of distributions,
in view of (\ref{eq:WeakConvergence}). Furthermore, in view of (\ref{BoundednessEnergyPsi}),
we can bound for any $\text{\textgreek{t}}_{*}>0$ 
\begin{equation}
\sup_{n\in\mathbb{N}}\Big\{\sum_{j=0}^{1}\int_{\max\{-\text{\textgreek{t}}_{*},-\text{\textgreek{t}}_{n}\}}^{\text{\textgreek{t}}_{*}}\Big(\int_{\text{\textgreek{S}}_{\text{\textgreek{t}}}}J_{\text{\textgreek{m}}}^{N}(T^{j}\text{\textgreek{y}})n^{\text{\textgreek{m}}}\Big)\, d\text{\textgreek{t}}\Big\}<+\infty
\end{equation}
and, thus, (\ref{eq:UpperBoundEnergy}) holds. The identity (\ref{eq:VanishingOutOfTheErgoregion})
follows by letting $n\rightarrow+\infty$ in (\ref{eq:ForLocalConvergence-1}).
Finally, (\ref{eq:StrongConvergence}) follows from (\ref{eq:VanishingOutOfTheErgoregion}),
(\ref{eq:ForLocalConvergence-1}) and (\ref{eq:StrongConvergence-1}).
\end{proof}

\subsection{\label{sub:Finishing-the-proof}Finishing the proof}

Let us assume, for the sake of contradiction, that any smooth solution
$\text{\textgreek{f}}$ to (\ref{eq:WaveEquation}) on $\mathcal{D}(\text{\textgreek{S}})$
with compactly supported initial data on $\text{\textgreek{S}}$ satisfies
(\ref{eq:BoundForContradiction}). 

Let $\text{\textgreek{f}}:\mathcal{D}(\text{\textgreek{S}})\rightarrow\mathbb{C}$
be as in the statement of Lemma \ref{lem:InitialDataNegativeTEenergy},
and let us set 
\begin{equation}
\text{\textgreek{y}}=T\text{\textgreek{f}}.
\end{equation}
In view of Lemma \ref{lem:InitialDataNegativeTEenergy}, $(\text{\textgreek{y}},T\text{\textgreek{y}})|_{\text{\textgreek{S}}}$
is smooth and compactly supported in $\mathcal{U}\cap\text{\textgreek{S}}$,
and moreover 
\begin{equation}
\int_{\text{\textgreek{S}}}J_{\text{\textgreek{m}}}^{T}(\text{\textgreek{y}})n^{\text{\textgreek{m}}}=-1.\label{eq:NegativeEnergyPsi}
\end{equation}
Let $\{\text{\textgreek{t}}_{n}\}_{n\in\mathbb{N}}$ be the sequence
defined by Lemma \ref{lem:DecayToATrappedSolution}, and let $\text{\textgreek{y}}_{\text{\textgreek{t}}_{n}},\tilde{\text{\textgreek{y}}}:\mathcal{M}\backslash\mathcal{H}^{-}\rightarrow\mathbb{C}$
be the functions defined by Lemma \ref{lem:DecayToATrappedSolution}. 

We will make use of the following identity, appearing also in \cite{Friedman1978},
holding for any acausal, inextendible and piecewise smooth hypersurface
$\mathcal{S}\subset\mathcal{M}\backslash\mathcal{H}^{-}$ such that
$T$ is everywhere transversal to $\mathcal{S}$ and any smooth function
$\text{\textgreek{f}}_{1}:\mathcal{M}\backslash\mathcal{H}^{-}\rightarrow\mathbb{C}$
such that $supp(\text{\textgreek{f}}_{1})\cap\mathcal{S}$ is compact
and $supp(\text{\textgreek{f}}_{1})\cap\mathcal{S}\cap\mathcal{H}^{+}=\emptyset$:
\begin{equation}
\int_{\mathcal{S}}J_{\text{\textgreek{m}}}^{T}(\text{\textgreek{f}}_{1})n_{\mathcal{S}}^{\text{\textgreek{m}}}=\int_{\mathcal{S}}Re\big\{ n_{\mathcal{S}}\text{\textgreek{f}}_{1}\cdot T\bar{\text{\textgreek{f}}}_{1}-\text{\textgreek{f}}_{1}\cdot n_{\mathcal{S}}(T\bar{\text{\textgreek{f}}}_{1})\big\}\, dg_{\mathcal{S}}-\int_{\mathcal{S}}Re\big\{\text{\textgreek{f}}_{1}\square_{g}\bar{\text{\textgreek{f}}}_{1}\big\} g(n_{\mathcal{S}},T)\, dg_{\mathcal{S}},\label{eq:Energy-SymplecticForm}
\end{equation}
where $n_{\mathcal{S}}$ is the future directed unit normal to $\mathcal{S}$. 

\medskip{}

\noindent \emph{Proof of (\ref{eq:Energy-SymplecticForm}). }One way
to obtain (\ref{eq:Energy-SymplecticForm}) is the following: Since
$supp(\text{\textgreek{f}}_{1})\cap\mathcal{S}$ is compact and $supp(\text{\textgreek{f}}_{1})\cap\mathcal{S}\cap\mathcal{H}^{+}=\emptyset$,
we can assume without loss of generality (by changing $\text{\textgreek{f}}_{1}$
away from $\mathcal{S}$ if necessary) that $\text{\textgreek{f}}_{1}$
has compact support in $\mathcal{M}\backslash(\mathcal{H}^{+}\cup\mathcal{H}^{-})$.
Then, integrating the identity 
\begin{equation}
-2Re\big\{ T\text{\textgreek{f}}_{1}\square_{g}\bar{\text{\textgreek{f}}}_{1}\big\}=-Re\big\{ T\text{\textgreek{f}}_{1}\square_{g}\bar{\text{\textgreek{f}}}_{1}-\text{\textgreek{f}}_{1}\square_{g}(T\bar{\text{\textgreek{f}}}_{1})+T(\text{\textgreek{f}}_{1}\square_{g}\bar{\text{\textgreek{f}}}_{1})\big\}\label{eq:BasicIdentity}
\end{equation}
 over $J^{-}(\mathcal{S})$, we readily obtain: 
\begin{equation}
-2\int_{J^{-}(\mathcal{S})}Re\big\{ T\text{\textgreek{f}}_{1}\square_{g}\bar{\text{\textgreek{f}}}_{1}\big\}\, dg=-\int_{J^{-}(\mathcal{S})}Re\big\{ T\text{\textgreek{f}}_{1}\square_{g}\bar{\text{\textgreek{f}}}_{1}-\text{\textgreek{f}}_{1}\square_{g}(T\bar{\text{\textgreek{f}}}_{1})\big\}\, dg+\int_{\mathcal{S}}Re\big\{\text{\textgreek{f}}_{1}\square_{g}\bar{\text{\textgreek{f}}}_{1}\big\} g(n_{\mathcal{S}},T)\, dg_{\mathcal{S}}.\label{eq:IdentityForSymplectic}
\end{equation}
Using the identities 
\begin{equation}
-2\int_{J^{-}(\mathcal{S})}Re\big\{ T\text{\textgreek{f}}_{1}\square_{g}\bar{\text{\textgreek{f}}}_{1}\big\}\, dg=\int_{\mathcal{S}}J_{\text{\textgreek{m}}}^{T}(\text{\textgreek{f}})n_{\mathcal{S}}^{\text{\textgreek{m}}}
\end{equation}
and
\[
-\int_{J^{-}(\mathcal{S})}Re\big\{ T\text{\textgreek{f}}_{1}\square_{g}\bar{\text{\textgreek{f}}}_{1}-\text{\textgreek{f}}_{1}\square_{g}(T\bar{\text{\textgreek{f}}}_{1})\big\}\, dg=\int_{\mathcal{S}}Re\big\{ n_{\mathcal{S}}\text{\textgreek{f}}_{1}\cdot T\bar{\text{\textgreek{f}}}_{1}-\text{\textgreek{f}}_{1}\cdot n_{\mathcal{S}}(T\bar{\text{\textgreek{f}}}_{2})\big\}\, dg_{\mathcal{S}}
\]
(holding because of the assumption that $\text{\textgreek{f}}_{1}$
has compact support in $\mathcal{M}\backslash(\mathcal{H}^{+}\cup\mathcal{H}^{-})$),
we finally obtain (\ref{eq:Energy-SymplecticForm}).

\medskip{}

We will also introduce the following (indefinite) inner product on
the hypersurfaces $\text{\textgreek{S}}_{\text{\textgreek{t}}}$:
For any two functions $\text{\textgreek{f}}_{1},\text{\textgreek{f}}_{2}:\mathcal{M}\backslash\mathcal{H}^{-}\rightarrow\mathbb{C}$
such that for any $\text{\textgreek{t}}_{*}>0$: 
\[
\sup_{\text{\textgreek{t}}\in[-\text{\textgreek{t}}_{*},\text{\textgreek{t}}_{*}]}\sum_{j=1}^{2}\int_{\text{\textgreek{S}}_{\text{\textgreek{t}}}}\big(J_{\text{\textgreek{m}}}^{N}(\text{\textgreek{f}}_{j})+J_{\text{\textgreek{m}}}^{N}(T\text{\textgreek{f}}_{j})\big)n^{\text{\textgreek{m}}}<+\infty
\]
 and at least one of them has compact support in space (i.\,e.~for
any $\text{\textgreek{t}}_{*}>0$, its support in $\{-\text{\textgreek{t}}_{*}\le t\le\text{\textgreek{t}}_{*}\}$
is compact), we will define for any $\text{\textgreek{t}}\in\mathbb{R}$:
\begin{equation}
\left\langle \text{\textgreek{f}}_{1},\text{\textgreek{f}}_{2}\right\rangle _{T,\text{\textgreek{t}}}=\frac{1}{2}\int_{\text{\textgreek{S}}_{\text{\textgreek{t}}}}Re\Big\{\big(n_{\mathcal{S}}\text{\textgreek{f}}_{1}\cdot T\bar{\text{\textgreek{f}}}_{2}+n_{\mathcal{S}}\text{\textgreek{f}}_{2}\cdot T\bar{\text{\textgreek{f}}}_{1}\big)-\big(\text{\textgreek{f}}_{1}\cdot n_{\mathcal{S}}(T\bar{\text{\textgreek{f}}}_{2})+\text{\textgreek{f}}_{2}\cdot n_{\mathcal{S}}(T\bar{\text{\textgreek{f}}}_{1})\big)\Big\}.\label{eq:TInnerProductSymplecic}
\end{equation}
Note that, if both $\text{\textgreek{f}}_{1}$ and $\text{\textgreek{f}}_{2}$
solve equation (\ref{eq:WaveEquation}) and at least one of them is
supported away from $\mathcal{H}^{+}$, then for any $\text{\textgreek{t}}_{1}\le\text{\textgreek{t}}_{2}$
the following identity holds: 
\begin{equation}
\left\langle \text{\textgreek{f}}_{1},\text{\textgreek{f}}_{2}\right\rangle _{T,\text{\textgreek{t}}_{1}}=\left\langle \text{\textgreek{f}}_{1},\text{\textgreek{f}}_{2}\right\rangle _{T,\text{\textgreek{t}}_{2}}.\label{eq:InnerProductIdentity}
\end{equation}
The equality (\ref{eq:TInnerProductSymplecic}) readily follows after
integrating the identity 
\begin{equation}
\frac{1}{2}Re\Big\{\big(\square_{g}\text{\textgreek{f}}_{1}T\bar{\text{\textgreek{f}}}_{2}+\square_{g}\text{\textgreek{f}}_{2}T\bar{\text{\textgreek{f}}}_{1}\big)-\big(\text{\textgreek{f}}_{1}\square_{g}(T\bar{\text{\textgreek{f}}}_{2})+\text{\textgreek{f}}_{2}\square_{g}(T\bar{\text{\textgreek{f}}}_{1})\big)\Big\}=0
\end{equation}
 over $\mathcal{R}(\text{\textgreek{t}}_{1},\text{\textgreek{t}}_{2})$.
\begin{rem*}
Note that, in the case when $\text{\textgreek{f}}_{1}$ and $\text{\textgreek{f}}_{2}$
solve equation (\ref{eq:WaveEquation}) and at least one of them is
supported away from $\mathcal{H}^{+}$, the expression (\ref{eq:TInnerProductSymplecic})
is the inner product of $\text{\textgreek{f}}_{1},\text{\textgreek{f}}_{2}$
associated to the $\int_{\text{\textgreek{S}}_{\text{\textgreek{t}}}}J_{\text{\textgreek{m}}}^{T}(\cdot)n^{\text{\textgreek{m}}}$
``norm'', in view of (\ref{eq:Energy-SymplecticForm}). Thus, (\ref{eq:InnerProductIdentity})
is a consequence of the conservation of the $T$-energy flux.
\end{rem*}
For any $\text{\textgreek{t}}\ge0$, the $T$-energy identity for
$\text{\textgreek{y}}$ in the region $\mathcal{R}(0,\text{\textgreek{t}})$
combined with (\ref{eq:NegativeEnergyPsi}) yields:
\begin{equation}
\int_{\text{\textgreek{S}}_{\text{\textgreek{t}}}}J_{\text{\textgreek{m}}}^{T}(\text{\textgreek{y}})n^{\text{\textgreek{m}}}+\int_{\mathcal{H}^{+}\cap\mathcal{R}(0,\text{\textgreek{t}})}J_{\text{\textgreek{m}}}^{T}(\text{\textgreek{y}})n_{\mathcal{H}^{+}}^{\text{\textgreek{m}}}=-1.\label{eq:EnergyIdentity}
\end{equation}
Since $T$ is causal on $\mathcal{M}\backslash\mathscr{E}$, we can
bound for any $\text{\textgreek{t}}\ge0$ and any $\text{\textgreek{d}}>0$:
\begin{equation}
\int_{\text{\textgreek{S}}_{\text{\textgreek{t}}}\backslash\mathscr{E}_{\text{\textgreek{d}}}}J_{\text{\textgreek{m}}}^{T}(\text{\textgreek{y}})n^{\text{\textgreek{m}}}+\int_{\mathcal{H}^{+}\cap\mathcal{R}(0,\text{\textgreek{t}})}J_{\text{\textgreek{m}}}^{T}(\text{\textgreek{y}})n_{\mathcal{H}^{+}}^{\text{\textgreek{m}}}\ge0.\label{eq:PositivityOfFlux}
\end{equation}
Therefore, (\ref{eq:EnergyIdentity}) and (\ref{eq:PositivityOfFlux})
imply that for any $\text{\textgreek{t}}\ge0$, $\text{\textgreek{d}}>0$:
\begin{equation}
\int_{\text{\textgreek{S}}_{\text{\textgreek{t}}}\cap\mathscr{E}_{\text{\textgreek{d}}}}J_{\text{\textgreek{m}}}^{T}(\text{\textgreek{y}})n^{\text{\textgreek{m}}}\le-1.\label{eq:LowerboundErgoregion}
\end{equation}
Since the functions $\text{\textgreek{y}}_{\text{\textgreek{t}}_{n}}$
satisfy (\ref{eq:TranslatedSolution}), from (\ref{eq:LowerboundErgoregion})
we obtain for any $\text{\textgreek{d}}>0$, any $\text{\textgreek{t}}>-\text{\textgreek{t}}_{n}$,
and any $n\in\mathbb{N}$:
\begin{equation}
\int_{\text{\textgreek{S}}_{\text{\textgreek{t}}}\cap\mathscr{E}_{\text{\textgreek{d}}}}J_{\text{\textgreek{m}}}^{T}(\text{\textgreek{y}}_{\text{\textgreek{t}}_{n}})n^{\text{\textgreek{m}}}\le-1.\label{eq:NegativeEnergySequence}
\end{equation}

Let $\text{\textgreek{q}}:\mathcal{M}\backslash\mathcal{H}^{-}\rightarrow[0,1]$
be a smooth function of compact support such that $\text{\textgreek{q}}\equiv1$
on $\mathcal{R}(-1,2)\cap\mathscr{E}_{\text{\textgreek{d}}_{0}}$
for some $0<\text{\textgreek{d}}_{0}<1$ and $supp(\text{\textgreek{q}})\cap\mathcal{H}^{+}=\emptyset$.
Applying the identity (\ref{eq:Energy-SymplecticForm}) for the function
$\text{\textgreek{q}}\text{\textgreek{y}}_{\text{\textgreek{t}}_{n}}$,
and using the fact that $\text{\textgreek{y}}_{\text{\textgreek{t}}_{n}}$
solves (\ref{eq:WaveEquation}), we obtain for any $n\in\mathbb{N}$
and any $0<\text{\textgreek{t}}_{0}\le1$: 
\begin{align}
\int_{0}^{\text{\textgreek{t}}_{0}}\Big(\int_{\text{\textgreek{S}}_{s}}J_{\text{\textgreek{m}}}^{T}(\text{\textgreek{q}}\text{\textgreek{y}}_{\text{\textgreek{t}}_{n}})n^{\text{\textgreek{m}}}\Big)\, ds=\int_{0}^{\text{\textgreek{t}}_{0}}\Big(\int_{\text{\textgreek{S}}_{s}} & Re\big\{ n(\text{\textgreek{q}}\text{\textgreek{y}}_{\text{\textgreek{t}}_{n}})T(\text{\textgreek{q}}\bar{\text{\textgreek{y}}}_{\text{\textgreek{t}}_{n}})-(\text{\textgreek{q}}\text{\textgreek{y}}_{\text{\textgreek{t}}_{n}})n_{\mathcal{S}}(T(\text{\textgreek{q}}\bar{\text{\textgreek{y}}}_{\text{\textgreek{t}}_{n}}))\big\}\, dg_{\text{\textgreek{S}}}\Big)\, ds-\label{eq:Energy-SymplecticForm-1}\\
 & -\int_{0}^{\text{\textgreek{t}}_{0}}\Big(\int_{\text{\textgreek{S}}_{s}}Re\big\{\text{\textgreek{q}}\text{\textgreek{y}}_{\text{\textgreek{t}}_{n}}(2\nabla^{\text{\textgreek{m}}}\text{\textgreek{q}}\nabla_{\text{\textgreek{m}}}\bar{\text{\textgreek{y}}}_{\text{\textgreek{t}}_{n}}+(\square_{g}\text{\textgreek{q}})\bar{\text{\textgreek{y}}}_{\text{\textgreek{t}}_{n}}\big\} g(n,T)\, dg_{\text{\textgreek{S}}}\Big)\, ds.\nonumber 
\end{align}
In view of (\ref{eq:NegativeEnergySequence}) and the fact that $\text{\textgreek{q}}\equiv1$
on $\mathcal{R}(-1,2)\cap\mathscr{E}_{\text{\textgreek{d}}_{0}}$,
(\ref{eq:Energy-SymplecticForm-1}) yields: 
\begin{equation}
\int_{0}^{\text{\textgreek{t}}_{0}}\Big(\int_{\text{\textgreek{S}}_{s}\cap\mathscr{E}_{\text{\textgreek{d}}_{0}}}Re\big\{ n\text{\textgreek{y}}_{\text{\textgreek{t}}_{n}}T\bar{\text{\textgreek{y}}}_{\text{\textgreek{t}}_{n}}-\text{\textgreek{y}}_{\text{\textgreek{t}}_{n}}n_{\mathcal{S}}(T\bar{\text{\textgreek{y}}}_{\text{\textgreek{t}}_{n}})\big\}\, dg_{\text{\textgreek{S}}}\Big)\, ds\le-\text{\textgreek{t}}_{0}+C\sum_{j=0}^{1}\int_{supp(\text{\textgreek{q}})\backslash\mathscr{E}_{\text{\textgreek{d}}_{0}}}\big(|\nabla T^{j}\text{\textgreek{y}}_{\text{\textgreek{t}}_{n}}|_{g_{ref}}^{2}+|T^{j}\text{\textgreek{y}}_{\text{\textgreek{t}}_{n}}|^{2}\big)\, dg.\label{eq:FornegativeEnergySequenceLimit}
\end{equation}
Let us examine the properties of (\ref{eq:FornegativeEnergySequenceLimit})
as $n\rightarrow+\infty$. 

\begin{enumerate}

\item In view of (\ref{eq:StrongConvergence}) and the fact that
$supp(\text{\textgreek{q}})$ is compact, the right hand side of (\ref{eq:FornegativeEnergySequenceLimit})
converges to $-\text{\textgreek{t}}_{0}$ as $n\rightarrow+\infty$.

\item For any compact subset $\mathcal{K}\subset\mathcal{M}\backslash\mathcal{H}^{+}$
and any pair of sequences $(\text{\textgreek{f}}_{n}^{(1)},\text{\textgreek{f}}_{n}^{(2)})_{n\in\mathbb{N}}\in L^{2}(\mathcal{K})\times L^{2}(\mathcal{K})$
such that $\sup_{n}||\text{\textgreek{f}}_{n}^{(1)}||_{L^{2}(\mathcal{K})}<+\infty$,
$\text{\textgreek{f}}_{n}^{(1)}\rightarrow\text{\textgreek{f}}^{(1)}$
weakly in $L^{2}(\mathcal{K})$ and $\text{\textgreek{f}}_{n}^{(2)}\rightarrow\text{\textgreek{f}}^{(2)}$
strongly in $L^{2}(\mathcal{K})$, one readily obtains that 
\begin{equation}
\lim_{n\rightarrow+\infty}\int_{\mathcal{K}}\text{\textgreek{f}}_{n}^{(1)}\text{\textgreek{f}}_{n}^{(2)}\, dg=\int_{\mathcal{K}}\text{\textgreek{f}}^{(1)}\text{\textgreek{f}}^{(2)}\, dg.
\end{equation}
Therefore, (\ref{BoundednessEnergyPsi}), (\ref{eq:WeakConvergence})
and (\ref{eq:StrongConvergence}) imply that: 
\begin{equation}
\lim_{n\rightarrow+\infty}\int_{0}^{\text{\textgreek{t}}_{0}}\Big(\int_{\text{\textgreek{S}}_{s}\cap\mathscr{E}_{\text{\textgreek{d}}_{0}}}Re\big\{ n\text{\textgreek{y}}_{\text{\textgreek{t}}_{n}}T\bar{\text{\textgreek{y}}}_{\text{\textgreek{t}}_{n}}-\text{\textgreek{y}}_{\text{\textgreek{t}}_{n}}n_{\mathcal{S}}(T\bar{\text{\textgreek{y}}}_{\text{\textgreek{t}}_{n}})\big\}\, dg_{\text{\textgreek{S}}}\Big)\, ds=\int_{0}^{\text{\textgreek{t}}_{0}}\Big(\int_{\text{\textgreek{S}}_{s}\cap\mathscr{E}_{\text{\textgreek{d}}_{0}}}Re\big\{ n\tilde{\text{\textgreek{y}}}T\bar{\tilde{\text{\textgreek{y}}}}-\tilde{\text{\textgreek{y}}}n_{\mathcal{S}}(T\bar{\tilde{\text{\textgreek{y}}}})\big\}\, dg_{\text{\textgreek{S}}}\Big)\, ds.
\end{equation}

\end{enumerate}

Thus, taking the limit $n\rightarrow+\infty$ in (\ref{eq:FornegativeEnergySequenceLimit}),
we obtain for any $0<\text{\textgreek{t}}_{0}\le1$: 
\begin{equation}
\int_{0}^{\text{\textgreek{t}}_{0}}\Big(\int_{\text{\textgreek{S}}_{s}\cap\mathscr{E}_{\text{\textgreek{d}}_{0}}}Re\big\{ n\tilde{\text{\textgreek{y}}}T\bar{\tilde{\text{\textgreek{y}}}}-\tilde{\text{\textgreek{y}}}n_{\mathcal{S}}(T\bar{\tilde{\text{\textgreek{y}}}})\big\}\, dg_{\text{\textgreek{S}}}\Big)\, ds\le-\text{\textgreek{t}}_{0}.\label{eq:NegativeEnergyTrappedSolution}
\end{equation}

According to Lemma \ref{lem:DecayToATrappedSolution}, $\tilde{\text{\textgreek{y}}}$
belongs to $H_{loc}^{1}(\mathcal{M}\backslash\mathcal{H}^{-})$ and
vanishes outside $\mathscr{E}_{ext}$, and, thus, Assumption \hyperref[Assumption 4]{A1}
implies that 
\begin{equation}
\tilde{\text{\textgreek{y}}}\equiv0\mbox{ on }\mathcal{U}.\label{eq:SupportTrappedFunction}
\end{equation}
Since $(\text{\textgreek{y}},T\text{\textgreek{y}})|_{\text{\textgreek{S}}}$
is compactly supported in $\mathcal{U}\cap\text{\textgreek{S}}$ and
$\mathcal{U}$ is open, in view of the finite speed of propagation
property of equation (\ref{eq:WaveEquation}), there exists some $0<\text{\textgreek{t}}_{0}\le1$
(depending on the support of $\text{\textgreek{y}}$ on $\text{\textgreek{S}}\cap\mathcal{U}$),
such that for all $0\le\bar{\text{\textgreek{t}}}\le\text{\textgreek{t}}_{0}$:
\begin{equation}
(\text{\textgreek{y}},T\text{\textgreek{y}})=(0,0)\mbox{ on }\text{\textgreek{S}}_{\bar{\text{\textgreek{t}}}}\backslash\mathcal{U}.\label{eq:FromFiniteSpeedOfPropagation}
\end{equation}
In view of the fact that $\mathcal{U}$ is translation invariant,
(\ref{eq:TInnerProductSymplecic}), (\ref{eq:SupportTrappedFunction})
and (\ref{eq:FromFiniteSpeedOfPropagation}) imply that for any $\text{\textgreek{t}}\in\mathbb{R}$:
\begin{equation}
\int_{0}^{\text{\textgreek{t}}_{0}}\left\langle \text{\textgreek{y}},\mathcal{F}_{\text{\textgreek{t}}}^{*}\tilde{\text{\textgreek{y}}}\right\rangle _{T,\bar{\text{\textgreek{t}}}}\, d\bar{\text{\textgreek{t}}}=0\label{eq:ZeroInnerProduct}
\end{equation}
(the expression (\ref{eq:ZeroInnerProduct}) is well defined, in view
of (\ref{eq:UpperBoundEnergy})), where 
\begin{equation}
\mathcal{F}_{\text{\textgreek{t}}}^{*}\tilde{\text{\textgreek{y}}}(t,x)\doteq\tilde{\text{\textgreek{y}}}(t+\text{\textgreek{t}},x).\label{eq:TranslatedTildePsi}
\end{equation}
In view of Assumption \hyperref[Assumption 3]{G3}, we have $\mathscr{E}_{ext}\cap\mathcal{H}^{+}=\emptyset$.
Thus, since $\tilde{\text{\textgreek{y}}}$ vanishes outside $\mathscr{E}_{ext}$,
we have $\tilde{\text{\textgreek{y}}}\equiv0$ on $\mathcal{H}^{+}$.
This fact, combined with (\ref{eq:ZeroInnerProduct}) and the identity
(\ref{eq:InnerProductIdentity}) (applied to a sequence of smooth
approximations of $\tilde{\text{\textgreek{y}}}$ in the norm defined
by (\ref{eq:UpperBoundEnergy})) yields for any $s,\text{\textgreek{t}}\in\mathbb{R}$:
\begin{equation}
\int_{s}^{s+\text{\textgreek{t}}_{0}}\left\langle \text{\textgreek{y}},\mathcal{F}_{\text{\textgreek{t}}}^{*}\tilde{\text{\textgreek{y}}}\right\rangle _{T,\bar{\text{\textgreek{t}}}}\, d\bar{\text{\textgreek{t}}}=0.\label{eq:ZeroInnerProductEverywhere}
\end{equation}

In view of the definitions (\ref{eq:TranslatedSolution}) and (\ref{eq:TranslatedTildePsi}),
the identity (\ref{eq:ZeroInnerProductEverywhere}) for $s=\text{\textgreek{t}}_{n}$
and $\text{\textgreek{t}}=-s$ yields: 
\begin{equation}
\int_{0}^{\text{\textgreek{t}}_{0}}\left\langle \text{\textgreek{y}}_{\text{\textgreek{t}}_{n}},\tilde{\text{\textgreek{y}}}\right\rangle _{T,\bar{\text{\textgreek{t}}}}\, d\bar{\text{\textgreek{t}}}=0.\label{eq:ZeroInnerProductBeforeContradiction}
\end{equation}
Thus, since $\tilde{\text{\textgreek{y}}}$ is supported in $\mathscr{E}_{ext}$
and $\mathcal{R}(0,1)\cap\mathscr{E}_{ext}$ is compact, (\ref{eq:WeakConvergence})
implies, after letting $n\rightarrow+\infty$ in (\ref{eq:ZeroInnerProductBeforeContradiction}):
\begin{equation}
\int_{0}^{\text{\textgreek{t}}_{0}}\left\langle \tilde{\text{\textgreek{y}}},\tilde{\text{\textgreek{y}}}\right\rangle _{T,\bar{\text{\textgreek{t}}}}\, d\bar{\text{\textgreek{t}}}=0
\end{equation}
or, in view of (\ref{eq:TInnerProductSymplecic}): 
\begin{equation}
\int_{0}^{\text{\textgreek{t}}_{0}}\Big(\int_{\text{\textgreek{S}}_{s}}Re\big\{ n\tilde{\text{\textgreek{y}}}T\bar{\tilde{\text{\textgreek{y}}}}-\tilde{\text{\textgreek{y}}}n_{\mathcal{S}}(T\bar{\tilde{\text{\textgreek{y}}}})\big\}\, dg_{\text{\textgreek{S}}_{s}}\Big)\, ds=0.\label{eq:ZeroForContradiction}
\end{equation}
The contradiction now follows after comparing (\ref{eq:ZeroForContradiction})
with (\ref{eq:NegativeEnergyTrappedSolution}) (using also the fact
that $\tilde{\text{\textgreek{y}}}$ is supported in $\mathscr{E}_{ext}$).
Thus, the proof of Theorem \ref{thm:FriedmanInstability} is complete.
\qed

\section{\label{sec:FrequencyDecomposition}Frequency decomposition}

As we remarked in Section \ref{sec:Proof-of-Theorem}, Sections \ref{sec:FrequencyDecomposition}--\ref{sec:Carleman}
will be devoted to the development of the technical machinery required
for the proof of Proposition \ref{prop:QuantitativeDecayOutsideErgoregion}.
In particular, in this section, we will assume that we are given a
smooth function $\text{\textgreek{y}}:\mathcal{M}\rightarrow\mathbb{C}$
solving the wave equation (\ref{eq:WaveEquation}) on $\mathcal{D}(\text{\textgreek{S}})$
(i.\,e.~the domain of dependence of $\text{\textgreek{S}}$) with
compactly supported initial data on $\text{\textgreek{S}}$, such
that 
\begin{equation}
\mathcal{E}[\text{\textgreek{y}}]\doteq\sup_{\text{\textgreek{t}}\ge0}\int_{\text{\textgreek{S}}_{\text{\textgreek{t}}}}J^{N}(\text{\textgreek{y}})n^{\text{\textgreek{m}}}<+\infty.\label{eq:BoundednessEnergy}
\end{equation}
 We will also introduce the frequency parameters $\text{\textgreek{w}}_{+}>1$
and $0<\text{\textgreek{w}}_{0}<1$, and we will decompose the function
$\text{\textgreek{y}}$ into components with localised frequency support
(associated to the $t$ variable). We will always identify $\mathcal{M}\backslash\mathcal{H}^{-}$
with $\mathbb{R}\times\text{\textgreek{S}}$ under the flow of $T$
as explained in Section \ref{sub:CoordinateCharts}. The constructions
in this section will be similar to the associated constructions in
Section 4 of \cite{Moschidisb}.

\subsection{Weighted energy estimates for $\text{\textgreek{y}}$}

Before proceeding to cut off $\text{\textgreek{y}}$ in the frequency
space, we will first derive a few bounds for some suitable weighted
energies of $\text{\textgreek{y}}$.

In view of the finite speed of propagation for solutions to (\ref{eq:WaveEquation})
and the fact that $(\text{\textgreek{y}},T\text{\textgreek{y}})|_{\text{\textgreek{S}}_{0}}$
is compactly supported, we infer that $(\text{\textgreek{y}},T\text{\textgreek{y}})|_{\text{\textgreek{S}}_{\text{\textgreek{t}}}}$
is also compactly supported for any $\text{\textgreek{t}}\ge0$. The
following lemma is a straightforward application of the finite speed
of propagation property of equation (\ref{eq:WaveEquation}):
\begin{lem}
\label{lem:WeightedBoundedness}For any $a>0$, any $R\gg1$ (so that
$T$ is timelike in $\{r\ge R\}$), any $\text{\textgreek{t}}_{1}\ge0$
and any $\text{\textgreek{t}}\in\mathbb{R}$:
\begin{equation}
\int_{\text{\textgreek{S}}_{\text{\textgreek{t}}}\cap\mathcal{D}(\text{\textgreek{S}}_{\text{\textgreek{t}}_{1}}\cap\{r\ge R\})}\big(\log(r)\big)^{a}J_{\text{\textgreek{m}}}^{T}(\text{\textgreek{y}})n^{\text{\textgreek{m}}}\le C_{a}\big(\log(2+|\text{\textgreek{t}}-\text{\textgreek{t}}_{1}|)\big)^{a+1}\int_{\text{\textgreek{S}}_{\text{\textgreek{t}}_{1}}\cap\{r\ge R\}}\big(\log(r)\big)^{a}J_{\text{\textgreek{m}}}^{T}(\text{\textgreek{y}})n^{\text{\textgreek{m}}}\label{eq:WeightedBoundednessLogarithmic}
\end{equation}
and 
\begin{equation}
\int_{\text{\textgreek{S}}_{\text{\textgreek{t}}}\cap\mathcal{D}(\text{\textgreek{S}}_{\text{\textgreek{t}}_{1}}\cap\{r\ge R\})}r^{a}J_{\text{\textgreek{m}}}^{T}(\text{\textgreek{y}})n^{\text{\textgreek{m}}}\le C_{a}\big(1+|\text{\textgreek{t}}-\text{\textgreek{t}}_{1}|\big)^{a}\int_{\text{\textgreek{S}}_{\text{\textgreek{t}}_{1}}\cap\{r\ge R\}}r^{a}J_{\text{\textgreek{m}}}^{T}(\text{\textgreek{y}})n^{\text{\textgreek{m}}},\label{eq:WeightedBoundednessPolynomial}
\end{equation}
where $\mathcal{D}(\text{\textgreek{S}}_{\text{\textgreek{t}}_{1}}\cap\{r\ge R\})\subset\{r\ge R\}$
is the domain of dependence of $\text{\textgreek{S}}_{\text{\textgreek{t}}}\cap\{r\ge R\}$
and $C_{a}>0$ depends only on $a$ and the geometry of $(\mathcal{M},g)$.\end{lem}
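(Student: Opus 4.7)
The plan is to combine finite speed of propagation for equation (\ref{eq:WaveEquation}) with conservation of the $T$-energy flux. For $R\gg1$, $T$ is strictly timelike on $\{r\ge R\}$ by Assumption \hyperref[Assumption 1]{G1}, so $J^{T}_{\text{\textgreek{m}}}(\text{\textgreek{y}})n^{\text{\textgreek{m}}}\ge 0$ there, and the flux of $J^{T}$ across any outgoing null hypersurface contained in $\{r\ge R\}$ is nonnegative by the dominant energy condition applied to the future-directed causal pair consisting of $T$ and the null generator. Since the left-hand side of each of (\ref{eq:WeightedBoundednessLogarithmic})--(\ref{eq:WeightedBoundednessPolynomial}) depends only on the data $(\text{\textgreek{y}},T\text{\textgreek{y}})|_{\text{\textgreek{S}}_{\text{\textgreek{t}}_{1}}\cap\{r\ge R\}}$ by finite speed of propagation, we may assume this data is supported in $\{r\ge R\}$.

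The most direct route is the vector field multiplier $X=f(r)T$ with $f(r)=r^{a}$ or $f(r)=(\log r)^{a}$. Since $T$ is Killing, $\nabla^{\text{\textgreek{m}}}J^{X}_{\text{\textgreek{m}}}(\text{\textgreek{y}})=f'(r)\,Q(T,\nabla r)$, and the near-Minkowski form of $g$ from Assumption \hyperref[Assumption 1]{G1} gives the pointwise bound $|Q(T,\nabla r)|\le C\,J^{T}_{\text{\textgreek{m}}}(\text{\textgreek{y}})n^{\text{\textgreek{m}}}$ on $\{r\ge R\}$, essentially the Minkowski identity $|\partial_{t}\text{\textgreek{y}}\cdot\partial_{r}\text{\textgreek{y}}|\le\frac{1}{2}(|\partial_{t}\text{\textgreek{y}}|^{2}+|\partial_{r}\text{\textgreek{y}}|^{2})$ with $O(r^{-1})$ corrections. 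Applying the divergence theorem on $\mathcal{R}(\text{\textgreek{t}}_{1},\text{\textgreek{t}})\cap\mathcal{D}(\text{\textgreek{S}}_{\text{\textgreek{t}}_{1}}\cap\{r\ge R\})$ and discarding the nonnegative null boundary flux of $J^{X}$, one obtains
\begin{equation*}
\frac{d}{ds}\!\int_{\text{\textgreek{S}}_{s}\cap\mathcal{D}}f(r)J^{T}_{\text{\textgreek{m}}}(\text{\textgreek{y}})n^{\text{\textgreek{m}}}\ \le\ C\int_{\text{\textgreek{S}}_{s}\cap\mathcal{D}}|f'(r)|\,J^{T}_{\text{\textgreek{m}}}(\text{\textgreek{y}})n^{\text{\textgreek{m}}}.
\end{equation*}
By finite speed of propagation, $\mathcal{D}(\text{\textgreek{S}}_{\text{\textgreek{t}}_{1}}\cap\{r\ge R\})\cap\text{\textgreek{S}}_{s}\subseteq\{r\ge R+c|s-\text{\textgreek{t}}_{1}|\}$ for some $c>0$ close to $1$ in the asymptotic region, so $|f'/f|$ there is bounded pointwise by $a/(R+c|s-\text{\textgreek{t}}_{1}|)$ in the polynomial case and by $a/[(R+c|s-\text{\textgreek{t}}_{1}|)\log(R+c|s-\text{\textgreek{t}}_{1}|)]$ in the logarithmic case. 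Gr\"onwall's inequality then produces the growth factors $(1+c|\text{\textgreek{t}}-\text{\textgreek{t}}_{1}|/R)^{Ca/c}$ and $(\log(R+c|\text{\textgreek{t}}-\text{\textgreek{t}}_{1}|)/\log R)^{Ca/c}$, which, for $R\ge 1$ fixed and $C/c=1+O(R^{-1})$, are dominated by $C_{a}(1+|\text{\textgreek{t}}-\text{\textgreek{t}}_{1}|)^{a}$ and $C_{a}(\log(2+|\text{\textgreek{t}}-\text{\textgreek{t}}_{1}|))^{a+1}$ respectively.

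Alternatively, the same bounds follow from a dyadic argument: decompose the initial data via a partition of unity $\{\text{\textgreek{q}}_{k}\}_{k\ge 0}$ adapted to the annuli $\{2^{k-1}R\le r\le 2^{k+1}R\}$, solve (\ref{eq:WaveEquation}) for each piece $\text{\textgreek{y}}_{k}$ with data $\text{\textgreek{q}}_{k}\cdot(\text{\textgreek{y}},T\text{\textgreek{y}})|_{\text{\textgreek{S}}_{\text{\textgreek{t}}_{1}}}$, apply unweighted $T$-energy conservation to each $\text{\textgreek{y}}_{k}$, bound the weight on the propagated support $\{r\le 2^{k+1}R+C|\text{\textgreek{t}}-\text{\textgreek{t}}_{1}|\}$ by $C_{a}(2^{k}R)^{a}(1+|\text{\textgreek{t}}-\text{\textgreek{t}}_{1}|)^{a}$ (and analogously for the logarithmic weight, via $\log r\le\log(2^{k}R)+\log(2+|\text{\textgreek{t}}-\text{\textgreek{t}}_{1}|)+O(1)$), and sum back to $\text{\textgreek{y}}$ using the finite overlap of $\{\text{\textgreek{q}}_{k}\}$ on $\text{\textgreek{S}}_{\text{\textgreek{t}}_{1}}$ together with the Hardy inequality of Lemma \ref{lem:HardyInequalities} to absorb the cross terms $|\nabla\text{\textgreek{q}}_{k}|^{2}|\text{\textgreek{y}}|^{2}$ generated by the partition.

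The main obstacle is the logarithmic case. In either approach, the exponent $a+1$ in (\ref{eq:WeightedBoundednessLogarithmic}) (versus the cleaner $a$ in (\ref{eq:WeightedBoundednessPolynomial})) reflects a loss relative to the sharp vector field identity: in the dyadic approach it is the logarithmically growing overlap of $\{\text{\textgreek{y}}_{k}\}$ at time $\text{\textgreek{t}}$ among the innermost annuli (precisely those where the weight $(\log r)^{a}$ is smallest), while in the multiplier approach it arises when balancing the asymptotic constant $C/c=1+O(R^{-1})$ against the logarithmic weight in the Gr\"onwall step. Either way the polynomial weight is rigid enough to concentrate the energy in the outer annuli and avoid this loss, while the slowly-varying logarithmic weight is not.
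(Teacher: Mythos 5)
Your two routes both point in a reasonable direction, but both sketches as written have real gaps, and neither is quite what the paper does. The paper's argument is a \emph{backward} dyadic one with no partition of unity at all: it splits the target integral over $\text{\textgreek{S}}_{\text{\textgreek{t}}}\cap\mathcal{D}$ into disjoint dyadic annuli $\mathcal{A}_{k}$, applies \emph{exact} conservation of the $T$-energy flux on the causal diamond $J^{-}(\mathcal{A}_{k}\cap\text{\textgreek{S}}_{\text{\textgreek{t}}})\cap\mathcal{D}^{+}(\text{\textgreek{S}}_{\text{\textgreek{t}}_{1}}\cap\{r\ge R\})$ (the null side fluxes have a sign because $T$ is timelike there), so that $\int_{\mathcal{A}_{k}\cap\text{\textgreek{S}}_{\text{\textgreek{t}}}\cap\mathcal{D}}J^{T}\le\sum_{|n-k|\lesssim L}\int_{\mathcal{A}_{n}\cap\text{\textgreek{S}}_{\text{\textgreek{t}}_{1}}}J^{T}$ with $L=\log_{2}(|\text{\textgreek{t}}-\text{\textgreek{t}}_{1}|+1)+C$, and then multiplies by the weight and re-sums. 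Since the decomposition is of the integral (which is $\sigma$-additive over disjoint annuli), there are no cross terms, no Hardy inequality, and no overlap factor; the only loss is the range $|n-k|\lesssim L$, giving $\sum_{|n-k|\lesssim L}2^{na}\sim 2^{(k+L)a}\sim 2^{ka}(1+|\text{\textgreek{t}}-\text{\textgreek{t}}_{1}|)^{a}$ (a geometric sum dominated by its top term) in the polynomial case, and $\sum_{|n-k|\lesssim L}n^{a}\sim L\,(k+L)^{a}\lesssim L^{a+1}k^{a}$ (an arithmetic sum of $\sim L$ terms) in the logarithmic case. So the exponent $a+1$ is purely a feature of the slowly-varying weight in this overlap-free backward sum; it has nothing to do with $C/c$ mismatches or partition-of-unity overlap.

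Concretely, your Gr\"onwall route as stated has a constant-tracking gap: with $C/c=1+O(R^{-1})>1$, the Gr\"onwall exponent is $a(1+O(R^{-1}))>a$, and $(1+|\text{\textgreek{t}}-\text{\textgreek{t}}_{1}|)^{a(1+\varepsilon)}$ is \emph{not} $\lesssim_{R,a}(1+|\text{\textgreek{t}}-\text{\textgreek{t}}_{1}|)^{a}$, so you do not recover (\ref{eq:WeightedBoundednessPolynomial}). (It can be repaired, but only by observing that the corrections to both the pointwise bound $|Q(\nabla r,T)|\le(1+O(r^{-1}))J^{T}_{\text{\textgreek{m}}}n^{\text{\textgreek{m}}}$ and the inward speed of the null boundary of $\mathcal{D}$ are $O(r^{-1})$ with $r\gtrsim R+|s-\text{\textgreek{t}}_{1}|$, so their time integrals are bounded and add only a multiplicative constant to $e^{a\int\!ds/r_{\min}}$; you skip this entirely.) Your forward dyadic route is dual to the paper's and inherits the complications the paper avoids: reassembling $J^{T}(\sum_{k}\text{\textgreek{y}}_{k})$ needs a Cauchy--Schwarz that produces an overlap factor $|K_{j}|\sim\log(|\text{\textgreek{t}}-\text{\textgreek{t}}_{1}|/R)$ on the innermost annuli of $\mathcal{D}\cap\text{\textgreek{S}}_{\text{\textgreek{t}}}$, and, contrary to your claim that the polynomial weight "concentrates the energy in the outer annuli," that factor multiplies the \emph{unweighted} energy and survives for both weights — it is not absorbable without a further argument. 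The cleanest fix is simply to drop the partition of unity and run the decomposition backward, as the paper does.
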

\begin{proof}
Let us define for any $k\ge1$ the sets 
\begin{equation}
\mathcal{A}_{k}=\{2^{k}\le r\le2^{k+1}\}\subset\mathcal{M},
\end{equation}
and let us set 
\begin{equation}
\mathcal{A}_{0}=\{r\le1\}.
\end{equation}
Then, in view of the asymptotics (\ref{eq:metric}) of $g$ in each
connected component of the asymptotically flat region $\mathcal{I}_{as}$,
there exists a constant $C>0$ depending on the geometry of $(\mathcal{M},g)$
such that for any $\text{\textgreek{t}}_{1}\ge0$, $\text{\textgreek{t}}\in\mathbb{R}$
and any $k\in\mathbb{N}$: 
\begin{equation}
\text{\textgreek{S}}_{\text{\textgreek{t}}_{1}}\cap\Big(J^{+}(\mathcal{A}_{k}\cap\text{\textgreek{S}}_{\text{\textgreek{t}}})\cup J^{-}(\mathcal{A}_{k}\cap\text{\textgreek{S}}_{\text{\textgreek{t}}})\Big)\subset\bigcup_{n=\max\{0,k-\log_{2}(|\text{\textgreek{t}}-\text{\textgreek{t}}_{1}|+1)-C\}}^{k+\log_{2}(|\text{\textgreek{t}}-\text{\textgreek{t}}_{1}|+1)+C}\mathcal{A}_{n}\cap\text{\textgreek{S}}_{\text{\textgreek{t}}_{1}}.\label{eq:DyadicDecomposition:}
\end{equation}

Applying for any $k\in\mathbb{N}$ the conservation of the $T$-energy
flux in the spacetime region $J^{-}(\mathcal{A}_{k}\cap\text{\textgreek{S}}_{\text{\textgreek{t}}})\cap\mathcal{D}^{+}(\text{\textgreek{S}}_{\text{\textgreek{t}}_{1}}\cap\{r\ge R\})$,
in the case $\text{\textgreek{t}}\ge\text{\textgreek{t}}_{1}$, or
the region $J^{+}(\mathcal{A}_{k}\cap\text{\textgreek{S}}_{\text{\textgreek{t}}})\cap\mathcal{D}^{-}(\text{\textgreek{S}}_{\text{\textgreek{t}}_{1}}\cap\{r\ge R\})$,%
\footnote{Here, $\mathcal{D}^{+}(B)$ is the future domain of dependence of
the set $B\subset\mathcal{M}$, while $\mathcal{D}^{-}(B)$ is the
past domain of dependence%
} in the case $\text{\textgreek{t}}\le\text{\textgreek{t}}_{1}$, we
readily obtain in view of (\ref{eq:DyadicDecomposition:})(using also
the fact that $T$ is timelike for $r\ge R$): 
\begin{equation}
\int_{\mathcal{A}_{k}\cap\text{\textgreek{S}}_{\text{\textgreek{t}}}\cap\mathcal{D}(\text{\textgreek{S}}_{\text{\textgreek{t}}_{1}}\cap\{r\ge R\})}J_{\text{\textgreek{m}}}^{T}(\text{\textgreek{y}})n^{\text{\textgreek{m}}}\le\sum_{n=\max\{0,k-\log_{2}(|\text{\textgreek{t}}-\text{\textgreek{t}}_{1}|+1)-C\}}^{k+\log_{2}(|\text{\textgreek{t}}-\text{\textgreek{t}}_{1}|+1)+C}\int_{\mathcal{A}_{n}\cap\text{\textgreek{S}}_{\text{\textgreek{t}}_{1}}\cap\{r\ge R\})}J_{\text{\textgreek{m}}}^{T}(\text{\textgreek{y}})n^{\text{\textgreek{m}}}.\label{eq:DyadicEnergyIdentity}
\end{equation}
Multiplying (\ref{eq:DyadicEnergyIdentity}) with $k^{a}$ and summing
over $k\in\mathbb{N}$, we obtain: 
\begin{align}
\sum_{k=0}^{\infty}k^{a}\int_{\mathcal{A}_{k}\cap\text{\textgreek{S}}_{\text{\textgreek{t}}}\cap\mathcal{D}(\text{\textgreek{S}}_{\text{\textgreek{t}}_{1}}\cap\{r\ge R\})}J_{\text{\textgreek{m}}}^{T}(\text{\textgreek{y}})n^{\text{\textgreek{m}}} & \le\sum_{k=0}^{\infty}\Big(k^{a}\sum_{n=\max\{0,k-\log_{2}(|\text{\textgreek{t}}-\text{\textgreek{t}}_{1}|+1)-C\}}^{k+\log_{2}(|\text{\textgreek{t}}-\text{\textgreek{t}}_{1}|+1)+C}\int_{\mathcal{A}_{n}\cap\text{\textgreek{S}}_{\text{\textgreek{t}}_{1}}\cap\{r\ge R\})}J_{\text{\textgreek{m}}}^{T}(\text{\textgreek{y}})n^{\text{\textgreek{m}}}\Big)\le\label{eq:DyadicEnergyIdentity-1}\\
 & \le C\sum_{k=0}^{\infty}\Big(\big(\sum_{j=k}^{k+\log_{2}(|\text{\textgreek{t}}-\text{\textgreek{t}}_{1}|+1)+C}j^{a}\big)\int_{\mathcal{A}_{k}\cap\text{\textgreek{S}}_{\text{\textgreek{t}}_{1}}\cap\{r\ge R\})}J_{\text{\textgreek{m}}}^{T}(\text{\textgreek{y}})n^{\text{\textgreek{m}}}\Big)\le\nonumber \\
 & \le C_{a}\sum_{k=0}^{\infty}\Big((k+\log_{2}(|\text{\textgreek{t}}-\text{\textgreek{t}}_{1}|+1)+C)^{a+1}\int_{\mathcal{A}_{k}\cap\text{\textgreek{S}}_{\text{\textgreek{t}}_{1}}\cap\{r\ge R\})}J_{\text{\textgreek{m}}}^{T}(\text{\textgreek{y}})n^{\text{\textgreek{m}}}\Big).\nonumber 
\end{align}

Inequality (\ref{eq:WeightedBoundednessLogarithmic}) follows readily
from (\ref{eq:DyadicEnergyIdentity-1}). Inequality (\ref{eq:WeightedBoundednessPolynomial})
follows in the same way, after multiplying (\ref{eq:DyadicEnergyIdentity})
with $2^{ka}$ and summing over $k\in\mathbb{N}$.
\end{proof}
In view of (\ref{eq:BoundednessEnergy}) and the conservation of the
$T$-energy flux in the region $\{t_{-}\ge0\}\cap\{t\le0\}$, we can
bound: 
\begin{equation}
\sup_{\text{\textgreek{t}}\in\mathbb{R}}\int_{\text{\textgreek{S}}_{\text{\textgreek{t}}}\cap\{t_{-}\ge0\}}J_{\text{\textgreek{m}}}^{N}(\text{\textgreek{y}})n^{\text{\textgreek{m}}}\le\mathcal{E}[\text{\textgreek{y}}]\label{eq:BoundednessAtAlltimes}
\end{equation}
(note that $\text{\textgreek{S}}_{\text{\textgreek{t}}}\cap\{t_{-}\ge0\}=\text{\textgreek{S}}_{\text{\textgreek{t}}}$
when $\text{\textgreek{t}}\ge0$). Furthermore, in view of (\ref{eq:WeightedBoundednessLogarithmic})
for $\text{\textgreek{t}}_{1}=0$ and the Hardy inequality (\ref{eq:GeneralHardyLogarithmic}),
we can estimate: 
\begin{equation}
\sup_{\text{\textgreek{t}}\le0}\Big(\big(\log(2+|\text{\textgreek{t}}|)\big)^{-3}\int_{\text{\textgreek{S}}_{\text{\textgreek{t}}}\cap\{t_{-}>0\}}(1+r)^{-2}|\text{\textgreek{y}}|^{2}\Big)\le C\int_{\text{\textgreek{S}}_{0}}\big(\log(2+r)\big)^{3}J_{\text{\textgreek{m}}}^{N}(\text{\textgreek{y}})n^{\text{\textgreek{m}}}.\label{eq:BoundHardyNegativeTimes}
\end{equation}

\subsection{\label{sub:Frequency-cut-off}Frequency cut-off }

Let us fix a constant $R_{1}\gg1$ large in terms of the geometry
of $(\mathcal{M},g)$, as well as a smooth cut-off function $\text{\textgreek{q}}_{1}:[0,+\infty)\rightarrow[0,1]$
satisfying $\text{\textgreek{q}}_{1}(r)=0$ for $r\le R_{1}$ and
$\text{\textgreek{q}}_{1}(r)=1$ for $r\ge R_{1}+1$. As in Section
4 of \cite{Moschidisb}, we will define the following distorted time
function on $\mathcal{M}\backslash\mathcal{H}^{-}$: 
\begin{equation}
t_{-}=t+\frac{1}{2}\text{\textgreek{q}}_{1}(r)(r-R_{1}).\label{eq:DistortedTime-}
\end{equation}
 Note that $\{t=0\}\subset J^{+}(\{t_{-}=0\})$.

We will also fix another smooth cut-off function $\text{\textgreek{q}}_{2}:\mathbb{R}\rightarrow[0,1]$,
satisfying $\text{\textgreek{q}}_{2}\equiv0$ on $(-\infty,0]$ and
$\text{\textgreek{q}}_{2}\equiv1$ on $[1,+\infty)$, and we will
define the function $\text{\textgreek{y}}_{c}:\mathcal{M}\backslash\mathcal{H}^{-}\rightarrow\mathbb{C}$
as 
\begin{equation}
\text{\textgreek{y}}_{c}\doteq\begin{cases}
\text{\textgreek{q}}_{2}(t_{-})\cdot\text{\textgreek{y}}, & t_{-}\ge0,\\
0, & t_{-}\le0.
\end{cases}\label{eq:CutOffPsi}
\end{equation}
Since $\text{\textgreek{y}}$ solves (\ref{eq:WaveEquation}), $\text{\textgreek{y}}_{c}$
solves
\begin{equation}
\square_{g}\text{\textgreek{y}}_{c}=F,\label{eq:InhomogeneousCutOffWaveequation}
\end{equation}
where 
\begin{equation}
F=2\partial^{\text{\textgreek{m}}}\text{\textgreek{q}}_{2}(t_{-})\cdot\partial_{\text{\textgreek{m}}}\text{\textgreek{y}}+\square_{g}\text{\textgreek{q}}_{2}(t_{-})\cdot\text{\textgreek{y}}\label{eq:SourceTermCutOff}
\end{equation}
is supported in $\{0\le t_{-}\le1\}$.

Noting that $r\gtrsim|\text{\textgreek{t}}|$ on $\{t=\text{\textgreek{t}}\}\cap\{0\le t_{-}\le1\}$
for $\text{\textgreek{t}}\le0$, combining (\ref{eq:BoundednessAtAlltimes})
and (\ref{eq:BoundHardyNegativeTimes}) (in each asymptotically flat
end of $\text{\textgreek{S}}_{\text{\textgreek{t}}}$) with the Hardy-type
inequality (obtained after averaging (\ref{eq:GeneralHardyLogarithmic})
over $R_{2}$, using also a Poincare-type inequality in the near region
$\{r\lesssim1\}$):
\begin{align}
\int_{\text{\textgreek{S}}_{\text{\textgreek{t}}}\backslash\mathcal{D}(\text{\textgreek{S}}_{0}\cap\{r\ge R\})}(1+r)^{-2}|\text{\textgreek{y}}_{c}|^{2} & \le C\int_{\text{\textgreek{S}}_{\text{\textgreek{t}}}\backslash\mathcal{D}(\text{\textgreek{S}}_{0}\cap\{r\ge2R\})}\big(\log(2+r)\big)^{2}J_{\text{\textgreek{m}}}^{N}(\text{\textgreek{y}}_{c})n^{\text{\textgreek{m}}}+C\int_{\text{\textgreek{S}}_{\text{\textgreek{t}}}\cap\mathcal{D}(\text{\textgreek{S}}_{0}\cap\{R\le r\le2R\})}(1+r)^{-2}\log(r)|\text{\textgreek{y}}_{c}|^{2}\le\label{eq:HardyInterior}\\
 & \le C\big(\log(2+|\text{\textgreek{t}}|)\big)^{2}\int_{\text{\textgreek{S}}_{\text{\textgreek{t}}}}J_{\text{\textgreek{m}}}^{N}(\text{\textgreek{y}}_{c})n^{\text{\textgreek{m}}}+C\log(2+|\text{\textgreek{t}}|)\int_{\text{\textgreek{S}}_{\text{\textgreek{t}}}\cap\mathcal{D}(\text{\textgreek{S}}_{0}\cap\{r\ge R\})}(1+r)^{-2}|\text{\textgreek{y}}_{c}|^{2},\nonumber 
\end{align}
 we obtain in view of (\ref{eq:CutOffPsi}):
\begin{equation}
\sup_{\text{\textgreek{t}}\ge0}\int_{\text{\textgreek{S}}_{\text{\textgreek{t}}}}J^{N}(\text{\textgreek{y}}_{c})n^{\text{\textgreek{m}}}+\sup_{\text{\textgreek{t}}\le0}\Big(|\text{\textgreek{t}}|^{-2}\big(\log(2+|\text{\textgreek{t}}|)\big)^{-4}\int_{\text{\textgreek{S}}_{\text{\textgreek{t}}}}J_{\text{\textgreek{m}}}^{N}(\text{\textgreek{y}}_{c})n^{\text{\textgreek{m}}}\Big)+\sup_{\text{\textgreek{t}}\in\mathbb{R}}\Big(\big(\log(2+|\text{\textgreek{t}}|)\big)^{-4}\int_{\text{\textgreek{S}}_{\text{\textgreek{t}}}}\big(1+r\big)^{-2}|\text{\textgreek{y}}_{c}|^{2}\Big)\le C\mathcal{E}_{log}[\text{\textgreek{y}}],\label{eq:EnergyBoundCutOff}
\end{equation}
where 
\begin{equation}
\mathcal{E}_{log}[\text{\textgreek{y}}]\doteq\mathcal{E}[\text{\textgreek{y}}]+\int_{\text{\textgreek{S}}_{0}}\big(\log(2+r)\big)^{3}J_{\text{\textgreek{m}}}^{N}(\text{\textgreek{y}})n^{\text{\textgreek{m}}}.\label{eq:WeightedInitialEnergy}
\end{equation}

\begin{rem*}
In dimensions $d\ge3$, inequality (\ref{eq:EnergyBoundCutOff}),
as well as most of the estimates of this section, holds without the
logarithmic loss (since (\ref{eq:BoundHardyNegativeTimes}) holds
without a logarithmic loss in this case).
\end{rem*}
We will now proceed to perform a cut-off procedure on $\text{\textgreek{y}}_{c}$
in the frequency domain. Let $0<\text{\textgreek{w}}_{0}<1$ be a
(small) positive constant, and $\text{\textgreek{w}}_{+}\gg\text{\textgreek{w}}_{0}$
a (large) positive constant, and let us set 
\begin{equation}
n=\lceil\log_{2}\frac{\text{\textgreek{w}}_{+}}{\text{\textgreek{w}}_{0}}\rceil
\end{equation}
 and, for any integer $1\le k\le n$: 
\begin{equation}
\text{\textgreek{w}}_{k}=2^{k}\text{\textgreek{w}}_{0}.
\end{equation}
Fixing a third smooth cut-off function $\text{\textgreek{q}}_{3}:\mathbb{R}\rightarrow[0,1]$
satisfying $\text{\textgreek{q}}_{3}\equiv1$ on $[-1,1]$ and $\text{\textgreek{q}}_{3}\equiv0$
on $(-\infty,-2]\cup[2,+\infty)$, we will define the following Schwartz
functions on $\mathbb{R}$:

\begin{align}
\text{\textgreek{z}}_{0}(t) & =\int_{-\infty}^{+\infty}e^{i\text{\textgreek{w}}t}\text{\textgreek{q}}_{3}(\text{\textgreek{w}}_{0}^{-1}\text{\textgreek{w}})\, d\text{\textgreek{w}},\label{eq:FrequencyCutOffFunctions}\\
\text{\textgreek{z}}_{k}(t) & =\int_{-\infty}^{+\infty}e^{i\text{\textgreek{w}}t}\big(\text{\textgreek{q}}_{3}(\text{\textgreek{w}}_{k}^{-1}\text{\textgreek{w}})-\text{\textgreek{q}}_{3}(\text{\textgreek{w}}_{k-1}^{-1}\text{\textgreek{w}})\big)\, d\text{\textgreek{w}},\mbox{ for }1\le k\le n\nonumber \\
\text{\textgreek{z}}_{\le\text{\textgreek{w}}_{+}}(t) & =\sum_{k=0}^{n}\text{\textgreek{z}}_{k}(t).\nonumber 
\end{align}
Notice that the Fourier transform of $\text{\textgreek{z}}_{k}$ is
supported in $\{\text{\textgreek{w}}_{k-1}\le|\text{\textgreek{w}}|\le2\text{\textgreek{w}}_{k}\}$
(setting $\text{\textgreek{w}}_{-1}=0$), while the frequency support
of $\text{\textgreek{z}}_{\le\text{\textgreek{w}}_{+}}$ is contained
in $\{|\text{\textgreek{w}}|\le4\text{\textgreek{w}}_{+}\}$. Furthermore,
the following Schwartz bounds hold for any integers $m,m'\in\mathbb{N}$
and $0\le k\le n$:
\begin{equation}
\sup_{t\in\mathbb{R}}\big|\text{\textgreek{w}}_{k}^{-1-m'}(1+|\text{\textgreek{w}}_{k}t|^{m})\big(\frac{d}{dt}\big)^{m'}\text{\textgreek{z}}_{k}(t)\big|\le C_{m}\label{eq:SchwartzBoundk}
\end{equation}
and 
\begin{equation}
\sup_{t\in\mathbb{R}}\big|\text{\textgreek{w}}_{+}^{-1-m'}(1+|\text{\textgreek{w}}_{+}t|^{m})\big(\frac{d}{dt}\big)^{m'}\text{\textgreek{z}}_{\le\text{\textgreek{w}}_{+}}(t)\big|\le C_{m}.\label{eq:SchwartzBoundLargerCutOff}
\end{equation}

Using $\text{\textgreek{z}}_{k},\text{\textgreek{z}}_{\le\text{\textgreek{w}}_{+}}$,
we will define, for $0\le k\le n$, the ``frequency decomposed''
components $\text{\textgreek{y}}_{k},\text{\textgreek{y}}_{\le\text{\textgreek{w}}_{+}},\text{\textgreek{y}}_{\ge\text{\textgreek{w}}_{+}}:\mathcal{M}\backslash\mathcal{H}^{-}\rightarrow\mathbb{C}$
of $\text{\textgreek{y}}$ through the following relations (identifying
$\mathcal{M}\backslash\mathcal{H}^{-}$ with $\mathbb{R}\times\text{\textgreek{S}}$
through the flow of $T$):
\begin{equation}
\text{\textgreek{y}}_{k}(t,\cdot)=\int_{-\infty}^{+\infty}\text{\textgreek{z}}_{k}(t-s)\text{\textgreek{y}}_{c}(s,\cdot)\, ds,\label{eq:FrequencyLocalisedPart}
\end{equation}
\begin{equation}
\text{\textgreek{y}}_{\le\text{\textgreek{w}}_{+}}(t,\cdot)=\int_{-\infty}^{+\infty}\text{\textgreek{z}}_{\le\text{\textgreek{w}}_{+}}(t-s)\text{\textgreek{y}}_{c}(s,\cdot)\, ds\label{eq:LowFrequencyPart}
\end{equation}
and 
\begin{equation}
\text{\textgreek{y}}_{\ge\text{\textgreek{w}}_{+}}(t,\cdot)=\text{\textgreek{y}}_{c}(t,\cdot)-\text{\textgreek{y}}_{\le\text{\textgreek{w}}_{+}}(t,\cdot).\label{eq:HighFrequencyPart}
\end{equation}
Note that the integrals (\ref{eq:FrequencyLocalisedPart}) and (\ref{eq:LowFrequencyPart})
do not necessarily converge pointwise for all $(t,x)\in\mathbb{R}\times\text{\textgreek{S}}$,
since the bound (\ref{eq:BoundednessEnergy}) does not suffice to
exclude the pointwise exponential growth of $\text{\textgreek{y}}$
in the $t$ variable. Instead, in view of (\ref{eq:EnergyBoundCutOff}),
(\ref{eq:SchwartzBoundk}) and (\ref{eq:SchwartzBoundLargerCutOff}),
the restrictions $(\text{\textgreek{y}}_{k},T\text{\textgreek{y}}_{k})|_{\text{\textgreek{S}}_{\text{\textgreek{t}}}}$,
$(\text{\textgreek{y}}_{\le\text{\textgreek{w}}_{+}},T\text{\textgreek{y}}_{\le\text{\textgreek{w}}_{+}})|_{\text{\textgreek{S}}_{\text{\textgreek{t}}}}$
and $(\text{\textgreek{y}}{}_{\ge\text{\textgreek{w}}_{+}},T\text{\textgreek{y}}_{\ge\text{\textgreek{w}}_{+}})|_{\text{\textgreek{S}}_{\text{\textgreek{t}}}}$
are only defined as finite energy functions on $\text{\textgreek{S}}_{\text{\textgreek{t}}}$
for any $\text{\textgreek{t}}\in\mathbb{R}$, satisfying the following
bound for any $a>0$ (derived from (\ref{eq:EnergyBoundCutOff}),
(\ref{eq:SchwartzBoundk}), (\ref{eq:SchwartzBoundLargerCutOff})
and Young's inequality): 
\begin{align}
\sup_{\text{\textgreek{t}}\ge0} & \Big((1+\text{\textgreek{w}}_{k}^{-2-a})^{-1}\int_{\text{\textgreek{S}}}\big(|N\text{\textgreek{y}}_{k}(\text{\textgreek{t}},x)|^{2}+|\nabla_{g_{\text{\textgreek{S}}}}\text{\textgreek{y}}_{k}(\text{\textgreek{t}},x)|_{g_{\text{\textgreek{S}}}}^{2}\big)\, dg_{\text{\textgreek{S}}}\Big)+\label{eq:EnergyClassAPrioriBoundLocal}\\
+ & \sup_{\text{\textgreek{t}}\le0}\Big((1+\text{\textgreek{w}}_{k}^{-2-a})^{-1}|\text{\textgreek{t}}|^{-2}\big(\log(2+|\text{\textgreek{t}}|)\big)^{-4}\int_{\text{\textgreek{S}}}\big(|N\text{\textgreek{y}}_{k}(\text{\textgreek{t}},x)|^{2}+|\nabla_{g_{\text{\textgreek{S}}}}\text{\textgreek{y}}_{k}(\text{\textgreek{t}},x)|_{g_{\text{\textgreek{S}}}}^{2}\big)\, dg_{\text{\textgreek{S}}}\Big)+\nonumber \\
 & \hphantom{\sup_{\text{\textgreek{t}}\le0}\Big((1}+\sup_{\text{\textgreek{t}}\in\mathbb{R}}\Big((1+\text{\textgreek{w}}_{k}^{-a})^{-1}\big(\log(2+|\text{\textgreek{t}}|)\big)^{-4}\int_{\text{\textgreek{S}}}(1+r)^{-2}|\text{\textgreek{y}}_{k}(\text{\textgreek{t}},x)|^{2}\, dg_{\text{\textgreek{S}}}\Big)\le C_{a}\mathcal{E}_{log}[\text{\textgreek{y}}],\nonumber 
\end{align}
\begin{align}
\sup_{\text{\textgreek{t}}\ge0} & \int_{\text{\textgreek{S}}}\big(|N\text{\textgreek{y}}_{\le\text{\textgreek{w}}_{+}}(\text{\textgreek{t}},x)|^{2}+|\nabla_{g_{\text{\textgreek{S}}}}\text{\textgreek{y}}_{\le\text{\textgreek{w}}_{+}}(\text{\textgreek{t}},x)|_{g_{\text{\textgreek{S}}}}^{2}\big)\, dg_{\text{\textgreek{S}}}+\label{eq:EnergyClassAPrioriBoundLow}\\
+ & \sup_{\text{\textgreek{t}}\le0}\Big(|\text{\textgreek{t}}|^{-2}\big(\log(2+|\text{\textgreek{t}}|)\big)^{-4}\int_{\text{\textgreek{S}}}\big(|N\text{\textgreek{y}}_{\le\text{\textgreek{w}}_{+}}(\text{\textgreek{t}},x)|^{2}+|\nabla_{g_{\text{\textgreek{S}}}}\text{\textgreek{y}}_{\le\text{\textgreek{w}}_{+}}(\text{\textgreek{t}},x)|_{g_{\text{\textgreek{S}}}}^{2}\big)\, dg_{\text{\textgreek{S}}}\Big)+\nonumber \\
 & \hphantom{\sup_{\text{\textgreek{t}}\le0}\Big((1}+\sup_{\text{\textgreek{t}}\in\mathbb{R}}\Big(\big(\log(2+|\text{\textgreek{t}}|)\big)^{-4}\int_{\text{\textgreek{S}}}(1+r)^{-2}|\text{\textgreek{y}}_{\le\text{\textgreek{w}}_{+}}(\text{\textgreek{t}},x)|^{2}\, dg_{\text{\textgreek{S}}}\Big)\le C_{a}\mathcal{E}_{log}[\text{\textgreek{y}}]\nonumber 
\end{align}
and 
\begin{align}
\sup_{\text{\textgreek{t}}\ge0} & \int_{\text{\textgreek{S}}}\big(|N\text{\textgreek{y}}_{\ge\text{\textgreek{w}}_{+}}(\text{\textgreek{t}},x)|^{2}+|\nabla_{g_{\text{\textgreek{S}}}}\text{\textgreek{y}}_{\ge\text{\textgreek{w}}_{+}}(\text{\textgreek{t}},x)|_{g_{\text{\textgreek{S}}}}^{2}\big)\, dg_{\text{\textgreek{S}}}+\label{eq:EnergyClassAPrioriBoundHigh}\\
+ & \sup_{\text{\textgreek{t}}\le0}\Big(|\text{\textgreek{t}}|^{-2}\big(\log(2+|\text{\textgreek{t}}|)\big)^{-4}\int_{\text{\textgreek{S}}}\big(|N\text{\textgreek{y}}_{\ge\text{\textgreek{w}}_{+}}(\text{\textgreek{t}},x)|^{2}+|\nabla_{g_{\text{\textgreek{S}}}}\text{\textgreek{y}}_{\ge\text{\textgreek{w}}_{+}}(\text{\textgreek{t}},x)|_{g_{\text{\textgreek{S}}}}^{2}\big)\, dg_{\text{\textgreek{S}}}\Big)+\nonumber \\
 & \hphantom{\sup_{\text{\textgreek{t}}\le0}\Big((1}+\sup_{\text{\textgreek{t}}\in\mathbb{R}}\Big(\big(\log(2+|\text{\textgreek{t}}|)\big)^{-4}\int_{\text{\textgreek{S}}}(1+r)^{-2}|\text{\textgreek{y}}_{\ge\text{\textgreek{w}}_{+}}(\text{\textgreek{t}},x)|^{2}\, dg_{\text{\textgreek{S}}}\Big)\le C_{a}\mathcal{E}_{log}[\text{\textgreek{y}}].\nonumber 
\end{align}

Defining, similarly $F_{k}$, $F_{\le\text{\textgreek{w}}_{+}}$ and
$F_{\ge\text{\textgreek{w}}_{+}}$ in terms of $F$ as in (\ref{eq:FrequencyLocalisedPart})--(\ref{eq:HighFrequencyPart})
(replacing $\text{\textgreek{y}}_{c}$ with $F$), in view of (\ref{eq:InhomogeneousCutOffWaveequation})
we obtain the following relations (for any $0\le k\le n$):
\begin{equation}
\square_{g}\text{\textgreek{y}}_{k}=F_{k},\label{eq:InhomogeneousWaveEquationPsiK}
\end{equation}
\begin{equation}
\square_{g}\text{\textgreek{y}}_{\le\text{\textgreek{w}}_{+}}=F_{\le\text{\textgreek{w}}_{+}}\label{eq:InhomogeneousWaveEquationPsiLow}
\end{equation}
 and 
\begin{equation}
\square_{g}\text{\textgreek{y}}_{\ge\text{\textgreek{w}}_{+}}=F_{\ge\text{\textgreek{w}}_{+}}.\label{eq:InhomogeneousWaveEquationPsiHigh}
\end{equation}

\subsection{\label{sub:BoundsForPsiK}Bounds for the frequency-decomposed components}

In this section, we will establish some useful estimates for the energy
of $\text{\textgreek{y}}_{k},\text{\textgreek{y}}_{\le\text{\textgreek{w}}_{+}},\text{\textgreek{y}}_{\ge\text{\textgreek{w}}_{+}}$,
as well as for the ``error'' terms $F_{k},F_{\le\text{\textgreek{w}}_{+}},F_{\ge\text{\textgreek{w}}_{+}}$,
in terms of $\mathcal{E}_{log}[\text{\textgreek{y}}]$. 

We start with an estimate for weighted spacetime norms of the terms
$F_{k},F_{\le\text{\textgreek{w}}_{+}},F_{\ge\text{\textgreek{w}}_{+}}$.
\begin{lem}
\label{lem:BoundF}We can bound for any $0\le k\le n$, any $q,q'\in\mathbb{N}$
and any $0\le\text{\textgreek{t}}_{1}\le\text{\textgreek{t}}_{2}$:
\begin{equation}
\int_{\mathcal{R}(\text{\textgreek{t}}_{1},\text{\textgreek{t}}_{2})}r^{q}|F_{k}|^{2}\le C_{qq'}(1+\text{\textgreek{w}}_{k}^{-q-2})\cdot(1+\text{\textgreek{w}}_{k}\text{\textgreek{t}}_{1})^{-q'}\mathcal{E}_{log}[\text{\textgreek{y}}].\label{eq:SchwartzEstimateFk}
\end{equation}
 The same inequality also holds for $F_{\le\text{\textgreek{w}}_{+}},F_{\ge\text{\textgreek{w}}_{+}}$
in place of $F_{k}$ (with $\text{\textgreek{w}}_{+}$ in place of
$\text{\textgreek{w}}_{k}$).\end{lem}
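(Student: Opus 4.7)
The strategy is to combine the Schwartz decay of $\text{\textgreek{z}}_k$ supplied by (\ref{eq:SchwartzBoundk}) with the spacetime localization of $F$ inherited from (\ref{eq:SourceTermCutOff}) and the definition (\ref{eq:DistortedTime-}) of $t_-$. The starting observation is geometric: since $F$ is supported in the slab $\{0 \le t_- \le 1\}$, for each fixed $x \in \text{\textgreek{S}}$ the support of $F(\cdot, x)$ as a function of $s$ is an interval $I_x \subset \mathbb{R}$ of length $O(1)$ centered at a point $s^*(x)$, with $s^*(x) \in [0,1]$ when $r(x) \le R_1$ and $s^*(x) = -\tfrac{1}{2}(r(x) - R_1) + O(1)$ when $r(x) \ge R_1 + 1$. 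Consequently, for $(t, x) \in \mathcal{R}(\text{\textgreek{t}}_1, \text{\textgreek{t}}_2)$ with $\text{\textgreek{t}}_1 \ge 2$, one has $|t - s^*(x)| \ge c(\text{\textgreek{t}}_1 + (r(x) - R_1)_+)$ for a geometric constant $c > 0$, so the Schwartz factor $(1 + \text{\textgreek{w}}_k |t - s^*(x)|)^{-m}$ produces decay in both $\text{\textgreek{t}}_1$ and $r$ simultaneously.

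Concretely, Cauchy--Schwarz applied to (\ref{eq:FrequencyLocalisedPart}) together with (\ref{eq:SchwartzBoundk}) yields the pointwise bound
\begin{equation*}
|F_k(t,x)|^2 \;\le\; C_m \, \text{\textgreek{w}}_k^{2} \, (1 + \text{\textgreek{w}}_k |t - s^*(x)|)^{-2m} \int_{I_x} |F(s,x)|^2 \, ds
\end{equation*}
for any $m \in \mathbb{N}$. Integrating over $\mathcal{R}(\text{\textgreek{t}}_1, \text{\textgreek{t}}_2)$ via (\ref{eq:DegeneracyCoareaFormula}) and performing the $t$-integration first (using the change of variables $u = \text{\textgreek{w}}_k(t - s^*)$) generates a factor $\text{\textgreek{w}}_k^{-1}$ together with a residual weight $(1 + \text{\textgreek{w}}_k(\text{\textgreek{t}}_1 + (r-R_1)_+))^{-(2m-1)}$. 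The elementary inequality $(1 + a + b)^{2N} \ge (1+a)^N (1+b)^N$ (valid for $a, b \ge 0$) then allows the splitting
\begin{equation*}
(1 + \text{\textgreek{w}}_k(\text{\textgreek{t}}_1 + (r-R_1)_+))^{-(2m-1)} \;\le\; C (1 + \text{\textgreek{w}}_k \text{\textgreek{t}}_1)^{-q'} (1 + \text{\textgreek{w}}_k (r-R_1)_+)^{-M},
\end{equation*}
provided $\max(q', M) \le (2m-1)/2$; taking $m$ large enough in terms of $q, q'$ isolates the desired $(1 + \text{\textgreek{w}}_k \text{\textgreek{t}}_1)^{-q'}$ factor.

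It remains to control $\int_\text{\textgreek{S}} r^q (1 + \text{\textgreek{w}}_k(r-R_1)_+)^{-M} \int_{I_x} |F(s,x)|^2 \, ds \, dg_\text{\textgreek{S}}$. Expanding $F$ via (\ref{eq:SourceTermCutOff}) yields $|F(s,x)|^2 \lesssim |\nabla \text{\textgreek{y}}(s,x)|_{g_{ref}}^{2} + |\text{\textgreek{y}}(s,x)|^2$ on $\mathrm{supp}(F)$. Swapping the $s$- and $x$-integrations, and noting that $(s,x) \in \mathrm{supp}(F)$ restricts $x$ (at fixed $s \le 0$) to a shell of $r$-thickness $O(1)$ around $r = R_1 + 2|s|$, I bound $\int_{\text{shell at } s} |\nabla \text{\textgreek{y}}|_{g_{ref}}^{2} \lesssim \mathcal{E}_{log}[\text{\textgreek{y}}]$ using (\ref{eq:BoundednessAtAlltimes}) (since $t_- \approx 0$ on the shell), and $\int_{\text{shell at } s} |\text{\textgreek{y}}|^2 \lesssim r_s^2 \log^4(2+|s|) \, \mathcal{E}_{log}[\text{\textgreek{y}}]$ by combining (\ref{eq:BoundHardyNegativeTimes}) with Lemma \ref{lem:WeightedBoundedness} and the Hardy inequality (\ref{eq:GeneralHardyLogarithmic}). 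Integrating $r^q (1 + \text{\textgreek{w}}_k r)^{-M}$ times the shell volume factor $\sim r_s^{d-1}$ against this bound in $s$, and substituting $u = \text{\textgreek{w}}_k r_s$, the polynomial factors in $r_s$ combine with the Schwartz decay to give (for $M$ large enough) a factor $\text{\textgreek{w}}_k^{-q-c_0}$ for some $c_0$ depending only on the dimension; combining with the prefactor $\text{\textgreek{w}}_k$ yields the claimed $(1 + \text{\textgreek{w}}_k^{-q-2}) \mathcal{E}_{log}[\text{\textgreek{y}}]$ bound.

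The case $\text{\textgreek{t}}_1 \in [0, 2]$ follows from a direct Young's inequality in the $t$-variable together with (\ref{eq:EnergyBoundCutOff}), which yields $\int_{\mathcal{R}(0,2)} r^q |F_k|^2 \le C_q \mathcal{E}_{log}[\text{\textgreek{y}}]$; this is compatible with the claim since $(1 + \text{\textgreek{w}}_k \text{\textgreek{t}}_1)^{-q'}$ is bounded below for bounded $\text{\textgreek{t}}_1$. The estimates for $F_{\le \text{\textgreek{w}}_+}$ and $F_{\ge \text{\textgreek{w}}_+}$ follow the same scheme, using (\ref{eq:SchwartzBoundLargerCutOff}) in place of (\ref{eq:SchwartzBoundk}) for the former, and, for the latter, the decomposition $F_{\ge \text{\textgreek{w}}_+} = F - F_{\le \text{\textgreek{w}}_+}$ together with the fact that $F$ itself vanishes on $\mathcal{R}(\text{\textgreek{t}}_1, \text{\textgreek{t}}_2)$ whenever $\text{\textgreek{t}}_1 > 1$ (which is immediate from inspecting the support condition $t_- \in [0,1]$). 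The main technical obstacle is the careful bookkeeping in the radial integration: the Schwartz factor $(1 + \text{\textgreek{w}}_k r)^{-M}$ must simultaneously absorb the $r^q$ weight, the spherical volume factor $r^{d-1}$, the growth of $\int |\text{\textgreek{y}}|^2$ on shells at radius $\sim |s|$, and any polylogarithmic corrections; this is arranged by taking $m$ in the Schwartz bound sufficiently large in terms of $q$, $q'$ and the ambient dimension $d$.
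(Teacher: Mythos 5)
Your proof follows essentially the paper's route: apply the Schwartz bound (\ref{eq:SchwartzBoundk}) and Cauchy--Schwarz in the convolution, exploit the support of $F$ in the slab $\{0\le t_-\le1\}$, perform the $t$-integration first to isolate the $(1+\text{\textgreek{w}}_k\text{\textgreek{t}}_1)^{-q'}$ factor and a radial decay factor, then control the remaining slab integral of $|F|^2$ via (\ref{eq:BoundednessAtAlltimes}) and (\ref{eq:BoundHardyNegativeTimes}). Your explicit treatment of $F_{\ge\text{\textgreek{w}}_+}$ — noting that $F$ itself vanishes on $\mathcal{R}(\text{\textgreek{t}}_1,\text{\textgreek{t}}_2)$ for $\text{\textgreek{t}}_1>1$, so $F_{\ge\text{\textgreek{w}}_+}=-F_{\le\text{\textgreek{w}}_+}$ there — is a clean way to handle a case the paper dismisses as ``the same way.''

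There is one bookkeeping slip in the final step: you propose to integrate $r^q(1+\text{\textgreek{w}}_k r)^{-M}$ ``times the shell volume factor $\sim r_s^{d-1}$,'' but the shell bounds you have just established, namely $\int_{\text{shell at }s}|\nabla\text{\textgreek{y}}|_{g_{ref}}^2\lesssim\mathcal{E}_{log}[\text{\textgreek{y}}]$ and $\int_{\text{shell at }s}|\text{\textgreek{y}}|^2\lesssim r_s^2\log^3(2+|s|)\,\mathcal{E}_{log}[\text{\textgreek{y}}]$, are already taken against the full $dg_{\text{\textgreek{S}}}$ measure, which carries the $r^{d-1}$ Jacobian. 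Inserting another $r_s^{d-1}$ double-counts it and would produce a dimension-dependent exponent $\text{\textgreek{w}}_k^{-q-c_0}$ with $c_0\sim d$, which does not match the claimed $(1+\text{\textgreek{w}}_k^{-q-2})$. Dropping the spurious factor, the one-dimensional $s$-integral of $r_s^{q+2}(1+\text{\textgreek{w}}_k r_s)^{-M}\log^3(2+r_s)$, after substituting $u=\text{\textgreek{w}}_k r_s$ and applying the prefactor $\text{\textgreek{w}}_k$ from the $t$-integration, gives $\text{\textgreek{w}}_k^{-(q+2)}\bigl(C+\log^3(\text{\textgreek{w}}_k^{-1})\bigr)$, i.e.\ the stated bound up to a harmless logarithmic loss. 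The paper organizes the same computation by converting the slab integral to a $\text{\textgreek{t}}$-integral via (\ref{eq:DegeneracyCoareaFormula}) and invoking (\ref{eq:BoundednessAtAlltimes}), (\ref{eq:BoundHardyNegativeTimes}) slice-by-slice, which sidesteps the explicit shell-volume accounting. In any case the precise exponent is not used sharply downstream (Corollary \ref{cor:CarlemanForPsik} works with $(1+\text{\textgreek{w}}_k^{-10})$), so your argument, once the extra $r_s^{d-1}$ is removed, is sound.
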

\begin{proof}
In view of (\ref{eq:SourceTermCutOff}) and the fact that 
\begin{equation}
F_{k}(t,\cdot)=\int_{-\infty}^{+\infty}\text{\textgreek{z}}_{k}(t-s)F(s,\cdot)\, dt,
\end{equation}
we can estimate (denoting with $x$ the space variable in the splitting
$\mathcal{M}\backslash\mathcal{H}^{-}=\mathbb{R}\times\text{\textgreek{S}}$)

\begin{align}
\int_{\mathcal{R}(\text{\textgreek{t}}_{1},\text{\textgreek{t}}_{2})}r^{q}|F_{k}|^{2} & \le C\int_{\text{\textgreek{S}}}\int_{\text{\textgreek{t}}_{1}}^{\text{\textgreek{t}}_{2}}r^{q}\Big|\int_{-\infty}^{\infty}\text{\textgreek{z}}_{k}(t-s)F(s,x)\, ds\Big|^{2}\, dtdg_{\text{\textgreek{S}}}\label{eq:FInequality}\\
 & \le C\Big(\int_{\text{\textgreek{S}}}r^{q}\int_{\text{\textgreek{t}}_{1}}^{\text{\textgreek{t}}_{2}}\Big|\int_{-\frac{1}{2}\text{\textgreek{q}}_{1}(r)(r-R_{1})}^{1-\frac{1}{2}\text{\textgreek{q}}_{1}(r)(r-R_{1})}\text{\textgreek{z}}_{k}(t-s)F(s,x)\, ds\Big|^{2}\, dtdg_{\text{\textgreek{S}}}\Big).\nonumber 
\end{align}
From the Schwartz bound (\ref{eq:SchwartzBoundk}) for $m=q'+q+5$
and $m'=0$ (using also (\ref{eq:SourceTermCutOff})), we can estimate:
\begin{equation}
\begin{split}\int_{\text{\textgreek{S}}}r^{q}\int_{\text{\textgreek{t}}_{1}}^{\text{\textgreek{t}}_{2}} & \Big|\int_{-\frac{1}{2}\text{\textgreek{q}}_{1}(r)(r-R_{1})}^{1-\frac{1}{2}\text{\textgreek{q}}_{1}(r)(r-R_{1})}\text{\textgreek{z}}_{k}(t-s)F(s,x)\, ds\Big|^{2}\, dtdg_{\text{\textgreek{S}}}\le\\
\le & C_{qq'}\int_{\text{\textgreek{S}}}r^{q}\int_{\text{\textgreek{t}}_{1}}^{\text{\textgreek{t}}_{2}}\Big|\int_{-\frac{1}{2}\text{\textgreek{q}}_{1}(r)(r-R_{1})}^{1-\frac{1}{2}\text{\textgreek{q}}_{1}(r)(r-R_{1})}\frac{\text{\textgreek{w}}_{k}}{(1+\text{\textgreek{w}}_{k}|t-s|)^{q'+q+5}}F(s,x)\, ds\Big|^{2}\, dtdg_{\text{\textgreek{S}}}\le\\
\le & C_{qq'}\int_{\text{\textgreek{S}}}r^{q}\int_{\text{\textgreek{t}}_{1}}^{\text{\textgreek{t}}_{2}}\Big(\int_{-\infty}^{+\infty}\frac{\text{\textgreek{w}}_{k}}{(1+\text{\textgreek{w}}_{k}|\text{\textgreek{l}}|)^{q'+q+5}}\, d\text{\textgreek{l}}\Big)\Big(\int_{-\frac{1}{2}\text{\textgreek{q}}_{1}(r)(r-R_{1})}^{1-\frac{1}{2}\text{\textgreek{q}}_{1}(r)(r-R_{1})}\frac{\text{\textgreek{w}}_{k}}{(1+\text{\textgreek{w}}_{k}|t-s|)^{q'+q+5}}|F(s,x)|^{2}\, ds\Big)\, dtdg_{\text{\textgreek{S}}}\le\\
\le & C_{qq'}\int_{\text{\textgreek{S}}}r^{q}\int_{\text{\textgreek{t}}_{1}}^{\text{\textgreek{t}}_{2}}\Big(\int_{-\frac{1}{2}\text{\textgreek{q}}_{1}(r)(r-R_{1})}^{1-\frac{1}{2}\text{\textgreek{q}}_{1}(r)(r-R_{1})}\frac{\text{\textgreek{w}}_{k}}{(1+\text{\textgreek{w}}_{k}|t-s|)^{q'+q+5}}|F(s,x)|^{2}\, ds\Big)\, dtdg_{\text{\textgreek{S}}}\le\\
\le & C_{qq'}\int_{\text{\textgreek{S}}}r^{q}\Big(\int_{\text{\textgreek{t}}_{1}}^{\text{\textgreek{t}}_{2}}\frac{\text{\textgreek{w}}_{k}}{(1+\text{\textgreek{w}}_{k}|\text{\textgreek{t}}+\frac{1}{2}\text{\textgreek{q}}_{1}(r)(r-R_{1})|)^{q'+q+5}}d\text{\textgreek{t}}\Big)\Big(\int_{-\frac{1}{2}\text{\textgreek{q}}_{1}(r)(r-R_{1})}^{1-\frac{1}{2}\text{\textgreek{q}}_{1}(r)(r-R_{1})}|F(s,x)|^{2}\, ds\Big)\, dg_{\text{\textgreek{S}}}\le\\
\le & C_{qq'}\frac{1+\text{\textgreek{w}}_{k}^{-q}}{(1+\text{\textgreek{w}}_{k}\text{\textgreek{t}}_{1})^{q'}}\int_{\text{\textgreek{S}}}(1+\text{\textgreek{w}}_{k}r)^{-4}\Big(\int_{-\frac{1}{2}\text{\textgreek{q}}_{1}(r)(r-R_{1})}^{1-\frac{1}{2}\text{\textgreek{q}}_{1}(r)(r-R_{1})}|F(t,x)|^{2}\, dt\Big)\, dg_{\text{\textgreek{S}}}\le\\
\le & C_{qq'}\frac{1+\text{\textgreek{w}}_{k}^{-q}}{(1+\text{\textgreek{w}}_{k}\text{\textgreek{t}}_{1})^{q'}}\int_{\text{\textgreek{S}}}(1+\text{\textgreek{w}}_{k}r)^{-4}\Big(\int_{-\frac{1}{2}\text{\textgreek{q}}_{1}(r)(r-R_{1})}^{1-\frac{1}{2}\text{\textgreek{q}}_{1}(r)(r-R_{1})}\big(J_{\text{\textgreek{m}}}^{N}(\text{\textgreek{y}})n^{\text{\textgreek{m}}}+|\text{\textgreek{y}}|^{2}\big)\, dt\Big)\, dg_{\text{\textgreek{S}}}\le\\
\le & C_{qq'}\frac{1+\text{\textgreek{w}}_{k}^{-q-2}}{(1+\text{\textgreek{w}}_{k}\text{\textgreek{t}}_{1})^{q'}}\int_{-\infty}^{0}(1+\text{\textgreek{t}})^{-2}\Big(\int_{\text{\textgreek{S}}_{\text{\textgreek{t}}}\cap\{0\le t_{-}\le1\}}\big(J_{\text{\textgreek{m}}}^{N}(\text{\textgreek{y}})n^{\text{\textgreek{m}}}+(1+r)^{-2}|\text{\textgreek{y}}|^{2}\big)\Big)\, d\text{\textgreek{t}}
\end{split}
\label{eq:BoundFromSchwartzFk}
\end{equation}
(for the last inequality, we used the fact that $t\sim r$ on $\{0\le t_{-}\le1\}$).
Therefore, from (\ref{eq:FInequality}), (\ref{eq:BoundFromSchwartzFk}),
(\ref{eq:BoundednessAtAlltimes}) and (\ref{eq:BoundHardyNegativeTimes}),
we readily obtain (\ref{eq:SchwartzEstimateFk}).

The estimate for $F_{\le\text{\textgreek{w}}_{+}}$ and $F_{\ge\text{\textgreek{w}}_{+}}$
follows in exactly the same way.
\end{proof}
We will also need the following qualitative decay statement near spacelike
infinity for the functions $\text{\textgreek{y}}_{k}$, $\text{\textgreek{y}}_{\le\text{\textgreek{w}}_{+}}$
and $\text{\textgreek{y}}_{\ge\text{\textgreek{w}}_{+}}$:
\begin{lem}
\label{lem:DecayPsiKNearSpacelikeInfinity}For any $q\in\mathbb{N}$,
any $\text{\textgreek{t}}\ge0$ and any $0\le k\le n$: 
\begin{equation}
\limsup_{R\rightarrow+\infty}\Big(R^{q}\int_{\text{\textgreek{S}}_{\text{\textgreek{t}}}\cap\{R\le r\le R+1\}}\big(J_{\text{\textgreek{m}}}^{N}(\text{\textgreek{y}}_{k})n^{\text{\textgreek{m}}}+|\text{\textgreek{y}}_{k}|^{2}\big)\Big)=0.\label{eq:RapidDecayAtSpacelikeInfinity}
\end{equation}
The relation (\ref{eq:RapidDecayAtSpacelikeInfinity}) also holds
for $\text{\textgreek{y}}_{\le\text{\textgreek{w}}_{+}}$, $\text{\textgreek{y}}_{\ge\text{\textgreek{w}}_{+}}$
in place of $\text{\textgreek{y}}_{k}$.\end{lem}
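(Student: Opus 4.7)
The plan is to combine finite speed of propagation for $\text{\textgreek{y}}$ with the Schwartz decay of the time-convolution kernels $\text{\textgreek{z}}_{k}$, $\text{\textgreek{z}}_{\le\text{\textgreek{w}}_{+}}$ recorded in (\ref{eq:SchwartzBoundk})--(\ref{eq:SchwartzBoundLargerCutOff}). Since $\text{\textgreek{y}}$ solves (\ref{eq:WaveEquation}) with initial data of compact support on $\text{\textgreek{S}}$, finite speed of propagation in the asymptotically flat region of $(\mathcal{M},g)$ yields constants $R_{*},c_{*}>0$ such that $\text{\textgreek{y}}(s,x)=0$ whenever $r(x)\ge R_{*}+c_{*}|s|$; equivalently, $\text{\textgreek{y}}(s,x)\ne 0$ forces $|s|\ge c_{*}^{-1}(r(x)-R_{*})$. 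The same property, combined with the cut-off $\text{\textgreek{q}}_{2}(t_{-})$ in (\ref{eq:CutOffPsi}) which kills the negative-$s$ branch once $r(x)$ is large, implies that for $r(x)=R\gg R_{*}$ one has $\text{\textgreek{y}}_{c}(s,x)\ne 0$ only when $s\ge c(R-R_{*})$ for some uniform $c>0$.

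Next I fix $\text{\textgreek{t}}\ge 0$ and take $R\gg 1+\text{\textgreek{t}}$. For $x\in\text{\textgreek{S}}_{\text{\textgreek{t}}}$ with $r(x)\in[R,R+1]$, the integrand in
\begin{equation*}
\text{\textgreek{y}}_{k}(\text{\textgreek{t}},x)=\int_{-\infty}^{+\infty}\text{\textgreek{z}}_{k}(\text{\textgreek{t}}-s)\,\text{\textgreek{y}}_{c}(s,x)\,ds
\end{equation*}
is supported in $\{s\gtrsim R\}$, and on this set $|\text{\textgreek{t}}-s|\ge s-\text{\textgreek{t}}\gtrsim R$. By the Schwartz bound (\ref{eq:SchwartzBoundk}), for every $q'\in\mathbb{N}$,
\begin{equation*}
|\text{\textgreek{z}}_{k}(\text{\textgreek{t}}-s)|\le C_{k,q'}(1+|\text{\textgreek{t}}-s|)^{-q'}.
\end{equation*}
Spatial derivatives (in the identification $\mathcal{M}\backslash\mathcal{H}^{-}\simeq\mathbb{R}\times\text{\textgreek{S}}$) commute past the time convolution, while $T\text{\textgreek{y}}_{k}$ is obtained by replacing $\text{\textgreek{z}}_{k}$ with $\text{\textgreek{z}}_{k}'$, which satisfies the same Schwartz bound.

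Applying Cauchy--Schwarz in $s$ and using (\ref{eq:EnergyBoundCutOff}) to control the $N$-energy and weighted $L^{2}$-norm of $\text{\textgreek{y}}_{c}$ on each $\text{\textgreek{S}}_{s}$ (namely $\int_{\text{\textgreek{S}}_{s}}J_{\text{\textgreek{m}}}^{N}(\text{\textgreek{y}}_{c})n^{\text{\textgreek{m}}}\lesssim(1+s^{2})(\log(2+|s|))^{4}\mathcal{E}_{log}[\text{\textgreek{y}}]$ and $\int_{\text{\textgreek{S}}_{s}}(1+r)^{-2}|\text{\textgreek{y}}_{c}|^{2}\lesssim(\log(2+|s|))^{4}\mathcal{E}_{log}[\text{\textgreek{y}}]$, the latter after trivially majorizing $\int_{\{R\le r\le R+1\}}|\text{\textgreek{y}}_{c}|^{2}$ by $R^{2}\int(1+r)^{-2}|\text{\textgreek{y}}_{c}|^{2}$), and then inserting the Schwartz bound on the effective support $s\gtrsim R$, one obtains
\begin{equation*}
\int_{\text{\textgreek{S}}_{\text{\textgreek{t}}}\cap\{R\le r\le R+1\}}\bigl(J_{\text{\textgreek{m}}}^{N}(\text{\textgreek{y}}_{k})n^{\text{\textgreek{m}}}+|\text{\textgreek{y}}_{k}|^{2}\bigr)\le C_{k,q'}\,R^{3-q'}\bigl(\log(2+R)\bigr)^{C}\mathcal{E}_{log}[\text{\textgreek{y}}].
\end{equation*}
Multiplying by $R^{q}$ and sending $R\to+\infty$ yields (\ref{eq:RapidDecayAtSpacelikeInfinity}) for $\text{\textgreek{y}}_{k}$ upon choosing any $q'>q+3$.

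The same argument, with (\ref{eq:SchwartzBoundLargerCutOff}) in place of (\ref{eq:SchwartzBoundk}), proves the statement for $\text{\textgreek{y}}_{\le\text{\textgreek{w}}_{+}}$. For $\text{\textgreek{y}}_{\ge\text{\textgreek{w}}_{+}}=\text{\textgreek{y}}_{c}-\text{\textgreek{y}}_{\le\text{\textgreek{w}}_{+}}$ the triangle inequality reduces the claim to the analogous assertion for $\text{\textgreek{y}}_{c}$, which is trivial: at the single time $\text{\textgreek{t}}$, finite speed of propagation gives $\text{\textgreek{y}}_{c}(\text{\textgreek{t}},\cdot)$ compact spatial support in $\text{\textgreek{S}}_{\text{\textgreek{t}}}$, so the annular integral vanishes identically for $R$ sufficiently large. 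The only mildly delicate point is the Cauchy--Schwarz bookkeeping in the third paragraph, which trades arbitrarily fast Schwartz decay of $\text{\textgreek{z}}_{k}$ in time for arbitrarily fast polynomial decay in $R$; this easily beats the at most quadratic-in-$s$ growth of the $N$-energy of $\text{\textgreek{y}}_{c}$ on distant slices furnished by (\ref{eq:EnergyBoundCutOff}).
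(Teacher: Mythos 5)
Your proposal is correct and follows essentially the same route as the paper: localize the $s$-support of $\text{\textgreek{y}}_{c}(\cdot,x)$ to $\{s\gtrsim r(x)\}$ via finite speed of propagation together with the $t_-$-cut-off, then Cauchy--Schwarz plus the Schwartz decay of $\text{\textgreek{z}}_{k}$ and the a priori energy/Hardy bounds on $\text{\textgreek{y}}_{c}$ (the paper invokes (\ref{eq:BoundednessAtAlltimes})--(\ref{eq:BoundHardyNegativeTimes}) rather than the equivalent packaged statement (\ref{eq:EnergyBoundCutOff}), but this is cosmetic). Your handling of $\text{\textgreek{y}}_{\ge\text{\textgreek{w}}_{+}}$, writing it as $\text{\textgreek{y}}_{c}-\text{\textgreek{y}}_{\le\text{\textgreek{w}}_{+}}$ and noting that $\text{\textgreek{y}}_{c}(\text{\textgreek{t}},\cdot)$ has compact spatial support, is a slightly more explicit version of the paper's terse ``follows in exactly the same way'' and is the right reading of that remark.
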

\begin{proof}
The proof of Lemma \ref{lem:DecayPsiKNearSpacelikeInfinity} is a
straightforward consequence of the compact support of $(\text{\textgreek{y}},T\text{\textgreek{y}})|_{\text{\textgreek{S}}}$
and the Schwartz bounds (\ref{eq:SchwartzBoundk}), (\ref{eq:SchwartzBoundLargerCutOff}).

Let $R_{0}(\text{\textgreek{y}})$ be sufficiently large, so that
$(\text{\textgreek{y}},T\text{\textgreek{y}})|_{\text{\textgreek{S}}}$
is supported in $\{r\le R_{0}(\text{\textgreek{y}})-1\}$. Then, in
view of the finite speed of propagation property of equation (\ref{eq:WaveEquation}),
there exists a $C>0$ (depending only on the geometry of $(\mathcal{M},g)$,
so that the function $\text{\textgreek{y}}$ is supported in $\{r\le R_{0}(\text{\textgreek{y}})+C|t|\}\subset\mathcal{M}$.
Thus, 
\begin{equation}
\text{\textgreek{y}}\equiv0\mbox{ on }\{|t|\ge C^{-1}\big(r-R_{0}(\text{\textgreek{y}})\big).\label{eq:PsiVanishesAway}
\end{equation}
 Then, in view of (\ref{eq:CutOffPsi}), (\ref{eq:FrequencyLocalisedPart}),
(\ref{eq:SchwartzBoundk}) and (\ref{eq:PsiVanishesAway}), we can
bound for any $\text{\textgreek{t}}\ge0$, $R>R_{0}(\text{\textgreek{y}})+C\text{\textgreek{t}}$
and $0\le k\le n$: 
\begin{align}
\int_{\text{\textgreek{S}}_{\text{\textgreek{t}}}\cap\{R\le r\le R+1\}}\big(J_{\text{\textgreek{m}}}^{N}(\text{\textgreek{y}}_{k})n^{\text{\textgreek{m}}}+|\text{\textgreek{y}}_{k}|^{2}\big) & \le C\int_{\text{\textgreek{S}}\cap\{R\le r\le R+1\}}\sum_{j=0}^{1}\Big|\int_{-\infty}^{+\infty}\text{\textgreek{z}}_{k}(\text{\textgreek{t}}-s)\nabla^{j}\text{\textgreek{y}}_{c}(s,x)\, ds\Big|_{g_{ref}}^{2}\, dg_{\text{\textgreek{S}}}\le\label{eq:BoundForRapidDecay}\\
 & \le C\Big(\int_{-\infty}^{\infty}|\text{\textgreek{z}}_{k}(s)|\, ds\Big)\sum_{j=0}^{1}\int_{\text{\textgreek{S}}\cap\{R\le r\le R+1\}}\int_{-\infty}^{+\infty}|\text{\textgreek{z}}_{k}(\text{\textgreek{t}}-s)|\big|\nabla^{j}\text{\textgreek{y}}_{c}(s,x)\big|_{g_{ref}}^{2}\, dsdg_{\text{\textgreek{S}}}\le\nonumber \\
 & \le C_{q}\sum_{j=0}^{1}\int_{\text{\textgreek{S}}\cap\{R\le r\le R+1\}}\int_{-\infty}^{+\infty}\frac{\text{\textgreek{w}}_{k}}{1+(\text{\textgreek{w}}_{k}|\text{\textgreek{t}}-s|)^{q+4}}\big|\nabla^{j}\text{\textgreek{y}}_{c}(s,x)\big|_{g_{ref}}^{2}\, dsdg_{\text{\textgreek{S}}}\le\nonumber \\
 & \le C_{q}\sum_{j=0}^{1}\int_{\text{\textgreek{S}}\cap\{R\le r\le R+1\}}\int_{C^{-1}(R-R_{0}(\text{\textgreek{y}}))}^{+\infty}\frac{\text{\textgreek{w}}_{k}}{1+(\text{\textgreek{w}}_{k}|\text{\textgreek{t}}-s|)^{q+4}}\big|\nabla^{j}\text{\textgreek{y}}(s,x)\big|_{g_{ref}}^{2}\, dsdg_{\text{\textgreek{S}}}.\nonumber 
\end{align}

In view of the bounds (\ref{eq:BoundednessAtAlltimes}) and (\ref{eq:BoundHardyNegativeTimes}),
inequality (\ref{eq:BoundForRapidDecay}) yields: 
\begin{align}
\int_{\text{\textgreek{S}}_{\text{\textgreek{t}}}\cap\{R\le r\le R+1\}}\big(J_{\text{\textgreek{m}}}^{N}(\text{\textgreek{y}}_{k})n^{\text{\textgreek{m}}}+|\text{\textgreek{y}}_{k}|^{2}\big) & \le C_{q}R^{2}\mathcal{E}_{log}[\text{\textgreek{y}}]\int_{\text{\textgreek{S}}\cap\{R\le r\le R+1\}}\int_{C^{-1}(R-R_{0}(\text{\textgreek{y}}))}^{+\infty}\frac{\text{\textgreek{w}}_{k}}{1+(\text{\textgreek{w}}_{k}|\text{\textgreek{t}}-s|)^{q+4}}\big(\log(2+|s|)\big)^{3}\, dsdg_{\text{\textgreek{S}}}\le\label{eq:AlmostThereForRapiddecay}\\
 & \le C_{q}\frac{\text{\textgreek{w}}_{k}^{-1}R^{2}}{1+\big(\text{\textgreek{w}}_{k}(C^{-1}(R-R_{0}(\text{\textgreek{y}}))-\text{\textgreek{t}})\big)^{q+2}}\mathcal{E}_{log}[\text{\textgreek{y}}].\nonumber 
\end{align}
Thus, (\ref{eq:RapidDecayAtSpacelikeInfinity}) readily follows from
(\ref{eq:AlmostThereForRapiddecay}).

The relation (\ref{eq:RapidDecayAtSpacelikeInfinity}) for $\text{\textgreek{y}}_{\le\text{\textgreek{w}}_{+}}$,
$\text{\textgreek{y}}_{\ge\text{\textgreek{w}}_{+}}$ in place of
$\text{\textgreek{y}}_{k}$ follows in exactly the same way, using
(\ref{eq:SchwartzBoundLargerCutOff}) in place of (\ref{eq:SchwartzBoundk}).
\end{proof}
We will now proceed to obtain local in time estimates of the form
$\int_{-\infty}^{\infty}|\partial_{t}\text{\textgreek{y}}_{k}|^{2}\, dt\sim\text{\textgreek{w}}_{k}^{2}\int_{-\infty}^{\infty}|\text{\textgreek{y}}_{k}|^{2}\, dt$.
Let us define the following Schwartz functions on $\mathbb{R}$, similar
to (\ref{eq:FrequencyCutOffFunctions}):

\begin{align}
\text{\textgreek{x}}_{0}(t) & =\int_{-\infty}^{+\infty}e^{i\text{\textgreek{w}}t}\text{\textgreek{q}}_{3}(\frac{1}{2}\text{\textgreek{w}}_{0}^{-1}\text{\textgreek{w}})\, d\text{\textgreek{w}},\label{eq:SeedsReproducingFormula}\\
\text{\textgreek{x}}_{k}(t) & =\int_{-\infty}^{+\infty}e^{i\text{\textgreek{w}}t}\big(\text{\textgreek{q}}_{3}(\frac{1}{2}\text{\textgreek{w}}_{k}^{-1}\text{\textgreek{w}})-\text{\textgreek{q}}_{3}(2\text{\textgreek{w}}_{k-1}^{-1}\text{\textgreek{w}})\big)\, d\text{\textgreek{w}},\mbox{ for }1\le k\le n.\nonumber 
\end{align}
Notice that, for any $0\le k\le n$ (setting $\text{\textgreek{w}}_{-1}=0$),
$\text{\textgreek{q}}_{3}(\frac{1}{2}\text{\textgreek{w}}_{k}^{-1}\text{\textgreek{w}})-\text{\textgreek{q}}_{3}(2\text{\textgreek{w}}_{k-1}^{-1}\text{\textgreek{w}})=1$
for all $\text{\textgreek{w}}\in\mathbb{R}$ such that $\text{\textgreek{q}}_{3}(\text{\textgreek{w}}_{k}^{-1}\text{\textgreek{w}})-\text{\textgreek{q}}_{3}(\text{\textgreek{w}}_{k-1}^{-1}\text{\textgreek{w}})\neq0$,
and thus: 
\begin{equation}
\hat{\text{\textgreek{z}}}_{k}=\hat{\text{\textgreek{x}}}_{k}\cdot\hat{\text{\textgreek{z}}}_{k},\label{eq:BaseForReproducingFormula}
\end{equation}
where $\hat{\hphantom{\text{\textgreek{z}}}}$ denotes the Fourier
transform operator on $\mathbb{R}$. Moreover, the following Schwartz
bound holds for any integers $m,m'\in\mathbb{N}$ and $0\le k\le n$:
\begin{equation}
\sup_{t\in\mathbb{R}}\big|\text{\textgreek{w}}_{k}^{-1-m'}(1+|\text{\textgreek{w}}_{k}t|^{m})\big(\frac{d}{dt}\big)^{m'}\text{\textgreek{x}}_{k}(t)\big|\le C_{m,m'}.\label{eq:SchwartzBoundk-1}
\end{equation}

The relation (\ref{eq:BaseForReproducingFormula}), as well as the
definition (\ref{eq:FrequencyCutOffFunctions}), implies for any $0\le k\le n$
the following self reproducing formula for $\text{\textgreek{y}}_{k}$:

\begin{equation}
\text{\textgreek{y}}_{k}(t,\cdot)=\int_{-\infty}^{\infty}\text{\textgreek{x}}_{k}(t-s)\cdot\text{\textgreek{y}}_{k}(s,\cdot)\, ds,\label{eq:Reproducing Formula}
\end{equation}
where, again, the integral in the right hand side of (\ref{eq:Reproducing Formula})
converges with respect to the $\int_{\text{\textgreek{S}}_{t}}J_{\text{\textgreek{m}}}^{N}(\cdot)n^{\text{\textgreek{m}}}$
norm (in view of (\ref{eq:EnergyClassAPrioriBoundLocal}), (\ref{eq:SchwartzBoundk-1})
and Young's inequality).

For any $1\le k\le n$, we will also introduce the anti-derivatives
of $\text{\textgreek{x}}_{k}$, defined as 
\begin{equation}
\tilde{\text{\textgreek{x}}}_{k}(t)=\int_{-\infty}^{+\infty}\frac{1}{i\text{\textgreek{w}}}e^{i\text{\textgreek{w}}t}\big(\text{\textgreek{q}}_{3}(\frac{1}{2}\text{\textgreek{w}}_{k}^{-1}\text{\textgreek{w}})-\text{\textgreek{q}}_{3}(2\text{\textgreek{w}}_{k-1}^{-1}\text{\textgreek{w}})\big)\, d\text{\textgreek{w}},
\end{equation}
thus satisfying for any $m\in\mathbb{N}$ the Schwartz bound
\begin{equation}
\sup_{t\in\mathbb{R}}\big|(1+|\text{\textgreek{w}}_{k}t|^{m})\text{\textgreek{x}}_{k}(t)\big|\le C_{m},\label{eq:SchwartzBoundk-1-1}
\end{equation}
as well as the frequency-domain identity:
\begin{equation}
\hat{\text{\textgreek{z}}}_{k}=\hat{\tilde{\text{\textgreek{x}}}}_{k}\cdot i\text{\textgreek{w}}\hat{\text{\textgreek{z}}}_{k}.\label{eq:BaseForReproducingFormulaAntiderivative}
\end{equation}
In view of (\ref{eq:BaseForReproducingFormulaAntiderivative}), as
well as the definition (\ref{eq:FrequencyCutOffFunctions}), we obtain
for any $1\le k\le n$:
\begin{equation}
\text{\textgreek{y}}_{k}(t,\cdot)=\int_{-\infty}^{\infty}\tilde{\text{\textgreek{x}}}_{k}(t-s)\cdot T\text{\textgreek{y}}_{k}(s,\cdot)\, ds,\label{eq:Reproducing FormulaAntiderivative}
\end{equation}
where the integral in the right hand side of (\ref{eq:Reproducing FormulaAntiderivative})
converges with respect to the $\int_{\text{\textgreek{S}}_{t}}J_{\text{\textgreek{m}}}^{N}(\cdot)n^{\text{\textgreek{m}}}$
norm.

We can now establish the following lemma:
\begin{lem}
\label{lem:DtToOmegaInequalities}For any $1\le k\le n$, any $0\le\text{\textgreek{t}}_{1}\le\text{\textgreek{t}}_{2}$,
any $T$-invariant $L_{loc}^{\infty}$ function $\text{\textgreek{q}}:\mathcal{M}\backslash\mathcal{H}^{-}\rightarrow[0,+\infty)$,
any $R\ge0$ and any $0<a<1$, we can bound
\begin{multline}
c\text{\textgreek{w}}_{k}^{2}\int_{\mathcal{R}(\text{\textgreek{t}}_{1},\text{\textgreek{t}}_{2})\cap\{r\le R\}}\text{\textgreek{q}}|\text{\textgreek{y}}_{k}|^{2}-C_{a}\text{\textgreek{w}}_{k}^{2}(1+\text{\textgreek{w}}_{k}^{-5-a})\sup_{\{r\le R\}}\text{\textgreek{q}}\cdot\mathcal{E}_{log}[\text{\textgreek{y}}]\le\int_{\mathcal{R}(\text{\textgreek{t}}_{1},\text{\textgreek{t}}_{2})\cap\{r\le R\}}\text{\textgreek{q}}|T\text{\textgreek{y}}_{k}|^{2}\le\\
\le C\text{\textgreek{w}}_{k}^{2}\int_{\mathcal{R}(\text{\textgreek{t}}_{1},\text{\textgreek{t}}_{2})\cap\{r\le R\}}\text{\textgreek{q}}|\text{\textgreek{y}}_{k}|^{2}+C_{a}\text{\textgreek{w}}_{k}^{2}(1+\text{\textgreek{w}}_{k}^{-1-a})\big(\log(2+\text{\textgreek{t}}_{2})\big)^{4}R^{2}\sup_{\{r\le R\}}\text{\textgreek{q}}\cdot\mathcal{E}_{log}[\text{\textgreek{y}}],\label{eq:w-estimate}
\end{multline}
and similarly for $k=0$:

\begin{equation}
\int_{\mathcal{R}(\text{\textgreek{t}}_{1},\text{\textgreek{t}}_{2})\cap\{r\le R\}}\text{\textgreek{q}}|T\text{\textgreek{y}}_{0}|^{2}\le C\text{\textgreek{w}}_{0}^{2}\int_{\mathcal{R}(\text{\textgreek{t}}_{1},\text{\textgreek{t}}_{2})\cap\{r\le R\}}\text{\textgreek{q}}|\text{\textgreek{y}}_{k}|^{2}+C_{a}\text{\textgreek{w}}_{0}^{2}(1+\text{\textgreek{w}}_{0}^{-1-a})\big(\log(2+\text{\textgreek{t}}_{2})\big)^{4}R^{2}\sup_{\{r\le R\}}\text{\textgreek{q}}\cdot\mathcal{E}_{log}[\text{\textgreek{y}}].\label{eq:LowFrequencyEstimate}
\end{equation}
 
\end{lem}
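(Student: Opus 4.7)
\medskip

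\noindent\textbf{Proof proposal for Lemma \ref{lem:DtToOmegaInequalities}.} The plan is to exploit the reproducing formulas \eqref{eq:Reproducing Formula} and \eqref{eq:Reproducing FormulaAntiderivative}, together with the Schwartz bounds \eqref{eq:SchwartzBoundk-1} and \eqref{eq:SchwartzBoundk-1-1} for $\text{\textgreek{x}}_k$ and $\tilde{\text{\textgreek{x}}}_k$, to convert the frequency localisation of $\text{\textgreek{y}}_k$ near $|\text{\textgreek{w}}|\sim\text{\textgreek{w}}_k$ into a quantitative equivalence between $T\text{\textgreek{y}}_k$ and $\text{\textgreek{w}}_k\text{\textgreek{y}}_k$ in $L^2$-type norms. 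The splitting of the convolution in $s$ into the on-diagonal contribution $s\in[\text{\textgreek{t}}_1,\text{\textgreek{t}}_2]$, which produces the main term, and the off-diagonal tail $s\notin[\text{\textgreek{t}}_1,\text{\textgreek{t}}_2]$, which produces the error term, will be the bookkeeping backbone of the argument.

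For the \emph{upper bound}, I would differentiate \eqref{eq:Reproducing Formula} in $t$ to obtain
\begin{equation*}
T\text{\textgreek{y}}_k(t,\cdot)=\int_{-\infty}^{\infty}\text{\textgreek{x}}_k'(t-s)\,\text{\textgreek{y}}_k(s,\cdot)\,ds,
\end{equation*}
note that $\|\text{\textgreek{x}}_k'\|_{L^1(\mathbb{R})}\le C\text{\textgreek{w}}_k$ by \eqref{eq:SchwartzBoundk-1} with $m'=1$, and apply the Cauchy--Schwarz inequality in the weighted form $|T\text{\textgreek{y}}_k(t,x)|^2\le\|\text{\textgreek{x}}_k'\|_{L^1}\int|\text{\textgreek{x}}_k'(t-s)||\text{\textgreek{y}}_k(s,x)|^2\,ds$. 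Integrating against $\text{\textgreek{q}}(x)\mathbf{1}_{\{r\le R\}}$ over $\mathcal{R}(\text{\textgreek{t}}_1,\text{\textgreek{t}}_2)$ and switching the order of integration, the $s$-integral splits as $[\text{\textgreek{t}}_1,\text{\textgreek{t}}_2]\cup([\text{\textgreek{t}}_1,\text{\textgreek{t}}_2])^c$. On $[\text{\textgreek{t}}_1,\text{\textgreek{t}}_2]$ the inner integral $\int_{\text{\textgreek{t}}_1}^{\text{\textgreek{t}}_2}|\text{\textgreek{x}}_k'(t-s)|\,dt$ is bounded by $\|\text{\textgreek{x}}_k'\|_{L^1}\le C\text{\textgreek{w}}_k$, producing exactly $C\text{\textgreek{w}}_k^2\int_{\mathcal{R}(\text{\textgreek{t}}_1,\text{\textgreek{t}}_2)\cap\{r\le R\}}\text{\textgreek{q}}|\text{\textgreek{y}}_k|^2$. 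On the complement, the Schwartz decay $|\text{\textgreek{x}}_k'(t-s)|\le C_m\text{\textgreek{w}}_k^2(1+\text{\textgreek{w}}_k|t-s|)^{-m}$ yields a fast decay of $\int_{\text{\textgreek{t}}_1}^{\text{\textgreek{t}}_2}|\text{\textgreek{x}}_k'(t-s)|\,dt$ in the distance of $s$ from $[\text{\textgreek{t}}_1,\text{\textgreek{t}}_2]$, which combined with the a priori bound \eqref{eq:EnergyClassAPrioriBoundLocal} on $\int_{\text{\textgreek{S}}_s}(1+r)^{-2}|\text{\textgreek{y}}_k|^2$ and the crude pointwise estimate $\mathbf{1}_{\{r\le R\}}\le(1+R)^2(1+r)^{-2}$ generates the error $C_a\text{\textgreek{w}}_k^2(1+\text{\textgreek{w}}_k^{-1-a})(\log(2+\text{\textgreek{t}}_2))^4R^2\sup_{\{r\le R\}}\text{\textgreek{q}}\cdot\mathcal{E}_{log}[\text{\textgreek{y}}]$, where the logarithmic factor absorbs the polynomial-times-logarithmic growth of \eqref{eq:EnergyClassAPrioriBoundLocal} at negative times after integrating Schwartz tails in $s$.

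For the \emph{lower bound} I would use \eqref{eq:Reproducing FormulaAntiderivative} instead and run the same argument with $\tilde{\text{\textgreek{x}}}_k$ in place of $\text{\textgreek{x}}_k'$. Since $\|\tilde{\text{\textgreek{x}}}_k\|_{L^1(\mathbb{R})}\le C\text{\textgreek{w}}_k^{-1}$ by \eqref{eq:SchwartzBoundk-1-1}, the same on/off-diagonal splitting gives
\begin{equation*}
\int_{\mathcal{R}(\text{\textgreek{t}}_1,\text{\textgreek{t}}_2)\cap\{r\le R\}}\text{\textgreek{q}}|\text{\textgreek{y}}_k|^2\le C\text{\textgreek{w}}_k^{-2}\int_{\mathcal{R}(\text{\textgreek{t}}_1,\text{\textgreek{t}}_2)\cap\{r\le R\}}\text{\textgreek{q}}|T\text{\textgreek{y}}_k|^2+\text{error},
\end{equation*}
which rearranges into the desired lower bound after multiplying through by $\text{\textgreek{w}}_k^2$; the extra factor of $\text{\textgreek{w}}_k^{-4}$ appearing in the exponent $-5-a$ (versus $-1-a$ for the upper bound) is the combined cost of converting the $\text{\textgreek{w}}_k^{-1}$ from $\|\tilde{\text{\textgreek{x}}}_k\|_{L^1}$ and an extra $\text{\textgreek{w}}_k^{-2}$ when one bounds $|T\text{\textgreek{y}}_k|^2$ in terms of $\mathcal{E}_{log}[\text{\textgreek{y}}]$ via \eqref{eq:EnergyClassAPrioriBoundLocal} in the off-diagonal region. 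The case $k=0$ of \eqref{eq:LowFrequencyEstimate} is identical to the upper-bound argument above, using $\text{\textgreek{x}}_0$ in place of $\text{\textgreek{x}}_k$; one does not attempt a lower bound here because $\hat{\text{\textgreek{z}}}_0$ is supported up to $\text{\textgreek{w}}=0$, so no reproducing formula of the form \eqref{eq:Reproducing FormulaAntiderivative} is available.

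The main obstacle I anticipate is the careful accounting of the off-diagonal error. The a priori bound \eqref{eq:EnergyClassAPrioriBoundLocal} on $\text{\textgreek{y}}_k$ grows polynomially in $|s|$ for $s\le0$ and carries logarithmic factors for all $s$, while the Schwartz tails of $\text{\textgreek{x}}_k'$ (respectively $\tilde{\text{\textgreek{x}}}_k$) decay like powers of $\text{\textgreek{w}}_k|t-s|$; one must choose the decay exponent $m$ in \eqref{eq:SchwartzBoundk-1}, \eqref{eq:SchwartzBoundk-1-1} large enough to beat this growth and recover an integrable tail whose $L^1$-norm over $s\notin[\text{\textgreek{t}}_1,\text{\textgreek{t}}_2]$ is bounded by $C_a(1+\text{\textgreek{w}}_k^{-a})$ for any $0<a<1$, which is the source of the $a$-dependence in the stated estimate. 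The spatial cutoff $\mathbf{1}_{\{r\le R\}}$ together with the weighted estimate on $(1+r)^{-2}|\text{\textgreek{y}}_k|^2$ is what forces the $R^2$ factor in the upper-bound error.
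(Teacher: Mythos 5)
Your proposal coincides with the paper's own argument: the upper bound differentiates the reproducing formula \eqref{eq:Reproducing Formula}, the lower bound uses the antiderivative formula \eqref{eq:Reproducing FormulaAntiderivative}, and in both cases the weighted Cauchy--Schwarz inequality together with the on-/off-diagonal decomposition of the $s$-convolution, estimated against \eqref{eq:EnergyClassAPrioriBoundLocal} via the Schwartz tails, produce the main term and the error, with the $k=0$ lower bound correctly omitted because $\hat{\text{\textgreek{z}}}_0$ is supported through $\text{\textgreek{w}}=0$. One small correction to your exponent bookkeeping: the $\text{\textgreek{w}}_k^{-2}$ coming from $\|\tilde{\text{\textgreek{x}}}_k\|_{L^1}^2$ in the lower-bound derivation cancels exactly against the outer multiplication by $\text{\textgreek{w}}_k^2$ when you rearrange, so the extra factor $\text{\textgreek{w}}_k^{-4}$ separating $-5-a$ from $-1-a$ comes entirely from the a priori bound \eqref{eq:EnergyClassAPrioriBoundLocal}: in the lower-bound off-diagonal integral one must invoke the \emph{unweighted} flux $J^{N}_{\text{\textgreek{m}}}(\text{\textgreek{y}}_k)N^{\text{\textgreek{m}}}$, which carries the larger $(1+\text{\textgreek{w}}_k^{-2-a})$ normalisation and an additional $|\text{\textgreek{t}}|^2$ growth at negative times, whereas the upper bound gets away with the weighted $(1+r)^{-2}|\text{\textgreek{y}}_k|^2$ term, which carries only $(1+\text{\textgreek{w}}_k^{-a})$ and pure $(\log)^4$ growth, at the cost of the $R^2$ prefactor.
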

\emph{Remark}: Notice that the constant multiplying the error term
in the right hand side of (\ref{eq:w-estimate}) depends on $R$ and
$\text{\textgreek{t}}_{2}$, while this is not the case in the left
hand side. 
\begin{proof}
For any $0\le k\le n$, from (\ref{eq:Reproducing Formula}) and (\ref{eq:SchwartzBoundk-1})
(for $m=5$, $m'=1$) we can estimate for any $\text{\textgreek{t}}\ge0$:
\begin{align}
\int_{\text{\textgreek{S}}\cap\{r\le R\}}\text{\textgreek{q}}(x)|T\text{\textgreek{y}}_{k}(\text{\textgreek{t}},x)|^{2}\, dg_{\text{\textgreek{S}}} & =\int_{\text{\textgreek{S}}\cap\{r\le R\}}\text{\textgreek{q}}(x)\Big|\int_{-\infty}^{+\infty}\text{\textgreek{x}}_{k}^{\prime}(\text{\textgreek{t}}-s)\text{\textgreek{y}}_{k}(s,x)\, ds\Big|^{2}\, dg_{\text{\textgreek{S}}}\le\label{eq:FirstEstimateFromReproducingFormula}\\
 & \le C\int_{\text{\textgreek{S}}\cap\{r\le R\}}\text{\textgreek{q}}(x)\Big|\int_{-\infty}^{+\infty}\frac{\text{\textgreek{w}}_{k}^{2}}{1+|\text{\textgreek{w}}_{k}(\text{\textgreek{t}}-s)|^{5}}\text{\textgreek{y}}_{k}(s,x)\, ds\Big|^{2}\, dg_{\text{\textgreek{S}}}\le\nonumber \\
 & \le C\text{\textgreek{w}}_{k}^{2}\int_{\text{\textgreek{S}}\cap\{r\le R\}}\text{\text{\textgreek{q}}}(x)\Big(\int_{-\infty}^{+\infty}\frac{\text{\textgreek{w}}_{k}}{1+|\text{\textgreek{w}}_{k}(\text{\textgreek{t}}-s)|^{5}}\, ds\Big)\Big(\int_{-\infty}^{+\infty}\frac{\text{\textgreek{w}}_{k}}{1+|\text{\textgreek{w}}_{k}(\text{\textgreek{t}}-s)|^{5}}|\text{\textgreek{y}}_{k}(s,x)|^{2}\, ds\Big)\, dg_{\text{\textgreek{S}}}\le\nonumber \\
 & \le C\text{\textgreek{w}}_{k}^{2}\int_{\text{\textgreek{S}}\cap\{r\le R\}}\int_{-\infty}^{+\infty}\frac{\text{\textgreek{w}}_{k}}{1+|\text{\textgreek{w}}_{k}(\text{\textgreek{t}}-s)|^{5}}\text{\textgreek{q}}(x)|\text{\textgreek{y}}_{k}(s,x)|^{2}\, dsdg_{\text{\textgreek{S}}}.\nonumber 
\end{align}
 Thus, integrating (\ref{eq:FirstEstimateFromReproducingFormula})
over $\{\text{\textgreek{t}}_{1}\le\text{\textgreek{t}}\le\text{\textgreek{t}}_{2}\}$
we obtain: 
\begin{align}
\int_{\mathcal{R}(\text{\textgreek{t}}_{1},\text{\textgreek{t}}_{2})\cap\{r\le R\}}\text{\textgreek{q}}|T\text{\textgreek{y}}_{k}|^{2} & \le C\text{\textgreek{w}}_{k}^{2}\int_{\text{\textgreek{t}}_{1}}^{\text{\textgreek{t}}_{2}}\int_{\text{\textgreek{S}}\cap\{r\le R\}}\int_{-\infty}^{+\infty}\frac{\text{\textgreek{w}}_{k}}{1+|\text{\textgreek{w}}_{k}(\text{\textgreek{t}}-s)|^{5}}\text{\textgreek{q}}(x)|\text{\textgreek{y}}_{k}(s,x)|^{2}\, dsdg_{\text{\textgreek{S}}}d\text{\textgreek{t}}\le\label{eq:SpacetimeUpperBoundTPsiK}\\
 & \le C\text{\textgreek{w}}_{k}^{2}\Big(\int_{\text{\textgreek{t}}_{1}}^{\text{\textgreek{t}}_{2}}\int_{\text{\textgreek{S}}\cap\{r\le R\}}\int_{\text{\textgreek{t}}_{1}}^{\text{\textgreek{t}}_{2}}\frac{\text{\textgreek{w}}_{k}}{1+|\text{\textgreek{w}}_{k}(\text{\textgreek{t}}-s)|^{5}}\text{\textgreek{q}}(x)|\text{\textgreek{y}}_{k}(s,x)|^{2}\, dsdg_{\text{\textgreek{S}}}d\text{\textgreek{t}}+\nonumber \\
 & \hphantom{\le C\text{\textgreek{w}}_{k}^{2}\Big(}+\int_{\text{\textgreek{t}}_{1}}^{\text{\textgreek{t}}_{2}}\int_{\text{\textgreek{S}}\cap\{r\le R\}}\int_{\mathbb{R}\backslash[\text{\textgreek{t}}_{1},\text{\textgreek{t}}_{2}]}\frac{\text{\textgreek{w}}_{k}}{1+|\text{\textgreek{w}}_{k}(\text{\textgreek{t}}-s)|^{5}}\text{\textgreek{q}}(x)|\text{\textgreek{y}}_{k}(s,x)|^{2}\, dsdg_{\text{\textgreek{S}}}d\text{\textgreek{t}}\Big)\le\nonumber \\
 & \le C\text{\textgreek{w}}_{k}^{2}\Big(\big(\int_{-\infty}^{+\infty}\frac{\text{\textgreek{w}}_{k}}{1+|\text{\textgreek{w}}_{k}\text{\textgreek{l}}|^{5}}\, d\text{\textgreek{l}}\big)\int_{\mathcal{R}(\text{\textgreek{t}}_{1},\text{\textgreek{t}}_{2})\cap\{r\le R\}}\text{\textgreek{q}}|\text{\textgreek{y}}_{k}|^{2}+\nonumber \\
 & \hphantom{\le C\text{\textgreek{w}}_{k}^{2}\Big(}+\int_{\text{\textgreek{S}}\cap\{r\le R\}}\int_{-\infty}^{\text{\textgreek{t}}_{1}}\big(\int_{\text{\textgreek{t}}_{1}}^{\text{\textgreek{t}}_{2}}\frac{\text{\textgreek{w}}_{k}}{1+|\text{\textgreek{w}}_{k}(\text{\textgreek{t}}-s)|^{5}}\, d\text{\textgreek{t}}\big)\text{\textgreek{q}}(x)|\text{\textgreek{y}}_{k}(s,x)|^{2}\, dsdg_{\text{\textgreek{S}}}+\nonumber \\
 & \hphantom{\le C\text{\textgreek{w}}_{k}^{2}\Big(}+\int_{\text{\textgreek{S}}\cap\{r\le R\}}\int_{\text{\textgreek{t}}_{2}}^{+\infty}\big(\int_{\text{\textgreek{t}}_{1}}^{\text{\textgreek{t}}_{2}}\frac{\text{\textgreek{w}}_{k}}{1+|\text{\textgreek{w}}_{k}(\text{\textgreek{t}}-s)|^{5}}\, d\text{\textgreek{t}}\big)\text{\textgreek{q}}(x)|\text{\textgreek{y}}_{k}(s,x)|^{2}\, dsdg_{\text{\textgreek{S}}}\Big)\le\nonumber \\
 & \le C\text{\textgreek{w}}_{k}^{2}\Big(\int_{\mathcal{R}(\text{\textgreek{t}}_{1},\text{\textgreek{t}}_{2})\cap\{r\le R\}}\text{\textgreek{q}}|\text{\textgreek{y}}_{k}|^{2}+\nonumber \\
 & \hphantom{\le C\text{\textgreek{w}}_{k}^{2}\Big(}+\sup_{r\le R}\text{\textgreek{q}}\cdot\int_{\text{\textgreek{S}}\cap\{r\le R\}}\int_{\mathbb{R}\backslash[\text{\textgreek{t}}_{1},\text{\textgreek{t}}_{2}]}\frac{1}{1+(\text{\textgreek{w}}_{k}\min\{|\text{\textgreek{t}}_{1}-s|,|\text{\textgreek{t}}_{2}-s|\})^{4}}|\text{\textgreek{y}}_{k}(s,x)|^{2}\, dsdg_{\text{\textgreek{S}}}\Big).\nonumber 
\end{align}

From (\ref{eq:EnergyClassAPrioriBoundLocal}) we can readily estimate
for any $0<a<1$:
\begin{equation}
\begin{split}\int_{\text{\textgreek{S}}\cap\{r\le R\}}\int_{\mathbb{R}\backslash[\text{\textgreek{t}}_{1},\text{\textgreek{t}}_{2}]} & \frac{1}{1+(\text{\textgreek{w}}_{k}\min\{|\text{\textgreek{t}}_{1}-s|,|\text{\textgreek{t}}_{2}-s|\})^{4}}|\text{\textgreek{y}}_{k}(s,x)|^{2}\, dsdg_{\text{\textgreek{S}}}\le\\
\le & C_{a}R^{2}\Big(\int_{\mathbb{R}\backslash[\text{\textgreek{t}}_{1},\text{\textgreek{t}}_{2}]}\frac{1}{1+(\text{\textgreek{w}}_{k}\min\{|\text{\textgreek{t}}_{1}-s|,|\text{\textgreek{t}}_{2}-s|\})^{4}}\big(\log\big((2+|s|)\big)^{4}\, ds\Big)(1+\text{\textgreek{w}}_{k}^{-a})\mathcal{E}_{log}[\text{\textgreek{y}}]\le\\
\le & C_{a}R^{2}(1+\text{\textgreek{w}}_{k}^{-1-2a})\big(\log(2+\text{\textgreek{t}}_{2})\big)^{4}\mathcal{E}_{log}[\text{\textgreek{y}}].
\end{split}
\label{eq:UpperBoundErrorTerm}
\end{equation}
Thus, from (\ref{eq:SpacetimeUpperBoundTPsiK}) and (\ref{eq:UpperBoundErrorTerm})
we readily infer the right ``half'' of inequality (\ref{eq:w-estimate}),
as well as inequality (\ref{eq:LowFrequencyEstimate}).

In order to establish the left ``half'' of inequality (\ref{eq:w-estimate}),
we will work similarly, using formula (\ref{eq:Reproducing FormulaAntiderivative})
in place of (\ref{eq:Reproducing Formula}). In particular, from (\ref{eq:Reproducing FormulaAntiderivative})
and (\ref{eq:SchwartzBoundk-1-1}) (for $m=5$) we obtain for any
$\text{\textgreek{t}}\ge0$ and any $1\le k\le n$:

\begin{align}
\int_{\text{\textgreek{S}}\cap\{r\le R\}}\text{\textgreek{q}}(x)|\text{\textgreek{y}}_{k}(\text{\textgreek{t}},x)|^{2}\, dg_{\text{\textgreek{S}}} & =\int_{\text{\textgreek{S}}\cap\{r\le R\}}\text{\textgreek{q}}(x)\Big|\int_{-\infty}^{+\infty}\tilde{\text{\textgreek{x}}}_{k}(\text{\textgreek{t}}-s)T\text{\textgreek{y}}_{k}(s,x)\, ds\Big|^{2}\, dg_{\text{\textgreek{S}}}\le\label{eq:FirstEstimateFromReproducingFormulaAntiderivative}\\
 & \le C\int_{\text{\textgreek{S}}\cap\{r\le R\}}\text{\textgreek{q}}(x)\Big|\int_{-\infty}^{+\infty}\frac{1}{1+|\text{\textgreek{w}}_{k}(\text{\textgreek{t}}-s)|^{5}}T\text{\textgreek{y}}_{k}(s,x)\, ds\Big|^{2}\, dg_{\text{\textgreek{S}}}\le\nonumber \\
 & \le C\text{\textgreek{w}}_{k}^{-2}\int_{\text{\textgreek{S}}\cap\{r\le R\}}\text{\text{\textgreek{q}}}(x)\Big(\int_{-\infty}^{+\infty}\frac{\text{\textgreek{w}}_{k}}{1+|\text{\textgreek{w}}_{k}(\text{\textgreek{t}}-s)|^{5}}\, ds\Big)\Big(\int_{-\infty}^{+\infty}\frac{\text{\textgreek{w}}_{k}}{1+|\text{\textgreek{w}}_{k}(\text{\textgreek{t}}-s)|^{5}}|T\text{\textgreek{y}}_{k}(s,x)|^{2}\, ds\Big)\, dg_{\text{\textgreek{S}}}\le\nonumber \\
 & \le C\text{\textgreek{w}}_{k}^{-2}\int_{\text{\textgreek{S}}\cap\{r\le R\}}\int_{-\infty}^{+\infty}\frac{\text{\textgreek{w}}_{k}}{1+|\text{\textgreek{w}}_{k}(\text{\textgreek{t}}-s)|^{5}}\text{\textgreek{q}}(x)|T\text{\textgreek{y}}_{k}(s,x)|^{2}\, dsdg_{\text{\textgreek{S}}}.\nonumber 
\end{align}
Integrating (\ref{eq:FirstEstimateFromReproducingFormulaAntiderivative})
over $\{\text{\textgreek{t}}_{1}\le\text{\textgreek{t}}\le\text{\textgreek{t}}_{2}\}$,
we obtain:
\begin{align}
\int_{\mathcal{R}(\text{\textgreek{t}}_{1},\text{\textgreek{t}}_{2})\cap\{r\le R\}}\text{\textgreek{q}}|\text{\textgreek{y}}_{k}|^{2} & \le C\text{\textgreek{w}}_{k}^{-2}\int_{\text{\textgreek{t}}_{1}}^{\text{\textgreek{t}}_{2}}\int_{\text{\textgreek{S}}\cap\{r\le R\}}\int_{-\infty}^{+\infty}\frac{\text{\textgreek{w}}_{k}}{1+|\text{\textgreek{w}}_{k}(\text{\textgreek{t}}-s)|^{5}}\text{\textgreek{q}}(x)|T\text{\textgreek{y}}_{k}(s,x)|^{2}\, dsdg_{\text{\textgreek{S}}}d\text{\textgreek{t}}\le\label{eq:SpacetimeLowerBoundTPsiK}\\
 & \le C\text{\textgreek{w}}_{k}^{-2}\Big(\int_{\text{\textgreek{t}}_{1}}^{\text{\textgreek{t}}_{2}}\int_{\text{\textgreek{S}}\cap\{r\le R\}}\int_{\text{\textgreek{t}}_{1}}^{\text{\textgreek{t}}_{2}}\frac{\text{\textgreek{w}}_{k}}{1+|\text{\textgreek{w}}_{k}(\text{\textgreek{t}}-s)|^{5}}\text{\textgreek{q}}(x)|T\text{\textgreek{y}}_{k}(s,x)|^{2}\, dsdg_{\text{\textgreek{S}}}d\text{\textgreek{t}}+\nonumber \\
 & \hphantom{\le C\text{\textgreek{w}}_{k}^{2}\Big(}+\int_{\text{\textgreek{t}}_{1}}^{\text{\textgreek{t}}_{2}}\int_{\text{\textgreek{S}}\cap\{r\le R\}}\int_{\mathbb{R}\backslash[\text{\textgreek{t}}_{1},\text{\textgreek{t}}_{2}]}\frac{\text{\textgreek{w}}_{k}}{1+|\text{\textgreek{w}}_{k}(\text{\textgreek{t}}-s)|^{5}}\text{\textgreek{q}}(x)|T\text{\textgreek{y}}_{k}(s,x)|^{2}\, dsdg_{\text{\textgreek{S}}}d\text{\textgreek{t}}\Big)\le\nonumber \\
 & \le C\text{\textgreek{w}}_{k}^{-2}\Big(\big(\int_{-\infty}^{+\infty}\frac{\text{\textgreek{w}}_{k}}{1+|\text{\textgreek{w}}_{k}\text{\textgreek{l}}|^{5}}\, d\text{\textgreek{l}}\big)\int_{\mathcal{R}(\text{\textgreek{t}}_{1},\text{\textgreek{t}}_{2})\cap\{r\le R\}}\text{\textgreek{q}}|T\text{\textgreek{y}}_{k}|^{2}+\nonumber \\
 & \hphantom{\le C\text{\textgreek{w}}_{k}^{2}\Big(}+\int_{\text{\textgreek{S}}\cap\{r\le R\}}\int_{-\infty}^{\text{\textgreek{t}}_{1}}\big(\int_{\text{\textgreek{t}}_{1}}^{\text{\textgreek{t}}_{2}}\frac{\text{\textgreek{w}}_{k}}{1+|\text{\textgreek{w}}_{k}(\text{\textgreek{t}}-s)|^{5}}\, d\text{\textgreek{t}}\big)\text{\textgreek{q}}(x)|T\text{\textgreek{y}}_{k}(s,x)|^{2}\, dsdg_{\text{\textgreek{S}}}+\nonumber \\
 & \hphantom{\le C\text{\textgreek{w}}_{k}^{2}\Big(}+\int_{\text{\textgreek{S}}\cap\{r\le R\}}\int_{\text{\textgreek{t}}_{2}}^{+\infty}\big(\int_{\text{\textgreek{t}}_{1}}^{\text{\textgreek{t}}_{2}}\frac{\text{\textgreek{w}}_{k}}{1+|\text{\textgreek{w}}_{k}(\text{\textgreek{t}}-s)|^{5}}\, d\text{\textgreek{t}}\big)\text{\textgreek{q}}(x)|T\text{\textgreek{y}}_{k}(s,x)|^{2}\, dsdg_{\text{\textgreek{S}}}\Big)\le\nonumber \\
 & \le C\text{\textgreek{w}}_{k}^{-2}\Big(\int_{\mathcal{R}(\text{\textgreek{t}}_{1},\text{\textgreek{t}}_{2})\cap\{r\le R\}}\text{\textgreek{q}}|\text{\textgreek{y}}_{k}|^{2}+\nonumber \\
 & \hphantom{\le C\text{\textgreek{w}}_{k}^{-2}\Big(}+\sup_{r\le R}\text{\textgreek{q}}\cdot\int_{\text{\textgreek{S}}\cap\{r\le R\}}\int_{\mathbb{R}\backslash[\text{\textgreek{t}}_{1},\text{\textgreek{t}}_{2}]}\frac{1}{1+(\text{\textgreek{w}}_{k}\min\{|\text{\textgreek{t}}_{1}-s|,|\text{\textgreek{t}}_{2}-s|\})^{4}}|T\text{\textgreek{y}}_{k}(s,x)|^{2}\, dsdg_{\text{\textgreek{S}}}\Big).\nonumber 
\end{align}
 From (\ref{eq:EnergyClassAPrioriBoundLocal}) we can estimate:
\begin{equation}
\begin{split}\int_{\text{\textgreek{S}}\cap\{r\le R\}}\int_{\mathbb{R}\backslash[\text{\textgreek{t}}_{1},\text{\textgreek{t}}_{2}]} & \frac{1}{1+(\text{\textgreek{w}}_{k}\min\{|\text{\textgreek{t}}_{1}-s|,|\text{\textgreek{t}}_{2}-s|\})^{4}}|T\text{\textgreek{y}}_{k}(s,x)|^{2}\, dsdg_{\text{\textgreek{S}}}\le\\
\le & C_{a}\Big(\int_{\mathbb{R}\backslash[\text{\textgreek{t}}_{1},\text{\textgreek{t}}_{2}]}\frac{(1+\max\{0,-s\})^{2}\big(\log(2+\max\{0,-s\})\big)^{4}}{1+(\text{\textgreek{w}}_{k}\min\{|\text{\textgreek{t}}_{1}-s|,|\text{\textgreek{t}}_{2}-s|\})^{4}}\, ds\Big)(1+\text{\textgreek{w}}_{k}^{-2-a})\mathcal{E}_{log}[\text{\textgreek{y}}]\le\\
\le & C_{a}(1+\text{\textgreek{w}}_{k}^{-5-2a})\mathcal{E}_{log}[\text{\textgreek{y}}].
\end{split}
\label{eq:UpperBoundErrorTerm-1}
\end{equation}
Thus, the left ``half'' of inequality (\ref{eq:w-estimate}) follows
from (\ref{eq:SpacetimeLowerBoundTPsiK}) and (\ref{eq:UpperBoundErrorTerm-1}).
\end{proof}
We will also need the following estimate in the case when $\text{\textgreek{y}}$
is of the form $T\text{\textgreek{f}}$, where $\text{\textgreek{f}}$
is a smooth solution to the wave equation on $\mathcal{D}(\text{\textgreek{S}})$:
\begin{lem}
\label{lem:NearZeroFrequencyEstimateDerivatives}Let $\text{\textgreek{y}}$
be of the form
\begin{equation}
\text{\textgreek{y}}=T\text{\textgreek{f}},\label{eq:DefinitionPsiLemma}
\end{equation}
 where $\text{\textgreek{f}}:\mathcal{D}(\text{\textgreek{S}})\rightarrow\mathbb{C}$
is a smooth function solving (\ref{eq:WaveEquation}) with compactly
supported initial data on $\text{\textgreek{S}}$, such that $\mathcal{E}[\text{\textgreek{f}}]<+\infty.$
Then, for any $0\le\text{\textgreek{t}}_{1}\le\text{\textgreek{t}}_{2}$,
any $0<a<1$ and any $R\ge0$ we can bound: 
\begin{align}
\int_{\mathcal{R}(\text{\textgreek{t}}_{1},\text{\textgreek{t}}_{2})\cap\{r\le R\}}\big(J_{\text{\textgreek{m}}}^{N}(\text{\textgreek{y}}_{0})N^{\text{\textgreek{m}}}+|\text{\textgreek{y}}_{0}|^{2}\big) & \le C\text{\textgreek{w}}_{0}^{2}\int_{\mathcal{R}(\text{\textgreek{t}}_{1},\text{\textgreek{t}}_{2})\cap\{r\le R\}}\big(J_{\text{\textgreek{m}}}^{N}(\text{\textgreek{f}})N^{\text{\textgreek{m}}}+|\text{\textgreek{f}}|^{2}\big)+\label{eq:LowFrequencyEstimateDerivative}\\
 & \hphantom{\le C\Big(\text{\textgreek{w}}_{0}^{2}}+C_{a}\big(\text{\textgreek{w}}_{0}^{2}(1+\text{\textgreek{w}}_{0}^{-1-a})R^{2}\big(\log(2+|\text{\textgreek{t}}_{2}|\big)^{4}+(1+\text{\textgreek{w}}_{0}\text{\textgreek{t}}_{1})^{-1}R^{2}\big)\mathcal{E}_{log}[\text{\textgreek{f}}].\nonumber 
\end{align}
\end{lem}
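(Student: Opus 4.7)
The key observation is that $\psi = T\phi$ is the time derivative of a finite-energy wave, while the time-convolution operator $P_0 f := \zeta_0 *_t f$ defining the low-frequency projection commutes with $T = \partial_t$. The only obstruction is that the time cutoff $\theta_2(t_-)$ in the definition of $\psi_c$ does not commute with $T$: since $Tt_- = 1$, we have
\[
\psi_c \;=\; \theta_2(t_-)T\phi \;=\; T(\theta_2(t_-)\phi) - \theta_2'(t_-)\phi \;=\; T\phi_c - \theta_2'(t_-)\phi.
\]
Applying $P_0$ and commuting with $T$ yields the decomposition $\psi_0 = T\phi_0 - G$, where $G := P_0(\theta_2'(t_-)\phi)$ is an error term whose source is supported in the compact time-slab $\{0 \le t_- \le 1\}$.

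The main term $T\phi_0$ is handled by applying Lemma \ref{lem:DtToOmegaInequalities} (with $\phi$ in place of $\psi$ and $k=0$) to each component of $|\psi_0|^2 + J^N_\mu(\psi_0)N^\mu$. In the coordinates $(t,x)$ of Section \ref{sub:CoordinateCharts}, where $T = \partial_t$ commutes with the spatial derivatives $\nabla_x$, one has the pointwise equivalence $J^N_\mu(f)N^\mu \sim |Tf|^2 + |\nabla_x f|^2$ on the compact set $\{r\le R\}$, so that the relevant quantities become $|T\phi_0|^2 + |T^2\phi_0|^2 + |T\nabla_x\phi_0|^2$. The proof of Lemma \ref{lem:DtToOmegaInequalities} (via the reproducing formula and the Schwartz bound \eqref{eq:SchwartzBoundk-1}) applies with $\phi_0$, $T\phi_0$, $\nabla_x\phi_0$ in turn as input (each having frequency support in $\{|\omega|\le 2\omega_0\}$), and yields
\[
\int_{\mathcal{R}(\tau_1,\tau_2)\cap\{r\le R\}} \bigl(|T\phi_0|^2 + |T^2\phi_0|^2 + |T\nabla_x\phi_0|^2\bigr) \;\le\; C\omega_0^2 \int_{\mathcal{R}(\tau_1,\tau_2)\cap\{r\le R\}} \bigl(|\phi_0|^2 + |T\phi_0|^2 + |\nabla_x\phi_0|^2\bigr) + \textrm{(err)},
\]
with the error bounded by $C_a\omega_0^2(1+\omega_0^{-1-a})R^2(\log(2+\tau_2))^4\mathcal{E}_{log}[\phi]$. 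Young's inequality applied to the convolution $\phi_0 = \zeta_0 *_t \phi_c$ (combined with $|\phi_c|\le |\phi|$ and similarly for $\nabla_x \phi_0$) then converts the right-hand side to $C\omega_0^2\int_{\mathcal{R}(\tau_1,\tau_2)\cap\{r\le R\}}(J^N_\mu(\phi)N^\mu + |\phi|^2)$ modulo an additional error of the same form.

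The error term $G$ is estimated exactly as in the proof of Lemma \ref{lem:BoundF}. Since its source $\theta_2'(t_-)\phi$ is supported in the slab $\{0\le t_- \le 1\}$, and any $t\in[\tau_1,\tau_2]$ satisfies $|t-s|\gtrsim \tau_1$ whenever $s$ lies in the slab and $r(x)\le R$ (using $t - t_- = -\tfrac12\theta_1(r)(r-R_1)$ and $\tau_1 \ge 0$), a single power of the Schwartz decay from \eqref{eq:SchwartzBoundk} applied to $\zeta_0$ and $\zeta_0'$ produces the factor $(1+\omega_0\tau_1)^{-1}$. The remaining slab-integrals of $|\phi|^2$ and $J^N_\mu(\phi)N^\mu$ on $\{r\le R\}$ are bounded by $R^2\cdot\mathcal{E}_{log}[\phi]$ using the bounds \eqref{eq:BoundednessAtAlltimes}--\eqref{eq:HardyInterior} already established for $\mathcal{E}_{log}$, giving the second error term $C_a(1+\omega_0\tau_1)^{-1}R^2\mathcal{E}_{log}[\phi]$ in the conclusion. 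The proof is then finished by combining $|\psi_0|^2 \le 2|T\phi_0|^2 + 2|G|^2$ with the analogous pointwise bounds for $J^N_\mu(\psi_0)N^\mu$.

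The main bookkeeping obstacle is tracking both the error contributions simultaneously: the Lemma \ref{lem:DtToOmegaInequalities}-type error (which blows up like $R^2(\log(2+\tau_2))^4$) comes from comparing $T\phi_0$ to $\omega_0\phi_0$ inside the window $[\tau_1,\tau_2]$, while the $G$-error (with Schwartz decay in $\omega_0\tau_1$) comes from the fixed source supported near $\{t_-\sim 0\}$; one must verify that these are the only two sources of loss and that neither accumulates in a way that breaks the stated inequality. Once the commutation identity $\psi_0 = T\phi_0 - G$ is in hand, everything reduces to the two previously proved Lemmata \ref{lem:BoundF} and \ref{lem:DtToOmegaInequalities} applied with $\phi$ replacing $\psi$.
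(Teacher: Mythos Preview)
Your proposal is correct and follows essentially the same approach as the paper. The commutation identity $\psi_0 = T\phi_0 - G$ you write down is precisely the integration-by-parts the paper performs in \eqref{eq:FirstEqualityConvolution}: starting from $\psi_0 = \int \zeta_0(t-s)\theta_2(s+\tfrac12\theta_1(r)(r-R_1))\partial_t\phi\,ds$ and integrating by parts in $s$ produces exactly the two pieces $\int\zeta_0'(t-s)\phi_c\,ds = T\phi_0$ and $-\int\zeta_0(t-s)\theta_2'(\ldots)\phi\,ds = -G$. The only organizational difference is that the paper then estimates $T\phi_0 = \zeta_0'*_t\phi_c$ directly via the Schwartz bound on $\zeta_0'$ (obtaining $C\omega_0^2\int|\phi|^2$ in one step), whereas you invoke Lemma~\ref{lem:DtToOmegaInequalities} to first get $C\omega_0^2\int|\phi_0|^2$ and then apply Young's inequality to pass to $|\phi|^2$; the paper's route saves this extra step but yours is equally valid. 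The treatment of the $G$-term and of the higher-order quantities $T\psi_0$, $\nabla_{g_\Sigma}\psi_0$ is identical in both arguments.
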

\begin{proof}
The bounds (\ref{eq:BoundednessAtAlltimes}), (\ref{eq:WeightedBoundednessLogarithmic})
for $\text{\textgreek{f}}$ in place of $\text{\textgreek{y}}$ (combined
with the Hardy-type inequalities (\ref{eq:GeneralHardyLogarithmic})
and (\ref{eq:HardyInterior})) imply that 
\begin{equation}
\sup_{\text{\textgreek{t}}\in\mathbb{R}}\int_{\text{\textgreek{S}}_{\text{\textgreek{t}}}\cap\{t_{-}\ge0\}}J^{N}(\text{\textgreek{f}})n^{\text{\textgreek{m}}}+\sup_{\text{\textgreek{t}}\in\mathbb{R}}\Big(\big(\log(2+|\text{\textgreek{t}}|)\big)^{4}\int_{\text{\textgreek{S}}_{\text{\textgreek{t}}}\cap\{t_{-}\ge0\}}\big(1+r\big)^{-2}|\text{\textgreek{f}}|^{2}\Big)\le C\mathcal{E}_{log}[\text{\textgreek{f}}].\label{eq:EnergyBoundPhi}
\end{equation}

From (\ref{eq:DefinitionPsiLemma}), (\ref{eq:CutOffPsi}) and (\ref{eq:FrequencyLocalisedPart})
we calculate:
\begin{align}
\int_{\text{\textgreek{S}}\cap\{r\le R\}}\int_{\text{\textgreek{t}}_{1}}^{\text{\textgreek{t}}_{2}}|\text{\textgreek{y}}_{0}(s,x)|^{2}\, d\text{\textgreek{t}}dg_{\text{\textgreek{S}}} & =\int_{\text{\textgreek{S}}\cap\{r\le R\}}\int_{\text{\textgreek{t}}_{1}}^{\text{\textgreek{t}}_{2}}\Big|\int_{-\infty}^{+\infty}\text{\textgreek{z}}_{0}(t-s)\text{\textgreek{y}}_{c}(s,x)\, ds\Big|^{2}\, dtdg_{\text{\textgreek{S}}}=\label{eq:FirstEqualityConvolution}\\
 & =\int_{\text{\textgreek{S}}\cap\{r\le R\}}\int_{\text{\textgreek{t}}_{1}}^{\text{\textgreek{t}}_{2}}\Big|\int_{-\infty}^{+\infty}\text{\textgreek{z}}_{0}(t-s)\text{\textgreek{q}}_{2}(s+\frac{1}{2}\text{\textgreek{q}}_{1}(r)(r-R_{1}))\partial_{t}\text{\textgreek{f}}(s,x)\, ds\Big|^{2}\, dtdg_{\text{\textgreek{S}}}=\nonumber \\
 & =\int_{\text{\textgreek{S}}\cap\{r\le R\}}\int_{\text{\textgreek{t}}_{1}}^{\text{\textgreek{t}}_{2}}\Big|-\int_{-\infty}^{+\infty}\frac{d}{ds}\big(\text{\textgreek{z}}_{0}(t-s)\text{\textgreek{q}}_{2}(s+\frac{1}{2}\text{\textgreek{q}}_{1}(r)(r-R_{1}))\big)\cdot\text{\textgreek{f}}(s,x)\, ds\Big|^{2}\, dtdg_{\text{\textgreek{S}}},\nonumber 
\end{align}
noting that the integrating by parts in the last step of (\ref{eq:FirstEqualityConvolution})
is possible in view of the Schwartz bound (\ref{eq:SchwartzBoundk})
on $\text{\textgreek{z}}_{0}$ and (\ref{eq:EnergyBoundPhi}).

In view of (\ref{eq:SchwartzBoundk}), the relation (\ref{eq:FirstEqualityConvolution})
yields: 
\begin{align}
\int_{\text{\textgreek{S}}\cap\{r\le R\}}\int_{\text{\textgreek{t}}_{1}}^{\text{\textgreek{t}}_{2}}|\text{\textgreek{y}}_{0}(s, & x)|^{2}\, d\text{\textgreek{t}}dg_{\text{\textgreek{S}}}\le C\int_{\text{\textgreek{S}}\cap\{r\le R\}}\int_{\text{\textgreek{t}}_{1}}^{\text{\textgreek{t}}_{2}}\Big(\Big|\int_{-\infty}^{+\infty}\frac{\text{\textgreek{w}}_{0}^{2}}{1+|\text{\textgreek{w}}_{0}(t-s)|^{3}}\text{\textgreek{q}}_{2}(s+\frac{1}{2}\text{\textgreek{q}}_{1}(r)(r-R_{1}))\cdot\text{\textgreek{f}}(s,x)\, ds\Big|^{2}+\label{eq:FirstEqualityConvolution-1}\\
 & \hphantom{x)|^{2}\, d\text{\textgreek{t}}dg_{\text{\textgreek{S}}}\le C\int_{\text{\textgreek{S}}\cap\{r\le R\}}\int_{\text{\textgreek{t}}_{1}}^{\text{\textgreek{t}}_{2}}\Big(}+\Big|\int_{-\infty}^{+\infty}\frac{\text{\textgreek{w}}_{0}}{1+|\text{\textgreek{w}}_{0}(t-s)|^{3}}\text{\textgreek{q}}_{2}^{\prime}(s+\frac{1}{2}\text{\textgreek{q}}_{1}(r)(r-R_{1}))\cdot\text{\textgreek{f}}(s,x)\, ds\Big|^{2}\Big)\, dtdg_{\text{\textgreek{S}}}\le\nonumber \\
 & \le C\Big(\int_{-\infty}^{+\infty}\frac{\text{\textgreek{w}}_{0}}{1+|\text{\textgreek{w}}_{0}(t-s)|^{3}}\, ds\Big)\Bigg\{\text{\textgreek{w}}_{0}^{2}\int_{\text{\textgreek{S}}\cap\{r\le R\}}\int_{\text{\textgreek{t}}_{1}}^{\text{\textgreek{t}}_{2}}\int_{-\frac{1}{2}\text{\textgreek{q}}_{1}(r)(r-R_{1})}^{+\infty}\frac{\text{\textgreek{w}}_{0}}{1+|\text{\textgreek{w}}_{0}(t-s)|^{3}}|\text{\textgreek{f}}(s,x)|^{2}\, dsdtdg_{\text{\textgreek{S}}}+\nonumber \\
 & \hphantom{\le C\Big(\int_{-\infty}^{+\infty}\frac{\text{\textgreek{w}}_{0}}{1+|\text{\textgreek{w}}_{0}(t-s)|^{3}}\, ds\Big)\Bigg\{}+\int_{\text{\textgreek{S}}\cap\{r\le R\}}\int_{\text{\textgreek{t}}_{1}}^{\text{\textgreek{t}}_{2}}\int_{-\frac{1}{2}\text{\textgreek{q}}_{1}(r)(r-R_{1})}^{1-\frac{1}{2}\text{\textgreek{q}}_{1}(r)(r-R_{1})}\frac{\text{\textgreek{w}}_{0}}{1+|\text{\textgreek{w}}_{0}(t-s)|^{3}}|\text{\textgreek{f}}(s,x)|^{2}\, dsdtdg_{\text{\textgreek{S}}}\Bigg\}\le\nonumber \\
 & \le C\Bigg\{\text{\textgreek{w}}_{0}^{2}\int_{\text{\textgreek{S}}\cap\{r\le R\}}\int_{[\text{\textgreek{t}}_{1},\text{\textgreek{t}}_{2}]\cap[-\frac{1}{2}\text{\textgreek{q}}_{1}(r)(r-R_{1}),+\infty)}\Big(\int_{\text{\textgreek{t}}_{1}}^{\text{\textgreek{t}}_{2}}\frac{\text{\textgreek{w}}_{0}}{1+|\text{\textgreek{w}}_{0}(t-s)|^{3}}\, dt\Big)|\text{\textgreek{f}}(s,x)|^{2}\, dsdg_{\text{\textgreek{S}}}+\nonumber \\
 & \hphantom{\le C\Bigg\{}+\text{\textgreek{w}}_{0}^{2}\int_{\text{\textgreek{S}}\cap\{r\le R\}}\int_{[-\frac{1}{2}\text{\textgreek{q}}_{1}(r)(r-R_{1}),+\infty)\backslash[\text{\textgreek{t}}_{1},\text{\textgreek{t}}_{2}]}\Big(\int_{\text{\textgreek{t}}_{1}}^{\text{\textgreek{t}}_{2}}\frac{\text{\textgreek{w}}_{0}}{1+|\text{\textgreek{w}}_{0}(t-s)|^{3}}\, dt\Big)|\text{\textgreek{f}}(s,x)|^{2}\, dsdg_{\text{\textgreek{S}}}+\nonumber \\
 & \hphantom{\le C\Bigg\{}+\int_{\text{\textgreek{S}}\cap\{r\le R\}}\int_{-\frac{1}{2}\text{\textgreek{q}}_{1}(r)(r-R_{1})}^{1-\frac{1}{2}\text{\textgreek{q}}_{1}(r)(r-R_{1})}\Big(\int_{\text{\textgreek{t}}_{1}}^{\text{\textgreek{t}}_{2}}\frac{\text{\textgreek{w}}_{0}}{1+|\text{\textgreek{w}}_{0}(t-s)|^{3}}\, dt\Big)|\text{\textgreek{f}}(s,x)|^{2}\, dsdg_{\text{\textgreek{S}}}\Bigg\}\le\nonumber \\
 & \le C\Bigg\{\text{\textgreek{w}}_{0}^{2}\int_{\text{\textgreek{S}}\cap\{r\le R\}}\int_{\text{\textgreek{t}}_{1}}^{\text{\textgreek{t}}_{2}}|\text{\textgreek{f}}(s,x)|^{2}\, dsdg_{\text{\textgreek{S}}}+\nonumber \\
 & \hphantom{\le C\Bigg\{}+\text{\textgreek{w}}_{0}^{2}\int_{\text{\textgreek{S}}\cap\{r\le R\}}\int_{[-\frac{1}{2}\text{\textgreek{q}}_{1}(r)(r-R_{1}),+\infty)\backslash[\text{\textgreek{t}}_{1},\text{\textgreek{t}}_{2}]}\frac{1}{1+\min\{|\text{\textgreek{w}}_{0}(\text{\textgreek{t}}_{1}-s)|^{2},|\text{\textgreek{w}}_{0}(\text{\textgreek{t}}_{2}-s)|^{2}\}}|\text{\textgreek{f}}(s,x)|^{2}\, dsdg_{\text{\textgreek{S}}}+\nonumber \\
 & \hphantom{\le C\Bigg\{}+\int_{\text{\textgreek{S}}\cap\{r\le R\}}\int_{-\frac{1}{2}\text{\textgreek{q}}_{1}(r)(r-R_{1})}^{1-\frac{1}{2}\text{\textgreek{q}}_{1}(r)(r-R_{1})}\frac{1}{1+|\text{\textgreek{w}}_{0}(\text{\textgreek{t}}_{1}-s)|^{2}}|\text{\textgreek{f}}(s,x)|^{2}\, dsdg_{\text{\textgreek{S}}}\Bigg\}.\nonumber 
\end{align}
From (\ref{eq:EnergyBoundPhi}), we can estimate for any $0<a<1$:
\begin{multline}
\text{\textgreek{w}}_{0}^{2}\int_{\text{\textgreek{S}}\cap\{r\le R\}}\int_{[-\frac{1}{2}\text{\textgreek{q}}_{1}(r)(r-R_{1}),+\infty)\backslash[\text{\textgreek{t}}_{1},\text{\textgreek{t}}_{2}]}\frac{1}{1+\min\{|\text{\textgreek{w}}_{0}(\text{\textgreek{t}}_{1}-s)|^{2},|\text{\textgreek{w}}_{0}(\text{\textgreek{t}}_{2}-s)|^{2}\}}|\text{\textgreek{f}}(s,x)|^{2}\, dsdg_{\text{\textgreek{S}}}\le\\
\le C_{a}\text{\textgreek{w}}_{0}^{2}(1+\text{\textgreek{w}}_{0}^{-1-a})R^{2}\big(\log(2+|\text{\textgreek{t}}_{2}|\big)^{4}\mathcal{E}_{log}[\text{\textgreek{f}}]
\end{multline}
and 
\begin{equation}
\int_{\text{\textgreek{S}}\cap\{r\le R\}}\int_{-\frac{1}{2}\text{\textgreek{q}}_{1}(r)(r-R_{1})}^{1-\frac{1}{2}\text{\textgreek{q}}_{1}(r)(r-R_{1})}\frac{1}{1+|\text{\textgreek{w}}_{0}(\text{\textgreek{t}}_{1}-s)|^{2}}|\text{\textgreek{f}}(s,x)|^{2}\, dsdg_{\text{\textgreek{S}}}\le C_{a}(1+\text{\textgreek{w}}_{0}^{-1-a})(1+\text{\textgreek{w}}_{0}\text{\textgreek{t}}_{1})^{-1}R^{2}\mathcal{E}_{log}[\text{\textgreek{f}}].
\end{equation}
 Thus, from (\ref{eq:FirstEqualityConvolution-1}) we obtain for any
$0<a<1$: 
\begin{align}
\int_{\text{\textgreek{S}}\cap\{r\le R\}}\int_{\text{\textgreek{t}}_{1}}^{\text{\textgreek{t}}_{2}}|\text{\textgreek{y}}_{0}(s,x)|^{2}\, d\text{\textgreek{t}}dg_{\text{\textgreek{S}}}\le C & \text{\textgreek{w}}_{0}^{2}\int_{\text{\textgreek{S}}\cap\{r\le R\}}\int_{\text{\textgreek{t}}_{1}}^{\text{\textgreek{t}}_{2}}|\text{\textgreek{f}}(s,x)|^{2}\, dsdg_{\text{\textgreek{S}}}+\label{eq:BoundZerothOrder}\\
 & +C_{a}\big(\text{\textgreek{w}}_{0}^{2}\big(\log(2+|\text{\textgreek{t}}_{2}|\big)^{4}+(1+\text{\textgreek{w}}_{0}\text{\textgreek{t}}_{1})^{-1}\big)(1+\text{\textgreek{w}}_{0}^{-1-a})R^{2}\mathcal{E}_{log}[\text{\textgreek{f}}].\nonumber 
\end{align}
Repeating the same procedure with $T\text{\textgreek{y}}_{0}$ and
$\nabla_{g_{\text{\textgreek{S}}}}\text{\textgreek{y}}_{0}$ in place
of $\text{\textgreek{y}}_{0}$, we similarly obtain: 
\begin{align}
\int_{\text{\textgreek{S}}\cap\{r\le R\}}\int_{\text{\textgreek{t}}_{1}}^{\text{\textgreek{t}}_{2}}|T\text{\textgreek{y}}_{0}(s,x)|^{2}\, d\text{\textgreek{t}}dg_{\text{\textgreek{S}}}\le C & \text{\textgreek{w}}_{0}^{2}\int_{\text{\textgreek{S}}\cap\{r\le R\}}\int_{\text{\textgreek{t}}_{1}}^{\text{\textgreek{t}}_{2}}|T\text{\textgreek{f}}(s,x)|^{2}\, dsdg_{\text{\textgreek{S}}}+\label{eq:BoundZerothOrder-1}\\
 & +C_{a}\big(\text{\textgreek{w}}_{0}^{2}\big(\log(2+|\text{\textgreek{t}}_{2}|\big)^{4}+(1+\text{\textgreek{w}}_{0}\text{\textgreek{t}}_{1})^{-1}\big)(1+\text{\textgreek{w}}_{0}^{-1-a})R^{2}\mathcal{E}_{log}[\text{\textgreek{f}}]\nonumber 
\end{align}
 and 
\begin{align}
\int_{\text{\textgreek{S}}\cap\{r\le R\}}\int_{\text{\textgreek{t}}_{1}}^{\text{\textgreek{t}}_{2}}|\nabla_{g_{\text{\textgreek{S}}}}\text{\textgreek{y}}_{0}(s,x)|_{g_{\text{\textgreek{S}}}}^{2}\, d\text{\textgreek{t}}dg_{\text{\textgreek{S}}}\le C & \text{\textgreek{w}}_{0}^{2}\int_{\text{\textgreek{S}}\cap\{r\le R\}}\int_{\text{\textgreek{t}}_{1}}^{\text{\textgreek{t}}_{2}}|\nabla_{g_{\text{\textgreek{S}}}}\text{\textgreek{f}}(s,x)|_{g_{\text{\textgreek{S}}}}^{2}\, dsdg_{\text{\textgreek{S}}}+\label{eq:BoundZerothOrder-2}\\
 & +C_{a}\big(\text{\textgreek{w}}_{0}^{2}\big(\log(2+|\text{\textgreek{t}}_{2}|\big)^{4}+(1+\text{\textgreek{w}}_{0}\text{\textgreek{t}}_{1})^{-1}\big)(1+\text{\textgreek{w}}_{0}^{-1-a})R^{2}\mathcal{E}_{log}[\text{\textgreek{f}}].\nonumber 
\end{align}

Inequality (\ref{eq:LowFrequencyEstimateDerivative}) readily follows
after adding (\ref{eq:BoundZerothOrder}), (\ref{eq:BoundZerothOrder-1})
and (\ref{eq:BoundZerothOrder-2}).
\end{proof}
We will finally establish the following bound for the energy of the
high frequency part $\text{\textgreek{y}}_{\ge\text{\textgreek{w}}_{+}}$
of $\text{\textgreek{y}}$:
\begin{lem}
\label{lem:HighFrequencies}For any $\text{\textgreek{t}}\ge0$ and
any $m\in\mathbb{N}$ such that 
\begin{equation}
\sum_{j=0}^{m}\mathcal{E}[T^{j}\text{\textgreek{y}}]<+\infty,\label{eq:BoundEnergyHigherDerivatives}
\end{equation}
there exists a constant $C_{m}>0$ depending only on $m$ such that:

\begin{equation}
\int_{\{t=\text{\textgreek{t}}\}}J_{\text{\textgreek{m}}}^{N}(\text{\textgreek{y}}_{\ge\text{\textgreek{w}}_{+}})n^{\text{\textgreek{m}}}\le\frac{C_{m}}{\text{\textgreek{w}}_{+}^{2m}}\Big(\sum_{j=0}^{m}\mathcal{E}[T^{j}\text{\textgreek{y}}]+\mathcal{E}_{log}[\text{\textgreek{y}}]\Big).\label{eq:HighFrequencyBoundLemma}
\end{equation}
\end{lem}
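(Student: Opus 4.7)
The strategy is to realise $\psi_{\geq\omega_+}$ as the convolution in $t$ of $T^m\psi_c$ against a kernel that enjoys $\omega_+^{-m}$ scaling, thereby converting $m$ Killing-field derivatives into $m$ powers of $\omega_+^{-1}$. For $m\geq 1$ (the case $m=0$ being immediate from (4.31)), I define $\tilde\zeta_m$ via
\[
\hat{\tilde\zeta}_m(\omega) \doteq \frac{1-\hat\zeta_{\leq\omega_+}(\omega)}{(i\omega)^m}.
\]
Since $\hat\zeta_{\leq\omega_+}(\omega)=\theta_3(\omega_n^{-1}\omega)$ is identically $1$ on $\{|\omega|\leq\omega_n\}$, $\hat{\tilde\zeta}_m$ is smooth, supported in $\{|\omega|\geq\omega_n\}$, and coincides with $(i\omega)^{-m}$ on $\{|\omega|\geq 2\omega_n\}$. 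Rescaling $\omega=\omega_n\xi$ exhibits $\tilde\zeta_m(t)=\omega_n^{1-m}\hat F_m(\omega_n t)$ for a fixed Schwartz function $\hat F_m$, giving
\[
|\tilde\zeta_m(t)|\leq C_N\,\omega_+^{1-m}(1+\omega_+|t|)^{-N}\ \text{for all }N, \qquad \|\tilde\zeta_m\|_{L^1(\mathbb R)}\leq C\,\omega_+^{-m}.
\]
The Fourier identity $(1-\hat\zeta_{\leq\omega_+}(\omega))\hat\psi_c(\omega)=\hat{\tilde\zeta}_m(\omega)\cdot\widehat{T^m\psi_c}(\omega)$, valid because $T\leftrightarrow i\omega$ under the partial Fourier transform in $t$, then yields the physical-space reproducing formula
\[
\psi_{\geq\omega_+}(t,\cdot)=\int_{\mathbb R}\tilde\zeta_m(t-s)\,T^m\psi_c(s,\cdot)\,ds,
\]
which converges in the finite-energy sense thanks to (4.29) applied to $T^m\psi$ and the rapid decay of $\tilde\zeta_m$, by the same type of argument used to establish (4.26)--(4.27).

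Because $T$ is Killing and preserves the splitting $\mathcal{M}\setminus\mathcal{H}^-\simeq\mathbb R\times\Sigma$ trivially, $\nabla_{g_\Sigma}$ commutes with $T$-translation, so I may differentiate under the integral to obtain
\[
T\psi_{\geq\omega_+}(\tau,\cdot)=\int\tilde\zeta_m(\tau-s)\,T^{m+1}\psi_c(s,\cdot)\,ds,\qquad \nabla_{g_\Sigma}\psi_{\geq\omega_+}(\tau,\cdot)=\int\tilde\zeta_m(\tau-s)\,\nabla_{g_\Sigma}T^m\psi_c(s,\cdot)\,ds.
\]
A Cauchy--Schwarz estimate against the weight $|\tilde\zeta_m(\tau-s)|$, exactly as in the proofs of Lemmas 4.1 and 4.3, followed by integration over $\Sigma_\tau$, gives
\[
\int_{\Sigma_\tau}J_\mu^N(\psi_{\geq\omega_+})n^\mu\leq C\,\|\tilde\zeta_m\|_{L^1}\int|\tilde\zeta_m(\tau-s)|\,\Bigl(\int_{\Sigma_s}J_\mu^N(T^m\psi_c)N^\mu\,dg_\Sigma\Bigr)\,ds.
\]

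It remains to control the inner spatial integral at each time $s\in\mathbb R$. Using $T(t_-)=1$ and Leibniz,
\[
T^m\psi_c=\theta_2(t_-)T^m\psi+\sum_{j=1}^m\binom{m}{j}\theta_2^{(j)}(t_-)\,T^{m-j}\psi.
\]
For the main term, (4.16) applied to $T^m\psi$ yields $\sup_{s\in\mathbb R}\int_{\Sigma_s\cap\{t_-\geq0\}}J_\mu^N(T^m\psi)N^\mu\leq\mathcal E[T^m\psi]$. Each remaining term is supported in the slab $\{0\leq t_-\leq 1\}$, where $r\sim|s|+R_1$; using the compact support of the initial data of $T^{m-j}\psi$ on $\Sigma$ together with a fundamental-theorem-of-calculus argument against $T^{m-j+1}\psi$ (controlled, in turn, by $\mathcal E[T^{m-j}\psi]$ for $j\geq 1$), combined with the Hardy-type bound (4.28) applied to the undifferentiated factor in the $j=m$ term, produces a polynomial-in-$|s|$ estimate of the shape
\[
\int_{\Sigma_s}J_\mu^N(T^m\psi_c)N^\mu\,dg_\Sigma\leq C_m(1+|s|)^2\big(\log(2+|s|)\big)^4\Bigl(\sum_{j=0}^m\mathcal E[T^j\psi]+\mathcal E_{log}[\psi]\Bigr).
\]
The Schwartz decay of $\tilde\zeta_m$ absorbs this polynomial growth when integrated against $|\tilde\zeta_m(\tau-s)|$, contributing a bounded factor, and combined with $\|\tilde\zeta_m\|_{L^1}\leq C\omega_+^{-m}$ delivers the desired bound (4.53). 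The principal technical obstacle is step (c): the slab terms $\theta_2^{(j)}(t_-)T^{m-j}\psi$ for $1\leq j\leq m-1$, whose $L^2(\Sigma_s)$ norms are \emph{not} directly controlled by $\mathcal E[T^{m-j}\psi]$, and for which one must carefully exploit the finite speed of propagation together with compact support of the initial data (rather than the weighted bound $\mathcal E_{log}[T^{m-j}\psi]$, which the conclusion forbids us from invoking) to obtain the polynomial growth that is then dominated by the Schwartz decay of $\tilde\zeta_m$.
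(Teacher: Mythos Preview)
Your overall strategy matches the paper's: build a reproducing kernel whose Fourier symbol is $(i\omega)^{-m}$ times the high-frequency cutoff, convolve against $T^m\psi_c$, and run Cauchy--Schwarz. There is, however, a concrete error in your kernel analysis. The claim that $\tilde\zeta_m$ is (a rescaling of) a Schwartz function is false: the Fourier symbol $G_m(\xi)=(1-\theta_3(\xi))/(i\xi)^m$ is smooth, but it is only $O(|\xi|^{-m})$ at infinity, not rapidly decreasing. Consequently $\check G_m$ is rapidly decreasing but \emph{not} smooth at the origin; for $m=1$ it has a logarithmic singularity there. The paper handles this by establishing the corrected pointwise bound
\[
|\tilde\zeta_m(t)|\le C_{q,m}\,\omega_+^{1-m}\,\frac{|\log|\omega_+ t||+1}{1+|\omega_+ t|^q},
\]
obtained by analysing $\int_1^\infty y^{-m}e^{i\lambda y}\,dy$ directly and using the identity $t\,\tilde\zeta_m(t)=-m\omega_+^{-1}\tilde\zeta_{m+1}(t)+\text{(Schwartz)}$ for the decay in $t$. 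Since the logarithm is locally integrable, your $L^1$ bound $\|\tilde\zeta_m\|_{L^1}\le C\omega_+^{-m}$ survives and the remainder of your argument goes through unchanged once you use this corrected bound in place of the Schwartz claim.

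Your treatment of the slab terms is more complicated than necessary. You invoke finite speed of propagation and a fundamental-theorem-of-calculus argument to control $\|T^{m-j}\psi\|_{L^2(\Sigma_s\cap\{0\le t_-\le 1\})}$ for $1\le j\le m-1$, worrying that the statement forbids $\mathcal{E}_{log}[T^{m-j}\psi]$. But for $m-j\ge 1$ one has pointwise $|T^{m-j}\psi|^2\le C\,J^N_\mu(T^{m-j-1}\psi)N^\mu$, so the slab integral is bounded directly by $\mathcal{E}[T^{m-j-1}\psi]$, which is already in the allowed right-hand side. Only the undifferentiated term $|\psi|^2$ genuinely requires the Hardy-type weighted estimate and contributes $\mathcal{E}_{log}[\psi]$. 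This is how the paper proceeds, and it avoids the polynomial-in-$|s|$ loss coming from your FTC step.
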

\begin{proof}
We can assume without loss of generality that $m\ge1$, since the
$m=0$ case is a direct consequence of (\ref{eq:EnergyClassAPrioriBoundHigh}).
Let us introduce the function $\breve{\text{\textgreek{x}}}_{m}:\mathbb{R}\backslash\{0\}\rightarrow\mathbb{C}$
by the formula 
\begin{equation}
\breve{\text{\textgreek{x}}}_{m}(t)=\int_{-\infty}^{+\infty}(i\text{\textgreek{w}}_{+}^{-1}\text{\textgreek{w}})^{-m}e^{i\text{\textgreek{w}}t}(1-\text{\textgreek{q}}(\text{\textgreek{w}}_{+}^{-1}\text{\textgreek{w}}))\, d\text{\textgreek{w}}.\label{eq:TemperedDistribution}
\end{equation}
Note that, when $m=1$, the right hand side of (\ref{eq:TemperedDistribution})
diverges when $t=0$. In view of the bound 
\begin{equation}
\Big|\int_{1}^{+\infty}\frac{1}{y^{m}}e^{i\text{\textgreek{l}}y}\, dy\Big|\le\begin{cases}
C\big(|\log(\text{\textgreek{l}})|+1\big), & m=1\\
C, & m>1,
\end{cases}
\end{equation}
as well as the relation 
\begin{equation}
t\breve{\text{\textgreek{x}}}_{m}(t)=i\int_{-\infty}^{+\infty}e^{i\text{\textgreek{w}}t}\frac{d}{d\text{\textgreek{w}}}\big((i\text{\textgreek{w}})^{-m}(1-\text{\textgreek{q}}(\text{\textgreek{w}}_{+}^{-1}\text{\textgreek{w}}))\big)\, d\text{\textgreek{w}}=-m\text{\textgreek{w}}_{+}^{-1}\breve{\text{\textgreek{x}}}_{m+1}(t)-i\text{\textgreek{w}}_{+}^{-1}\int_{-\infty}^{+\infty}e^{i\text{\textgreek{w}}t}(i\text{\textgreek{w}}_{+}^{-1}\text{\textgreek{w}})^{-m}\text{\textgreek{q}}^{\prime}(\text{\textgreek{w}}_{+}^{-1}\text{\textgreek{w}}))\, d\text{\textgreek{w}},
\end{equation}
from (\ref{eq:TemperedDistribution}) we infer that for any integer
$q\in\mathbb{N}$ and any $t\neq0$:
\begin{equation}
|\breve{\text{\textgreek{x}}}_{m}(t)|\le C_{qm}\text{\textgreek{w}}_{+}\frac{|\log(|\text{\textgreek{w}}_{+}t|)|+1}{|\text{\textgreek{w}}_{+}t|^{q}+1}.\label{eq:SchwartzBoundLogarithmic}
\end{equation}

Defining the tempered distribution 
\begin{equation}
\text{\textgreek{z}}_{\ge\text{\textgreek{w}}_{+}}\doteq\text{\textgreek{d}}_{D}-\text{\textgreek{z}}_{\le\text{\textgreek{w}}_{+}},
\end{equation}
 where $\text{\textgreek{d}}_{D}$ is Dirac's delta function and $\text{\textgreek{z}}_{\le\text{\textgreek{w}}_{+}}$
is defined by (\ref{eq:FrequencyCutOffFunctions}), the Fourier transforms
of $\breve{\text{\textgreek{x}}}_{m}$ and $\text{\textgreek{z}}_{\ge\text{\textgreek{w}}_{+}}$
satisfy the relation:
\begin{equation}
\hat{\text{\textgreek{z}}}_{\ge\text{\textgreek{w}}_{+}}=\text{\textgreek{w}}_{+}^{-m}\widehat{\breve{\text{\textgreek{x}}}}_{m}\cdot(i\text{\textgreek{w}})^{m}\hat{\text{\textgreek{z}}}_{\ge\text{\textgreek{w}}_{+}},
\end{equation}
yielding the following relation for $\text{\textgreek{y}}_{\ge\text{\textgreek{w}}_{+}}$
in physical space: 
\begin{equation}
\text{\textgreek{y}}_{\ge\text{\textgreek{w}}_{+}}(t,\cdot)=\text{\textgreek{w}}_{+}^{-m}\int_{-\infty}^{+\infty}\breve{\text{\textgreek{x}}}_{m}(t-s)T^{m}\text{\textgreek{y}}_{\ge\text{\textgreek{w}}_{+}}(s,\cdot)\, ds,\label{eq:ReproducingFormulaHigherFrequencyPart}
\end{equation}
where, again, the integral in the right hand side of (\ref{eq:ReproducingFormulaHigherFrequencyPart})
converges in the $\int_{\text{\textgreek{S}}_{t}}J_{\text{\textgreek{m}}}^{N}(\cdot)n^{\text{\textgreek{m}}}$
norm.

From (\ref{eq:ReproducingFormulaHigherFrequencyPart}) and (\ref{eq:SchwartzBoundLogarithmic})
we can estimate for any $\text{\textgreek{t}}\in\mathbb{R}$:
\begin{equation}
\begin{split}\int_{\text{\textgreek{S}}}\big(|T\text{\textgreek{y}}_{\ge\text{\textgreek{w}}_{+}} & (\text{\textgreek{t}},x)|^{2}+|\nabla_{g_{\text{\textgreek{S}}}}\text{\textgreek{y}}_{\ge\text{\textgreek{w}}_{+}}(\text{\textgreek{t}},x)|_{g_{\text{\textgreek{S}}}}^{2}\big)\, dg_{\text{\textgreek{S}}}=\\
= & \text{\textgreek{w}}_{+}^{-2m}\int_{\text{\textgreek{S}}}\Big(\Big|\int_{-\infty}^{+\infty}\breve{\text{\textgreek{x}}}_{m}(\text{\textgreek{t}}-s)T^{m+1}\text{\textgreek{y}}_{\ge\text{\textgreek{w}}_{+}}(s,x)\, ds\Big|^{2}+\Big|\int_{-\infty}^{+\infty}\breve{\text{\textgreek{x}}}_{m}(\text{\textgreek{t}}-s)\nabla_{g_{\text{\textgreek{S}}}}T^{m}\text{\textgreek{y}}_{\ge\text{\textgreek{w}}_{+}}(s,x)\, ds\Big|_{g_{\text{\textgreek{S}}}}^{2}\big)\, dg_{\text{\textgreek{S}}}\le\\
\le & C_{m}\text{\textgreek{w}}_{+}^{-2m}\Big\{\int_{\text{\textgreek{S}}}\Big(\Big|\int_{-\infty}^{+\infty}\text{\textgreek{w}}_{+}\frac{|\log(|\text{\textgreek{w}}_{+}(\text{\textgreek{t}}-s)|)|+1}{|\text{\textgreek{w}}_{+}(\text{\textgreek{t}}-s)|^{4}+1}T^{m+1}\text{\textgreek{y}}_{\ge\text{\textgreek{w}}_{+}}(s,x)\, ds\Big|^{2}+\\
 & \hphantom{C_{m}\Big\{\int_{\text{\textgreek{S}}}\Big(}+\Big|\int_{-\infty}^{+\infty}\text{\textgreek{w}}_{+}\frac{|\log(|\text{\textgreek{w}}_{+}(\text{\textgreek{t}}-s)|)|+1}{|\text{\textgreek{w}}_{+}(\text{\textgreek{t}}-s)|^{4}+1}\nabla_{g_{\text{\textgreek{S}}}}T^{m}\text{\textgreek{y}}_{\ge\text{\textgreek{w}}_{+}}(s,x)\, ds\Big|_{g_{\text{\textgreek{S}}}}^{2}\big)\, dg_{\text{\textgreek{S}}}\Big\}\le\\
\le & C_{m}\text{\textgreek{w}}_{+}^{-2m}\Big(\int_{-\infty}^{+\infty}\text{\textgreek{w}}_{+}\frac{|\log(|\text{\textgreek{w}}_{+}(\text{\textgreek{t}}-s)|)|+1}{|\text{\textgreek{w}}_{+}(\text{\textgreek{t}}-s)|^{4}+1}\, ds\Big)\times\\
 & \hphantom{C_{m}\text{\textgreek{w}}_{+}^{-2m}\Big(}\times\Big(\int_{\text{\textgreek{S}}}\int_{-\infty}^{+\infty}\text{\textgreek{w}}_{+}\frac{|\log(|\text{\textgreek{w}}_{+}(\text{\textgreek{t}}-s)|)|+1}{|\text{\textgreek{w}}_{+}(\text{\textgreek{t}}-s)|^{4}+1}\big(|T^{m+1}\text{\textgreek{y}}_{\ge\text{\textgreek{w}}_{+}}(s,x)|^{2}+|\nabla_{g_{\text{\textgreek{S}}}}T^{m}\text{\textgreek{y}}_{\ge\text{\textgreek{w}}_{+}}(s,x)|_{g_{\text{\textgreek{S}}}}^{2}\big)\, dsdg_{\text{\textgreek{S}}}\Big)\le\\
\le & C_{m}\text{\textgreek{w}}_{+}^{-2m}\Big(\int_{\text{\textgreek{S}}}\int_{-\infty}^{+\infty}\text{\textgreek{w}}_{+}\frac{|\log(|\text{\textgreek{w}}_{+}(\text{\textgreek{t}}-s)|)|+1}{|\text{\textgreek{w}}_{+}(\text{\textgreek{t}}-s)|^{4}+1}\big(|T^{m+1}\text{\textgreek{y}}_{\ge\text{\textgreek{w}}_{+}}(s,x)|^{2}+|\nabla_{g_{\text{\textgreek{S}}}}T^{m}\text{\textgreek{y}}_{\ge\text{\textgreek{w}}_{+}}(s,x)|_{g_{\text{\textgreek{S}}}}^{2}\big)\, dsdg_{\text{\textgreek{S}}}\Big).
\end{split}
\label{eq:BoundForHighFrequencies}
\end{equation}

In view of (\ref{eq:HighFrequencyPart}) and the Schwartz bounds (\ref{eq:SchwartzBoundLargerCutOff}),
we readily obtain that for any $\text{\textgreek{t}}\in\mathbb{R}$:
\begin{equation}
\begin{split}\int_{\text{\textgreek{S}}}\big(|T^{m+1}\text{\textgreek{y}}_{\ge\text{\textgreek{w}}_{+}}(\text{\textgreek{t}},x)|^{2} & +|\nabla_{g_{\text{\textgreek{S}}}}T^{m}\text{\textgreek{y}}_{\ge\text{\textgreek{w}}_{+}}(\text{\textgreek{t}},x)|_{g_{\text{\textgreek{S}}}}^{2}\big)\, dg_{\text{\textgreek{S}}}\le\\
\le & C\int_{-\infty}^{+\infty}\frac{\text{\textgreek{w}}_{+}}{(1+\text{\textgreek{w}}_{+}|\text{\textgreek{t}}-s|)^{4}}\Big(\int_{\text{\textgreek{S}}}\big(|T^{m+1}\text{\textgreek{y}}_{c}(s,x)|^{2}+|\nabla_{g_{\text{\textgreek{S}}}}T^{m}\text{\textgreek{y}}_{c}(s,x)|_{g_{\text{\textgreek{S}}}}^{2}\big)\, dg_{\text{\textgreek{S}}}\Big)\, ds.
\end{split}
\label{eq:BoundEnergyFromCutOff}
\end{equation}
In view of the definition (\ref{eq:CutOffPsi}) of $\text{\textgreek{y}}_{c}$,
(\ref{eq:BoundEnergyFromCutOff}) yields 
\begin{equation}
\begin{split}\int_{\text{\textgreek{S}}}\big(|T^{m+1}\text{\textgreek{y}}_{\ge\text{\textgreek{w}}_{+}}(\text{\textgreek{t}},x)|^{2} & +|\nabla_{g_{\text{\textgreek{S}}}}T^{m}\text{\textgreek{y}}_{\ge\text{\textgreek{w}}_{+}}(\text{\textgreek{t}},x)|_{g_{\text{\textgreek{S}}}}^{2}\big)\, dg_{\text{\textgreek{S}}}\le\\
\le & C\sum_{j=0}^{m}\int_{-\infty}^{+\infty}\frac{\text{\textgreek{w}}_{+}}{(1+\text{\textgreek{w}}_{+}|\text{\textgreek{t}}-s|)^{4}}\Big\{\int_{\text{\textgreek{S}}\cap\{t_{-}\ge0\}}\big(|T^{j+1}\text{\textgreek{y}}(s,x)|^{2}+|\nabla_{g_{\text{\textgreek{S}}}}T^{j}\text{\textgreek{y}}(s,x)|_{g_{\text{\textgreek{S}}}}^{2}\big)\, dg_{\text{\textgreek{S}}}+\\
 & \hphantom{C\sum_{j=0}^{m}\int_{-\infty}^{+\infty}\frac{\text{\textgreek{w}}_{+}}{(1+\text{\textgreek{w}}_{+}|\text{\textgreek{t}}-s|)^{4}}\Big(}+\int_{\text{\textgreek{S}}\cap\{0\le t_{-}\le1\}}|\text{\textgreek{y}}(s,x)|^{2}\, dg_{\text{\textgreek{S}}}\Big\}\, ds.
\end{split}
\label{eq:BoundEnergyFromCutOff-1}
\end{equation}
Thus, (\ref{eq:BoundForHighFrequencies}), (\ref{eq:BoundEnergyFromCutOff-1}),
(\ref{eq:BoundHardyNegativeTimes}), and (\ref{eq:BoundEnergyHigherDerivatives})
(combined with the conservation of the $T$-energy flux in the region
$\{t_{-}\ge0\}\cap\{t\le0\}$) imply: 
\begin{align}
\int_{\text{\textgreek{S}}}\big(|T\text{\textgreek{y}}_{\ge\text{\textgreek{w}}_{+}}( & \text{\textgreek{t}},x)|^{2}+|\nabla_{g_{\text{\textgreek{S}}}}\text{\textgreek{y}}_{\ge\text{\textgreek{w}}_{+}}(\text{\textgreek{t}},x)|_{g_{\text{\textgreek{S}}}}^{2}\big)\, dg_{\text{\textgreek{S}}}\le\\
 & \le C_{m}\text{\textgreek{w}}_{+}^{-2m}\Big(\int_{-\infty}^{+\infty}\text{\textgreek{w}}_{+}\frac{|\log(|\text{\textgreek{w}}_{+}(\text{\textgreek{t}}-s)|)|+1}{|\text{\textgreek{w}}_{+}(\text{\textgreek{t}}-s)|^{4}+1}(1+|s|^{2})\big(\log(2+|s|)\big)^{4}\, ds\Big)\Big(\sum_{j=0}^{m}\mathcal{E}[T^{j}\text{\textgreek{y}}]+\mathcal{E}_{log}[\text{\textgreek{y}}]\Big),\nonumber 
\end{align}
from which (\ref{eq:HighFrequencyBoundLemma}) readily follows.
\end{proof}

\section{\label{sec:Carleman}A Carleman-type estimate outside the extended
ergoregion}

In this section, we will establish the following estimate for solutions
$\text{\textgreek{f}}$ to the inhomogeneous wave equation 
\begin{equation}
\square_{g}\text{\textgreek{f}}=G\label{eq:InhomogeneousWaveEquation}
\end{equation}
on $(\mathcal{M},g)$: 
\begin{prop}
\label{prop:GeneralCarlemanEstimate} For any $s,R\gg1$ sufficiently
large in terms of the geometry of $(\mathcal{M},g)$ and any $0<\text{\textgreek{e}}_{0}<1$,
there exists a smooth $T$-invariant function $f:\mathcal{M}\backslash\mathcal{H}^{-}\rightarrow(0,+\infty)$
satisfying 
\begin{equation}
f=\begin{cases}
e^{2sw_{R}}+e^{2s\tilde{w}_{R}}, & r\le R,\\
C_{s}\Big(\frac{r}{R}-\frac{9}{10}\log(\frac{r}{R})\Big) & r\ge R,
\end{cases}\label{eq:FunctionInCarlemanProposition}
\end{equation}
 where the functions $w_{R},\tilde{w}_{R}:\{r\le R\}\rightarrow\mathbb{R}$
satisfy

\begin{enumerate}

\item $w_{R}\equiv\tilde{w}_{R}\mbox{ on }\{r\le\frac{1}{4}r_{0}\}\cup\mathscr{E}_{ext}\cup\{r\ge\frac{1}{2}R_{0}\},$

\item $\sup_{\{r\le R\}}w_{R}-\inf_{\{r\le R\}}w_{R}+\sup_{\{r\le R\}}\tilde{w}_{R}-\inf_{\{r\le R\}}\tilde{w}_{R}\le C\text{\textgreek{e}}_{0}^{-1}R^{3\text{\textgreek{e}}_{0}}\mbox{ for some absolute constant }C>0,$

\item $\inf_{\{\frac{1}{4}r_{0}\le r\le R\}\backslash\mathscr{E}_{2\text{\textgreek{d}}}}w_{R}\ge\max_{\mathscr{E}_{\text{\textgreek{d}}}}w_{R}+c_{\text{\textgreek{d}}}R^{-3\text{\textgreek{e}}_{0}}\mbox{ and }\inf_{\{\frac{1}{4}r_{0}\le r\le R\}\backslash\mathscr{E}_{2\text{\textgreek{d}}}}\tilde{w}_{R}\ge\max_{\mathscr{E}_{\text{\textgreek{d}}}}\tilde{w}_{R}+c_{\text{\textgreek{d}}}R^{-3\text{\textgreek{e}}_{0}}\mbox{ for any }0<\text{\textgreek{d}}\ll1,$

\item $\sum_{j=1}^{4}\big(|\nabla^{j}w_{R}|_{g_{ref}}+|\nabla^{j}\tilde{w}_{R}|_{g_{ref}}\big)\le C,$

\end{enumerate} so that the following statement holds: For any $0<\text{\textgreek{d}},\text{\textgreek{e}}_{0}\ll1$,
any $s,R\gg1$ satisfying $\text{\textgreek{e}}_{0}sR^{-9\text{\textgreek{e}}_{0}}\gg1$,
any $0\le\text{\textgreek{t}}_{1}\le\text{\textgreek{t}}_{2}$ and
any smooth function $\text{\textgreek{f}}:\mathcal{M}\backslash\mathcal{H}^{-}\rightarrow\mathbb{C}$
solving (\ref{eq:InhomogeneousWaveEquation}) with compact support
on the hypersurfaces $\{t=\text{\textgreek{t}}\}$ for any $\text{\textgreek{t}}_{1}\le\text{\textgreek{t}}\le\text{\textgreek{t}}_{2}$,
we can estimate: 
\begin{equation}
\begin{split}\int_{\mathcal{R}(\text{\textgreek{t}}_{1},\text{\textgreek{t}}_{2})\cap\{r\le R_{0}\}\backslash\mathscr{E}_{\text{\textgreek{d}}}}(f+\inf_{\{r\ge\frac{1}{4}r_{0}\}\backslash\mathscr{E}}f)\Bigg\{ & sR^{-3\text{\textgreek{e}}_{0}}|\nabla_{g_{\text{\textgreek{S}}}}\text{\textgreek{f}}|_{g_{\text{\textgreek{S}}}}^{2}-C_{\text{\textgreek{d}}}sR^{-3\text{\textgreek{e}}_{0}}|T\text{\textgreek{f}}|^{2}+s^{3}R^{-9\text{\textgreek{e}}_{0}}|\text{\textgreek{f}}|^{2}\Bigg\}\, dg+\\
+\int_{\mathcal{R}(\text{\textgreek{t}}_{1},\text{\textgreek{t}}_{2})\cap\{R_{0}\le r\le\frac{1}{2}R\}}f\Bigg\{ & sR^{-3\text{\textgreek{e}}_{0}}r^{-\frac{5}{2}}\big(\big|\partial_{r}\text{\textgreek{f}}\big|^{2}+r^{-2}|\partial_{\text{\textgreek{sv}}}\text{\textgreek{f}}|^{2}\big)+sR^{-3\text{\textgreek{e}}_{0}}r^{-2}|T\text{\textgreek{f}}|^{2}+\text{\textgreek{e}}_{0}s^{3}R^{-9\text{\textgreek{e}}_{0}}r^{-4}|\text{\textgreek{f}}|^{2}\Bigg\}\, dg+\\
+\int_{\mathcal{R}(\text{\textgreek{t}}_{1},\text{\textgreek{t}}_{2})\cap\{\frac{1}{2}R\le r\le R\}}f\Bigg\{ & r^{-\frac{5}{2}}\big(\big|\partial_{r}\text{\textgreek{f}}\big|^{2}+r^{-2}|\partial_{\text{\textgreek{sv}}}\text{\textgreek{f}}|^{2}\big)+R\partial_{r}w_{R}\big(cR^{-2}s|T\text{\textgreek{f}}|^{2}-CR^{-4}s^{3}|\text{\textgreek{f}}|^{2}\big)\Bigg\}\, dg+\\
+\int_{\mathcal{R}(\text{\textgreek{t}}_{1},\text{\textgreek{t}}_{2})\cap\{r\ge R\}}f(R)\Bigg\{ & r^{-\frac{5}{2}}\big(\big|\partial_{r}\text{\textgreek{f}}\big|^{2}+r^{-2}|\partial_{\text{\textgreek{sv}}}\text{\textgreek{f}}|^{2}\big)+r^{-2}|T\text{\textgreek{f}}|^{2}-CR^{-1}r^{-3}|\text{\textgreek{f}}|^{2}\Bigg\}\, dg\le\\
\le & C_{\text{\textgreek{d}}}\int_{\mathcal{R}(\text{\textgreek{t}}_{1},\text{\textgreek{t}}_{2})\cap\mathscr{E}_{\text{\textgreek{d}}}}f\Big\{ s^{2}R^{-6\text{\textgreek{e}}_{0}}|\nabla\text{\textgreek{f}}|_{g_{ref}}^{2}+s^{4}R^{-12\text{\textgreek{e}}_{0}}|\text{\textgreek{f}}|^{2}\Big\}\, dg+\\
 & +C\Big|\int_{\mathcal{R}(\text{\textgreek{t}}_{1},\text{\textgreek{t}}_{2})}G\big(\nabla^{\text{\textgreek{m}}}f\nabla_{\text{\textgreek{m}}}\bar{\text{\textgreek{f}}}+O\big(\sum_{j=1}^{2}(1+r)^{j-2}|\nabla^{j}f|_{g_{ref}}\big)\bar{\text{\textgreek{f}}}\big)\, dg\Big|+\\
 & +C\sum_{j=1}^{2}\int_{\text{\textgreek{S}}_{\text{\textgreek{t}}_{j}}}\Big(|\nabla f|_{g_{ref}}|\nabla\text{\textgreek{f}}|_{g_{ref}}^{2}+\big(\sum_{j=1}^{3}(1+r)^{j-3}|\nabla^{j}f|_{g_{ref}}\big)|\text{\textgreek{f}}|^{2}\Big)\, dg_{\text{\textgreek{S}}}.
\end{split}
\label{eq:MainCarlemanEstimate}
\end{equation}

\end{prop}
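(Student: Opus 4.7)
The overall strategy will be to establish \eqref{eq:MainCarlemanEstimate} via the vector-field multiplier method applied directly to \eqref{eq:InhomogeneousWaveEquation}, without passing through a conjugated operator $e^{-sw}\square_g e^{sw}$. Concretely, for the weight $f$ of \eqref{eq:FunctionInCarlemanProposition}, the plan is to multiply $\square_g \text{\textgreek{f}} = G$ by a first order multiplier of the form $\overline{X\text{\textgreek{f}} + h\text{\textgreek{f}}}$, where $X = \nabla^{\text{\textgreek{m}}} f\,\partial_{\text{\textgreek{m}}}$ is the gradient of the weight and $h$ is a scalar of the order of $s\cdot f$ (essentially $h = -\tfrac{1}{2}\square_g f + \text{\textgreek{l}}\, s\, f$ for a tuned $\text{\textgreek{l}}>0$), take the real part, and integrate over $\mathcal{R}(\text{\textgreek{t}}_{1},\text{\textgreek{t}}_{2})$. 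Using the stress-energy calculus of Section \ref{sub:Currents}, the divergence identity will produce a principal bulk term $\int Q_{\text{\textgreek{m}\textgreek{n}}}(\text{\textgreek{f}})\nabla^{\text{\textgreek{m}}}X^{\text{\textgreek{n}}}$ plus lower order pieces involving $h$, $\nabla h$ and $\square_g f$, together with the boundary terms on $\text{\textgreek{S}}_{\text{\textgreek{t}}_{j}}$ and the inhomogeneous error $\int G \cdot \overline{(X\text{\textgreek{f}} + h\text{\textgreek{f}})}$ that one sees on the right of \eqref{eq:MainCarlemanEstimate}.

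For the construction of $w_{R}$ and $\tilde w_{R}$, I would build each as a pseudoconvex weight for $\square_g$ on $\{r\le R\}\setminus\mathscr{E}_{ext}$. Since $g(T,T)<0$ on $\mathcal{M}\setminus(\mathscr{E}\cup\mathcal{H})$, the level sets of $t$ itself are spacelike in this region, so any smooth function whose level sets remain small perturbations of these will have spacelike level sets outside $\mathscr{E}$; near the future horizon, the $N$-transversality guaranteed by Assumption \hyperref[Assumption 2]{G2} will be used to extend pseudoconvexity up to $\mathcal{H}^{+}$, and near $r = R$ one smoothly matches the polynomial profile $\tfrac{r}{R}-\tfrac{9}{10}\log(r/R)$ dictated by \eqref{eq:FunctionInCarlemanProposition}. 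The two weights $w_{R},\tilde w_{R}$ differ only in how they descend into the different connected components of $\mathcal{M}\setminus\mathscr{E}$: one of them behaves pseudoconvexly on the component meeting $\mathcal{I}_{as}\cup\mathcal{H}^{+}$ while being constant on $\mathcal{M}_{enc}$, and the other is reversed; they coincide on $\mathscr{E}_{ext}$ and on $\{r\le r_{0}/4\}\cup\{r\ge R_{0}/2\}$ as required by (1), and summing $e^{2sw_{R}}+e^{2s\tilde w_{R}}$ produces a single $f$ that is pseudoconvex everywhere outside $\mathscr{E}_{ext}$ in $\{r\le R\}$. Properties (2)--(4) then reduce to an elementary quantitative check on the chosen profile.

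The core positivity will come from inserting $f = e^{2sw_{R}}+e^{2s\tilde w_{R}}$ into $\nabla^{2}f$: the dominant piece is $4s^{2}e^{2sw_{R}}\nabla w_{R}\otimes\nabla w_{R}+2se^{2sw_{R}}\mathrm{Hess}(w_{R})$ (and the analogous piece with $\tilde w_{R}$). Contracting with $Q_{\text{\textgreek{m}\textgreek{n}}}(\text{\textgreek{f}})$ and absorbing the $h$-dependent terms by the choice of $\text{\textgreek{l}}$ yields the claimed $sR^{-3\text{\textgreek{e}}_{0}}|\nabla_{g_{\text{\textgreek{S}}}}\text{\textgreek{f}}|_{g_{\text{\textgreek{S}}}}^{2} + s^{3}R^{-9\text{\textgreek{e}}_{0}}|\text{\textgreek{f}}|^{2}$ lower bound, modulo the loss $-C_{\text{\textgreek{d}}}sR^{-3\text{\textgreek{e}}_{0}}|T\text{\textgreek{f}}|^{2}$ which is intrinsic to the fact that $\mathrm{Hess}(w_{R})$ restricted to null directions is positive only transversely to the $T$-direction. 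In the far region $\{r\ge R\}$, the polynomial weight gives $\nabla^{2}f\sim (rR)^{-1}g_{\mathbb{S}^{d-1}}$, and the computation reduces to the standard Morawetz calculation on the asymptotically flat end using \eqref{eq:metric}; the $\tfrac{9}{10}\log(r/R)$ correction in $f$ is precisely what generates the $r^{-2}|T\text{\textgreek{f}}|^{2}$ coefficient with a favorable sign while producing only the controlled error $-CR^{-1}r^{-3}|\text{\textgreek{f}}|^{2}$. The transition annulus $\{R/2\le r\le R\}$ is encoded in the $R\,\partial_{r}w_{R}$ factor, which is the derivative of the matching profile.

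The main obstacle will be the simultaneous calibration of the three regimes — the Carleman exponential zone $\{r\le R_{0}\}$, the transition zone $\{R/2\le r\le R\}$, and the polynomial Morawetz zone $\{r\ge R\}$ — while keeping track of the precise powers of $s$ and $R$ that render \eqref{eq:MainCarlemanEstimate} useful at the frequency-localization stage of Section \ref{sec:Proof-of-Proposition}. In particular, one must verify that the bulk error generated by the spatial cutoffs built into $w_{R}$ (whose third and fourth derivatives enter through $\square_g f$ and through $\nabla h$) does not introduce spurious contributions of order $s^{3}$ at the junctures $r\approx r_{0}$ and $r\approx R_{0}$ that would spoil the $s^{3}R^{-9\text{\textgreek{e}}_{0}}$ positivity; this is exactly what forces the smallness hypothesis $\text{\textgreek{e}}_{0}sR^{-9\text{\textgreek{e}}_{0}}\gg1$. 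A secondary difficulty is the quantitative handling of property (3) near $\partial\mathscr{E}_{ext}$: the pseudoconvexity gap $c_{\text{\textgreek{d}}}R^{-3\text{\textgreek{e}}_{0}}$ is what prevents the left-hand side of \eqref{eq:MainCarlemanEstimate} from being swallowed by the ergoregion error $\int_{\mathscr{E}_{\text{\textgreek{d}}}}f\{s^{2}R^{-6\text{\textgreek{e}}_{0}}|\nabla\text{\textgreek{f}}|_{g_{ref}}^{2}+s^{4}R^{-12\text{\textgreek{e}}_{0}}|\text{\textgreek{f}}|^{2}\}$ on the right, which is precisely the term that will later be absorbed after frequency localization in $t$.
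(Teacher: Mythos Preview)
Your overall strategy---a first-order multiplier $2\nabla^{\text{\textgreek{m}}}f\,\nabla_{\text{\textgreek{m}}}\text{\textgreek{f}} + (\text{scalar})\cdot\text{\textgreek{f}}$, integration by parts over $\mathcal{R}(\text{\textgreek{t}}_{1},\text{\textgreek{t}}_{2})$, and region-by-region positivity using $\nabla^{2}f = (4s^{2}\nabla w_{R}\otimes\nabla w_{R} + 2s\,\mathrm{Hess}(w_{R}))e^{2sw_{R}}$---is correct and matches the paper's Sections~\ref{sub:IntegrationsByParts}--\ref{sub:ProofOfCarleman}. The gap is in your construction of $w_{R}$ and $\tilde w_{R}$.

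Your description of why two weights are needed is wrong. You say they differ in how they treat the different connected components of $\mathcal{M}\setminus\mathscr{E}$, with one pseudoconvex on the component meeting $\mathcal{I}_{as}\cup\mathcal{H}^{+}$ and constant on $\mathcal{M}_{enc}$, the other reversed. But $\mathcal{M}_{enc}\subset\mathscr{E}_{ext}$ by definition, and property~(1) forces $w_{R}\equiv\tilde w_{R}$ on $\mathscr{E}_{ext}$, so your description is internally inconsistent. More importantly, the connected-component picture misses the actual obstruction: the weight $w$ is built (Lemmas~\ref{lem:DoubleExponentForTheCarleman}--\ref{lem:PositivityHessianW}) as $e^{l\bar w}$ where $\bar w$ solves an elliptic boundary value problem on $\text{\textgreek{S}}\setminus\mathscr{E}_{ext}$, and $\bar w|_{\text{\textgreek{S}}}$ is a Morse function with finitely many critical points $\{x_{j}\}$, none of which are local maxima. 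At those points $\nabla w=0$, so the key positivity $\nabla^{\text{\textgreek{m}}}\nabla^{\text{\textgreek{n}}}w\,\nabla_{\text{\textgreek{m}}}w\,\nabla_{\text{\textgreek{n}}}w>0$ (which drives the $s^{3}$ term in~\eqref{eq:GeneralFormOfAfhR}) degenerates. The second weight $\tilde w$ is obtained by composing $w$ with a diffeomorphism $\mathcal{X}$ that displaces each $x_{j}$ to a nearby point $\tilde x_{j}$ where $w$ is strictly larger (possible precisely because $x_{j}$ is not a local maximum); then \eqref{eq:InequalitiesForPairOfDeformedFunctions}--\eqref{eq:InequalitiesForPair2} ensure that in the ball around $x_{j}$, where the estimate for $e^{2sw_{R}}$ fails, the term $e^{2s\tilde w_{R}}$ dominates $e^{2sw_{R}}$ exponentially in $s$ and carries the positivity (and vice versa near $\tilde x_{j}$). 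Without this critical-point mechanism, you cannot produce a single $T$-invariant $w$ with $\nabla^{\text{\textgreek{m}}}\nabla^{\text{\textgreek{n}}}w\,\nabla_{\text{\textgreek{m}}}w\,\nabla_{\text{\textgreek{n}}}w\ge c>0$ on the whole of $\{r\ge\tfrac18 r_{0}\}\setminus\mathscr{E}_{ext}$, and your bulk estimate collapses there.

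A smaller point: your remark that ``level sets of $t$ are spacelike, so small perturbations will have spacelike level sets'' is not the relevant geometry here. The weights are $T$-invariant, so their level sets contain $T$ and are never spacelike; what is used instead is that $\nabla^{\text{\textgreek{m}}}w\,\nabla_{\text{\textgreek{m}}}w>0$ outside $\mathscr{E}\cup\mathcal{H}$ (a consequence of $T(w)=0$ and $T$ timelike there), together with the Hessian lower bound of Lemma~\ref{lem:PositivityHessianW}.
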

The proof of Proposition \ref{prop:GeneralCarlemanEstimate} will
be given in Section \ref{sub:ProofOfCarleman}. It will be based on
the construction of a suitable multiplier for the inhomogeneous wave
equation (\ref{eq:InhomogeneousWaveEquation}), which will be presented
in Sections \ref{sub:Some-auxiliary-lemmas}--\ref{sub:ChoiceOfSeedFunctions},
as well as an intricate integration-by-parts procedure, that will
be performed in Section \ref{sub:IntegrationsByParts}. 
\begin{rem*}
In fact, Proposition \ref{prop:GeneralCarlemanEstimate} also holds
in the case when $\mathscr{E}=\emptyset$. We should also remark that
Proposition \ref{prop:GeneralCarlemanEstimate} applies in the case
when $(\mathcal{M},g)$ has a $T$-invariant timelike boundary component
$\partial_{tim}\mathcal{M}$, with $\partial_{tim}\mathcal{M}\cap\text{\textgreek{S}}$
compact and $\partial_{tim}\mathcal{M}\cap\mathcal{H}=\emptyset$,
and $\text{\textgreek{f}}$ is assumed to satisfy either Dirichlet
or Neumann boundary conditions on $\partial_{tim}\mathcal{M}$ (see
Section \ref{sub:ProofWithBoundaryConditions} for more details). 

Furthermore, the proof of Proposition \ref{prop:GeneralCarlemanEstimate}
applies without any change after replacing equation (\ref{eq:InhomogeneousWaveEquation})
with 
\begin{equation}
\square_{g}\text{\textgreek{f}}-V\text{\textgreek{f}}=G,
\end{equation}
for any smooth and $T$-invariant function $V:\mathcal{M}\rightarrow\mathbb{R}$
satisfying either $\partial_{r}V<0$ and $V\rightarrow0$ as $r\rightarrow+\infty$
in the asymptotically flat region of $(\mathcal{M},g)$, or $\sup_{\mathcal{M}}\big((1+r)^{2+\text{\textgreek{h}}}|V|\big)<+\infty$
for some $\text{\textgreek{h}}>0$. In the later case, the constants
in the analogue of (\ref{eq:MainCarlemanEstimate}) can be chosen
to depending only on $\text{\textgreek{h}}$ and $\sup_{\mathcal{M}}\big((1+r)^{2+\text{\textgreek{h}}}|V|\big)$. 

Finally, let us remark that the estimate (\ref{eq:MainCarlemanEstimate})
can be readily used to show that any smooth solution $\text{\textgreek{f}}$
to equation (\ref{eq:WaveEquation}) on $\mathcal{M}$ of the form
$\text{\textgreek{f}}=e^{-i\text{\textgreek{w}}t}\text{\textgreek{f}}_{\text{\textgreek{w}}}$
with $\text{\textgreek{w}}\in\mathbb{R}\backslash\{0\}$, $T(\text{\textgreek{f}}_{\text{\textgreek{w}}})=0$
and 
\begin{equation}
\lim_{\text{\textgreek{r}}\rightarrow+\infty}\int_{\{r=\text{\textgreek{r}}\}\cap\{t=0\}}\big(|\text{\textgreek{f}}_{\text{\textgreek{w}}}|^{2}+|\nabla\text{\textgreek{f}}_{\text{\textgreek{w}}}|^{2}\big)=0
\end{equation}
 vanishes identically on $\mathcal{M}\backslash\mathscr{E}_{ext}$. 
\end{rem*}
As a corollary of Proposition \ref{prop:GeneralCarlemanEstimate},
given $\text{\textgreek{w}}_{+}>1$ and $0<\text{\textgreek{w}}_{0}<1$,
we will establish the following estimate for the frequency localised
components $\text{\textgreek{y}}_{k}$ of any solution $\text{\textgreek{y}}$
to the wave equation (\ref{eq:WaveEquation}) on $(\mathcal{M},g)$
satisfying the bound (\ref{eq:BoundednessEnergy}) (see the relevant
constructions in Section \ref{sec:FrequencyDecomposition}):
\begin{cor}
\label{cor:CarlemanForPsik}For any smooth solution $\text{\textgreek{y}}$
to (\ref{eq:WaveEquation}) satisfying (\ref{eq:BoundednessEnergy}),
any integer $1\le k\le n$, any $0<\text{\textgreek{d}}_{1},\text{\textgreek{d}}_{2},\text{\textgreek{e}}_{0}<1$,
any $R_{1}\ge0$, any $0\le\text{\textgreek{t}}_{1}\le\text{\textgreek{t}}_{2}$
we can bound: 
\begin{equation}
\begin{split}\int_{\mathcal{R}(\text{\textgreek{t}}_{1},\text{\textgreek{t}}_{2})\cap\{r\le R_{1}\}\backslash\mathscr{E}_{2\text{\textgreek{d}}_{1}}}\Big(J_{\text{\textgreek{m}}}^{N}(\text{\textgreek{y}}_{k})N^{\text{\textgreek{m}}} & +|\text{\textgreek{y}}_{k}|^{2}\Big)\le\\
\le & \text{\textgreek{d}}_{2}\int_{\mathcal{R}(\text{\textgreek{t}}_{1},\text{\textgreek{t}}_{2})\cap\mathscr{E}_{\text{\textgreek{d}}_{1}}}\Big(J_{\text{\textgreek{m}}}^{N}(\text{\textgreek{y}}_{k})N^{\text{\textgreek{m}}}+|\text{\textgreek{y}}_{k}|^{2}\Big)+\\
 & +C_{\text{\textgreek{e}}_{0}\text{\textgreek{d}}_{1}R_{1}}(1+\text{\textgreek{w}}_{k}^{-10})\big(\log(2+\text{\textgreek{t}}_{2})\big)^{4}\cdot e^{C_{\text{\textgreek{e}}_{0}\text{\textgreek{d}}_{1}}\cdot\max\big\{|\text{\textgreek{w}}_{k}|,|\text{\textgreek{w}}_{k}|^{-\text{\textgreek{e}}_{0}},-\log\text{\textgreek{d}}_{2}\big\}}\mathcal{E}_{log}[\text{\textgreek{y}}],
\end{split}
\label{eq:CarlemanEstimate}
\end{equation}
where $C_{\text{\textgreek{e}}_{0}\text{\textgreek{d}}_{1}R_{1}}$
depends only on $\text{\textgreek{e}}_{0},\text{\textgreek{d}}_{1},R_{1}$
and the geometry of $(\mathcal{M},g)$, while $C_{\text{\textgreek{e}}_{0}\text{\textgreek{d}}_{1}}$
depends only on $\text{\textgreek{e}}_{0},\text{\textgreek{d}}_{1}$
and the geometry of $(\mathcal{M},g)$.
\end{cor}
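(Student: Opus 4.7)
\medskip

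The plan is to apply Proposition \ref{prop:GeneralCarlemanEstimate} to $\text{\textgreek{f}} = \text{\textgreek{y}}_{k}$, using \eqref{eq:InhomogeneousWaveEquationPsiK} to identify the source $G = F_{k}$. The parameters $s,R$ in Proposition \ref{prop:GeneralCarlemanEstimate} are to be chosen as functions of $\text{\textgreek{w}}_{k},\text{\textgreek{d}}_{1},\text{\textgreek{d}}_{2},\text{\textgreek{e}}_{0}$ and $R_{1}$; concretely, I would take $R$ large enough in terms of $R_{1}$ and $\text{\textgreek{d}}_{1}$ (so that the ``small-$r$'' portion $\{r\le R_{1}\}\setminus\mathscr{E}_{2\text{\textgreek{d}}_{1}}$ sits in the interior of the region where properties (1)--(3) of the weight $f$ are effective), and then set $s=C_{\text{\textgreek{e}}_{0}\text{\textgreek{d}}_{1}}R^{3\text{\textgreek{e}}_{0}}\max\{|\text{\textgreek{w}}_{k}|,\,|\text{\textgreek{w}}_{k}|^{-\text{\textgreek{e}}_{0}},\,-\log\text{\textgreek{d}}_{2}\}$ large enough so that (a) the condition $\text{\textgreek{e}}_{0}sR^{-9\text{\textgreek{e}}_{0}}\gg1$ is verified, (b) the weight-gap $e^{2sc_{\text{\textgreek{d}}_{1}}R^{-3\text{\textgreek{e}}_{0}}}$ dictated by property (3) of $w_{R},\tilde{w}_{R}$ is at least $\text{\textgreek{d}}_{2}^{-1}$, and (c) the product $s^{2}R^{-6\text{\textgreek{e}}_{0}}$ dominates $\text{\textgreek{w}}_{k}^{2}$ by a large constant.

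On the left-hand side of \eqref{eq:MainCarlemanEstimate}, I would discard all the $r\ge R_{1}$ contributions and work only with the piece $\{r\le R_{1}\}\setminus\mathscr{E}_{2\text{\textgreek{d}}_{1}}$, where property (3) forces $f\ge e^{2sc_{\text{\textgreek{d}}_{1}}R^{-3\text{\textgreek{e}}_{0}}}\cdot\sup_{\mathscr{E}_{\text{\textgreek{d}}_{1}}}f$. The main obstacle is the indefinite $-C_{\text{\textgreek{d}}_{1}}sR^{-3\text{\textgreek{e}}_{0}}|T\text{\textgreek{y}}_{k}|^{2}$ term on the left: here I would invoke Lemma \ref{lem:DtToOmegaInequalities} (with $q$ a cutoff supported in $\{r\le R_{1}\}\setminus\mathscr{E}_{2\text{\textgreek{d}}_{1}}$) to replace $|T\text{\textgreek{y}}_{k}|^{2}$ by $C\text{\textgreek{w}}_{k}^{2}|\text{\textgreek{y}}_{k}|^{2}$ plus a tail that is absorbed into $\mathcal{E}_{log}[\text{\textgreek{y}}]$ with a factor $(\log(2+\text{\textgreek{t}}_{2}))^{4}$; by choice of $s$ with $s^{2}R^{-6\text{\textgreek{e}}_{0}}\gg\text{\textgreek{w}}_{k}^{2}$, the resulting $sR^{-3\text{\textgreek{e}}_{0}}\text{\textgreek{w}}_{k}^{2}|\text{\textgreek{y}}_{k}|^{2}$ term is absorbed into the $s^{3}R^{-9\text{\textgreek{e}}_{0}}|\text{\textgreek{y}}_{k}|^{2}$ term of the left-hand side. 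Together with the gradient control this yields a lower bound of the form $e^{cs R^{-3\text{\textgreek{e}}_0}}\cdot(\sup_{\mathscr{E}_{\text{\textgreek{d}}_{1}}}f)\cdot\int_{\mathcal{R}\cap\{r\le R_{1}\}\setminus\mathscr{E}_{2\text{\textgreek{d}}_{1}}}\bigl(J_{\text{\textgreek{m}}}^{N}(\text{\textgreek{y}}_{k})N^{\text{\textgreek{m}}}+|\text{\textgreek{y}}_{k}|^{2}\bigr)$ on the left, for a suitable constant $c$.

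On the right-hand side, the ergoregion integral is handled by bounding $f\le\sup_{\mathscr{E}_{\text{\textgreek{d}}_{1}}}f$ there and by using Lemma \ref{lem:DtToOmegaInequalities} in reverse to estimate $s^{2}R^{-6\text{\textgreek{e}}_{0}}|\nabla\text{\textgreek{y}}_{k}|_{g_{ref}}^{2}+s^{4}R^{-12\text{\textgreek{e}}_{0}}|\text{\textgreek{y}}_{k}|^{2}\lesssim s^{2}R^{-6\text{\textgreek{e}}_{0}}\bigl(J_{\text{\textgreek{m}}}^{N}(\text{\textgreek{y}}_{k})N^{\text{\textgreek{m}}}+|\text{\textgreek{y}}_{k}|^{2}\bigr)+(\text{error in }\mathcal{E}_{log}[\text{\textgreek{y}}])$; dividing by the lower bound on the left produces the prefactor $e^{-cs R^{-3\text{\textgreek{e}}_0}}s^{2}R^{-6\text{\textgreek{e}}_{0}}$ in front of the ergoregion energy, which by the choice of $s$ is at most $\text{\textgreek{d}}_{2}$. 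The source term $\int G(\ldots)$ is bounded via Cauchy--Schwarz combined with Lemma \ref{lem:BoundF} (with $q$ and $q'$ chosen large), giving a contribution that is a polynomial in $\text{\textgreek{w}}_{k}^{-1}$ times rapid decay in $\text{\textgreek{t}}_{1}$ and $\mathcal{E}_{log}[\text{\textgreek{y}}]$. The boundary terms at $\text{\textgreek{t}}=\text{\textgreek{t}}_{1},\text{\textgreek{t}}_{2}$ are controlled using \eqref{eq:EnergyClassAPrioriBoundLocal}, which yields a $(\log(2+\text{\textgreek{t}}_{2}))^{4}(1+\text{\textgreek{w}}_{k}^{-2-a})$ factor times $\mathcal{E}_{log}[\text{\textgreek{y}}]$. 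Finally, the $r\ge R$ tails on both sides are disposed of using Lemma \ref{lem:DecayPsiKNearSpacelikeInfinity} (after integrating $r$-by-$r$ in dyadic shells and exploiting that Lemma \ref{lem:DecayPsiKNearSpacelikeInfinity} provides arbitrary polynomial decay at spatial infinity). Collecting all the error contributions, bounding the prefactors by $e^{C_{\text{\textgreek{e}}_{0}\text{\textgreek{d}}_{1}}\max\{|\text{\textgreek{w}}_{k}|,|\text{\textgreek{w}}_{k}|^{-\text{\textgreek{e}}_{0}},-\log\text{\textgreek{d}}_{2}\}}$ (coming from $e^{Cs}$ through the weight $f$ on the right-hand-side boundary terms), yields \eqref{eq:CarlemanEstimate}. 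The main difficulty is the simultaneous balancing of $s$: it must be large enough to absorb $|T\text{\textgreek{y}}_{k}|^{2}$ and to generate the factor $\text{\textgreek{d}}_{2}$ on the ergoregion term, yet not so large that the boundary/source error overwhelms the bound; the permissible window is exactly what dictates the exponent $\max\{|\text{\textgreek{w}}_{k}|,|\text{\textgreek{w}}_{k}|^{-\text{\textgreek{e}}_{0}},-\log\text{\textgreek{d}}_{2}\}$ appearing in \eqref{eq:CarlemanEstimate}.
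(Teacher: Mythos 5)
Your outline---apply Proposition \ref{prop:GeneralCarlemanEstimate} to $\text{\textgreek{f}}=\text{\textgreek{y}}_{k}$ with $G=F_{k}$, use the weight gap from property (3) of $w_{R},\tilde{w}_{R}$ to generate the small prefactor $\text{\textgreek{d}}_{2}$ in front of the ergoregion integral, convert between $|T\text{\textgreek{y}}_{k}|^{2}$ and $\text{\textgreek{w}}_{k}^{2}|\text{\textgreek{y}}_{k}|^{2}$ via Lemma \ref{lem:DtToOmegaInequalities}, and control the source and boundary terms by Lemma \ref{lem:BoundF} and the a~priori bounds (\ref{eq:EnergyClassAPrioriBoundLocal})---is the right skeleton, and broadly the one the paper uses. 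However, there is a genuine gap in the parameter choice and, correspondingly, in how you treat the regions $\{\tfrac{1}{2}R\le r\le R\}$ and $\{r\ge R\}$ on the left-hand side of (\ref{eq:MainCarlemanEstimate}).

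The point is that those regions contribute terms on the \emph{left} of (\ref{eq:MainCarlemanEstimate}) that are of \emph{indefinite} sign, namely $R\partial_{r}w_{R}\,(cR^{-2}s|T\text{\textgreek{f}}|^{2}-CR^{-4}s^{3}|\text{\textgreek{f}}|^{2})$ for $\tfrac{1}{2}R\le r\le R$ and $r^{-2}|T\text{\textgreek{f}}|^{2}-CR^{-1}r^{-3}|\text{\textgreek{f}}|^{2}$ for $r\ge R$. You cannot ``discard all the $r\ge R_{1}$ contributions,'' nor can you ``dispose of'' them via Lemma \ref{lem:DecayPsiKNearSpacelikeInfinity}: that lemma is used in the paper only to justify the approximation of $\text{\textgreek{y}}_{k}$ by compactly supported data so that (\ref{eq:MainCarlemanEstimate}) is applicable at all; it says nothing about the sign or size of the far-region integrands. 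To make these terms non-negative after converting $|T\text{\textgreek{y}}_{k}|^{2}\sim\text{\textgreek{w}}_{k}^{2}|\text{\textgreek{y}}_{k}|^{2}$, one needs both a lower bound $R\gtrsim\text{\textgreek{w}}_{k}^{-1}$ and, critically, an \emph{upper} bound $s\lesssim R\text{\textgreek{w}}_{k}$ (so that $c\text{\textgreek{w}}_{k}^{2}-CR^{-2}s^{2}\ge 0$). Your proposal never records this upper bound on $s$.

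This makes your parameter ansatz inconsistent. You fix $R$ ``large enough in terms of $R_{1}$ and $\text{\textgreek{d}}_{1}$'' (independently of $\text{\textgreek{w}}_{k}$ and $\text{\textgreek{d}}_{2}$) and then set $s\sim R^{3\text{\textgreek{e}}_{0}}\max\{|\text{\textgreek{w}}_{k}|,|\text{\textgreek{w}}_{k}|^{-\text{\textgreek{e}}_{0}},-\log\text{\textgreek{d}}_{2}\}$. But the requirement $s\lesssim R\text{\textgreek{w}}_{k}$ then forces $\max\{1,\text{\textgreek{w}}_{k}^{-1-\text{\textgreek{e}}_{0}},\text{\textgreek{w}}_{k}^{-1}(-\log\text{\textgreek{d}}_{2})\}\lesssim R^{1-3\text{\textgreek{e}}_{0}}$, which fails for any fixed $R$ once $\text{\textgreek{w}}_{k}$ is small or $\text{\textgreek{d}}_{2}$ is small. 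The paper avoids this by letting $R$ depend on $\text{\textgreek{w}}_{k}$ and $\text{\textgreek{d}}_{2}$ as in (\ref{eq:LowerBoundR}), i.e.~$R\gtrsim\max\{1,\text{\textgreek{w}}_{k}^{-1/(1-9\text{\textgreek{e}}_{0})},(-\log\text{\textgreek{d}}_{2})^{1/(1-9\text{\textgreek{e}}_{0})}\}$, which is precisely what makes the two-sided window $C^{1/3}\max\{(1+\text{\textgreek{w}}_{k})R^{9\text{\textgreek{e}}_{0}},-\log\text{\textgreek{d}}_{2}\}\le s\le C^{-1/3}R\text{\textgreek{w}}_{k}$ of (\ref{eq:BoundsS}) non-empty. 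You cannot, as an alternative, bound the far-region negative terms directly via (\ref{eq:EnergyClassAPrioriBoundLocal}), since that would produce a factor $(\text{\textgreek{t}}_{2}-\text{\textgreek{t}}_{1})$ with no smallness, defeating the whole pigeonholing strategy downstream. So the two-sided balancing of $s$ \emph{and} an $\text{\textgreek{w}}_{k},\text{\textgreek{d}}_{2}$-adapted choice of $R$ are both essential, and both are missing from your write-up.
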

The proof of Corollary \ref{cor:CarlemanForPsik} will be presented
in Section \ref{sub:ProofOfCorollaryCarleman}.

Finally, let us sketch an additional application of Proposition \ref{prop:GeneralCarlemanEstimate}
in the Riemannian setting. Let $(\text{\textgreek{S}}^{d},\bar{g})$,
$d\ge3$, be an asymptotically conic Riemannian manifold, with the
asymptotics described in \cite{Rodnianski2011}, and let us consider
the unique solution $u\in L^{2}(\text{\textgreek{S}})$ of the inhomogeneous
Helmholtz equation 
\begin{equation}
\text{\textgreek{D}}_{\bar{g}}u+\text{\textgreek{w}}^{2}u-Vu=G\label{eq:HelmholtzEquationPotential}
\end{equation}
 on $(\text{\textgreek{S}},\bar{g})$ for a suitably decaying source
term $G:\text{\textgreek{S}}\rightarrow\mathbb{C}$, with $0<Im(\text{\textgreek{w}})\ll1$,
$Re(\text{\textgreek{w}})\neq0$ and a potential $V:\text{\textgreek{S}}\rightarrow\mathbb{R}$
satisfying either $\partial_{r}V<0$ in the asymptotically conic region
of $(\text{\textgreek{S}},\bar{g})$ and $V\rightarrow0$ as $r\rightarrow+\infty$
(where $r$ is the radial coordinate function in the asymptotically
conic region of $\text{\textgreek{S}}$, extended to a positive function
everywhere on $\text{\textgreek{S}}$), or 
\begin{equation}
\sup_{\text{\textgreek{S}}}\big((r{}^{-2-\text{\textgreek{h}}}+|\text{\textgreek{w}}|r^{-1-\text{\textgreek{h}}})|V|\big)<+\infty.\label{eq:BoundPotentialHelmholtz}
\end{equation}
Then, applying Proposition \ref{prop:GeneralCarlemanEstimate} on
the product spacetime $(\mathbb{R}\times\text{\textgreek{S}},g=-dt^{2}+\bar{g})$
for the function $\text{\textgreek{f}}=e^{-i\text{\textgreek{w}}t}u$
solving (in view of (\ref{eq:HelmholtzEquationPotential})) 
\begin{equation}
\square_{g}\text{\textgreek{f}}-V\text{\textgreek{f}}=e^{-i\text{\textgreek{w}}t}G,
\end{equation}
and using the charge estimate
\begin{equation}
Im(\text{\textgreek{w}}^{2})\int_{\text{\textgreek{S}}}|u|^{2}\, d\bar{g}=\int_{\text{\textgreek{S}}}Im(G\bar{u})\, d\bar{g}
\end{equation}
(combined with elliptic estimates for (\ref{eq:HelmholtzEquationPotential}),
as is done, for instance, in \cite{Rodnianski2011}) one readily obtains
the (quantitative in $V$) global Carleman-type estimates of \cite{Rodnianski2011},
albeit with a worse dependence on $\text{\textgreek{w}}$ as $Re(\text{\textgreek{w}})\rightarrow0$.
Thus, the proof of Proposition \ref{prop:GeneralCarlemanEstimate}
yields a proof of the Carleman-type estimates used in \cite{Rodnianski2011,Moschidisb}
based entirely on the method of first order multipliers. 
\begin{rem*}
A multiplier-based proof of a similar set of Carleman-type estimates
for equation (\ref{eq:HelmholtzEquationPotential}) restricted, however,
to the high frequency regime $\text{\textgreek{w}}\gg1$ was obtained
previously in \cite{Datchev2014}. 
\end{rem*}

\subsection{Parameters and cut-off functions in the proof of Proposition \ref{cor:CarlemanForPsik}}

Let $R_{0}\gg1$ be large in terms of the geometry of $(\mathcal{M},g)$,
such that $\{r\ge\frac{1}{8}R_{0}\}\subset\mathcal{I}_{as}$ ($R_{0}$
will be considered fixed and, thus, we will not use any special notation
to denote the dependence of constants on $R_{0}$). \textgreek{I}n
addition to the parameters $\text{\textgreek{d}},\text{\textgreek{e}}_{0},s,R$
appearing in the statement of Proposition \ref{cor:CarlemanForPsik},
we will introduce the parameters $R\gg R_{0}$ and $0<\text{\textgreek{d}}_{0},\text{\textgreek{d}}_{1},\text{\textgreek{d}}_{2}\ll1$.
We will assume without loss of generality that $0<\text{\textgreek{e}}_{0}\ll1$.
These additional parameters will be fixed in the proof of Proposition
\ref{cor:CarlemanForPsik}.

In the region $\{r\ge\frac{1}{2}R_{0}\}$, the vector field $\partial_{r}$
will simply denote the associated coordinate vector field in the $(t,r,\text{\textgreek{sv}})$
coordinate chart in each connected component of this region. 

Fixing a smooth function $\text{\textgreek{q}}_{4}:\mathbb{R}\rightarrow[0,1]$
satisfying $\text{\textgreek{q}}_{4}(x)=0$ for $x\le\frac{3}{4}$
and $\text{\textgreek{q}}_{4}(x)=1$ for $x\ge1$, we will define
the following smooth cut-off functions: 
\begin{gather}
\text{\textgreek{q}}_{\ge R_{0}}(r)\doteq\text{\textgreek{q}}_{4}(\frac{r}{R_{0}}),\label{eq:SmoothCut-OffNearRegion}\\
\text{\textgreek{q}}_{\le R}(r)\doteq\text{\textgreek{q}}_{4}(\frac{R}{r}).
\end{gather}

\begin{rem*}
Note that $\text{\textgreek{q}}_{\le R}\equiv1$ for $r\le R$ and
$\text{\textgreek{q}}_{\le R}\equiv0$ for $r\ge\frac{4}{3}R$, while
$\text{\textgreek{q}}_{\ge R_{0}}\equiv1$ for $r\ge R_{0}$ and $\text{\textgreek{q}}_{\ge R_{0}}\equiv0$
for $r\le\frac{3}{4}R_{0}$.
\end{rem*}

\subsection{\label{sub:Some-auxiliary-lemmas}Construction of the auxiliary functions
$w_{R},\tilde{w}_{R}$}

In this section, we will construct the pair of functions $w_{R},\tilde{w}_{R}:\mathcal{M}\backslash\mathcal{H}^{-}\rightarrow\mathbb{R}$
appearing in the statement of Proposition \ref{cor:CarlemanForPsik},
depending on the parameters $\text{\textgreek{d}}_{0},\text{\textgreek{d}}_{1},\text{\textgreek{e}}_{0},s,R$.
These functions will be used extensively in the next sections. 

First, we will establish the following lemma:
\begin{lem}
\label{lem:DoubleExponentForTheCarleman}There exists a smooth and
$T$-invariant function $\bar{w}:\mathcal{M}\backslash\mathcal{H}\rightarrow\mathbb{R}$
satisfying the following properties:

\begin{enumerate}

\item The restriction $\bar{w}|_{\text{\textgreek{S}}}$ of $\bar{w}$
on $\text{\textgreek{S}}$ is a Morse function on $\overline{\text{\textgreek{S}}\backslash\mathscr{E}_{ext}}$,
with no critical points on $\partial\mathscr{E}_{ext}$. Furthermore,
none of the (at most finite) critical points $\{x_{j}\}_{j=1}^{k}$
of $\bar{w}|_{\text{\textgreek{S}}}$ on $\text{\textgreek{S}}\backslash\mathscr{E}_{ext}$
is a point of local maximum of $\bar{w}|_{\text{\textgreek{S}}}$.

\item In the region $\{0<r\le\frac{1}{8}r_{0}\}$,%
\footnote{Recall that $\{r\le\frac{1}{8}r_{0}\}\cap\text{\textgreek{S}}$ is
a neighborhood of $\mathcal{H}^{+}\cap\text{\textgreek{S}}$ in $\text{\textgreek{S}}$%
} $\bar{w}$ is a function of $r$ and satisfies 
\begin{equation}
\nabla^{\text{\textgreek{m}}}r\nabla_{\text{\textgreek{m}}}\bar{w}>0.\label{eq:Positivity in r direction}
\end{equation}

\item In the region $\{r\ge\frac{1}{2}R_{0}\}$, $\bar{w}$ is a
function of $r$, and satisfies (\ref{eq:Positivity in r direction}).

\item On $\big(\overline{\mathcal{M}\backslash\mathscr{E}_{ext}\cup\mathcal{H}}\big)\backslash\big(\mathbb{R}\times\cup_{j=1}^{k}\{x_{j}\}\big)$
we have 
\begin{equation}
\nabla^{\text{\textgreek{m}}}\bar{w}\nabla_{\text{\textgreek{m}}}\bar{w}>0.
\end{equation}

\item For any $0<\text{\textgreek{d}}\ll1$, we have 
\begin{equation}
\inf_{\{r\ge\frac{1}{4}r_{0}\}\backslash\mathscr{E}_{2\text{\textgreek{d}}}}\bar{w}>\max_{\mathscr{E}_{\text{\textgreek{d}}}}\bar{w}.\label{eq:SmallnessInErgoregion}
\end{equation}

\end{enumerate}\end{lem}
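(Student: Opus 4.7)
The plan is to construct $\bar{w}$ as the $T$-invariant extension of a suitable Morse function $w_\Sigma$ on $\Sigma$. The guiding observation is that if $\bar{w}$ is $T$-invariant, then $g(T,\nabla\bar{w})=T(\bar{w})=0$, so $\nabla\bar{w}\in T^\perp$ everywhere. On $\mathcal{M}\setminus(\mathscr{E}\cup\mathcal{H})$ the Killing field $T$ is strictly timelike by \eqref{eq:NondegenrateTOutsideErgoregion}, hence $T^\perp$ is a spacelike hyperplane and any nonzero vector in it is spacelike, so $\nabla^\mu \bar{w}\nabla_\mu \bar{w}>0$ whenever $\nabla\bar{w}\neq 0$. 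Property~(4) on the interior of $\mathcal{M}\setminus(\mathscr{E}_{ext}\cup\mathcal{H})$ is thereby reduced to a statement about the critical set of $w_\Sigma=\bar{w}|_\Sigma$.

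I would build $w_\Sigma$ in three matching pieces. On $\{0<r\le \tfrac{1}{8}r_0\}\cap\Sigma$, set $w_\Sigma=h_0(r)$ with $h_0$ smooth and strictly increasing; this yields~(2) via \eqref{eq:NonVanishingNormalDerivativeR}. On each asymptotically flat end $\{r\ge \tfrac{1}{2}R_0\}\cap\Sigma$, set $w_\Sigma=h_\infty(r)$ with $h_\infty$ strictly increasing, giving~(3). In the compact intermediate region, use a partition of unity to arrange $w_\Sigma<h_0(\tfrac{r_0}{16})$ on $\mathscr{E}_{ext}\cap\Sigma$ and $w_\Sigma$ strictly above this threshold outside a small neighborhood of $\mathscr{E}_{ext}$. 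Assumption~\hyperref[Assumption 3]{G3}, which guarantees that every connected component of $\overline{\Sigma\setminus\mathscr{E}_{ext}}$ reaches some asymptotic end in $\mathcal{I}_{as}$, makes this monotone profile realizable in each component and yields~(5). A small $C^\infty$ perturbation supported in the intermediate region then produces a Morse function on $\overline{\Sigma\setminus\mathscr{E}_{ext}}$: the absence of critical points on $\partial\mathscr{E}_{ext}$ is generic in $C^\infty$, and finiteness of the critical set follows from the radial structure of $w_\Sigma$ near the horizon and at infinity, where $dw_\Sigma\neq 0$. Local maxima can be eliminated by Morse handle cancellation: each connected component of $\overline{\Sigma\setminus\mathscr{E}_{ext}}$ is non-compact with $w_\Sigma\to\infty$ along its asymptotic end, so any index-$d$ critical point is topologically unnecessary and can be paired with an adjacent index-$(d-1)$ saddle along a gradient trajectory and removed.

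Define $\bar{w}$ on $\mathcal{M}\setminus\mathcal{H}^-$ by $T$-invariant extension of $w_\Sigma$. Properties~(1)--(3) and~(5) then follow from the $T$-invariance of $r$ and the construction of $w_\Sigma$. The main obstacle is~(4) on the boundary $\partial\mathscr{E}_{ext}\subset\partial\mathscr{E}$, where $T$ is null: there $T^\perp$ is the degenerate hyperplane containing $T$ itself, and a nonzero vector in $T^\perp$ fails to be spacelike precisely when it is proportional to $T$. Since no point of $\partial\mathscr{E}_{ext}\cap\Sigma$ is a critical point of $w_\Sigma$, the only possible failure is $\nabla\bar{w}(p)=cT_p$ for some $c\neq 0$ and some $p\in\partial\mathscr{E}_{ext}$. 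Dualizing, this is the condition that $dw_\Sigma(p)$ be parallel to the pullback of $g(T,\cdot)$ to $T^*_p\Sigma$, carving out a line bundle of codimension $d-1$ in the $1$-jet bundle of functions on $\Sigma$ over the $(d-1)$-dimensional submanifold $\partial\mathscr{E}_{ext}\cap\Sigma$. A straightforward Thom transversality argument shows that a further generic $C^\infty$-small perturbation of $w_\Sigma$ supported in a neighborhood of $\partial\mathscr{E}_{ext}\cap\Sigma$ makes $j^1 w_\Sigma|_{\partial\mathscr{E}_{ext}\cap\Sigma}$ avoid this bad set entirely, while preserving the Morse property, the absence of local maxima, the radial structure, and the profile~(5) established above. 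The resulting $\bar{w}$ satisfies all five conditions.
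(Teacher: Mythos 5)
Your overall scaffold (radial profile near $\mathcal{H}^{+}$ and at infinity, small perturbation in the compact intermediate region to achieve the Morse property) is reasonable, but it diverges substantially from the paper's construction, and the two places where you diverge are exactly where the gaps are.

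The paper does not build $\bar{w}$ by partitions of unity. It solves the elliptic boundary value problem $\text{\textgreek{D}}_{g_{\text{\textgreek{S}}}}\bar{w}_{\text{\textgreek{g}}}=\text{\textgreek{g}}$ on $\big(\text{\textgreek{S}}\cap\{\frac{1}{6}r_{0}<r<\frac{1}{4}R_{0}\}\big)\backslash\mathscr{E}_{ext}$ with \emph{constant} Dirichlet data on $\partial\mathscr{E}_{ext}$ and on the inner and outer $r$-spheres, then perturbs to a Morse function. This choice is not cosmetic; it kills both of your problematic steps at once. Because $\bar{w}_{\text{\textgreek{g}}}$ is constant on $\partial\mathscr{E}_{ext}\cap\text{\textgreek{S}}$, the differential $d\bar{w}$ annihilates all of $T\partial\mathscr{E}_{ext}$ (by $T$-invariance in the $T$-direction, by constancy in the $\text{\textgreek{S}}$-directions), so $\nabla\bar{w}$ is forced to be a nonzero multiple of the unit normal $n_{\partial\mathscr{E}}$, which by Assumption~\hyperref[Assumption 3]{G3} is spacelike; Hopf's lemma ensures the multiple is nonzero, and a small $C^{2}$ perturbation preserves this. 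And because $\text{\textgreek{D}}_{g_{\text{\textgreek{S}}}}\bar{w}_{\text{\textgreek{g}}_{0}}=\text{\textgreek{g}}_{0}>0$, no interior critical point can be a local maximum, so there is nothing to cancel.

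The sharper problem in your proposal is the Thom transversality step. You correctly identify the bad locus in the fiber $\mathbb{R}\times T^{*}_{p}\text{\textgreek{S}}$ of the $1$-jet bundle as $\mathbb{R}\times\big(\mathbb{R}\cdot\iota^{*}T_{p}^{\flat}\big)$, which has codimension $d-1$, and you also note that the base $\partial\mathscr{E}_{ext}\cap\text{\textgreek{S}}$ has dimension $d-1$. But then you conclude that a generic perturbation makes $j^{1}w_{\text{\textgreek{S}}}$ \emph{avoid} the bad set. That is precisely the dimension range where transversality does \emph{not} give avoidance: when the codimension of the stratum equals the dimension of the source, the Thom theorem gives a transverse intersection of generic dimension $0$, i.e., a finite nonempty set of bad points, not an empty one. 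Avoidance requires strict inequality $\dim(\partial\mathscr{E}_{ext}\cap\text{\textgreek{S}})<d-1$, which fails. So after your generic perturbation, property (4) is still expected to fail at finitely many points of $\partial\mathscr{E}_{ext}$, which is fatal for the intended use of $\bar{w}$ as a Carleman weight. The constancy-on-the-boundary trick in the paper is exactly what removes this codimension obstruction.

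The handle-cancellation argument for eliminating local maxima is a secondary concern, but as written it is also incomplete. Cancelling an index-$d$/index-$(d-1)$ pair requires a Smale-type cancellation lemma and a global modification of the function along the connecting gradient trajectory, which has no reason to stay away from $\mathscr{E}_{ext}\cap\text{\textgreek{S}}$, the near-horizon radial region, or the asymptotic region; so it is not clear that the monotone profile (5), the radial structure (2)--(3), and the $C^{0}$-smallness you rely on to keep $w_{\text{\textgreek{S}}}$ below the threshold on $\mathscr{E}_{ext}$ survive the cancellation. The paper's superharmonicity trick is not just slicker; it is a \emph{pointwise} condition that is preserved under small $C^{2}$ perturbations, which is what lets the Morse-genericity step go through without disturbing anything else.
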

\begin{proof}
The proof of Lemma \ref{lem:DoubleExponentForTheCarleman} will be
based on ideas from \cite{Rodnianski2011,Moschidisb}. 

Let $R_{0}\gg1$ be a fixed constant large in terms of the geometry
of $(\mathcal{M},g)$. For any $0\le\text{\textgreek{g}}<1$, let
$\bar{w}_{\text{\textgreek{g}}}:\big(\text{\textgreek{S}}\cap\{\frac{1}{6}r_{0}\le r\le\frac{1}{4}R_{0}\}\big)\backslash\mathscr{E}_{ext}\rightarrow\mathbb{R}$
be the (unique) smooth solution of the elliptic boundary value problem:
\begin{equation}
\begin{cases}
\text{\textgreek{D}}_{g_{\text{\textgreek{S}}}}\bar{w}_{\text{\textgreek{g}}}=\text{\textgreek{g}} & \mbox{on }\big(\text{\textgreek{S}}\cap\{\frac{1}{6}r_{0}<r<\frac{1}{4}R_{0}\}\big)\backslash\mathscr{E}_{ext},\\
\bar{w}_{\text{\textgreek{g}}}|_{r=\frac{1}{4}R_{0}}=2,\\
\bar{w}_{\text{\textgreek{g}}}|_{r=\frac{1}{6}r_{0}}=1,\\
\bar{w}_{\text{\textgreek{g}}}|_{\partial\mathscr{E}_{ext}}=1.
\end{cases}\label{eq:EllipticBoundaryProblem}
\end{equation}
 Let us extend $\bar{w}_{\text{\textgreek{g}}}$ on the whole of $\{\frac{1}{6}r_{0}<r<\frac{1}{4}R_{0}\}\backslash\mathscr{E}_{ext}\subset\mathcal{M}\backslash\mathcal{H}^{-}$
by the requirement that $T\bar{w}|_{\text{\textgreek{g}}}=0$. 

Since $g_{\text{\textgreek{S}}}$ is smooth, $\bar{w}_{\text{\textgreek{g}}}$
depends smoothly on $\text{\textgreek{g}}$ (see \cite{Gilbarg2001}).
In view of the fact that every connected component of $\mathcal{M}\backslash\mathscr{E}$
intersecting $\mathcal{H}^{+}$ also intersects $\mathcal{I}_{as}$
(see Assumption \hyperref[Assumption 3]{G3}), when $\text{\textgreek{g}}=0$,
the maximum principle and Hopf's lemma (see \cite{Gilbarg2001}) imply
that for any $\text{\textgreek{d}}>0$ 
\begin{equation}
\inf_{r=\frac{1}{4}R_{0}}\big(\nabla^{\text{\textgreek{m}}}r\nabla_{\text{\textgreek{m}}}\bar{w}_{0}\big),\mbox{ }\inf_{r=\frac{1}{6}r_{0}}\big(\nabla^{\text{\textgreek{m}}}r\nabla_{\text{\textgreek{m}}}\bar{w}_{0}\big),\mbox{ }\inf_{\partial\mathscr{E}_{ext}}\big(n_{\partial\mathscr{E}}(\bar{w}_{0})\big),\big(\inf_{\{r\ge\frac{1}{4}r_{0}\}\backslash\mathscr{E}_{\text{\textgreek{d}}}}\bar{w}_{0}-\max_{\partial\mathscr{E}_{ext}}\bar{w}_{0}\big)>0\label{eq:PositivityTransversalDerivativesW}
\end{equation}
(see Section \ref{sub:SpecialSubsets} for the definition of $n_{\partial\mathscr{E}}$).
Therefore, there exists a $\text{\textgreek{g}}_{0}\in(0,1)$ and
a $c_{0}>0$, such that: 
\begin{equation}
\inf_{r=\frac{1}{4}R_{0}}\big(\nabla^{\text{\textgreek{m}}}r\nabla_{\text{\textgreek{m}}}\bar{w}_{\text{\textgreek{g}}_{0}}\big),\mbox{ }\inf_{r=\frac{1}{6}r_{0}}\big(\nabla^{\text{\textgreek{m}}}r\nabla_{\text{\textgreek{m}}}\bar{w}_{\text{\textgreek{g}}_{0}}\big),\mbox{ }\inf_{\mathscr{E}_{c_{0}}\backslash\mathscr{E}_{ext}}\big(n_{\partial\mathscr{E}}(\bar{w}_{\text{\textgreek{g}}_{0}})\big)\ge c_{0}>0\label{eq:PositivityTransversalDerivativesW-1}
\end{equation}
and, for all $0<\text{\textgreek{d}}<1$ (and some fixed $c_{1}>0$):
\begin{equation}
\inf_{\{r\ge\frac{1}{4}r_{0}\}\backslash\mathscr{E}_{\text{\textgreek{d}}}}\bar{w}_{\text{\textgreek{g}}_{0}}-\max_{\partial\mathscr{E}_{ext}}\bar{w}_{\text{\textgreek{g}}_{0}}>c_{1}\text{\textgreek{d}}>0.\label{eq:DifferenceErgoregion}
\end{equation}

In view of \ref{eq:PositivityTransversalDerivativesW-1} and (\ref{eq:DifferenceErgoregion}),
we can extend $\bar{w}_{\text{\textgreek{g}}_{0}}$ as a $T$-invariant
function on the whole of $\mathcal{M}\backslash\mathcal{H}^{-}$ in
such a way, so that 
\begin{gather}
\inf_{\{\frac{1}{16}r_{0}\le r\le\frac{1}{6}r_{0}\}\cup\{\frac{1}{4}R_{0}\le r\le R_{0}\}}\big(\nabla^{\text{\textgreek{m}}}r\nabla_{\text{\textgreek{m}}}\bar{w}_{\text{\textgreek{g}}_{0}}\big)\ge\frac{1}{10}c_{0}>0,\label{eq:PositivityTransversalDerivativesW-1-1}\\
\inf_{\{\text{\textgreek{d}}\le r\le\frac{1}{6}r_{0}\}}\big(\nabla^{\text{\textgreek{m}}}r\nabla_{\text{\textgreek{m}}}\bar{w}_{\text{\textgreek{g}}_{0}}\big)\ge c(\text{\textgreek{d}})>0,\label{eq:NonNegativityNearHorizon}\\
\inf_{\{r\ge\frac{1}{4}r_{0}\}\backslash\mathscr{E}_{2\text{\textgreek{d}}}}\bar{w}_{\text{\textgreek{g}}_{0}}>\max_{\mathscr{E}_{\text{\textgreek{d}}}}\bar{w}_{\text{\textgreek{g}}_{0}}+\frac{1}{2}c_{1}\text{\textgreek{d}}\label{eq:SmallnessInErgoregion-1}
\end{gather}
 for all $0<\text{\textgreek{d}}\ll1$ (where $c(\text{\textgreek{d}})$
is a positive function of $\text{\textgreek{d}}>0$) and, in addition,
$\bar{w}_{\text{\textgreek{g}}_{0}}$ is a function of $r$ in the
region $\{r\le\frac{1}{8}r_{0}\}\cup\{r\ge\frac{1}{2}R_{0}\}$. Notice
also that, since $\bar{w}_{\text{\textgreek{g}}_{0}}|_{\partial\mathscr{E}_{ext}}=1$
and $n_{\partial\mathscr{E}}(\bar{w}_{\text{\textgreek{g}}_{0}})|_{\partial\mathscr{E}_{ext}}\ge c_{0}$,
we have 
\begin{equation}
\big(\nabla^{\text{\textgreek{m}}}\bar{w}_{\text{\textgreek{g}}_{0}}\nabla_{\text{\textgreek{m}}}\bar{w}_{\text{\textgreek{g}}_{0}}\big)\big|_{\partial\mathscr{E}_{ext}}\ge\frac{1}{2}c_{0}^{2}.\label{eq:GradientOnBoundryErgoregion}
\end{equation}

With $\bar{w}_{\text{\textgreek{g}}_{0}}$ constructed as above, we
can thus readily choose $\bar{w}$ so that it satisfies the following
conditions:

\begin{enumerate}

\item \label{enu:density} $\bar{w}|_{\text{\textgreek{S}}}$ and
$\bar{w}_{\text{\textgreek{g}}_{0}}|_{\text{\textgreek{S}}}$ satisfy
\begin{equation}
\big|\bar{w}|_{\text{\textgreek{S}}}-\bar{w}_{\text{\textgreek{g}}_{0}}|_{\text{\textgreek{S}}}\big|_{C^{2}(\text{\textgreek{S}})}<\frac{1}{100}\min\{\text{\textgreek{g}}_{0},c_{0}\}.\label{eq:MorseClosedness}
\end{equation}

\item \label{enu:Morse} $\bar{w}|_{\text{\textgreek{S}}}$ is a
Morse function on an open neighborhood of $\overline{\big(\text{\textgreek{S}}\cap\{\frac{1}{6}r_{0}\le r\le\frac{1}{4}R_{0}\}\big)\backslash\mathscr{E}_{ext}}$.

\item \label{enu:TrivialExtension} $\bar{w}|_{\text{\textgreek{S}}}=\bar{w}_{\text{\textgreek{g}}_{0}}|_{\text{\textgreek{S}}}$
in the region $\mathscr{E}\cup\{r\le\frac{1}{6}r_{0}\}\cup\{r\ge\frac{1}{4}R_{0}\}$.

\item $T\bar{w}=0$.

\end{enumerate}

\noindent \emph{Remark.} Note that the compatibility of conditions
\ref{enu:density} and \ref{enu:Morse} follows from the density of
the set of Morse functions on $\overline{\big(\text{\textgreek{S}}\cap\{\frac{1}{6}r_{0}\le r\le\frac{1}{4}R_{0}\}\big)\backslash\mathscr{E}_{ext}}$
in $C^{2}\Big(\overline{\big(\text{\textgreek{S}}\cap\{\frac{1}{6}r_{0}\le r\le\frac{1}{4}R_{0}\}\big)\backslash\mathscr{E}_{ext}}\Big)$.

\medskip{}

In view of (\ref{eq:EllipticBoundaryProblem}), (\ref{eq:PositivityTransversalDerivativesW-1-1}),
(\ref{eq:NonNegativityNearHorizon}) and (\ref{eq:MorseClosedness}),
$\bar{w}$ satisfies 
\begin{equation}
\text{\textgreek{D}}_{g_{\text{\textgreek{S}}}}\bar{w}|_{\text{\textgreek{S}}}>0
\end{equation}
on $\overline{\{\frac{1}{6}r_{0}<r<\frac{1}{4}R_{0}\}\backslash\mathscr{E}_{ext}}$
and 
\begin{gather}
\inf_{\{\frac{1}{16}r_{0}\le r\le\frac{1}{6}r_{0}\}\cup\{r\ge\frac{1}{4}R_{0}\}}\big(\nabla^{\text{\textgreek{m}}}r\nabla_{\text{\textgreek{m}}}\bar{w}\big)\ge\frac{1}{10}c_{0}>0,\label{eq:PositivityTransversalDerivativesW-1-1-1}\\
\inf_{\{\text{\textgreek{d}}\le r\le\frac{1}{6}r_{0}\}}\big(\nabla^{\text{\textgreek{m}}}r\nabla_{\text{\textgreek{m}}}\bar{w}\big)\ge c(\text{\textgreek{d}})>0.
\end{gather}
Therefore, none of the critical points of $\bar{w}|_{\text{\textgreek{S}}}$
on $\overline{\text{\textgreek{S}}\backslash\mathscr{E}_{ext}}$ is
a point of local maximum. Furthermore, Conditions \ref{enu:Morse}
and \ref{enu:TrivialExtension} imply that $\bar{w}|_{\text{\textgreek{S}}}$
is a Morse function on $\overline{\text{\textgreek{S}}\backslash\mathscr{E}_{ext}}$.
Since $T(\bar{w})=0$ and $T$ is strictly timelike on $\mathcal{M}\backslash(\mathscr{E}_{ext}\cup\mathcal{H})$,
in view of (\ref{eq:GradientOnBoundryErgoregion}) and Condition \ref{enu:density}
we have 
\begin{equation}
\nabla^{\text{\textgreek{m}}}\bar{w}\nabla_{\text{\textgreek{m}}}\bar{w}>0\mbox{ on }\big(\overline{\mathcal{M}\backslash\mathscr{E}_{ext}\cup\mathcal{H}}\big)\backslash\big(\mathbb{R}\times\cup_{j=1}^{k}\{x_{j}\}\big),
\end{equation}
 where $\{x_{j}\}_{j=1}^{k}$ are the (at most finite) critical points
of $\bar{w}|_{\text{\textgreek{S}}}$ on $\overline{\text{\textgreek{S}}\backslash\mathscr{E}_{ext}}$,
none of which lies on $\partial\mathscr{E}_{ext}$ (in view of (\ref{eq:GradientOnBoundryErgoregion})
and Condition \ref{enu:density}). Finally, in view of (\ref{eq:PositivityTransversalDerivativesW-1}),
(\ref{eq:SmallnessInErgoregion-1}) and (\ref{eq:MorseClosedness}),
inequality (\ref{eq:SmallnessInErgoregion}) holds for all $0<\text{\textgreek{d}}\ll1$.\end{proof}
\begin{lem}
\label{lem:PositivityHessianW}For any $0<\text{\textgreek{d}}_{0}\ll1$
small in terms of the geometry of $(\mathcal{M},g)$, there exists
a pair of smooth and $T$-invariant functions $w,\tilde{w}:\mathcal{M}\backslash\mathcal{H}^{-}\rightarrow\mathbb{R}$,
as well as a finite number of points $\{x_{j}\}_{j=1}^{k},\{\tilde{x}_{j}\}_{j=1}^{k}\in\text{\textgreek{S}}\cap\{r\le\frac{1}{2}R_{0}+4\text{\textgreek{d}}_{0}\}\backslash\mathscr{E}_{8\text{\textgreek{d}}_{0}}$,
such that the following statements hold:

\begin{enumerate}

\item Defining for any $\text{\textgreek{r}}>0$ the subsets 
\begin{gather}
\mathcal{B}_{crit}(\text{\textgreek{r}})=\mathbb{R}\times\big(\cup_{j=1}^{k}B_{g_{\text{\textgreek{S}}}}(x_{j},\text{\textgreek{r}})\big),\\
\tilde{\mathcal{B}}_{crit}(\text{\textgreek{r}})=\mathbb{R}\times\big(\cup_{j=1}^{k}B_{g_{\text{\textgreek{S}}}}(\tilde{x}_{j},\text{\textgreek{r}})\big)
\end{gather}
 of $\mathcal{M}\backslash\mathcal{H}^{-}$, where $B_{g_{\text{\textgreek{S}}}}(x_{j},\text{\textgreek{r}})\subset(\text{\textgreek{S}},g_{\text{\textgreek{S}}})$
is the closed Riemannian ball of radius $\text{\textgreek{r}}$ centered
at $x_{j}$, we have: 
\begin{gather}
\mathcal{B}_{crit}(\text{\textgreek{d}}_{0})\subset\tilde{\mathcal{B}}_{crit}(4\text{\textgreek{d}}_{0}),\\
\tilde{\mathcal{B}}_{crit}(\text{\textgreek{d}}_{0})\subset\mathcal{B}_{crit}(4\text{\textgreek{d}}_{0}),\\
\mathcal{B}_{crit}(\text{\textgreek{d}}_{0})\cap\tilde{\mathcal{B}}_{crit}(\text{\textgreek{d}}_{0})=\emptyset.
\end{gather}

\item The functions $w,\tilde{w}$ coincide outside $\mathcal{B}_{crit}(4\text{\textgreek{d}}_{0})$:
\begin{equation}
w\equiv\tilde{w}\mbox{ on }\mathcal{M}\backslash\big(\mathcal{B}_{crit}(4\text{\textgreek{d}}_{0})\cup\mathcal{H}^{-}\big).\label{eq:IdenticalWsAway}
\end{equation}

\item The functions $w,\tilde{w}$ satisfy the following non-degeneracy
conditions for some absolute constant $c_{0}>0$ (independent of $\text{\textgreek{d}}_{0}$):
\begin{gather}
\inf_{\{r\ge\frac{1}{8}r_{0}\}\backslash(\mathscr{E}_{ext}\cup\mathcal{B}_{crit}(\text{\textgreek{d}}_{0}))}\nabla^{\text{\textgreek{m}}}\nabla^{\text{\textgreek{n}}}w\nabla_{\text{\textgreek{m}}}w\nabla_{\text{\textgreek{n}}}w\ge c_{0}>0,\label{eq:LowerBoundHessian}\\
\inf_{\{r\ge\frac{1}{8}r_{0}\}\backslash(\mathscr{E}_{ext}\cup\tilde{\mathcal{B}}_{crit}(\text{\textgreek{d}}_{0}))}\nabla^{\text{\textgreek{m}}}\nabla^{\text{\textgreek{n}}}\tilde{w}\nabla_{\text{\textgreek{m}}}\tilde{w}\nabla_{\text{\textgreek{n}}}\tilde{w}\ge c_{0}>0.\label{eq:LowerBoundHessianDistorted}
\end{gather}

\item For any $T$-invariant vector fields $X,\tilde{X}$ on $\{r\ge\frac{1}{8}r_{0}\}\backslash(\mathscr{E}_{ext}\cup\mathcal{B}_{crit}(\text{\textgreek{d}}_{0}))$
such that $X(w)=0$ and $\tilde{X}(\tilde{w})=0$, the following one
sided bounds hold: 
\begin{gather}
\nabla_{\text{\textgreek{m}}}\nabla_{\text{\textgreek{n}}}wX^{\text{\textgreek{m}}}X^{\text{\textgreek{n}}}>-\text{\textgreek{d}}_{0}\frac{\nabla^{\text{\textgreek{m}}}\nabla^{\text{\textgreek{n}}}w\nabla_{\text{\textgreek{m}}}w\nabla_{\text{\textgreek{n}}}w}{g_{ref}(dw,dw)}g_{ref}(X,X),\label{eq:OneSidedBoundSideHessian}\\
\nabla_{\text{\textgreek{m}}}\nabla_{\text{\textgreek{n}}}\tilde{w}\tilde{X}^{\text{\textgreek{m}}}\tilde{X}^{\text{\textgreek{n}}}>-\text{\textgreek{d}}_{0}\frac{\nabla^{\text{\textgreek{m}}}\nabla^{\text{\textgreek{n}}}\tilde{w}\nabla_{\text{\textgreek{m}}}\tilde{w}\nabla_{\text{\textgreek{n}}}\tilde{w}}{g_{ref}(d\tilde{w},d\tilde{w})}g_{ref}(\tilde{X},\tilde{X}),\label{eq:OneSidedBoundHessianDistorted}
\end{gather}
where $g_{ref}$ is the reference Riemannian metric (\ref{eq:ReferenceRiemannianMetric}).

\item The functions $w,\tilde{w}$ satisfy 
\begin{gather}
\max_{\mathcal{B}_{crit}(\text{\textgreek{d}}_{0})}w<\min_{\mathcal{B}_{crit}(\text{\textgreek{d}}_{0})}\tilde{w},\label{eq:InequalitiesForPairOfDeformedFunctions}\\
\max_{\tilde{\mathcal{B}}_{crit}(\text{\textgreek{d}}_{0})}\tilde{w}<\min_{\tilde{\mathcal{B}}_{crit}(\text{\textgreek{d}}_{0})}w.\label{eq:InequalitiesForPair2}
\end{gather}

\item For any $0<\text{\textgreek{d}}\ll1$: 
\begin{gather}
\inf_{\{r\ge\frac{1}{4}r_{0}\}\backslash\mathscr{E}_{2\text{\textgreek{d}}}}w>\max_{\mathscr{E}_{\text{\textgreek{d}}}}w,\label{eq:SmallerinErgoregion}\\
\inf_{\{r\ge\frac{1}{4}r_{0}\}\backslash\mathscr{E}_{2\text{\textgreek{d}}}}\tilde{w}>\max_{\mathscr{E}_{\text{\textgreek{d}}}}\tilde{w}\label{eq:SmallerInErgoregionDistorted}
\end{gather}
and 
\begin{gather}
\inf_{\{r\ge\frac{1}{2}r_{0}\}\backslash\mathscr{E}_{ext}}w>\max_{\{r\le\frac{1}{4}r_{0}\}}w,\label{eq:SmallerNearHorizon}\\
\inf_{\{r\ge\frac{1}{2}r_{0}\}\backslash\mathscr{E}_{ext}}\tilde{w}>\max_{\{r\le\frac{1}{4}r_{0}\}}\tilde{w}\label{eq:SmallerNearHorizonDistorted}
\end{gather}

\end{enumerate}\end{lem}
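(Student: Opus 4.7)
The plan is to construct $w$ and $\tilde{w}$ as exponentials of two close, but carefully chosen, scalar ``seed'' functions $\bar{w}$ and $\bar{w}^{*}$. Starting from the Morse function $\bar{w}$ produced by Lemma~\ref{lem:DoubleExponentForTheCarleman}, which has finite critical set $\{x_j\}_{j=1}^k$ on $\overline{\text{\textgreek{S}}\setminus\mathscr{E}_{ext}}$ (all avoiding $\partial\mathscr{E}_{ext}$ and none a local maximum), I would set $w\doteq e^{\lambda\bar{w}}$ for a fixed large $\lambda=\lambda(\text{\textgreek{d}}_0)>0$. The key identity is
\begin{equation}
\nabla^{\text{\textgreek{m}}}\nabla^{\text{\textgreek{n}}}w\nabla_{\text{\textgreek{m}}}w\nabla_{\text{\textgreek{n}}}w=\lambda^{3}w^{3}\Big(\lambda|\nabla\bar{w}|^{4}+\nabla^{\text{\textgreek{m}}}\nabla^{\text{\textgreek{n}}}\bar{w}\,\nabla_{\text{\textgreek{m}}}\bar{w}\,\nabla_{\text{\textgreek{n}}}\bar{w}\Big).
\end{equation}
On $\{r\ge\frac{1}{8}r_{0}\}\setminus(\mathscr{E}_{ext}\cup\mathcal{B}_{crit}(\text{\textgreek{d}}_{0}))$ Lemma~\ref{lem:DoubleExponentForTheCarleman} gives $|\nabla\bar{w}|^{2}\ge c(\text{\textgreek{d}}_{0})>0$, so the right-hand side is $\ge c_{0}\lambda^{4}w^{3}|\nabla\bar{w}|^{4}$ once $\lambda$ dominates the bounded Hessian term, establishing (\ref{eq:LowerBoundHessian}). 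For the one-sided bound~(\ref{eq:OneSidedBoundSideHessian}), note that $X(w)=0$ forces $X(\bar{w})=0$, hence
\begin{equation}
\nabla_{\text{\textgreek{m}}}\nabla_{\text{\textgreek{n}}}w\,X^{\text{\textgreek{m}}}X^{\text{\textgreek{n}}}=\lambda w\cdot\nabla_{\text{\textgreek{m}}}\nabla_{\text{\textgreek{n}}}\bar{w}\,X^{\text{\textgreek{m}}}X^{\text{\textgreek{n}}},
\end{equation}
and comparing with the ratio $\nabla^{2}w(\nabla w,\nabla w)/g_{ref}(dw,dw)\sim\lambda^{2}w|\nabla\bar{w}|^{4}/|\nabla\bar{w}|_{g_{ref}}^{2}$ shows that for $\lambda$ sufficiently large relative to $\text{\textgreek{d}}_{0}^{-1}$ and the $C^{2}$-norm of $\bar{w}$, the one-sided inequality holds.

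For the second function $\tilde{w}$ I would perform a local perturbation of $\bar{w}$ supported in small neighborhoods of the critical points. Concretely, in a coordinate chart $(y^{1},\dots,y^{d})$ on $\text{\textgreek{S}}$ centered at each $x_{j}$, pick a bump $\eta_{j}$ equal to $1$ on $B_{g_{\text{\textgreek{S}}}}(x_{j},3\text{\textgreek{d}}_{0})$ and vanishing outside $B_{g_{\text{\textgreek{S}}}}(x_{j},4\text{\textgreek{d}}_{0})$, a linear functional $L_{j}$ in these coordinates with $|L_{j}|\sim\text{\textgreek{d}}_{0}$, and a small constant $c_{j}>0$ of size $\sim\text{\textgreek{d}}_{0}^{2}$. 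Define $\bar{w}^{*}\doteq\bar{w}+\sum_{j}\eta_{j}\bigl(L_{j}(y-x_{j})+c_{j}\bigr)$, extended as $T$-invariant to $\mathcal{M}\setminus\mathcal{H}^{-}$, and set $\tilde{w}\doteq e^{\lambda\bar{w}^{*}}$. Property~(\ref{eq:IdenticalWsAway}) is immediate since $\bar{w}^{*}\equiv\bar{w}$ outside $\cup_{j}B_{g_{\text{\textgreek{S}}}}(x_{j},4\text{\textgreek{d}}_{0})$. Inside the plateau $\{\eta_{j}\equiv1\}$, the implicit function theorem applied at the nondegenerate critical point $x_{j}$ yields a unique critical point $\tilde{x}_{j}=x_{j}-(\mathrm{Hess}_{x_{j}}\bar{w})^{-1}L_{j}+O(\text{\textgreek{d}}_{0}^{2})$ of $\bar{w}^{*}$, and by tuning $L_{j}$ we arrange $d_{g_{\text{\textgreek{S}}}}(\tilde{x}_{j},x_{j})\in[\text{\textgreek{d}}_{0},4\text{\textgreek{d}}_{0})\setminus\{0\}$. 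In the transition annulus $\{3\text{\textgreek{d}}_{0}<|y-x_{j}|<4\text{\textgreek{d}}_{0}\}$, $|\nabla\bar{w}|\gtrsim1$ while the perturbation contributes only $O(\text{\textgreek{d}}_{0})$, so no spurious critical points are created provided $\text{\textgreek{d}}_{0}$ is small. The Hessian bounds for $\tilde{w}$ on $\{r\ge\frac{1}{8}r_{0}\}\setminus(\mathscr{E}_{ext}\cup\tilde{\mathcal{B}}_{crit}(\text{\textgreek{d}}_{0}))$ then follow from the same computation as for $w$, using that $\bar{w}^{*}$ satisfies the analogue of condition~(4) of Lemma~\ref{lem:DoubleExponentForTheCarleman} with the shifted critical set.

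The value separation (\ref{eq:InequalitiesForPairOfDeformedFunctions})--(\ref{eq:InequalitiesForPair2}) would be secured by the constants $c_{j}>0$: on $\mathcal{B}_{crit}(\text{\textgreek{d}}_{0})$ we have $w=e^{\lambda\bar{w}(x_{j})}(1+O(\lambda\text{\textgreek{d}}_{0}^{2}))$ whereas $\tilde{w}=e^{\lambda(\bar{w}(x_{j})+c_{j})}(1+O(\lambda\text{\textgreek{d}}_{0}^{2}))$, so for $\lambda c_{j}$ of fixed order (and $\text{\textgreek{d}}_{0}$ small) the multiplicative gap $e^{\lambda c_{j}}$ dominates the local oscillation of either function in the ball; an analogous argument on $\tilde{\mathcal{B}}_{crit}(\text{\textgreek{d}}_{0})$ (where $w$ is still ``flat'' near the value $e^{\lambda\bar{w}(x_{j})}$ up to $O(\lambda\text{\textgreek{d}}_{0}^{2})$, while $\tilde{w}$ attains its local minimum) gives the reverse inequality once $c_{j}$ is chosen slightly negative on alternating components, or equivalently by swapping the roles via an additional small antisymmetric perturbation. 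The ordering conditions (\ref{eq:SmallerinErgoregion})--(\ref{eq:SmallerNearHorizonDistorted}) are inherited directly from the corresponding properties of $\bar{w}$ in Lemma~\ref{lem:DoubleExponentForTheCarleman} (monotonicity of $e^{\lambda\cdot}$ preserves the ordering), since the local perturbation supported in $\cup_{j}B_{g_{\text{\textgreek{S}}}}(x_{j},4\text{\textgreek{d}}_{0})\subset\{r\le\frac{1}{2}R_{0}+4\text{\textgreek{d}}_{0}\}\setminus\mathscr{E}_{8\text{\textgreek{d}}_{0}}$ affects only an inner region disjoint from $\mathscr{E}_{4\text{\textgreek{d}}_{0}}$ and $\{r\le\frac{1}{4}r_{0}\}$.

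The main obstacle I anticipate is the simultaneous arrangement of (\ref{eq:InequalitiesForPairOfDeformedFunctions}) and (\ref{eq:InequalitiesForPair2}): a naive single-sign choice of $c_{j}$ produces one inequality but reverses the other. Resolving this requires a delicate bookkeeping of the constants $c_{j}$ and the location of $\tilde{x}_{j}$ relative to $x_{j}$, or (equivalently) performing the perturbation in a slightly more structured way (e.g.\ replacing the linear $L_{j}(y-x_{j})$ by $L_{j}(y-x_{j})+\frac{1}{2}\kappa_{j}|y-\tilde{x}_{j}|^{2}$ with small $\kappa_{j}$) so that both $w$ and $\tilde{w}$ attain well-separated values at their respective critical points while remaining identical outside $\mathcal{B}_{crit}(4\text{\textgreek{d}}_{0})$. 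All other verifications reduce to the Morse structure of $\bar{w}$ and $\bar{w}^{*}$ and the algebraic computation carried out for $w$.
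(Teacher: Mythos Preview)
Your construction of $w=e^{\lambda\bar w}$ and the derivation of the Hessian bounds~(\ref{eq:LowerBoundHessian}) and~(\ref{eq:OneSidedBoundSideHessian}) from the identities
\[
\nabla_\mu w=\lambda w\,\nabla_\mu\bar w,\qquad \nabla_\mu\nabla_\nu w=\lambda w\bigl(\lambda\,\nabla_\mu\bar w\,\nabla_\nu\bar w+\nabla_\mu\nabla_\nu\bar w\bigr)
\]
are exactly the paper's argument. The inheritance of~(\ref{eq:SmallerinErgoregion})--(\ref{eq:SmallerNearHorizonDistorted}) from the monotonicity of $e^{\lambda(\cdot)}$ and the properties of $\bar w$ is also correct.

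The genuine gap is in your construction of $\tilde w$, and you have correctly located it at property~(5). An additive perturbation $\bar w^{*}=\bar w+\sum_j\eta_j(L_j+c_j)$ shifts the value of $\bar w$ in the \emph{same} direction throughout the plateau $\{\eta_j\equiv1\}$, which contains \emph{both} $B(x_j,\text{\textgreek{d}}_0)$ and $B(\tilde x_j,\text{\textgreek{d}}_0)$. Hence a choice of $c_j$ that forces $\tilde w>w$ on the first ball tends to force $\tilde w>w$ on the second as well, contradicting~(\ref{eq:InequalitiesForPair2}). Your suggested fixes (alternating signs of $c_j$, adding a small quadratic) do not address this, because the two balls lie in the \emph{same} connected component of the perturbation support; there is no ``antisymmetric'' choice available. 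At a saddle $x_j$ the bookkeeping becomes still more delicate, since $B(\tilde x_j,\text{\textgreek{d}}_0)$ can meet level sets of $\bar w$ below $\bar w(x_j)$.

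The paper sidesteps this entirely by defining $\tilde w\doteq w\circ\mathcal X$ for a diffeomorphism $\mathcal X:\text{\textgreek{S}}\to\text{\textgreek{S}}$ equal to the identity outside $\cup_jB(x_j,4\text{\textgreek{d}}_0)$ and \emph{swapping} $B(x_j,\text{\textgreek{d}}_0)$ with $B(\tilde x_j,\text{\textgreek{d}}_0)$, where $\tilde x_j$ is chosen (using that $x_j$ is not a local maximum of $\bar w$) so that $\max_{B(x_j,\text{\textgreek{d}}_0)}\bar w<\min_{B(\tilde x_j,\text{\textgreek{d}}_0)}\bar w$. Then both~(\ref{eq:InequalitiesForPairOfDeformedFunctions}) and~(\ref{eq:InequalitiesForPair2}) follow from this single inequality: on $\mathcal B_{crit}(\text{\textgreek{d}}_0)$ one has $\tilde w=w\circ\mathcal X$ with $\mathcal X(B(x_j,\text{\textgreek{d}}_0))=B(\tilde x_j,\text{\textgreek{d}}_0)$, giving $\min\tilde w=\min_{B(\tilde x_j,\text{\textgreek{d}}_0)}w>\max_{B(x_j,\text{\textgreek{d}}_0)}w$; and on $\tilde{\mathcal B}_{crit}(\text{\textgreek{d}}_0)$ the swap gives $\max\tilde w=\max_{B(x_j,\text{\textgreek{d}}_0)}w<\min_{B(\tilde x_j,\text{\textgreek{d}}_0)}w$. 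The pullback structure also makes~(\ref{eq:LowerBoundHessianDistorted}),~(\ref{eq:OneSidedBoundHessianDistorted}) automatic (possibly after enlarging $\lambda$), since $\tilde w$ is just $e^{\lambda(\bar w\circ\mathcal X)}$ and $\bar w\circ\mathcal X$ inherits the Morse structure of $\bar w$ with critical set $\{\tilde x_j\}$.
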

\begin{proof}
Let $\bar{w}:\mathcal{M}\backslash\mathcal{H}^{-}\rightarrow\mathbb{R}$
be as in the statement of Lemma \ref{lem:DoubleExponentForTheCarleman},
and let $\{x_{j}\}_{j=1}^{k}$ be the (at most finite) critical points
of $\bar{w}|_{\text{\textgreek{S}}}$ in $\overline{\text{\textgreek{S}}\backslash\mathscr{E}_{ext}}$.
According to Lemma \ref{lem:DoubleExponentForTheCarleman}, none of
these points lies on $\partial\mathscr{E}_{ext}$ or on $\{r\le\frac{1}{8}r_{0}\}\cup\{r\ge\frac{1}{2}R_{0}\}$
and, thus, provided $\text{\textgreek{d}}_{0}\ll1$, we have 
\begin{equation}
\{x_{j}\}_{j=1}^{k}\in\text{\textgreek{S}}\cap\{\frac{1}{8}r_{0}\le r\le\frac{1}{2}R_{0}\}\backslash\mathscr{E}_{16\text{\textgreek{d}}_{0}}.\label{eq:RegionOfCriticalPoints}
\end{equation}

Let $l>0$ be large in terms of $\text{\textgreek{d}}_{0}$, and let
us define the $T$-invariant function $w:\mathcal{M}\backslash\mathcal{H}^{-}\rightarrow\mathbb{R}$
as 
\begin{equation}
w=e^{l\bar{w}}.\label{eq:TheFunctionWL}
\end{equation}
Then, the one sided bounds (\ref{eq:LowerBoundHessian}) and (\ref{eq:OneSidedBoundSideHessian})
readily follow from the properties of $\bar{w}$ (see Lemma \ref{lem:DoubleExponentForTheCarleman}),
as well as the identities 
\begin{equation}
\nabla_{\text{\textgreek{m}}}w=l(\nabla_{\text{\textgreek{m}}}\bar{w})e^{l\bar{w}}\label{eq:FirstDerivativeW}
\end{equation}
and 
\begin{equation}
\nabla_{\text{\textgreek{m}}}\nabla_{\text{\textgreek{n}}}w=\big(l^{2}(\nabla_{\text{\textgreek{m}}}\bar{w})(\nabla_{\text{\textgreek{n}}}\bar{w})+l\nabla_{\text{\textgreek{m}}}\nabla_{\text{\textgreek{n}}}\bar{w}\big)e^{l\bar{w}},\label{eq:SecondDerivativeW}
\end{equation}
provided $l$ is sufficiently large in terms of $\text{\textgreek{d}}_{0}$.
Inequality (\ref{eq:SmallerinErgoregion}) follows readily from (\ref{eq:SmallnessInErgoregion}).

Since the points $\{x_{j}\}_{j=1}^{k}$ satisfy (\ref{eq:RegionOfCriticalPoints})
and none of them is a point of local maximum for $\bar{w}|_{\text{\textgreek{S}}}$
(see Lemma \ref{lem:DoubleExponentForTheCarleman}), for any $0<\text{\textgreek{d}}_{0}\ll1$,
there exists a diffeomorphism $\mathcal{X}:\text{\textgreek{S}}\rightarrow\mathbb{\text{\textgreek{S}}}$
such that $\mathcal{X}=Id$ on $\text{\textgreek{S}}\backslash\cup_{j=1}^{k}B_{g_{\text{\textgreek{S}}}}(x_{j},4\text{\textgreek{d}}_{0})$
and for all $1\le j\le k$: 
\begin{equation}
2\text{\textgreek{d}}_{0}<dist_{g_{\text{\textgreek{S}}}}(x_{j},\mathcal{X}(x_{j}))<4\text{\textgreek{d}}_{0},\label{eq:DeltaSeperatedCriticalPoints}
\end{equation}
\begin{equation}
\mathcal{X}\big(B_{g_{\text{\textgreek{S}}}}(x_{j},\text{\textgreek{d}}_{0})\big)=B_{g_{\text{\textgreek{S}}}}(\mathcal{X}(x_{j}),\text{\textgreek{d}}_{0}),\label{eq:SitchingDiffeomorphism}
\end{equation}
\begin{equation}
\mathcal{X}\big(B_{g_{\text{\textgreek{S}}}}(\mathcal{X}(x_{j}),\text{\textgreek{d}}_{0})\big)=B_{g_{\text{\textgreek{S}}}}(x_{j},\text{\textgreek{d}}_{0}),\label{eq:SitchingDiffeomorphism-1}
\end{equation}
and 
\begin{equation}
\max_{B_{g_{\text{\textgreek{S}}}}(x_{j},\text{\textgreek{d}}_{0})}\bar{w}<\min_{B_{g_{\text{\textgreek{S}}}}(\mathcal{X}(x_{j}),\text{\textgreek{d}}_{0})}\bar{w}.\label{eq:NonLocalMaximum}
\end{equation}
Setting $\tilde{x}_{j}\doteq\mathcal{X}(x_{j})$ for $j=1,\ldots,k$,
provided $\text{\textgreek{d}}_{0}$ is sufficiently small in terms
of the geometry of $(\mathcal{M},g)$, we have: 
\begin{equation}
\{\tilde{x}_{j}\}_{j=1}^{k}\in\text{\textgreek{S}}\cap\{r\le\frac{1}{2}R_{0}+4\text{\textgreek{d}}_{0}\}\backslash\mathscr{E}_{8\text{\textgreek{d}}_{0}}.
\end{equation}

Extending $\mathcal{X}$ on the whole of $\mathcal{M}\backslash\mathcal{H}^{-}$
by the requirement that it commutes with the flow of $T$, i.\,e.:
\begin{equation}
\mathcal{L}_{T}\circ\mathcal{X}=\mathcal{X}\circ\mathcal{L}_{T},
\end{equation}
and defining the function $\tilde{w}:\mathcal{M}\backslash\mathcal{H}^{-}\rightarrow\mathbb{R}$
as 
\begin{equation}
\tilde{w}\doteq w\circ\mathcal{X},\label{eq:DeformedWeightFunction}
\end{equation}
we infer that, in view of (\ref{eq:FirstDerivativeW}), (\ref{eq:SecondDerivativeW})
and the properties of $\mathcal{X}$, the relations (\ref{eq:IdenticalWsAway}),
(\ref{eq:LowerBoundHessianDistorted}) and (\ref{eq:OneSidedBoundHessianDistorted})
hold, provided $l$ is sufficiently large in terms of $\text{\textgreek{d}}$
and the precise choice of $\mathcal{X}$. Furthermore, in view of
(\ref{eq:NonLocalMaximum}), inequalities (\ref{eq:InequalitiesForPairOfDeformedFunctions})
and (\ref{eq:InequalitiesForPair2}) hold. Finally, inequalities (\ref{eq:SmallerinErgoregion})
and (\ref{eq:SmallerInErgoregionDistorted}) follow trivially from
(\ref{eq:SmallnessInErgoregion}) and the fact that $\mathcal{X}=Id$
on $\mathscr{E}_{\text{\textgreek{d}}}$ for $\text{\textgreek{d}}\le4\text{\textgreek{d}}_{0}$,
while (\ref{eq:SmallerNearHorizon}) and (\ref{eq:SmallerNearHorizonDistorted})
follow from (\ref{eq:Positivity in r direction}).\end{proof}
\begin{lem}
\label{lem:ConstructionWR}For any $R_{0}\gg1$,$0<\text{\textgreek{d}}_{0}\ll1$,
$0<\text{\textgreek{d}}_{1}\ll1$, $s\gg1$, $0<\text{\textgreek{e}}_{0}\ll1$
and $R\gg\max\{R_{0},\text{\textgreek{e}}_{0}^{-1}\}$, there exists
a pair of smooth and $T$-invariant function $w_{R},\tilde{w}_{R}:\{r\le R\}\subset\mathcal{M}\backslash\mathcal{H}^{-}\rightarrow\mathbb{R}$
satisfying the following properties:

\begin{enumerate}

\item \label{enu:nearRegion} In the region $\{r\le R_{0}\}$: 
\begin{equation}
w_{R}=R^{-3\text{\textgreek{e}}_{0}}w
\end{equation}
and 
\begin{equation}
\tilde{w}_{R}=R^{-3\text{\textgreek{e}}_{0}}\tilde{w},
\end{equation}
where $w,\tilde{w}$ are the functions from Lemma \ref{lem:PositivityHessianW}.

\item In the region $\{R_{0}\le r\le R^{\text{\textgreek{e}}_{0}}\}$,
$w_{R}$ is a function of $r$ and $\tilde{w}_{R}=w_{R}$. The following
bounds are also satisfied for some constants depending only on $R_{0}$
and $\text{\textgreek{d}}_{0}$ (and the precise choice of $w$):
\begin{gather}
0<cR^{-3\text{\textgreek{e}}_{0}}\le\partial_{r}w_{R}\le C,\label{eq:BoundDerivativeIntermediate}\\
\partial_{r}^{2}w_{R}+r^{-1}\partial_{r}w_{R}\ge cR^{-3\text{\textgreek{e}}_{0}}+\big|r^{-\frac{1}{2}}\partial_{r}^{2}w_{R}\big|+\big|r^{-\frac{3}{2}}\partial_{r}w_{R}\big|,\label{eq:BoundSecondDerivativeIntermediate}\\
|\partial_{r}^{2}w_{R}|,|\partial_{r}^{3}w_{R}|,|\partial_{r}^{4}w_{R}|\le C.\label{eq:UpperboundsIntermediate}
\end{gather}

\item \label{enu:NearIntermediateRegion}In the region $\{R^{\text{\textgreek{e}}_{0}}\le r\le\frac{1}{2}R\}$:
\begin{equation}
w_{R}=\tilde{w}_{R}=C_{1}\text{\textgreek{e}}_{0}^{-1}(\frac{r}{R})^{\text{\textgreek{e}}_{0}}+C_{2}\label{eq:W_Intermediate_Region}
\end{equation}
 for some constants $C_{1},C_{2}$ depending only on $R_{0},\text{\textgreek{d}}_{0}$
(and the precise choice of $w_{l}$).

\item In the region $\{\frac{1}{2}R\le r\le R\}$:
\begin{equation}
w_{R}=\tilde{w}_{R}=v_{s}(\frac{r}{R})+C_{3}\label{eq:DefinitionWRIntermediate+}
\end{equation}
for some constant $C_{3}$ depending on $R_{0},\text{\textgreek{d}}_{0},\text{\textgreek{d}}_{1}$
(and the precise choice of $w$), where the function $v_{s}:[\frac{1}{2},1]\rightarrow\mathbb{R}$
depends on $s,\text{\textgreek{e}}_{0},\text{\textgreek{d}}_{0},\text{\textgreek{d}}_{1}$
and satisfies (for some constants $c_{\text{\textgreek{d}}_{0}},C_{\text{\textgreek{d}}_{0}}>0$
depending on $\text{\textgreek{d}}_{0},R_{0}$ and the precise choice
of $w$):
\begin{gather}
\frac{dv_{s}}{dx}\ge c_{\text{\textgreek{d}}_{0}}s^{-1},\label{eq:LowerBound}\\
\big|\frac{d^{2}v_{s}}{dx^{2}}\big|\le C_{\text{\textgreek{d}}_{0}}(\text{\textgreek{d}}_{1}s+\text{\textgreek{d}}_{1}^{-1})\frac{dv_{s}}{dx}\label{eq:UpperBoundSecondDerivative}\\
\big|\frac{d^{2}v_{s}}{dx^{2}}\big|,\big|\frac{d^{3}v_{s}}{dx^{3}}\big|,\big|\frac{d^{3}v_{s}}{dx^{3}}\big|\le C_{\text{\textgreek{d}}_{0}}\text{\textgreek{d}}_{1}^{-1}\label{eq:UpperBounds}
\end{gather}
and, for $x\in[\frac{3}{4},1]$: 
\begin{equation}
v_{s}(x)=\frac{1}{2s}\log\big(x-\frac{9}{10}\log(x)\big).\label{eq:ValueLog}
\end{equation}

\end{enumerate}\end{lem}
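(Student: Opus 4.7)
The plan is to build $w_R$ and $\tilde w_R$ piecewise in the four prescribed radial zones, choosing the formulas in the two outer zones first and then adjusting the interpolation in Region 2 to achieve the required derivative bounds. Properties of $w,\tilde w$ from Lemma \ref{lem:PositivityHessianW} will be combined with the fact that $\bar w$ (and hence $w$, and also $\tilde w$ since the deformation diffeomorphism $\mathcal X$ from the proof of Lemma \ref{lem:PositivityHessianW} is the identity outside the critical balls, which lie in $\{r\le\tfrac12R_{0}\}$) is purely a function of $r$ in $\{r\ge\tfrac12R_{0}\}$. This is precisely what makes a radial ansatz in Regions 2--4 compatible with the scaled values $R^{-3\varepsilon_{0}}w$ and $R^{-3\varepsilon_{0}}\tilde w$ at $r=R_{0}$.

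First I would fix Region 1 by simply setting $w_{R}=R^{-3\varepsilon_{0}}w$ and $\tilde w_{R}=R^{-3\varepsilon_{0}}\tilde w$ for $r\le R_{0}$; Region 3 by setting $w_{R}=\tilde w_{R}=C_{1}\varepsilon_{0}^{-1}(r/R)^{\varepsilon_{0}}+C_{2}$ on $\{R^{\varepsilon_{0}}\le r\le \tfrac12 R\}$; and then building $v_{s}$ in Region 4 as follows. On $[\tfrac34,1]$ take the prescribed $v_{s}(x)=\frac{1}{2s}\log\bigl(x-\tfrac{9}{10}\log x\bigr)$; a direct computation shows $v_{s}'(x)\sim s^{-1}$ and $|v_{s}^{(j)}(x)|\lesssim s^{-1}$ for $j=2,3,4$, so (\ref{eq:LowerBound})--(\ref{eq:UpperBounds}) hold comfortably on $[\tfrac34,1]$. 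Extend $v_{s}$ to $[\tfrac12,\tfrac34]$ by patching smoothly (with a fixed cut-off supported in $[\tfrac12,\tfrac34]$ of $C^{k}$-norm $\lesssim \text{\textgreek{d}}_{1}^{-1}$) to a linear function $\alpha_{s}(x-\tfrac12)+\beta_{s}$ with slope $\alpha_{s}=c_{\text{\textgreek{d}}_{0}}s^{-1}$; the transition uses the scale $\text{\textgreek{d}}_{1}$ and introduces precisely the $\text{\textgreek{d}}_{1}s+\text{\textgreek{d}}_{1}^{-1}$ factor in (\ref{eq:UpperBoundSecondDerivative}), while monotonicity of the patched derivative gives (\ref{eq:LowerBound}).

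The constants $C_{1},C_{2},C_{3}$ are now fixed by matching $w_{R}$ and $\partial_{r}w_{R}$ at $r=\tfrac12 R$ across the Region 3/Region 4 interface: the slope of the power function $C_{1}R^{-1}(r/R)^{\varepsilon_{0}-1}$ at $r=\tfrac12R$ must equal $R^{-1}v_{s}'(\tfrac12)\sim R^{-1}s^{-1}$, determining $C_{1}=C_{1}(R_{0},\text{\textgreek{d}}_{0})$ (independent of $s$ after absorbing the $s^{-1}$ into the definition of $\alpha_{s}$), and $C_{2},C_{3}$ absorb the matching of values. This leaves Region 2: on $\{R_{0}\le r\le R^{\varepsilon_{0}}\}$, since $w=\tilde w$ and both are radial there, I set $w_{R}=\tilde w_{R}=h_{R}(r)$ with $h_{R}$ a smooth, strictly convex function of $r$ interpolating between $h_{R}(R_{0})=R^{-3\varepsilon_{0}}w(R_{0})$ and the power-function value $C_{1}\varepsilon_{0}^{-1}R^{\varepsilon_{0}(\varepsilon_{0}-1)}+C_{2}$ at $r=R^{\varepsilon_{0}}$, chosen so that $h_{R}'$ is monotonically increasing from a value $\ge cR^{-3\varepsilon_{0}}$ at $r=R_{0}$ to the matching slope $\sim R^{-2\varepsilon_{0}}$ at $r=R^{\varepsilon_{0}}$. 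An explicit choice is to take $h_{R}'(r)$ piecewise linear in $r$ on $[R_{0},R^{\varepsilon_{0}}]$, smoothed at the endpoints; one readily verifies $\partial_{r}^{2}h_{R}\sim R^{-3\varepsilon_{0}}/R^{\varepsilon_{0}}\cdot R^{2\varepsilon_{0}}\sim R^{-2\varepsilon_{0}}$ (or can be arranged to be larger), giving (\ref{eq:BoundSecondDerivativeIntermediate}) after noting that for $r\ge R_{0}\gg 1$ the terms $r^{-1/2}\partial_{r}^{2}w_{R}$ and $r^{-3/2}\partial_{r}w_{R}$ are a small fraction of the leading term. The higher derivatives $|\partial_{r}^{j}h_{R}|$ are automatically $\le C$ (in fact much smaller) because the construction involves smoothing on an $O(1)$ scale.

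The main obstacle is the simultaneous enforcement of the lower bound $\partial_{r}w_{R}\ge cR^{-3\varepsilon_{0}}$, the convexity bound (\ref{eq:BoundSecondDerivativeIntermediate}) and the upper bounds (\ref{eq:UpperboundsIntermediate}) in Region 2 while still matching the slope $\sim R^{-2\varepsilon_{0}}$ prescribed by the Region 3 ansatz at the upper endpoint $r=R^{\varepsilon_{0}}$. The parameter count works because $R^{\varepsilon_{0}}-R_{0}\sim R^{\varepsilon_{0}}$ is much larger than $R^{-3\varepsilon_{0}}$, so a derivative of order $R^{-3\varepsilon_{0}}$ integrated over an interval of length $R^{\varepsilon_{0}}$ produces a change of order $R^{-2\varepsilon_{0}}$, which is compatible with the matching boundary value; the second-derivative condition then amounts to the quantitative statement that $\partial_{r}h_{R}$ increases by a factor of order $R^{\varepsilon_{0}}$ over the interval, which is entirely natural for a convex profile connecting the two endpoint slopes. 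All the other conditions of the Lemma (radial structure in Regions 2--4, equality $w_{R}=\tilde w_{R}$ there, smoothness) are built into the construction.
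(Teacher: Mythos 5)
Your four-region layout and the general shape of the interpolants match the paper's proof, but two of your specific choices are incompatible with the Lemma as stated. First, in Region 2 you insist that $h_R$ be strictly convex on all of $[R_0,R^{\text{\textgreek{e}}_0}]$; but the Region 3 formula $C_1\text{\textgreek{e}}_0^{-1}(r/R)^{\text{\textgreek{e}}_0}+C_2$ has $\partial_r^2 w_R=C_1(\text{\textgreek{e}}_0-1)R^{-2}(r/R)^{\text{\textgreek{e}}_0-2}<0$, so smoothness of $w_R$ at $r=R^{\text{\textgreek{e}}_0}$ forces $h_R''(R^{\text{\textgreek{e}}_0})<0$, and no strictly convex interpolant attaches. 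The paper handles this by keeping $\partial_r w_R$ increasing only up to $r=R^{\text{\textgreek{e}}_0}-1$ and imposing, on the last unit interval, the one-sided bound $\partial_r^2 w_R\ge-(1+\text{\textgreek{e}}_0^2)|\partial_r^2 w_R(R^{\text{\textgreek{e}}_0})|$; the estimate (\ref{eq:BoundSecondDerivativeIntermediate}) still holds there because $r^{-1}\partial_r w_R$ dominates $|\partial_r^2 w_R|$ by the factor $(1-\text{\textgreek{e}}_0)^{-1}$, leaving a margin $\sim\text{\textgreek{e}}_0 C_1 R^{-3\text{\textgreek{e}}_0+\text{\textgreek{e}}_0^2}$ which in turn dominates the $r^{-1/2}$-weighted error terms.

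Second, and more seriously, in Region 4 you take $v_s$ linear near $x=\tfrac12$ with slope $\alpha_s=c_{\text{\textgreek{d}}_0}s^{-1}$, and then the matching of $\partial_r w_R$ at $r=\tfrac12 R$ gives $C_1(1/2)^{\text{\textgreek{e}}_0-1}=\alpha_s\sim s^{-1}$, hence $C_1\sim s^{-1}$. This contradicts the lemma's requirement that $C_1$ depend only on $R_0,\text{\textgreek{d}}_0$, and the independence from $s$ is genuinely needed downstream: the two-sided bounds (\ref{eq:BoundDerivative}), (\ref{eq:BoundDerivativesUpper}) require $C_1$ to be bounded above and below uniformly in $s$. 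The phrase ``absorbing the $s^{-1}$ into $\alpha_s$'' does not fix this; it is circular, since $\alpha_s$ and $C_1$ determine each other through the matching condition. The paper avoids the problem by setting $\tilde v_s$ equal to the shifted power law $C_1\text{\textgreek{e}}_0^{-1}x^{\text{\textgreek{e}}_0}+C_2-C_3$ on $[\tfrac12,\tfrac{11}{20}]$, so that $v_s'(\tfrac12)\sim C_1$ is of order one, not $s^{-1}$; the descent of $v_s'$ down to order $s^{-1}$ then happens afterwards, over the short interior interval $[\tfrac{11}{20},\tfrac35]$, via the explicit formula $-\text{\textgreek{d}}_1(x-\tfrac35)^6-\tfrac1{2s}\big((x-1)^2+10\big)$ together with a mollification at $x=\tfrac35$ at a scale controlled by $\text{\textgreek{d}}_1$. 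That is exactly where the $\text{\textgreek{d}}_1 s+\text{\textgreek{d}}_1^{-1}$ factor in (\ref{eq:UpperBoundSecondDerivative}) comes from, and your Region 4 profile needs to be redesigned along these lines.
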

\begin{rem*}
Notice that we can bound on $\{R^{\text{\textgreek{e}}_{0}}\le r\le\frac{1}{2}R\}$
\begin{gather}
\partial_{r}^{2}w_{R}+r^{-1}\partial_{r}w_{R}>c_{0}\text{\textgreek{e}}_{0}R^{-\text{\textgreek{e}}_{0}}\cdot r^{-2+\text{\textgreek{e}}_{0}}+|r^{-\frac{1}{2}}\partial_{r}^{2}w_{R}|+|r^{-\frac{3}{2}}\partial_{r}w_{R}|,\label{eq:Bound_S^3_Terms_Final}\\
\partial_{r}w_{R}>c_{0}R^{-\text{\textgreek{e}}_{0}}r^{-1+\text{\textgreek{e}}_{0}},\label{eq:BoundDerivative}\\
\sum_{j=1}^{4}|r^{j}\partial_{r}^{j}w_{R}|<C_{0}R^{-\text{\textgreek{e}}_{0}}r^{\text{\textgreek{e}}_{0}}.\label{eq:BoundDerivativesUpper}
\end{gather}
\end{rem*}
\begin{proof}
The construction of $w_{R}$ (and, similarly, $\tilde{w}_{R}$) can
be readily performed in view of the following observations:

\begin{itemize}

\item{ In view of Condition \ref{enu:nearRegion} and the properties
of the function $w$, for $r=R_{0}$ we have: 
\begin{gather}
\partial_{r}w_{R}(R_{0})\sim_{R_{0}}R^{-3\text{\textgreek{e}}_{0}},\label{eq:BoundDerivativeIntermediate-1}\\
\partial_{r}^{2}w_{R}(R_{0}),r^{-1}\partial_{r}w_{R}(R_{0})\sim_{R_{0}}R^{-3\text{\textgreek{e}}_{0}},\label{eq:BoundSecondDerivativeIntermediate-1}
\end{gather}
while Condition \ref{enu:NearIntermediateRegion} requires that, for
$r=R^{\text{\textgreek{e}}_{0}}$:
\begin{gather}
\partial_{r}w_{R}(R^{\text{\textgreek{e}}_{0}})=C_{1}R^{-2\text{\textgreek{e}}_{0}+\text{\textgreek{e}}_{0}^{2}}\gg\partial_{r}w_{R}(R_{0}),\label{eq:BoundDerivativeIntermediate-1-1}\\
\partial_{r}^{2}w_{R}(R^{\text{\textgreek{e}}_{0}})=C_{1}(\text{\textgreek{e}}_{0}-1)R^{-3\text{\textgreek{e}}_{0}+\text{\textgreek{e}}_{0}^{2}}\\
\partial_{r}^{2}w_{R}(R^{\text{\textgreek{e}}_{0}})+r^{-1}\partial_{r}w_{R}(R^{\text{\textgreek{e}}_{0}})\ge c\text{\textgreek{e}}_{0}R^{-3\text{\textgreek{e}}_{0}+\text{\textgreek{e}}_{0}^{2}}+\big|r^{-\frac{1}{2}}\partial_{r}^{2}w_{R}(R^{\text{\textgreek{e}}_{0}})\big|+\big|r^{-\frac{3}{2}}\partial_{r}w_{R}(R^{\text{\textgreek{e}}_{0}})\big|.\label{eq:BoundSecondDerivativeIntermediate-1-1}
\end{gather}
Therefore, we can readily construct the function $\partial_{r}w_{R}$
(as a function of $r$) on the interval $\{R_{0}\le r\le R^{\text{\textgreek{e}}_{0}}\}$
(and then integrate in order to obtain $w_{R}$ and the constant $C_{2}$
in (\ref{eq:W_Intermediate_Region})), so that (\ref{eq:BoundDerivativeIntermediate})--(\ref{eq:UpperboundsIntermediate})
are satisfied. In particular, $\partial_{r}w_{R}$ can be constructed
as an increasing function of $r$ (i.\,e.~with $\partial_{r}^{2}w_{R}>0$)
up to $r=R^{\text{\textgreek{e}}_{0}}-1$, while for $r\in[R^{\text{\textgreek{e}}_{0}}-1,R^{\text{\textgreek{e}}_{0}}]$,
$\partial_{r}w_{R}$ is constructed a smooth function of $r$ extending
(\ref{eq:W_Intermediate_Region}) from $\{r\ge R^{\text{\textgreek{e}}_{0}}\}$
under the requirement that it satisfies the one sided bound 
\begin{equation}
\partial_{r}^{2}w_{R}(r)\ge-(1+\text{\textgreek{e}}_{0}^{2})|\partial_{r}^{2}w_{R}(R^{\text{\textgreek{e}}_{0}})|.
\end{equation}

}

\item{ Let $\tilde{v}:[\frac{3}{5},1]\rightarrow\mathbb{R}$ be a
smooth and strictly increasing function such that $\tilde{v}(x)=-(x-1)^{2}-10$
for $x\in[\frac{3}{5},\frac{7}{10}]$ and $\tilde{v}(x)=\log\big(x-\frac{9}{10}\log(x)\big)$
for $x\in[\frac{3}{4},1]$. Then, provided $s\gg1$ and $\text{\textgreek{d}}_{1}<1$,
it canbe readily inferred that there exists a $C^{5}$ and piecewise
$C^{6}$ function $\tilde{v}_{s}:[\frac{1}{2},1]\rightarrow\mathbb{R}$,
which is smooth on $[\frac{1}{2},1]\backslash\{\frac{3}{5}\}$, satisfying
$\frac{d\tilde{v}_{s}}{dx}\ge\frac{1}{10s}C_{1}$, $\sum_{j=1}^{4}\big|\frac{d^{j}\tilde{v}_{s}}{dx^{j}}\big|\le10C_{1}\text{\textgreek{d}}_{1}^{-1}$
and 
\[
\tilde{v}_{s}(x)=\begin{cases}
C_{1}\text{\textgreek{e}}_{0}^{-1}x^{\text{\textgreek{e}}_{0}}+C_{2}-C_{3} & \mbox{ for }x\in[\frac{1}{2},\frac{11}{20}]\\
-\text{\textgreek{d}}_{1}(x-\frac{3}{5})^{6}-\frac{1}{2s}\big((x-1)^{2}+10\big) & \mbox{ for }x\in[\frac{23}{40},\frac{3}{5})\\
\frac{1}{2s}\tilde{v}(x), & \mbox{ for }x\in[\frac{3}{5},1]
\end{cases}
\]
for a suitable constant $C_{3}>0$ depending on $C_{1},C_{2}$. The
function $v_{s}$ is then constructed by mollifying $\tilde{v}_{s}$
around $x=\frac{3}{5}$.}

\end{itemize}
\end{proof}

\subsection{\label{sub:ChoiceOfSeedFunctions}The seed functions $f$, $\tilde{f}$,
$h$ and $\tilde{h}$}

In this section, we will construct (using the auxiliary functions
from the previous section) the seed functions for the multipliers
that will be used in the proof of Proposition \ref{cor:CarlemanForPsik}.

We will assume without loss of generality that $0<\text{\textgreek{d}}_{0},\text{\textgreek{d}}_{1}\ll1$,
$s\gg1$, $0<\text{\textgreek{e}}_{0}\ll1$ and $R\gg\max\{R_{0},\text{\textgreek{e}}_{0}^{-1}\}$.
Let $w_{R},\tilde{w}_{R}:\{r\le R\}\rightarrow\mathbb{R}$ be the
functions from Lemma \ref{lem:ConstructionWR} (associated to the
parameters $s,R,\text{\textgreek{e}}_{0},\text{\textgreek{d}}_{0},\text{\textgreek{d}}_{1}$).
We define the smooth and $T$-invariant functions $f,\tilde{f}:\mathcal{M}\backslash\mathcal{H}^{-}\rightarrow(0,+\infty)$
as follows: 
\begin{equation}
f=\begin{cases}
e^{2sw_{R}}, & \mbox{on }\{r\le R\}\\
C_{4}^{2s}\cdot\Big(\frac{r}{R}-\frac{9}{10}\log(\frac{r}{R})\Big), & \mbox{on }\{r\ge R\},
\end{cases}\label{eq:Function_F}
\end{equation}
and 
\begin{equation}
\tilde{f}=\begin{cases}
e^{2s\tilde{w}_{R}}, & \mbox{on }\{r\le R\}\\
C_{4}^{2s}\cdot\Big(\frac{r}{R}-\frac{9}{10}\log(\frac{r}{R})\Big), & \mbox{on }\{r\ge R\},
\end{cases}\label{eq:Function_F-1}
\end{equation}
where $C_{4}>0$ is chosen so that $f$ and $\tilde{f}$ are smooth
at $r=R$ (which is possible in view of (\ref{eq:ValueLog})). 

Let $h:\mathcal{M}\backslash\mathcal{H}^{-}\rightarrow\mathbb{R}$
be a smooth and $T$-invariant function satisfying the following conditions
(provided $\text{\textgreek{d}}_{1},\text{\textgreek{d}}_{2}\ll1$): 

\begin{enumerate}

\item In the region $\{r\le\frac{4}{3}R\}$:
\begin{equation}
h=-\text{\textgreek{q}}_{\le\frac{1}{2}R_{0}}s\text{\textgreek{d}}_{1}\frac{\nabla_{\text{\textgreek{m}}}\nabla_{\text{\textgreek{n}}}w_{R}\nabla^{\text{\textgreek{m}}}w_{R}\nabla^{\text{\textgreek{n}}}w_{R}}{g_{ref}(dw_{R},dw_{R})}e^{2sw_{R}}+\text{\textgreek{q}}_{\ge R_{0}}(r^{-1}-r^{-\frac{3}{2}})\partial_{r}f,\label{eq:hAlmostR}
\end{equation}
where 
\begin{equation}
\text{\textgreek{q}}_{\le\frac{1}{2}R_{0}}\doteq\text{\textgreek{q}}_{4}(\frac{R_{0}}{2r}).
\end{equation}

\item In the region $\{\frac{4}{3}R\le r\le\text{\textgreek{d}}_{2}^{-1}R\}$,
$h$ is a function of $r$, satisfying 
\begin{equation}
cf(R)r^{-2}<h\le\min\{(r^{-1}-r^{-\frac{3}{2}})\partial_{r}f,(1-r^{-\frac{1}{2}})\partial_{r}^{2}f\}\label{eq:hIntermediateAway}
\end{equation}
 and 
\begin{equation}
-\square_{g}h\le CR^{-4}f(R)\label{eq:BoundBoxh}
\end{equation}
for some absolute constants $C,c>0$.

\item In the region $\{r\ge\text{\textgreek{d}}_{2}^{-1}R\}$: 
\begin{equation}
h=\frac{1}{2}\partial_{r}^{2}f.\label{eq:hFarAway}
\end{equation}

\end{enumerate}

Notice that $h$ can indeed be defined as above on the interval $\{(1+2\text{\textgreek{d}}_{2})R\le r\le\text{\textgreek{d}}_{2}^{-1}R\}$
(provided $\text{\textgreek{d}}_{2}$ is smaller than an absolute
constant), in view of the fact that
\begin{equation}
\min\{\partial_{r}^{2}f,r^{-1}\partial_{r}f\}\gtrsim f(R)r^{-2},
\end{equation}
\begin{equation}
\square_{g}h=\big(1+O(r^{-1})\big)\partial_{r}^{2}h+\big((d-1)r^{-1}+O(r^{-2})\big)\partial_{r}h
\end{equation}
 and 
\begin{equation}
\sum_{j=1}^{4}r^{j-4}|\partial_{r}^{j}f|\le CR^{-4}f(R)
\end{equation}
on that interval, while $(r^{-1}-r^{-\frac{3}{2}})\partial_{r}f<\partial_{r}^{2}f$
for $R\le r<\frac{4}{3}R$ (provided $R\gg1$) and $r^{-1}\partial_{r}f>\partial_{r}^{2}f$
for $r\ge\text{\textgreek{d}}_{2}^{-1}R$ (provided $\text{\textgreek{d}}_{2}\ll1$). 

We also define $\tilde{h}:\mathcal{M}\backslash\mathcal{H}^{-}\rightarrow\mathbb{R}$
in the same way as $h$, but with $\tilde{w}_{R}$ and $\tilde{f}$
in place of $w_{R}$ and $f$, respectively.

\subsection{\label{sub:IntegrationsByParts}The integration-by-parts scheme}

In this section, we will establish a general identity obtained from
equation (\ref{eq:InhomogeneousWaveEquation}) and a suitable first
order multiplier, after successively integrating by parts over $\mathcal{R}(\text{\textgreek{t}}_{1},\text{\textgreek{t}}_{2})$.
This identity will lie at the core of the proof of Proposition \ref{prop:GeneralCarlemanEstimate}.

Let $f,h$ be as in Section \ref{sub:ChoiceOfSeedFunctions}. We introduce
the following multiplier for equation (\ref{eq:InhomogeneousWaveEquation})
\begin{equation}
2\nabla^{\text{\textgreek{m}}}f\cdot\nabla_{\text{\textgreek{m}}}\text{\textgreek{f}}+\square_{g}f\cdot\text{\textgreek{f}}.\label{eq:Multiplier}
\end{equation}

Multiplying (\ref{eq:InhomogeneousWaveEquationPsiK}) with the complex
conjugate of (\ref{eq:Multiplier}) and integrating by parts over
$\mathcal{R}(\text{\textgreek{t}}_{1},\text{\textgreek{t}}_{2})$,
we obtain: 
\begin{equation}
\begin{split}\int_{\mathcal{R}(\text{\textgreek{t}}_{1},\text{\textgreek{t}}_{2})}Re\Big\{ & 2\nabla^{\text{\textgreek{m}}}\nabla^{\text{\textgreek{n}}}f\nabla_{\text{\textgreek{m}}}\text{\textgreek{f}}\nabla_{\text{\textgreek{n}}}\bar{\text{\textgreek{f}}}-\frac{1}{2}\square_{g}^{2}f|\text{\textgreek{f}}|^{2}\big\}\, dg=-\int_{\mathcal{R}(\text{\textgreek{t}}_{1},\text{\textgreek{t}}_{2})}Re\big\{ G\big(2\nabla^{\text{\textgreek{m}}}f\nabla_{\text{\textgreek{m}}}\bar{\text{\textgreek{f}}}+(\square_{g}f)\bar{\text{\textgreek{f}}}\big)\big\}\, dg-\\
- & \sum_{j=1}^{2}(-1)^{j}\int_{\text{\textgreek{S}}_{\text{\textgreek{t}}_{j}}}Re\big\{\big(2\nabla^{\text{\textgreek{m}}}f\nabla_{\text{\textgreek{m}}}\bar{\text{\textgreek{f}}}\nabla_{\text{\textgreek{n}}}\text{\textgreek{f}}+(\square_{g}f)\text{\textgreek{f}}\nabla_{\text{\textgreek{n}}}\bar{\text{\textgreek{f}}}-\nabla_{\text{\textgreek{n}}}f\nabla^{\text{\textgreek{m}}}\text{\textgreek{f}}\nabla_{\text{\textgreek{m}}}\bar{\text{\textgreek{f}}}-\frac{1}{2}(\nabla_{\text{\textgreek{n}}}(\square_{g}f))|\text{\textgreek{f}}|^{2}\big)n_{\text{\textgreek{S}}_{\text{\textgreek{t}}_{j}}}^{\text{\textgreek{n}}}\big\}\, dg_{\text{\textgreek{S}}_{\text{\textgreek{t}}_{j}}}-\\
- & \int_{\mathcal{H}^{+}\cap\mathcal{R}(\text{\textgreek{t}}_{1},\text{\textgreek{t}}_{2})}Re\big\{\big(2\nabla^{\text{\textgreek{m}}}f\nabla_{\text{\textgreek{m}}}\bar{\text{\textgreek{f}}}\nabla_{\text{\textgreek{n}}}\text{\textgreek{f}}+(\square_{g}f)\text{\textgreek{f}}\nabla_{\text{\textgreek{n}}}\bar{\text{\textgreek{f}}}-\nabla_{\text{\textgreek{n}}}f\nabla^{\text{\textgreek{m}}}\text{\textgreek{f}}\nabla_{\text{\textgreek{m}}}\bar{\text{\textgreek{f}}}-\frac{1}{2}(\nabla_{\text{\textgreek{n}}}(\square_{g}f))|\text{\textgreek{f}}|^{2}\big)n_{\mathcal{H}^{+}}^{\text{\textgreek{n}}}\big\}\, dvol_{\mathcal{H}^{+}}.
\end{split}
\label{eq:Fundamental_Current}
\end{equation}

Let us split the left hand side of (\ref{eq:Fundamental_Current})
as 
\begin{equation}
\begin{split}\int_{\mathcal{R}(\text{\textgreek{t}}_{1},\text{\textgreek{t}}_{2})}Re\Big\{2\nabla^{\text{\textgreek{m}}}\nabla^{\text{\textgreek{n}}}f\nabla_{\text{\textgreek{m}}}\text{\textgreek{f}}\nabla_{\text{\textgreek{n}}}\bar{\text{\textgreek{f}}}-\frac{1}{2}\square_{g}^{2}f|\text{\textgreek{f}}|^{2}\Big\}\, dg= & +\int_{\mathcal{R}(\text{\textgreek{t}}_{1},\text{\textgreek{t}}_{2})}\text{\textgreek{q}}_{\le R}Re\Big\{2\nabla^{\text{\textgreek{m}}}\nabla^{\text{\textgreek{n}}}f\nabla_{\text{\textgreek{m}}}\text{\textgreek{f}}\nabla_{\text{\textgreek{n}}}\bar{\text{\textgreek{f}}}-\frac{1}{2}\square_{g}^{2}f|\text{\textgreek{f}}|^{2}\Big\}\, dg+\\
 & \hphantom{\int_{R}}+\int_{\mathcal{R}(\text{\textgreek{t}}_{1},\text{\textgreek{t}}_{2})}(1-\text{\textgreek{q}}_{\le R})Re\Big\{2\nabla^{\text{\textgreek{m}}}\nabla^{\text{\textgreek{n}}}f\nabla_{\text{\textgreek{m}}}\text{\textgreek{f}}\nabla_{\text{\textgreek{n}}}\bar{\text{\textgreek{f}}}-\frac{1}{2}\square_{g}^{2}f|\text{\textgreek{f}}|^{2}\Big\}\, dg.
\end{split}
\label{eq:SplitCutOffs}
\end{equation}
 Using the identity 
\begin{equation}
\nabla_{\text{\textgreek{m}}}\text{\textgreek{f}}\nabla_{\text{\textgreek{n}}}\bar{\text{\textgreek{f}}}=f^{-1}\nabla_{\text{\textgreek{m}}}(f^{\frac{1}{2}}\text{\textgreek{f}})\nabla_{\text{\textgreek{n}}}(f^{\frac{1}{2}}\bar{\text{\textgreek{f}}})-\frac{1}{2}f^{-1}\big(\nabla_{\text{\textgreek{m}}}f\text{\textgreek{f}}\nabla_{\text{\textgreek{n}}}\bar{\text{\textgreek{f}}}+\nabla_{\text{\textgreek{n}}}f\bar{\text{\textgreek{f}}}\nabla_{\text{\textgreek{m}}}\text{\textgreek{f}}\big)-\frac{1}{4}f^{-2}\nabla_{\text{\textgreek{m}}}f\nabla_{\text{\textgreek{n}}}f|\text{\textgreek{f}}|^{2}\label{eq:RescaledPhiIdentity}
\end{equation}
 and integrating by parts in the $\text{\textgreek{f}}\nabla\text{\textgreek{f}}$
terms, we have: 
\begin{equation}
\begin{split}\int_{\mathcal{R}(\text{\textgreek{t}}_{1},\text{\textgreek{t}}_{2})} & \text{\textgreek{q}}_{\le R}Re\Big\{2\nabla^{\text{\textgreek{m}}}\nabla^{\text{\textgreek{n}}}f\nabla_{\text{\textgreek{m}}}\text{\textgreek{f}}\nabla_{\text{\textgreek{n}}}\bar{\text{\textgreek{f}}}-\frac{1}{2}\square_{g}^{2}f|\text{\textgreek{f}}|^{2}\Big\}\, dg=\\
= & \int_{\mathcal{R}(\text{\textgreek{t}}_{1},\text{\textgreek{t}}_{2})}\text{\textgreek{q}}_{\le R}Re\Big\{2f^{-1}\nabla^{\text{\textgreek{m}}}\nabla^{\text{\textgreek{n}}}f\nabla_{\text{\textgreek{m}}}(f^{\frac{1}{2}}\text{\textgreek{f}})\nabla_{\text{\textgreek{n}}}(f^{\frac{1}{2}}\bar{\text{\textgreek{f}}})+\big(\nabla_{\text{\textgreek{n}}}(f^{-1}\nabla^{\text{\textgreek{m}}}\nabla^{\text{\textgreek{n}}}f\nabla_{\text{\textgreek{m}}}f)-\frac{1}{2}f^{-2}\nabla^{\text{\textgreek{m}}}\nabla^{\text{\textgreek{n}}}f\nabla_{\text{\textgreek{m}}}f\nabla_{\text{\textgreek{n}}}f-\frac{1}{2}\square_{g}^{2}f\big)|\text{\textgreek{f}}|^{2}\Big\}\, dg+\\
 & +\int_{\mathcal{R}(\text{\textgreek{t}}_{1},\text{\textgreek{t}}_{2})}\nabla_{\text{\textgreek{n}}}\text{\textgreek{q}}_{\le R}\cdot f^{-1}\nabla^{\text{\textgreek{m}}}\nabla^{\text{\textgreek{n}}}f\nabla_{\text{\textgreek{m}}}f|\text{\textgreek{f}}|^{2}\, dg+\sum_{j=1}^{2}(-1)^{j}\int_{\text{\textgreek{S}}_{\text{\textgreek{t}}_{j}}}\text{\textgreek{q}}_{\le R}\cdot f^{-1}\nabla_{\text{\textgreek{m}}}\nabla_{\text{\textgreek{n}}}f\nabla^{\text{\textgreek{m}}}f|\text{\textgreek{f}}|^{2}n_{\text{\textgreek{S}}_{\text{\textgreek{t}}_{j}}}^{\text{\textgreek{n}}}\, dg_{\text{\textgreek{S}}_{\text{\textgreek{t}}_{j}}}+\\
 & +\int_{\mathcal{H}^{+}\cap\mathcal{R}(\text{\textgreek{t}}_{1},\text{\textgreek{t}}_{2})}\text{\textgreek{q}}_{\le R}\cdot f^{-1}\nabla_{\text{\textgreek{m}}}\nabla_{\text{\textgreek{n}}}f\nabla^{\text{\textgreek{m}}}f|\text{\textgreek{f}}|^{2}n_{\mathcal{H}^{+}}^{\text{\textgreek{n}}}\, dvol_{\mathcal{H}^{+}}.
\end{split}
\label{eq:IntegrationByPartsIntermediate}
\end{equation}
Thus, in view of (\ref{eq:IntegrationByPartsIntermediate}), the identity
(\ref{eq:SplitCutOffs}) yields: 
\begin{equation}
\begin{split}\int_{\mathcal{R}(\text{\textgreek{t}}_{1},\text{\textgreek{t}}_{2})} & Re\Big\{2\nabla^{\text{\textgreek{m}}}\nabla^{\text{\textgreek{n}}}f\nabla_{\text{\textgreek{m}}}\text{\textgreek{f}}\nabla_{\text{\textgreek{n}}}\bar{\text{\textgreek{f}}}-\frac{1}{2}\square_{g}^{2}f|\text{\textgreek{f}}|^{2}\Big\}\, dg=\\
= & \int_{\mathcal{R}(\text{\textgreek{t}}_{1},\text{\textgreek{t}}_{2})}Re\Big\{2\text{\textgreek{q}}_{\le R}f^{-1}\nabla^{\text{\textgreek{m}}}\nabla^{\text{\textgreek{n}}}f\nabla_{\text{\textgreek{m}}}(f^{\frac{1}{2}}\text{\textgreek{f}})\nabla_{\text{\textgreek{n}}}(f^{\frac{1}{2}}\bar{\text{\textgreek{f}}})+2(1-\text{\textgreek{q}}_{\le R})\nabla^{\text{\textgreek{m}}}\nabla^{\text{\textgreek{n}}}f\nabla_{\text{\textgreek{m}}}\text{\textgreek{f}}\nabla_{\text{\textgreek{n}}}\bar{\text{\textgreek{f}}}\Big\}\, dg+\\
 & +\int_{\mathcal{R}(\text{\textgreek{t}}_{1},\text{\textgreek{t}}_{2})}\Big(\text{\textgreek{q}}_{\le R}\nabla_{\text{\textgreek{n}}}\big(f^{-1}\nabla^{\text{\textgreek{m}}}\nabla^{\text{\textgreek{n}}}f\nabla_{\text{\textgreek{m}}}f\big)-\frac{1}{2}\text{\textgreek{q}}_{\le R}f^{-2}\nabla^{\text{\textgreek{m}}}\nabla^{\text{\textgreek{n}}}f\nabla_{\text{\textgreek{m}}}f\nabla_{\text{\textgreek{n}}}f-\frac{1}{2}\square_{g}^{2}f\Big)|\text{\textgreek{f}}|^{2}\, dg+\\
 & +\int_{\mathcal{R}(\text{\textgreek{t}}_{1},\text{\textgreek{t}}_{2})}\big(\nabla_{\text{\textgreek{n}}}\text{\textgreek{q}}_{\le R}\cdot f^{-1}\nabla^{\text{\textgreek{m}}}\nabla^{\text{\textgreek{n}}}f\nabla_{\text{\textgreek{m}}}f\big)|\text{\textgreek{f}}|^{2}\, dg+\sum_{j=1}^{2}(-1)^{j}\int_{\text{\textgreek{S}}_{\text{\textgreek{t}}_{j}}}\text{\textgreek{q}}_{\le R}f^{-1}\nabla_{\text{\textgreek{m}}}\nabla_{\text{\textgreek{n}}}f\nabla^{\text{\textgreek{m}}}f|\text{\textgreek{f}}|^{2}n_{\text{\textgreek{S}}_{\text{\textgreek{t}}_{j}}}^{\text{\textgreek{n}}}\, dg_{\text{\textgreek{S}}_{\text{\textgreek{t}}_{j}}}+\\
 & +\int_{\mathcal{H}^{+}\cap\mathcal{R}(\text{\textgreek{t}}_{1},\text{\textgreek{t}}_{2})}\text{\textgreek{q}}_{\le R}\cdot f^{-1}\nabla_{\text{\textgreek{m}}}\nabla_{\text{\textgreek{n}}}f\nabla^{\text{\textgreek{m}}}f|\text{\textgreek{f}}|^{2}n_{\mathcal{H}^{+}}^{\text{\textgreek{n}}}\, dvol_{\mathcal{H}^{+}}.
\end{split}
\label{eq:BeforeSplittingAwayRegion}
\end{equation}

Adding to (\ref{eq:BeforeSplittingAwayRegion}) the identity 
\begin{equation}
\begin{split}0=\int_{\mathcal{R}(\text{\textgreek{t}}_{1},\text{\textgreek{t}}_{2})}\Big(-2\text{\textgreek{q}}_{\ge R_{0}} & \text{\textgreek{q}}_{\le R}r^{-1}f^{-1}(\partial_{r}f)\big|\partial_{r}(f^{\frac{1}{2}}\text{\textgreek{f}})\big|^{2}+2\text{\textgreek{q}}_{\ge R_{0}}\text{\textgreek{q}}_{\le R}r^{-1}(\partial_{r}f)|\partial_{r}\text{\textgreek{f}}|^{2}+\\
+ & \text{\textgreek{q}}_{\ge R_{0}}\text{\textgreek{q}}_{\le R}r^{-1}f^{-1}(\partial_{r}f)^{2}\partial_{r}(|\text{\textgreek{f}}|^{2})+\frac{1}{2}\text{\textgreek{q}}_{\ge R_{0}}\text{\textgreek{q}}_{\le R}r^{-1}f^{-2}(\partial_{r}f)^{3}|\text{\textgreek{f}}|^{2}\Big)\, dg
\end{split}
\label{eq:LIdentity}
\end{equation}
(recall that $\partial_{r}$ is the coordinate vector field in the
$(t,r,\text{\textgreek{sv}})$ coordinate chart in each connected
component of the region $\{r\ge\frac{1}{4}R_{0}\}$) and integrating
by parts in the $\partial_{r}(|\text{\textgreek{f}}|^{2})$ term,
we obtain: 
\begin{equation}
\begin{split}\int_{\mathcal{R}(\text{\textgreek{t}}_{1},\text{\textgreek{t}}_{2})} & Re\Big\{2\nabla^{\text{\textgreek{m}}}\nabla^{\text{\textgreek{n}}}f\nabla_{\text{\textgreek{m}}}\text{\textgreek{f}}\nabla_{\text{\textgreek{n}}}\bar{\text{\textgreek{f}}}-\frac{1}{2}\square_{g}^{2}f|\text{\textgreek{f}}|^{2}\Big\}\, dg=\\
= & \int_{\mathcal{R}(\text{\textgreek{t}}_{1},\text{\textgreek{t}}_{2})}Re\Big\{2\text{\textgreek{q}}_{\le R}\Big(f^{-1}\nabla^{\text{\textgreek{m}}}\nabla^{\text{\textgreek{n}}}f\nabla_{\text{\textgreek{m}}}(f^{\frac{1}{2}}\text{\textgreek{f}})\nabla_{\text{\textgreek{n}}}(f^{\frac{1}{2}}\bar{\text{\textgreek{f}}})-\text{\textgreek{q}}_{\ge R_{0}}r^{-1}f^{-1}(\partial_{r}f)\big|\partial_{r}(f^{\frac{1}{2}}\text{\textgreek{f}})\big|^{2}\Big)+\\
 & \hphantom{\int_{\mathcal{R}(\text{\textgreek{t}}_{1},\text{\textgreek{t}}_{2})}Re\Big\{}+2(1-\text{\textgreek{q}}_{\le R})\nabla^{\text{\textgreek{m}}}\nabla^{\text{\textgreek{n}}}f\nabla_{\text{\textgreek{m}}}\text{\textgreek{f}}\nabla_{\text{\textgreek{n}}}\bar{\text{\textgreek{f}}}+2\text{\textgreek{q}}_{\ge R_{0}}\text{\textgreek{q}}_{\le R}r^{-1}(\partial_{r}f)|\partial_{r}\text{\textgreek{f}}|^{2}+\mathcal{A}_{f}^{(R)}|\text{\textgreek{f}}|^{2}\Big\}\, dg+\\
 & +\sum_{j=1}^{2}(-1)^{j}\int_{\text{\textgreek{S}}_{\text{\textgreek{t}}_{j}}}\text{\textgreek{q}}_{\le R}f^{-1}\nabla_{\text{\textgreek{m}}}\nabla_{\text{\textgreek{n}}}f\nabla^{\text{\textgreek{m}}}f|\text{\textgreek{f}}|^{2}n_{\text{\textgreek{S}}_{\text{\textgreek{t}}_{j}}}^{\text{\textgreek{n}}}\, dg_{\text{\textgreek{S}}_{\text{\textgreek{t}}_{j}}}+\\
 & +\int_{\mathcal{H}^{+}\cap\mathcal{R}(\text{\textgreek{t}}_{1},\text{\textgreek{t}}_{2})}\text{\textgreek{q}}_{\le R}\cdot f^{-1}\nabla_{\text{\textgreek{m}}}\nabla_{\text{\textgreek{n}}}f\nabla^{\text{\textgreek{m}}}f|\text{\textgreek{f}}|^{2}n_{\mathcal{H}^{+}}^{\text{\textgreek{n}}}\, dvol_{\mathcal{H}^{+}},
\end{split}
\label{eq:AfterSplittingAwayRegion}
\end{equation}
where 
\begin{align}
\mathcal{A}_{f}^{(R)}\doteq & \text{\textgreek{q}}_{\le R}\nabla_{\text{\textgreek{n}}}\big(f^{-1}\nabla^{\text{\textgreek{m}}}\nabla^{\text{\textgreek{n}}}f\nabla_{\text{\textgreek{m}}}f\big)-\frac{1}{2}\text{\textgreek{q}}_{\le R}f^{-2}\nabla^{\text{\textgreek{m}}}\nabla^{\text{\textgreek{n}}}f\nabla_{\text{\textgreek{m}}}f\nabla_{\text{\textgreek{n}}}f-\frac{1}{2}\square_{g}^{2}f+\label{eq:AlmostFinalZerothOrderTerm}\\
 & -\text{\textgreek{q}}_{\ge R_{0}}\text{\textgreek{q}}_{\le R}\partial_{r}\big(r^{-1}f^{-1}(\partial_{r}f)^{2}\big)-\text{\textgreek{q}}_{\ge R_{0}}\text{\textgreek{q}}_{\le R}r^{-1}f^{-1}(\partial_{r}f)^{2}div(\partial_{r})+\frac{1}{2}\text{\textgreek{q}}_{\ge R_{0}}\text{\textgreek{q}}_{\le R}r^{-1}f^{-2}(\partial_{r}f)^{3}-\nonumber \\
 & +\nabla_{\text{\textgreek{n}}}\text{\textgreek{q}}_{\le R}\cdot f^{-1}\nabla^{\text{\textgreek{m}}}\nabla^{\text{\textgreek{n}}}f\nabla_{\text{\textgreek{m}}}f-\partial_{r}(\text{\textgreek{q}}_{\ge R_{0}}\text{\textgreek{q}}_{\le R})r^{-1}f^{-1}(\partial_{r}f)^{2}.\nonumber 
\end{align}
Thus, (\ref{eq:Fundamental_Current}) and (\ref{eq:AfterSplittingAwayRegion})
yield: 
\begin{equation}
\begin{split}\int_{\mathcal{R}(\text{\textgreek{t}}_{1},\text{\textgreek{t}}_{2})}Re\Big\{2\text{\textgreek{q}}_{\le R}\Big(f^{-1}\nabla^{\text{\textgreek{m}}}\nabla^{\text{\textgreek{n}}}f\nabla_{\text{\textgreek{m}}}(f^{\frac{1}{2}}\text{\textgreek{f}})\nabla_{\text{\textgreek{n}}}(f^{\frac{1}{2}}\bar{\text{\textgreek{f}}})-\text{\textgreek{q}}_{\ge R_{0}} & r^{-1}f^{-1}(\partial_{r}f)\big|\partial_{r}(f^{\frac{1}{2}}\text{\textgreek{f}})\big|^{2}\Big)+\\
\hphantom{\int_{\mathcal{R}(\text{\textgreek{t}}_{1},\text{\textgreek{t}}_{2})}Re\Big\{}+2(1-\text{\textgreek{q}}_{\le R})\nabla^{\text{\textgreek{m}}}\nabla^{\text{\textgreek{n}}}f\nabla_{\text{\textgreek{m}}}\text{\textgreek{f}}\nabla_{\text{\textgreek{n}}}\bar{\text{\textgreek{f}}}+2\text{\textgreek{q}}_{\ge R_{0}} & \text{\textgreek{q}}_{\le R}r^{-1}(\partial_{r}f)|\partial_{r}\text{\textgreek{f}}|^{2}+\mathcal{A}_{f}^{(R)}|\text{\textgreek{f}}|^{2}\Big\}\, dg=\\
= & -\int_{\mathcal{R}(\text{\textgreek{t}}_{1},\text{\textgreek{t}}_{2})}Re\big\{ G\big(2\nabla^{\text{\textgreek{m}}}f\nabla_{\text{\textgreek{m}}}\bar{\text{\textgreek{f}}}+(\square_{g}f)\bar{\text{\textgreek{f}}}\big)\big\}\, dg-\mathcal{B}_{f}^{(R)}[\text{\textgreek{f}};\text{\textgreek{t}}_{1},\text{\textgreek{t}}_{2}],
\end{split}
\label{eq:FundamentalCurrentBeforeFull-1}
\end{equation}
where 
\begin{equation}
\begin{split}\mathcal{B}_{f}^{(R)}[\text{\textgreek{f}};\text{\textgreek{t}}_{1},\text{\textgreek{t}}_{2}]= & \sum_{j=1}^{2}(-1)^{j}\int_{\text{\textgreek{S}}_{\text{\textgreek{t}}_{j}}}Re\Big\{\Big(2\nabla^{\text{\textgreek{m}}}f\nabla_{\text{\textgreek{m}}}\bar{\text{\textgreek{f}}}\nabla_{\text{\textgreek{n}}}\text{\textgreek{f}}+(\square_{g}f)\text{\textgreek{f}}\nabla_{\text{\textgreek{n}}}\bar{\text{\textgreek{f}}}-\nabla_{\text{\textgreek{n}}}f\nabla^{\text{\textgreek{m}}}\text{\textgreek{f}}\nabla_{\text{\textgreek{m}}}\bar{\text{\textgreek{f}}}+\\
 & \hphantom{\sum_{j=1}^{2}(-1)^{j}\int_{\text{\textgreek{S}}_{\text{\textgreek{t}}_{j}}}Re\big\{}+\big(\text{\textgreek{q}}_{\le R}f^{-1}\nabla_{\text{\textgreek{m}}}\nabla_{\text{\textgreek{n}}}f\nabla^{\text{\textgreek{m}}}f-\frac{1}{2}(\nabla_{\text{\textgreek{n}}}(\square_{g}f))\big)|\text{\textgreek{f}}|^{2}\Big)n_{\text{\textgreek{S}}_{\text{\textgreek{t}}_{j}}}^{\text{\textgreek{n}}}\Big\}\, dg_{\text{\textgreek{S}}_{\text{\textgreek{t}}_{j}}}+\\
 & \hphantom{\sum_{j=1}^{2}}+\int_{\mathcal{H}^{+}\cap\mathcal{R}(\text{\textgreek{t}}_{1},\text{\textgreek{t}}_{2})}Re\Big\{\Big(2\nabla^{\text{\textgreek{m}}}f\nabla_{\text{\textgreek{m}}}\bar{\text{\textgreek{f}}}\nabla_{\text{\textgreek{n}}}\text{\textgreek{f}}+(\square_{g}f)\text{\textgreek{f}}\nabla_{\text{\textgreek{n}}}\bar{\text{\textgreek{f}}}-\nabla_{\text{\textgreek{n}}}f\nabla^{\text{\textgreek{m}}}\text{\textgreek{f}}\nabla_{\text{\textgreek{m}}}\bar{\text{\textgreek{f}}}+\\
 & \hphantom{\hphantom{\sum_{j=1}^{2}}+\int_{\mathcal{H}^{+}\cap\mathcal{R}(\text{\textgreek{t}}_{1},\text{\textgreek{t}}_{2})}Re\Big\{}+\big(\text{\textgreek{q}}_{\le R}f^{-1}\nabla_{\text{\textgreek{m}}}\nabla_{\text{\textgreek{n}}}f\nabla^{\text{\textgreek{m}}}f-\frac{1}{2}(\nabla_{\text{\textgreek{n}}}(\square_{g}f))\big)|\text{\textgreek{f}}|^{2}\Big)n_{\mathcal{H}^{+}}^{\text{\textgreek{n}}}\Big\}\, dvol_{\mathcal{H}^{+}}.
\end{split}
\label{eq:BoundaryTerms}
\end{equation}

Finally, for $h:\mathcal{M}\backslash\mathcal{H}^{-}\rightarrow\mathbb{R}$
as in Section \ref{sub:ChoiceOfSeedFunctions}, adding to (\ref{eq:FundamentalCurrentBeforeFull-1})
the Lagrangean identity 
\begin{equation}
\begin{split}\int_{\mathcal{R}(\text{\textgreek{t}}_{1},\text{\textgreek{t}}_{2})}\big(-2h\nabla^{\text{\textgreek{m}}}\text{\textgreek{f}}\nabla_{\text{\textgreek{m}}}\bar{\text{\textgreek{f}}} & +(\square_{g}h)|\text{\textgreek{f}}|^{2}\big)\, dg=\int_{\mathcal{R}(\text{\textgreek{t}}_{1},\text{\textgreek{t}}_{2})}Re\big\{ G\cdot2h\bar{\text{\textgreek{f}}}\big\}\, dg+\\
+ & \sum_{j=1}^{2}(-1)^{j}\int_{\text{\textgreek{S}}_{\text{\textgreek{t}}_{j}}}Re\big\{\big(2h\nabla_{\text{\textgreek{n}}}\text{\textgreek{f}}\bar{\text{\textgreek{f}}}-\nabla_{\text{\textgreek{n}}}h|\text{\textgreek{f}}|^{2}\big)n_{\text{\textgreek{S}}_{\text{\textgreek{t}}_{j}}}^{\text{\textgreek{n}}}\big\}\, dg_{\text{\textgreek{S}}_{\text{\textgreek{t}}_{j}}}+\\
+ & \int_{\mathcal{H}^{+}\cap\mathcal{R}(\text{\textgreek{t}}_{1},\text{\textgreek{t}}_{2})}Re\big\{\big(2h\nabla_{\text{\textgreek{n}}}\text{\textgreek{f}}\bar{\text{\textgreek{f}}}-\nabla_{\text{\textgreek{n}}}h|\text{\textgreek{f}}|^{2}\big)n_{\mathcal{H}^{+}}^{\text{\textgreek{n}}}\big\}\, dvol_{\mathcal{H}^{+}},
\end{split}
\label{eq:Lagrange}
\end{equation}
we obtain: 
\begin{equation}
\begin{split}\int_{\mathcal{R}(\text{\textgreek{t}}_{1},\text{\textgreek{t}}_{2})}Re\Big\{2\text{\textgreek{q}}_{\le R}f^{-1}\nabla^{\text{\textgreek{m}}}\nabla^{\text{\textgreek{n}}}f\nabla_{\text{\textgreek{m}}}(f^{\frac{1}{2}}\text{\textgreek{f}})\nabla_{\text{\textgreek{n}}}(f^{\frac{1}{2}}\bar{\text{\textgreek{f}}})-2\text{\textgreek{q}}_{\le R}\text{\textgreek{q}}_{\ge R_{0}} & r^{-1}f^{-1}(\partial_{r}f)\big|\partial_{r}(f^{\frac{1}{2}}\text{\textgreek{f}})\big|^{2}+\\
\hphantom{\int_{\mathcal{R}(\text{\textgreek{t}}_{1},\text{\textgreek{t}}_{2})}Re\Big\{}+2(1-\text{\textgreek{q}}_{\le R})\nabla^{\text{\textgreek{m}}}\nabla^{\text{\textgreek{n}}}f\nabla_{\text{\textgreek{m}}}\text{\textgreek{f}}\nabla_{\text{\textgreek{n}}}\bar{\text{\textgreek{f}}}+2\text{\textgreek{q}}_{\ge R_{0}} & \text{\textgreek{q}}_{\le R}r^{-1}(\partial_{r}f)|\partial_{r}\text{\textgreek{f}}|^{2}-2h\nabla^{\text{\textgreek{m}}}\text{\textgreek{f}}\nabla_{\text{\textgreek{m}}}\bar{\text{\textgreek{f}}}+\mathcal{A}_{f,h}^{(R)}|\text{\textgreek{f}}|^{2}\Big\}\, dg=\\
= & -\int_{\mathcal{R}(\text{\textgreek{t}}_{1},\text{\textgreek{t}}_{2})}Re\big\{ G\big(2\nabla^{\text{\textgreek{m}}}f\nabla_{\text{\textgreek{m}}}\bar{\text{\textgreek{f}}}+(\square_{g}f-2h)\bar{\text{\textgreek{f}}}\big)\big\}\, dg-\mathcal{B}_{f,h}^{(R)}[\text{\textgreek{f}};\text{\textgreek{t}}_{1},\text{\textgreek{t}}_{2}],
\end{split}
\label{eq:FundamentalCurrentFull}
\end{equation}
where 
\begin{align}
\mathcal{A}_{f,h}^{(R)}\doteq & \square_{g}h+\text{\textgreek{q}}_{\le R}\nabla_{\text{\textgreek{n}}}\big(f^{-1}\nabla^{\text{\textgreek{m}}}\nabla^{\text{\textgreek{n}}}f\nabla_{\text{\textgreek{m}}}f\big)-\frac{1}{2}\text{\textgreek{q}}_{\le R}f^{-2}\nabla^{\text{\textgreek{m}}}\nabla^{\text{\textgreek{n}}}f\nabla_{\text{\textgreek{m}}}f\nabla_{\text{\textgreek{n}}}f-\frac{1}{2}\square_{g}^{2}f-\label{eq:FinalZerothOrderTerm}\\
 & -\text{\textgreek{q}}_{\ge R_{0}}\text{\textgreek{q}}_{\le R}\partial_{r}\big(r^{-1}f^{-1}(\partial_{r}f)^{2}\big)-\text{\textgreek{q}}_{\ge R_{0}}\text{\textgreek{q}}_{\le R}r^{-1}f^{-1}(\partial_{r}f)^{2}div(\partial_{r})+\frac{1}{2}\text{\textgreek{q}}_{\ge R_{0}}\text{\textgreek{q}}_{\le R}r^{-1}f^{-2}(\partial_{r}f)^{3}-\nonumber \\
 & +\nabla_{\text{\textgreek{n}}}\text{\textgreek{q}}_{\le R}\cdot f^{-1}\nabla^{\text{\textgreek{m}}}\nabla^{\text{\textgreek{n}}}f\nabla_{\text{\textgreek{m}}}f-\partial_{r}(\text{\textgreek{q}}_{\ge R_{0}}\text{\textgreek{q}}_{\le R})r^{-1}f^{-1}(\partial_{r}f)^{2}\nonumber 
\end{align}
and 
\begin{equation}
\begin{split}\mathcal{B}_{f,h}^{(R)}[\text{\textgreek{f}};\text{\textgreek{t}}_{1},\text{\textgreek{t}}_{2}]= & \sum_{j=1}^{2}(-1)^{j}\int_{\text{\textgreek{S}}_{\text{\textgreek{t}}_{j}}}Re\Big\{\Big(2\nabla^{\text{\textgreek{m}}}f\nabla_{\text{\textgreek{m}}}\bar{\text{\textgreek{f}}}\nabla_{\text{\textgreek{n}}}\text{\textgreek{f}}+(\square_{g}f-2h)\text{\textgreek{f}}\nabla_{\text{\textgreek{n}}}\bar{\text{\textgreek{f}}}-\nabla_{\text{\textgreek{n}}}f\nabla^{\text{\textgreek{m}}}\text{\textgreek{f}}\nabla_{\text{\textgreek{m}}}\bar{\text{\textgreek{f}}}+\\
 & \hphantom{\sum_{j=1}^{2}(-1)^{j}\int_{\text{\textgreek{S}}_{\text{\textgreek{t}}_{j}}}Re\big\{}+\big(\text{\textgreek{q}}_{\le R}f^{-1}\nabla_{\text{\textgreek{m}}}\nabla_{\text{\textgreek{n}}}f\nabla^{\text{\textgreek{m}}}f+\nabla_{\text{\textgreek{n}}}h-\frac{1}{2}(\nabla_{\text{\textgreek{n}}}(\square_{g}f))\big)|\text{\textgreek{f}}|^{2}\Big)n_{\text{\textgreek{S}}_{\text{\textgreek{t}}_{j}}}^{\text{\textgreek{n}}}\Big\}\, dg_{\text{\textgreek{S}}_{\text{\textgreek{t}}_{j}}}+\\
 & \hphantom{\sum_{j=1}^{2}}+\int_{\mathcal{H}^{+}\cap\mathcal{R}(\text{\textgreek{t}}_{1},\text{\textgreek{t}}_{2})}Re\Big\{\Big(2\nabla^{\text{\textgreek{m}}}f\nabla_{\text{\textgreek{m}}}\bar{\text{\textgreek{f}}}\nabla_{\text{\textgreek{n}}}\text{\textgreek{f}}+(\square_{g}f-2h)\text{\textgreek{f}}\nabla_{\text{\textgreek{n}}}\bar{\text{\textgreek{f}}}-\nabla_{\text{\textgreek{n}}}f\nabla^{\text{\textgreek{m}}}\text{\textgreek{f}}\nabla_{\text{\textgreek{m}}}\bar{\text{\textgreek{f}}}+\\
 & \hphantom{\hphantom{\sum_{j=1}^{2}}+\int_{\mathcal{H}^{+}\cap\mathcal{R}(\text{\textgreek{t}}_{1},\text{\textgreek{t}}_{2})}Re\Big\{}+\big(\text{\textgreek{q}}_{\le R}f^{-1}\nabla_{\text{\textgreek{m}}}\nabla_{\text{\textgreek{n}}}f\nabla^{\text{\textgreek{m}}}f+\nabla_{\text{\textgreek{n}}}h-\frac{1}{2}(\nabla_{\text{\textgreek{n}}}(\square_{g}f))\big)|\text{\textgreek{f}}|^{2}\Big)n_{\mathcal{H}^{+}}^{\text{\textgreek{n}}}\Big\}\, dvol_{\mathcal{H}^{+}}.
\end{split}
\label{eq:BoundaryTermsFinal}
\end{equation}

In the next sections, we will esablish a number of estimates for the
left hand side of (\ref{eq:FundamentalCurrentFull}) that will lead
to the proof of Proposition \ref{prop:GeneralCarlemanEstimate}.

\subsection{\label{sub:PropertiesOfAfh}Estimates for the zeroth order term}

In this section, we will establish some bounds for the coefficient
$\mathcal{A}_{f,h}^{(R)}$ of the zeroth order term appearing in the
left hand side of (\ref{eq:FundamentalCurrentFull}).

In view of the choice of the funcions $f,h$ in Section \ref{sub:ChoiceOfSeedFunctions},
we can readily calculate that on $\{r\le R\}$ (where $\text{\textgreek{q}}_{R}\equiv1$),
the quantity $\mathcal{A}_{f,h}^{(R)}$ in (\ref{eq:FinalZerothOrderTerm})
has the form 
\begin{equation}
\mathcal{A}_{f,h}^{(R)}=\Big\{\mathcal{A}_{w_{R},3}s^{3}+\mathcal{A}_{w_{R},2}s^{2}+\mathcal{A}_{w_{R},1}s\Big\} e^{2sw_{R}},\label{eq:GeneralFormOfAfhR}
\end{equation}
where 
\begin{align}
\mathcal{A}_{w_{R},3}= & \Big(4-\text{\textgreek{q}}_{\le\frac{1}{2}R_{0}}\text{\textgreek{d}}_{1}\frac{\nabla^{\text{\textgreek{a}}}w_{R}\nabla_{\text{\textgreek{a}}}w_{R}}{g_{ref}(dw_{R},dw_{R})}\Big)\nabla^{\text{\textgreek{m}}}\nabla^{\text{\textgreek{n}}}w_{R}\nabla_{\text{\textgreek{m}}}w_{R}\nabla_{\text{\textgreek{n}}}w_{R}+4\text{\textgreek{q}}_{\ge R_{0}}r^{-1}\big(1+O(r^{-\frac{1}{2}})\big)(\partial_{r}w_{R})^{3},\label{eq:CoefficientOfS3}\\
\nonumber \\
\mathcal{A}_{w_{R},2}= & 4\nabla_{\text{\textgreek{n}}}\nabla^{\text{\textgreek{m}}}\nabla^{\text{\textgreek{n}}}w_{R}\nabla_{\text{\textgreek{m}}}w_{R}-4\nabla_{\text{\textgreek{n}}}\nabla^{\text{\textgreek{n}}}\nabla^{\text{\textgreek{m}}}w_{R}\nabla_{\text{\textgreek{m}}}w_{R}-\label{eq:CoefficientOfS2}\\
 & -4\nabla^{\text{\textgreek{m}}}(\square_{g}w_{R})\nabla_{\text{\textgreek{m}}}w_{R}-2(\square_{g}w_{R})^{2}+\nonumber \\
 & +4\text{\textgreek{q}}_{\ge R_{0}}r^{-1}\big(1+O(r^{-\frac{1}{2}})\big)\partial_{r}^{2}w_{R}\partial_{r}w_{R}+4\text{\textgreek{q}}_{\ge R_{0}}r^{-2}\big(1+O(r^{-\frac{1}{2}})\big)(\partial_{r}w_{R})^{2}+\nonumber \\
 & +O(\text{\textgreek{d}}_{1})\text{\textgreek{q}}_{\le\frac{1}{2}R_{0}}\sum_{j=0}^{1}|\nabla^{2+j}w_{R}|_{g_{ref}}|\nabla^{2-j}w_{R}|_{g_{ref}}+\nonumber \\
 & +O(|\nabla\text{\textgreek{q}}_{\le R_{0}}|_{g_{ref}}+|\nabla\text{\textgreek{q}}_{\le\frac{1}{2}R_{0}}|_{g_{ref}})\big(|\nabla^{2}w_{R}|_{g_{ref}}^{2}+|\nabla w_{R}|_{g_{ref}}^{2}\big),\nonumber \\
\nonumber \\
\mathcal{A}_{w_{R},1}= & -\square_{g}^{2}w_{R}+2\text{\textgreek{q}}_{\ge R_{0}}r^{-1}\big(1+O(r^{-\frac{1}{2}})\big)\partial_{r}^{3}w+2(d-3)\text{\textgreek{q}}_{\ge R_{0}}r^{-2}\big(1+O(r^{-\frac{1}{2}})\big)\partial_{r}^{2}w_{R}-\label{eq:CoefficientOfS1}\\
 & -2(d-3)\text{\textgreek{q}}_{\ge R_{0}}r^{-3}\big(1+O(r^{-\frac{1}{2}})\big)\partial_{r}w_{R}+\nonumber \\
 & +O(\text{\textgreek{d}}_{1})\text{\textgreek{q}}_{\le\frac{1}{2}R_{0}}\sum_{j_{1}+j_{2}+j_{3}=1}^{2}\frac{|\nabla^{2+j_{1}}w_{R}|_{g_{ref}}|\nabla^{1+j_{2}}w_{R}|_{g_{ref}}|\nabla^{1+j_{3}}w_{R}|_{g_{ref}}}{|\nabla w_{R}|_{g_{ref}}^{2}}+\nonumber \\
 & +2(\square_{g}\text{\textgreek{q}}_{\le R_{0}})r^{-1}\partial_{r}w_{R}-\text{\textgreek{d}}_{1}(\square_{g}\text{\textgreek{q}}_{\le\frac{1}{2}R_{0}})\nabla^{\text{\textgreek{m}}}\nabla^{\text{\textgreek{n}}}w_{R}\nabla_{\text{\textgreek{m}}}w_{R}\nabla_{\text{\textgreek{n}}}w_{R}\frac{\nabla^{\text{\textgreek{a}}}w_{R}\nabla_{\text{\textgreek{a}}}w_{R}}{g_{ref}(dw_{R},dw_{R})}+\nonumber \\
 & +O\Big(\sum_{j=1}^{2}(|\nabla^{j}\text{\textgreek{q}}_{\le R_{0}}|_{g_{ref}}+|\nabla^{j}\text{\textgreek{q}}_{\le\frac{1}{2}R_{0}}|_{g_{ref}})\Big)\big(|\nabla^{2}w_{R}|_{g_{ref}}+|\nabla w_{R}|_{g_{ref}}\big)\nonumber 
\end{align}
(with the constants implicit in the $O(\cdot)$ notation depending
only on the geometry of $(\mathcal{M},g)$).
\begin{rem*}
Notice the cancellation of the $O(s^{4})$ terms that were expected
to apper in (\ref{eq:GeneralFormOfAfhR}).
\end{rem*}

\subsubsection{Bound on $\{r\le R_{0}\}$}

In view of (\ref{eq:GeneralFormOfAfhR})--(\ref{eq:CoefficientOfS1}),
the properties of the function $w_{R}$ (see Lemma \ref{lem:ConstructionWR})
and the form (\ref{eq:metric}) of the metric $g$ in the region $r\gg1$
imply that in the region $\{r\le R_{0}\}$: 
\begin{equation}
\mathcal{A}_{f,h}^{(R)}=\Big\{\mathcal{A}_{w,3;R_{0}}R^{-9\text{\textgreek{e}}_{0}}s^{3}+O_{\text{\textgreek{d}}_{0}}(1)R^{-6\text{\textgreek{e}}_{0}}s^{2}+O_{\text{\textgreek{d}}_{0}}(1)R^{-3\text{\textgreek{e}}_{0}}s\Big\} e^{2sw_{R}},\label{eq:IntermediateRegionAfh}
\end{equation}
where 
\begin{equation}
\mathcal{A}_{w,3;R_{0}}=\Big(4-\text{\textgreek{q}}_{\le\frac{1}{2}R_{0}}\text{\textgreek{d}}_{1}\frac{\nabla^{\text{\textgreek{a}}}w\nabla_{\text{\textgreek{a}}}w}{g_{ref}(dw,dw)}\Big)\nabla^{\text{\textgreek{m}}}\nabla^{\text{\textgreek{n}}}w\nabla_{\text{\textgreek{m}}}w\nabla_{\text{\textgreek{n}}}w+4\text{\textgreek{q}}_{\ge R_{0}}r^{-1}\big(1+O(r^{-\frac{1}{2}})\big)(\partial_{r}w)^{3},\label{eq:TopCoefficientInIntermediateRegion}
\end{equation}
$w$ is the function from Lemma \ref{lem:PositivityHessianW} and
the constants implicit in the $O_{\text{\textgreek{d}}_{0}}(1)$ notation
depend only on $R_{0},\text{\textgreek{d}}_{0}$.

\subsubsection{Bounds on $\{R_{0}\le r\le R\}$}

In the region $\{R_{0}\le r\le R\}$, the expressions (\ref{eq:CoefficientOfS3}),
(\ref{eq:CoefficientOfS2}) and (\ref{eq:CoefficientOfS1}) simplify
as follows, in view of (\ref{eq:metric}) and the fact that $w_{R}$
is a function of $r$ for $r\ge R_{0}$: 
\begin{align}
\mathcal{A}_{w_{R},3}= & 4\big(1+O(r^{-\frac{1}{2}})\big)\partial_{r}^{2}w_{R}(\partial_{r}w_{R})^{2}+4r^{-1}\big(1+O(r^{-\frac{1}{2}})\big)(\partial_{r}w_{R})^{3},\\
\mathcal{A}_{w_{R},2}= & -4\big(1+O(r^{-\frac{1}{2}})\big)\partial_{r}w_{R}\partial_{r}^{3}w_{R}-2\big(1+O(r^{-\frac{1}{2}})\big)(\partial_{r}^{2}w_{R})^{2}+\\
 & \hphantom{+}+(4-8(d-1))r^{-1}\big(1+O(r^{-\frac{1}{2}})\big)\partial_{r}^{2}w_{R}\partial_{r}w_{R}+\nonumber \\
 & \hphantom{+}+(4(d-2)-2(d-1)^{2})r^{-2}\big(1+O(r^{-\frac{1}{2}})\big)(\partial_{r}w_{R})^{2},\nonumber \\
\mathcal{A}_{w_{R},1}= & -\big(1+O(r^{-\frac{1}{2}})\big)\partial_{r}^{4}w_{R}-2(d-1)r^{-1}\big(1+O(r^{-\frac{1}{2}})\big)\partial_{r}^{3}w_{R}-\\
 & \hphantom{+}-(d-3)^{2}r^{-1}\big(1+O(r^{-\frac{1}{2}})\big)\partial_{r}^{2}w_{R}+(d-3)^{2}r^{-3}\big(1+O(r^{-\frac{1}{2}})\big)\partial_{r}w_{R}.\nonumber 
\end{align}
Therefore, the properties of the function $w_{R}$ (see Lemma \ref{lem:ConstructionWR})
imply the following relations for $\mathcal{A}_{f,h}^{(R)}$ on $\{R_{0}\le r\le R\}$
(provided $R_{0}$is sufficiently large in terms of the geometry of
$(\mathcal{M},g)$):

\begin{enumerate}

\item In the region $\{R_{0}\le r\le R^{\text{\textgreek{e}}_{0}}\}$,
(\ref{eq:BoundDerivativeIntermediate})--(\ref{eq:UpperboundsIntermediate})
yield: 
\begin{equation}
\mathcal{A}_{f,h}^{(R)}\ge\Big\{ c_{\text{\textgreek{d}}_{0}}R^{-9\text{\textgreek{e}}_{0}}s^{3}-C_{\text{\textgreek{d}}_{0}}s^{2}-C_{\text{\textgreek{d}}_{0}}s\Big\} e^{2sw_{R}}\label{eq:LowerBoundAfhFirstAwayRegion}
\end{equation}
 for some constants $c_{\text{\textgreek{d}}_{0}},C_{\text{\textgreek{d}}_{0}}>0$
depending on $\text{\textgreek{d}}_{0},R_{0}$.

\item In the region $\{R^{\text{\textgreek{e}}_{0}}\le r\le\frac{1}{2}R\}$,
(\ref{eq:Bound_S^3_Terms_Final})--(\ref{eq:BoundDerivativesUpper})
yield: 
\begin{equation}
\mathcal{A}_{f,h}^{(R)}\ge\Big\{ c_{\text{\textgreek{d}}_{0}}\text{\textgreek{e}}_{0}R^{-3\text{\textgreek{e}}_{0}}\cdot r^{-4+3\text{\textgreek{e}}_{0}}s^{3}-C_{\text{\textgreek{d}}_{0}}R^{-2\text{\textgreek{e}}_{0}}r^{-4+2\text{\textgreek{e}}_{0}}s^{2}-C_{\text{\textgreek{d}}_{0}}R^{-\text{\textgreek{e}}_{0}}r^{-4+\text{\textgreek{e}}_{0}}s\Big\} e^{2sw_{R}}.\label{eq:LowerBoundAfhSecondAwayRegion}
\end{equation}

\item In the region $\{\frac{1}{2}R\le r\le R\}$, (\ref{eq:DefinitionWRIntermediate+})--(\ref{eq:UpperBounds})
yield: 
\begin{equation}
\mathcal{A}_{f,h}^{(R)}\ge-C_{\text{\textgreek{d}}_{0}}R^{-4}\Big\{ v_{s}^{\prime}(\frac{r}{R})s^{3}+s^{2}+s\Big\} e^{2sw_{R}},\label{eq:LowerBoundAfhThirdAwayRegion}
\end{equation}
where $C_{\text{\textgreek{d}}_{0}}>0$ depends only on on $\text{\textgreek{d}}_{0},R_{0}$.

\end{enumerate}

\subsubsection{Bound on $\{r\ge R\}$}

In the region $\{R\le r\le\text{\textgreek{d}}_{2}^{-1}R\}$, (\ref{eq:Function_F}),
(\ref{eq:hAlmostR}) and (\ref{eq:BoundBoxh}) yield:
\begin{equation}
\mathcal{A}_{f,h}^{(R)}(r)\ge-CR^{-4}f(R)\label{eq:LowerBoundAfAlmostInfinity}
\end{equation}
for some absolute constant $C>0$, while for $r\ge\text{\textgreek{d}}_{2}^{-1}R$
we have (provided $\text{\textgreek{d}}_{2}\ll1$):
\begin{align}
\mathcal{A}_{f,h}^{(R)} & =-\frac{(d-1)}{2}r^{-1}\big(1+O(r^{-\frac{1}{2}})\big)\partial_{r}^{3}f-\frac{(d-1)(d-3)}{2}r^{-2}\big(1+O(r^{-\frac{1}{2}})\big)\partial_{r}^{2}f+\frac{(d-1)(d-3)}{2}r^{-3}\big(1+O(r^{-\frac{1}{2}})\big)\partial_{r}f=\label{eq:LowerBoundAfhNearInfinity}\\
 & =\frac{1}{2}(d-1)r^{-4}f(R)\big((d-3)(\frac{r+O(r^{\frac{1}{2}})}{R})+\frac{9}{5}+O(r^{-\frac{1}{2}})\big)\ge\nonumber \\
 & \ge f(R)\Big(\frac{1}{2}(d-1)(d-3)R^{-1}r^{-3}\big(1+O(r^{-\frac{1}{2}})\big)+cr^{-4}\Big)\nonumber 
\end{align}
for some absolute constant $c>0$.

\subsection{Estimates for the first order terms}

In this Section, we will establish various bounds for the quantity
\begin{equation}
\begin{split}Re\Big\{2\text{\textgreek{q}}_{\le R}f^{-1}\nabla^{\text{\textgreek{m}}}\nabla^{\text{\textgreek{n}}}f\nabla_{\text{\textgreek{m}}}(f^{\frac{1}{2}}\text{\textgreek{f}})\nabla_{\text{\textgreek{n}}}(f^{\frac{1}{2}}\bar{\text{\textgreek{f}}})-2\text{\textgreek{q}}_{\le R}\text{\textgreek{q}}_{\ge R_{0}} & r^{-1}f^{-1}(\partial_{r}f)\big|\partial_{r}(f^{\frac{1}{2}}\text{\textgreek{f}})\big|^{2}+\\
\hphantom{Re\Big\{}+2(1-\text{\textgreek{q}}_{\le R})\nabla^{\text{\textgreek{m}}}\nabla^{\text{\textgreek{n}}}f\nabla_{\text{\textgreek{m}}}\text{\textgreek{f}}\nabla_{\text{\textgreek{n}}}\bar{\text{\textgreek{f}}}+2\text{\textgreek{q}}_{\ge R_{0}} & \text{\textgreek{q}}_{\le R}r^{-1}(\partial_{r}f)|\partial_{r}\text{\textgreek{f}}|^{2}-2h\nabla^{\text{\textgreek{m}}}\text{\textgreek{f}}\nabla_{\text{\textgreek{m}}}\bar{\text{\textgreek{f}}}
\end{split}
\label{eq:FundamentalCurrentFull-1}
\end{equation}
appearing in the integral in left hand side of (\ref{eq:FundamentalCurrentFull}).
Thus, combined with the bounds of Section \ref{sub:PropertiesOfAfh}
for the zeroth order term $\mathcal{A}_{f,h}^{(R)}|\text{\textgreek{f}}|^{2}$
in left hand side of (\ref{eq:FundamentalCurrentFull}), the results
of this section will provide all the necessary estimates leading to
the proof of Proposition \ref{prop:GeneralCarlemanEstimate}.

Let us denote with $g^{-1}$ the natural extension of the metric (\ref{eq:metric})
on the cotangent bundle $T^{*}\mathcal{M}$ of $\mathcal{M}$. Since
we have identified $\mathcal{M}\backslash\mathcal{H}^{-}$ with $\mathbb{R}\times\text{\textgreek{S}}$
under the flow of $T$, $g^{-1}$ splits naturally in any local coordinate
chart $(t,x^{1},\ldots,x^{d})$ on $\mathbb{R}\times\text{\textgreek{S}}$
as 
\begin{equation}
g^{-1}=g^{00}T\otimes T+\frac{1}{2}g^{0i}(T\otimes\partial_{x^{i}}+\partial_{x^{i}}\otimes T)+(g^{-1})_{\text{\textgreek{S}}}^{ij}\partial_{x^{i}}\otimes\partial_{x^{j}},\label{eq:InverseOfThemetricSplit}
\end{equation}
where $(g^{-1})_{\text{\textgreek{S}}}$ is a symmetric $(2,0)$-tensor
on $\text{\textgreek{S}}$. In view of Assumption \hyperref[Assumption 3]{G3},
the expression (\ref{eq:InverseOfThemetricSplit}) and the fact that
$g^{-1}$ is non-degenerate and has Lorentzian signature imply that
$(g^{-1})_{\text{\textgreek{S}}}$ has Riemannian signature on $\text{\textgreek{S}}\backslash(\mathscr{E}\cup\mathcal{H}^{+})$
and Lorentzian signature on $\text{\textgreek{S}}\cap int(\mathscr{E})$,
while $(g^{-1})_{\text{\textgreek{S}}}$ degenerates on $\text{\textgreek{S}}\cap(\partial\mathscr{E}\cup\mathcal{H}^{+})$.
Using the tensor $(g^{-1})_{\text{\textgreek{S}}}$, we can conveniently
bound for any $\text{\textgreek{f}}\in C^{1}(\mathcal{M}\backslash\mathcal{H}^{-})$
(for some constant $C>0$ depending only on the geometry of $(\mathcal{M},g)$):
\begin{equation}
\nabla^{\text{\textgreek{m}}}\text{\textgreek{f}}\nabla_{\text{\textgreek{m}}}\bar{\text{\textgreek{f}}}\ge(g^{-1})_{\text{\textgreek{S}}}^{ij}\partial_{i}\text{\textgreek{f}}\partial_{j}\bar{\text{\textgreek{f}}}-C|\nabla_{g_{\text{\textgreek{S}}}}\text{\textgreek{f}}|_{g_{\text{\textgreek{S}}}}|T\text{\textgreek{f}}|-C|T\text{\textgreek{f}}|^{2},\label{eq:BoundForgNorm}
\end{equation}
where the indices $i,j$ in the abstract index notation $(g^{-1})_{\text{\textgreek{S}}}^{ij}\partial_{i}\text{\textgreek{f}}\partial_{j}\bar{\text{\textgreek{f}}}$
run over the variables $\{x^{i}\}_{i=1}^{d}$ in any local coordinate
chart on $(\mathcal{M}\backslash\mathcal{H}^{-})$ of the form $(t,x^{1},\ldots,x^{d})$.

\subsubsection{Bound on $\mathscr{E}_{ext}\cup\{r\le\frac{1}{4}r_{0}\}$}

For any $0\le\text{\textgreek{t}}_{1}\le\text{\textgreek{t}}_{2}$,
we can readily bound from above on $\mathcal{R}(\text{\textgreek{t}}_{1},\text{\textgreek{t}}_{2})\cap\big(\mathscr{E}_{ext}\cup\{r\le\frac{1}{4}r_{0}\}\big)$
in view of (\ref{eq:Function_F}) and (\ref{eq:hAlmostR}): 
\begin{equation}
\begin{split}\int_{\mathcal{R}(\text{\textgreek{t}}_{1},\text{\textgreek{t}}_{2})\cap\big(\mathscr{E}_{ext}\cup\{r\le\frac{1}{4}r_{0}\}\big)}Re\Big\{ & 2\text{\textgreek{q}}_{\le R}f^{-1}\nabla^{\text{\textgreek{m}}}\nabla^{\text{\textgreek{n}}}f\nabla_{\text{\textgreek{m}}}(f^{\frac{1}{2}}\text{\textgreek{f}})\nabla_{\text{\textgreek{n}}}(f^{\frac{1}{2}}\bar{\text{\textgreek{f}}})-2\text{\textgreek{q}}_{\le R}\text{\textgreek{q}}_{\ge R_{0}}r^{-1}f^{-1}(\partial_{r}f)\big|\partial_{r}(f^{\frac{1}{2}}\text{\textgreek{f}})\big|^{2}+\\
 & +2(1-\text{\textgreek{q}}_{\le R})\nabla^{\text{\textgreek{m}}}\nabla^{\text{\textgreek{n}}}f\nabla_{\text{\textgreek{m}}}\text{\textgreek{f}}\nabla_{\text{\textgreek{n}}}\bar{\text{\textgreek{f}}}+2\text{\textgreek{q}}_{\ge R_{0}}\text{\textgreek{q}}_{\le R}r^{-1}(\partial_{r}f)|\partial_{r}\text{\textgreek{f}}|^{2}-2h\nabla^{\text{\textgreek{m}}}\text{\textgreek{f}}\nabla_{\text{\textgreek{m}}}\bar{\text{\textgreek{f}}}+\mathcal{A}_{f,h}^{(R)}|\text{\textgreek{f}}|^{2}\Big\}\, dg\le\\
 & \hphantom{++}\le C_{\text{\textgreek{d}}_{0}}\int_{\mathcal{R}(\text{\textgreek{t}}_{1},\text{\textgreek{t}}_{2})\cap\big(\mathscr{E}_{ext}\cup\{r\le\frac{1}{4}r_{0}\}\big)}e^{2sw_{R}}\Big((R^{-6\text{\textgreek{e}}_{0}}s^{2}+1)|\nabla\text{\textgreek{f}}|_{g_{ref}}^{2}+(R^{-12\text{\textgreek{e}}_{0}}s^{4}+1)|\text{\textgreek{f}}|^{2}\Big)\, dg.
\end{split}
\label{eq:BoundCurrentErgoregion}
\end{equation}

\subsubsection{Bound on $\{\frac{1}{4}r_{0}\le r\le\frac{1}{2}R_{0}\}\backslash\mathscr{E}_{ext}$}

In the region $\{\frac{1}{4}r_{0}\le r\le\frac{1}{2}R_{0}\}$, in
view of (\ref{eq:OneSidedBoundSideHessian}) and (\ref{eq:Function_F}),
we can estimate (using a Cauchy--Schwarz inequality): 
\begin{align}
Re\big\{2f^{-1}\nabla^{\text{\textgreek{m}}}\nabla^{\text{\textgreek{n}}}f\nabla_{\text{\textgreek{m}}}(f^{\frac{1}{2}}\text{\textgreek{f}})\nabla_{\text{\textgreek{n}}}(f^{\frac{1}{2}}\bar{\text{\textgreek{f}}})\big\} & =e^{2sw_{R}}\Big\{8s^{2}\big|\nabla^{\text{\textgreek{m}}}w_{R}\nabla_{\text{\textgreek{m}}}\text{\textgreek{f}}\big|^{2}+16s^{3}\nabla^{\text{\textgreek{m}}}w_{R}\nabla_{\text{\textgreek{m}}}w_{R}Re\{\text{\textgreek{f}}\cdot\nabla^{\text{\textgreek{n}}}w_{R}\nabla_{\text{\textgreek{n}}}\bar{\text{\textgreek{f}}}\}+\label{eq:UpperBoundDerivativesOusideErgoregion}\\
 & \hphantom{=e^{2sw_{R}}\Big\{}8s^{4}\big(\nabla^{\text{\textgreek{m}}}w_{R}\nabla_{\text{\textgreek{m}}}w_{R}\big)^{2}|\text{\textgreek{f}}|^{2}+4s\nabla^{\text{\textgreek{m}}}\nabla^{\text{\textgreek{n}}}w_{R}\nabla_{\text{\textgreek{m}}}\text{\textgreek{f}}\nabla_{\text{\textgreek{n}}}\bar{\text{\textgreek{f}}}+\nonumber \\
 & \hphantom{=e^{2sw_{R}}\Big\{}+8s\nabla^{\text{\textgreek{m}}}\nabla^{\text{\textgreek{n}}}w_{R}\nabla_{\text{\textgreek{m}}}w_{R}Re\{\text{\textgreek{f}}\cdot\nabla_{\text{\textgreek{n}}}\bar{\text{\textgreek{f}}}\}+4s^{3}\nabla^{\text{\textgreek{m}}}\nabla^{\text{\textgreek{n}}}w_{R}\nabla_{\text{\textgreek{m}}}w_{R}\nabla_{\text{\textgreek{n}}}w_{R}|\text{\textgreek{f}}|^{2}\Big\}\ge\nonumber \\
 & \ge e^{2sw_{R}}\Big\{8s^{2}\big|\nabla^{\text{\textgreek{m}}}w_{R}\nabla_{\text{\textgreek{m}}}\text{\textgreek{f}}+s^{2}\nabla^{\text{\textgreek{m}}}w_{R}\nabla_{\text{\textgreek{m}}}w_{R}\cdot\text{\textgreek{f}}\big|^{2}-Cs\text{\textgreek{d}}_{0}\frac{\nabla^{\text{\textgreek{m}}}\nabla^{\text{\textgreek{n}}}w_{R}\nabla_{\text{\textgreek{m}}}w_{R}\nabla_{\text{\textgreek{n}}}w_{R}}{g_{ref}(dw_{R},dw_{R})}\big|\nabla\text{\textgreek{f}}\big|_{g_{ref}}^{2}+\nonumber \\
 & \hphantom{\ge e^{2sw_{R}}\Big\{}+3s^{3}\nabla^{\text{\textgreek{m}}}\nabla^{\text{\textgreek{n}}}w_{R}\nabla_{\text{\textgreek{m}}}w_{R}\nabla_{\text{\textgreek{n}}}w_{R}|\text{\textgreek{f}}|^{2}\Big\}\nonumber 
\end{align}
for some absolute constant $C>0$. Thus, (\ref{eq:UpperBoundDerivativesOusideErgoregion}),
(\ref{eq:hAlmostR}) and (\ref{eq:BoundForgNorm}) imply that we can
bound from below on the region $\mathcal{R}(\text{\textgreek{t}}_{1},\text{\textgreek{t}}_{2})\cap\{\frac{1}{4}r_{0}\le r\le\frac{1}{2}R_{0}\}\backslash\mathscr{E}_{ext}$:

\begin{align}
\int_{\mathcal{R}(\text{\textgreek{t}}_{1},\text{\textgreek{t}}_{2})\cap\{\frac{1}{4}r_{0}\le r\le\frac{1}{2}R_{0}\}\backslash\mathscr{E}_{ext}} & Re\big\{2f^{-1}\nabla^{\text{\textgreek{m}}}\nabla^{\text{\textgreek{n}}}f\nabla_{\text{\textgreek{m}}}(f^{\frac{1}{2}}\text{\textgreek{f}})\nabla_{\text{\textgreek{n}}}(f^{\frac{1}{2}}\bar{\text{\textgreek{f}}})-2h\nabla^{\text{\textgreek{m}}}\text{\textgreek{f}}\nabla_{\text{\textgreek{m}}}\bar{\text{\textgreek{f}}}\big\}\, dg\ge\label{eq:LowerBoundCurrentOutsideErgoregion}\\
\ge & \int_{\mathcal{R}(\text{\textgreek{t}}_{1},\text{\textgreek{t}}_{2})\cap\{\frac{1}{4}r_{0}\le r\le\frac{1}{2}R_{0}\}\backslash\mathscr{E}_{ext}}e^{2sw_{R}}\Bigg\{\frac{\nabla^{\text{\textgreek{m}}}\nabla^{\text{\textgreek{n}}}w_{R}\nabla_{\text{\textgreek{m}}}w_{R}\nabla_{\text{\textgreek{n}}}w_{R}}{g_{ref}(dw_{R},dw_{R})}\Big(cs\text{\textgreek{d}}_{1}(g^{-1})_{\text{\textgreek{S}}}^{ij}\partial_{i}\text{\textgreek{f}}\partial_{j}\bar{\text{\textgreek{f}}}+cs^{3}g_{ref}(dw_{R},dw_{R})|\text{\textgreek{f}}|^{2}\Big)-\nonumber \\
 & \hphantom{\int_{\mathcal{R}(\text{\textgreek{t}}_{1},\text{\textgreek{t}}_{2})\cap\{\frac{1}{4}r_{0}\le r\le\frac{1}{2}R_{0}\}\backslash\mathscr{E}_{ext}}e^{2sw_{R}}\Bigg\{}-Cs\big(\sum_{j=1}^{2}|\nabla^{j}w_{R}|_{g_{ref}}\big)|\nabla_{g_{\text{\textgreek{S}}}}\text{\textgreek{f}}|_{g_{\text{\textgreek{S}}}}|T\text{\textgreek{f}}|-Cs\big(\sum_{j=1}^{2}|\nabla^{j}w_{R}|_{g_{ref}}\big)|T\text{\textgreek{f}}|^{2}\Bigg\}\, dg.\nonumber 
\end{align}
 for some absolute constants $c,C>0$.

\subsubsection{Bound on $\{\frac{1}{2}R_{0}\le r\le R_{0}\}$}

In the region $\{r\ge\frac{1}{2}R_{0}\}$, the functions $f,h$ depend
only on $r$. In particular, we compute in the $(t,r,\text{\textgreek{sv}})$
coordinate system in each connected component of the region $\{r\ge\frac{1}{2}R_{0}\}$
for any function $\text{\textgreek{y}}\in C^{1}(\mathcal{M})$: 
\begin{equation}
\nabla^{\text{\textgreek{m}}}\nabla^{\text{\textgreek{n}}}f\nabla_{\text{\textgreek{m}}}\text{\textgreek{y}}\nabla_{\text{\textgreek{n}}}\bar{\text{\textgreek{y}}}=\big((1+O(r^{-1}))\partial_{r}^{2}f+O(r^{-2})\partial_{r}f\big)|\partial_{r}\text{\textgreek{y}}|^{2}+r^{-3}(1+O(r^{-1}))\partial_{r}f|\partial_{\text{\textgreek{sv}}}\text{\textgreek{y}}|^{2}+O(r^{-2})\partial_{r}f|T\text{\textgreek{y}}|^{2}.\label{eq:ExpressionHessianAsFlat}
\end{equation}
Therefore, (\ref{eq:Function_F}) and (\ref{eq:hAlmostR}) yield the
following lower bound: 
\begin{equation}
\begin{split}\int_{\mathcal{R}(\text{\textgreek{t}}_{1},\text{\textgreek{t}}_{2})\cap\{\frac{1}{2}R_{0}\le r\le R_{0}\}}Re\Big\{2\text{\textgreek{q}}_{\le R} & f^{-1}\nabla^{\text{\textgreek{m}}}\nabla^{\text{\textgreek{n}}}f\nabla_{\text{\textgreek{m}}}(f^{\frac{1}{2}}\text{\textgreek{f}})\nabla_{\text{\textgreek{n}}}(f^{\frac{1}{2}}\bar{\text{\textgreek{f}}})-2\text{\textgreek{q}}_{\le R}\text{\textgreek{q}}_{\ge R_{0}}r^{-1}f^{-1}(\partial_{r}f)\big|\partial_{r}(f^{\frac{1}{2}}\text{\textgreek{f}})\big|^{2}+\\
+2( & 1-\text{\textgreek{q}}_{\le R})\nabla^{\text{\textgreek{m}}}\nabla^{\text{\textgreek{n}}}f\nabla_{\text{\textgreek{m}}}\text{\textgreek{f}}\nabla_{\text{\textgreek{n}}}\bar{\text{\textgreek{f}}}+2\text{\textgreek{q}}_{\ge R_{0}}\text{\textgreek{q}}_{\le R}r^{-1}(\partial_{r}f)|\partial_{r}\text{\textgreek{f}}|^{2}-2h\nabla^{\text{\textgreek{m}}}\text{\textgreek{f}}\nabla_{\text{\textgreek{m}}}\bar{\text{\textgreek{f}}}\Big\}\, dg\ge\\
\ge & c\int_{\mathcal{R}(\text{\textgreek{t}}_{1},\text{\textgreek{t}}_{2})\cap\{\frac{1}{2}R_{0}\le r\le R_{0}\}}e^{2sw_{R}}\Bigg\{\big(s^{2}(\partial_{r}w_{R})^{2}+O(1)s\sum_{j=1}^{2}|\nabla^{j}w_{R}|_{g_{ref}}\big)e^{-2sw_{R}}\big|\partial_{r}(e^{sw_{R}}\text{\textgreek{f}})\big|^{2}+\\
 & \hphantom{C\int_{\mathcal{R}(\text{\textgreek{t}}_{1},}}+s(\partial_{r}w_{R})\big(\text{\textgreek{q}}_{\ge R_{0}}(r^{-\frac{3}{2}}+O(r^{-2}))+O(\text{\textgreek{d}}_{1})|\nabla w_{R}|_{g_{ref}}^{-1}|\nabla^{2}w_{R}|_{g_{ref}}\big)|\partial_{r}\text{\textgreek{f}}|^{2}+\\
 & \hphantom{C\int_{\mathcal{R}(\text{\textgreek{t}}_{1},}}+s(\partial_{r}w_{R})\big(r^{-3}+O(r^{-4})+\text{\textgreek{q}}_{\ge R_{0}}(r^{-\frac{7}{2}}-r^{-3})+O(\text{\textgreek{d}}_{1})|\nabla w_{R}|_{g_{ref}}^{-1}|\nabla^{2}w_{R}|_{g_{ref}}\big)|\partial_{\text{\textgreek{sv}}}\text{\textgreek{f}}|^{2}+\\
 & \hphantom{C\int_{\mathcal{R}(\text{\textgreek{t}}_{1},}}+s(\partial_{r}w_{R})\big(\text{\textgreek{q}}_{\ge R_{0}}(r^{-1}+O(r^{-\frac{3}{2}}))+O(\text{\textgreek{d}}_{1})|\nabla w_{R}|_{g_{ref}}^{-1}|\nabla^{2}w_{R}|_{g_{ref}}\big)|T\text{\textgreek{f}}|^{2}\Bigg\}\, dg
\end{split}
\label{eq:LowerboundCurrentAlmostAsFlat}
\end{equation}
 for some $c>0$ depending on the geometry of $(\mathcal{M},g)$.

\subsubsection{Bound on $\{R_{0}\le r\le R\}$}

In the region $\{R_{0}\le r\le R\}$, in view of (\ref{eq:Function_F}),
(\ref{eq:hAlmostR}) and (\ref{eq:ExpressionHessianAsFlat}), we can
readily estimate from below: 
\begin{equation}
\begin{split}\int_{\mathcal{R}(\text{\textgreek{t}}_{1},\text{\textgreek{t}}_{2})\cap\{R_{0}\le r\le R\}}Re\Big\{2\text{\textgreek{q}}_{\le R} & f^{-1}\nabla^{\text{\textgreek{m}}}\nabla^{\text{\textgreek{n}}}f\nabla_{\text{\textgreek{m}}}(f^{\frac{1}{2}}\text{\textgreek{f}})\nabla_{\text{\textgreek{n}}}(f^{\frac{1}{2}}\bar{\text{\textgreek{f}}})-2\text{\textgreek{q}}_{\le R}\text{\textgreek{q}}_{\ge R_{0}}r^{-1}f^{-1}(\partial_{r}f)\big|\partial_{r}(f^{\frac{1}{2}}\text{\textgreek{f}})\big|^{2}+\\
+2(1- & \text{\textgreek{q}}_{\le R})\nabla^{\text{\textgreek{m}}}\nabla^{\text{\textgreek{n}}}f\nabla_{\text{\textgreek{m}}}\text{\textgreek{f}}\nabla_{\text{\textgreek{n}}}\bar{\text{\textgreek{f}}}+2\text{\textgreek{q}}_{\ge R_{0}}\text{\textgreek{q}}_{\le R}r^{-1}(\partial_{r}f)|\partial_{r}\text{\textgreek{f}}|^{2}-2h\nabla^{\text{\textgreek{m}}}\text{\textgreek{f}}\nabla_{\text{\textgreek{m}}}\bar{\text{\textgreek{f}}}\Big\}\, dg\ge\\
\ge c & \int_{\mathcal{R}(\text{\textgreek{t}}_{1},\text{\textgreek{t}}_{2})\cap\{R_{0}\le r\le R\}}e^{2sw_{R}}\Big\{\big(s^{2}(1+O(r^{-1}))(\partial_{r}w_{R})^{2}+s(\partial_{r}^{2}w_{R}+O(r^{-2})\partial_{r}w_{R})\big)e^{-2sw_{R}}\big|\partial_{r}(e^{sw_{R}}\text{\textgreek{f}})\big|^{2}+\\
 & \hphantom{c\int_{\mathcal{R}(\text{\textgreek{t}}_{1},\text{\textgreek{t}}_{2})\cap\{R_{0}\le r\le R\}}e^{2sw_{R}}\Big\{}+s(\partial_{r}w_{R})\big(r^{-\frac{3}{2}}+O(r^{-2})\big)|\partial_{r}\text{\textgreek{f}}|^{2}+s(\partial_{r}w_{R})\big(r^{-\frac{7}{2}}+O(r^{-4})\big)|\partial_{\text{\textgreek{sv}}}\text{\textgreek{f}}|^{2}+\\
 & \hphantom{c\int_{\mathcal{R}(\text{\textgreek{t}}_{1},\text{\textgreek{t}}_{2})\cap\{R_{0}\le r\le R\}}e^{2sw_{R}}\Big\{}+cs(\partial_{r}w_{R})\big(r^{-1}+O(r^{-\frac{3}{2}})\big)|T\text{\textgreek{f}}|^{2}\Big\}\, dg
\end{split}
\label{eq:LowerboundCurrentFirstAway}
\end{equation}
for some $c>0$ depending on the geometry of $(\mathcal{M},g)$.

\subsubsection{Bound on $\{r\ge R\}$}

In the region $\{r\ge R\}$, we can estimate in view of in view of
(\ref{eq:ExpressionHessianAsFlat}): 
\begin{equation}
\begin{split}\int_{\mathcal{R}(\text{\textgreek{t}}_{1},\text{\textgreek{t}}_{2})\cap\{r\ge R\}}Re\Big\{2\text{\textgreek{q}}_{\le R} & f^{-1}\nabla^{\text{\textgreek{m}}}\nabla^{\text{\textgreek{n}}}f\nabla_{\text{\textgreek{m}}}(f^{\frac{1}{2}}\text{\textgreek{f}})\nabla_{\text{\textgreek{n}}}(f^{\frac{1}{2}}\bar{\text{\textgreek{f}}})-2\text{\textgreek{q}}_{\le R}\text{\textgreek{q}}_{\ge R_{0}}r^{-1}f^{-1}(\partial_{r}f)\big|\partial_{r}(f^{\frac{1}{2}}\text{\textgreek{f}})\big|^{2}+\\
+2( & 1-\text{\textgreek{q}}_{\le R})\nabla^{\text{\textgreek{m}}}\nabla^{\text{\textgreek{n}}}f\nabla_{\text{\textgreek{m}}}\text{\textgreek{f}}\nabla_{\text{\textgreek{n}}}\bar{\text{\textgreek{f}}}+2\text{\textgreek{q}}_{\ge R_{0}}\text{\textgreek{q}}_{\le R}r^{-1}(\partial_{r}f)|\partial_{r}\text{\textgreek{f}}|^{2}-2h\nabla^{\text{\textgreek{m}}}\text{\textgreek{f}}\nabla_{\text{\textgreek{m}}}\bar{\text{\textgreek{f}}}\Big\}\, dg\ge\\
\ge & \int_{\mathcal{R}(\text{\textgreek{t}}_{1},\text{\textgreek{t}}_{2})\cap\{r\ge R\}}\Big\{\text{\textgreek{q}}_{\le R}\big(2(1+O(r^{-1}))\partial_{r}^{2}f-2(r^{-1}-r^{-\frac{3}{2}}+O(r^{-2}))\partial_{r}f\big)f^{-1}\big|\partial_{r}(f^{\frac{1}{2}}\text{\textgreek{f}})\big|^{2}+\\
 & \hphantom{\int_{\mathcal{R}(\text{\textgreek{t}}_{1},\text{\textgreek{t}}_{2})\cap\{r\ge R\}}\Big\{}+\Big(2(1-\text{\textgreek{q}}_{\le R})\big((1+O(r^{-1}))\partial_{r}^{2}f+O(r^{-2})\partial_{r}f\big)-2(1+O(r^{-1}))h\Big)|\partial_{r}\text{\textgreek{f}}|^{2}+\\
 & \hphantom{\int_{\mathcal{R}(\text{\textgreek{t}}_{1},\text{\textgreek{t}}_{2})\cap\{r\ge R\}}\Big\{}+\Big(2r^{-3}(1+O(r^{-1}))\partial_{r}f-2(r^{-2}+O(r^{-3}))h\Big)|\partial_{\text{\textgreek{sv}}}\text{\textgreek{f}}|^{2}+\big(2h+O(r^{-2})\partial_{r}f\big)|T\text{\textgreek{f}}|^{2}\Big\}\, dg.
\end{split}
\label{eq:LowerboundCurrentNearInfinity}
\end{equation}

\subsection{\label{sub:ProofOfCarleman}Proof of Proposition \ref{prop:GeneralCarlemanEstimate}}

\begin{enumerate}

\item In view of (\ref{eq:IntermediateRegionAfh}), (\ref{eq:TopCoefficientInIntermediateRegion}),
(\ref{eq:LowerBoundCurrentOutsideErgoregion}) and Lemma \ref{lem:PositivityHessianW},
we can bound: 
\begin{equation}
\begin{split} & \int_{\mathcal{R}(\text{\textgreek{t}}_{1},\text{\textgreek{t}}_{2})\cap\{\frac{1}{4}r_{0}\le r\le\frac{1}{2}R_{0}\}\backslash\mathscr{E}_{ext}}Re\Big\{2\text{\textgreek{q}}_{\le R}f^{-1}\nabla^{\text{\textgreek{m}}}\nabla^{\text{\textgreek{n}}}f\nabla_{\text{\textgreek{m}}}(f^{\frac{1}{2}}\text{\textgreek{f}})\nabla_{\text{\textgreek{n}}}(f^{\frac{1}{2}}\bar{\text{\textgreek{f}}})-2\text{\textgreek{q}}_{\le R}\text{\textgreek{q}}_{\ge R_{0}}r^{-1}f^{-1}(\partial_{r}f)\big|\partial_{r}(f^{\frac{1}{2}}\text{\textgreek{f}})\big|^{2}+\\
 & \hphantom{\int_{\mathcal{R}(\text{\textgreek{t}}_{1},\text{\textgreek{t}}_{2})\cap\{\frac{1}{4}r_{0}\le r}}+2(1-\text{\textgreek{q}}_{\le R})\nabla^{\text{\textgreek{m}}}\nabla^{\text{\textgreek{n}}}f\nabla_{\text{\textgreek{m}}}\text{\textgreek{f}}\nabla_{\text{\textgreek{n}}}\bar{\text{\textgreek{f}}}+2\text{\textgreek{q}}_{\ge R_{0}}\text{\textgreek{q}}_{\le R}r^{-1}(\partial_{r}f)|\partial_{r}\text{\textgreek{f}}|^{2}-2h\nabla^{\text{\textgreek{m}}}\text{\textgreek{f}}\nabla_{\text{\textgreek{m}}}\bar{\text{\textgreek{f}}}+\mathcal{A}_{f,h}^{(R)}|\text{\textgreek{f}}|^{2}\Big\}\, dg\ge\\
 & \hphantom{\int_{\mathcal{R}(\text{\textgreek{t}}_{1},\text{\textgreek{t}}_{2})\cap\{\frac{1}{4}r_{0}\le r}+2}\ge\int_{\mathcal{R}(\text{\textgreek{t}}_{1},\text{\textgreek{t}}_{2})\cap\{\frac{1}{4}r_{0}\le r\le\frac{1}{2}R_{0}\}\backslash\big(\mathscr{E}_{ext}\cup\mathcal{B}_{crit}(\text{\textgreek{d}}_{0})\big)}e^{2sw_{R}}\Bigg\{ c_{\text{\textgreek{d}}_{0}\text{\textgreek{d}}_{1}}sR^{-3\text{\textgreek{e}}_{0}}(g^{-1})_{\text{\textgreek{S}}}^{ij}\partial_{i}\text{\textgreek{f}}\partial_{j}\bar{\text{\textgreek{f}}}-CsR^{-3\text{\textgreek{e}}_{0}}|\nabla_{g_{\text{\textgreek{S}}}}\text{\textgreek{f}}|_{g_{\text{\textgreek{S}}}}|T\text{\textgreek{f}}|-\\
 & \hphantom{\hphantom{\int_{\mathcal{R}(\text{\textgreek{t}}_{1},\text{\textgreek{t}}_{2})\cap\{\frac{1}{4}r_{0}\le r}}+2\ge\int_{\mathcal{R}(\text{\textgreek{t}}_{1},\text{\textgreek{t}}_{2})\cap\{\frac{1}{4}r_{0}\le r\le\frac{1}{2}R_{0}\}\backslash}}-CsR^{-3\text{\textgreek{e}}_{0}}|T\text{\textgreek{f}}|^{2}+\big(c_{\text{\textgreek{d}}_{0}}s^{3}R^{-9\text{\textgreek{e}}_{0}}-C_{\text{\textgreek{d}}_{0}}s^{2}R^{-6\text{\textgreek{e}}_{0}}-C_{\text{\textgreek{d}}_{0}}sR^{-3\text{\textgreek{e}}_{0}}\big)|\text{\textgreek{f}}|^{2}\Bigg\}\, dg-\\
 & \hphantom{\int_{\mathcal{R}(\text{\textgreek{t}}_{1},\text{\textgreek{t}}_{2})\cap\{\frac{1}{4}r_{0}\le r}+2\ge}-C_{\text{\textgreek{d}}_{0}}\int_{\mathcal{R}(\text{\textgreek{t}}_{1},\text{\textgreek{t}}_{2})\cap\mathcal{B}_{crit}(\text{\textgreek{d}}_{0})}e^{2sw_{R}}\Big\{ sR^{-3\text{\textgreek{e}}_{0}}|\nabla\text{\textgreek{f}}|_{g_{ref}}^{2}+s^{3}R^{-9\text{\textgreek{e}}_{0}}|\text{\textgreek{f}}|^{2}\Big\}\, dg.
\end{split}
\label{eq:TotalBoundOutsideErgoregion}
\end{equation}
Repeating the same procedure for $\tilde{f}$ in place of $f$, $\tilde{h}$
in place of $h$ and $\tilde{w}_{R}$ in place of $w_{R}$ (see Lemma
\ref{lem:ConstructionWR}), from (the analogues of) (\ref{eq:IntermediateRegionAfh}),
(\ref{eq:TopCoefficientInIntermediateRegion}), (\ref{eq:LowerBoundCurrentOutsideErgoregion})
for $w_{R}$ in place of $\tilde{w}_{R}$ we obtain: 
\begin{equation}
\begin{split} & \int_{\mathcal{R}(\text{\textgreek{t}}_{1},\text{\textgreek{t}}_{2})\cap\{\frac{1}{4}r_{0}\le r\le\frac{1}{2}R_{0}\}\backslash\mathscr{E}_{ext}}Re\Big\{2\text{\textgreek{q}}_{\le R}\tilde{f}^{-1}\nabla^{\text{\textgreek{m}}}\nabla^{\text{\textgreek{n}}}f\nabla_{\text{\textgreek{m}}}(\tilde{f}^{\frac{1}{2}}\text{\textgreek{f}})\nabla_{\text{\textgreek{n}}}(\tilde{f}^{\frac{1}{2}}\bar{\text{\textgreek{f}}})-2\text{\textgreek{q}}_{\le R}\text{\textgreek{q}}_{\ge R_{0}}r^{-1}\tilde{f}^{-1}(\partial_{r}\tilde{f})\big|\partial_{r}(\tilde{f}^{\frac{1}{2}}\text{\textgreek{f}})\big|^{2}+\\
 & \hphantom{\int_{\mathcal{R}(\text{\textgreek{t}}_{1},\text{\textgreek{t}}_{2})\cap\{\frac{1}{4}r_{0}\le r}}+2(1-\text{\textgreek{q}}_{\le R})\nabla^{\text{\textgreek{m}}}\nabla^{\text{\textgreek{n}}}\tilde{f}\nabla_{\text{\textgreek{m}}}\text{\textgreek{f}}\nabla_{\text{\textgreek{n}}}\bar{\text{\textgreek{f}}}+2\text{\textgreek{q}}_{\ge R_{0}}\text{\textgreek{q}}_{\le R}r^{-1}(\partial_{r}\tilde{f})|\partial_{r}\text{\textgreek{f}}|^{2}-2\tilde{h}\nabla^{\text{\textgreek{m}}}\text{\textgreek{f}}\nabla_{\text{\textgreek{m}}}\bar{\text{\textgreek{f}}}+\mathcal{A}_{\tilde{f},\tilde{h}}^{(R)}|\text{\textgreek{f}}|^{2}\Big\}\, dg\ge\\
 & \hphantom{\int_{\mathcal{R}(\text{\textgreek{t}}_{1},\text{\textgreek{t}}_{2})\cap\{\frac{1}{4}r_{0}\le r}+2}\ge\int_{\mathcal{R}(\text{\textgreek{t}}_{1},\text{\textgreek{t}}_{2})\cap\{\frac{1}{4}r_{0}\le r\le\frac{1}{2}R_{0}\}\backslash\big(\mathscr{E}_{ext}\cup\tilde{\mathcal{B}}_{crit}(\text{\textgreek{d}}_{0})\big)}e^{2s\tilde{w}_{R}}\Bigg\{ c_{\text{\textgreek{d}}_{0}\text{\textgreek{d}}_{1}}sR^{-3\text{\textgreek{e}}_{0}}(g^{-1})_{\text{\textgreek{S}}}^{ij}\partial_{i}\text{\textgreek{f}}\partial_{j}\bar{\text{\textgreek{f}}}-CsR^{-3\text{\textgreek{e}}_{0}}|\nabla_{g_{\text{\textgreek{S}}}}\text{\textgreek{f}}|_{g_{\text{\textgreek{S}}}}|T\text{\textgreek{f}}|-\\
 & \hphantom{\hphantom{\int_{\mathcal{R}(\text{\textgreek{t}}_{1},\text{\textgreek{t}}_{2})\cap\{\frac{1}{4}r_{0}\le r}}+2\ge\int_{\mathcal{R}(\text{\textgreek{t}}_{1},\text{\textgreek{t}}_{2})\cap\{\frac{1}{4}r_{0}\le r\le\frac{1}{2}R_{0}\}\backslash}}-CsR^{-3\text{\textgreek{e}}_{0}}|T\text{\textgreek{f}}|^{2}+\big(c_{\text{\textgreek{d}}_{0}}s^{3}R^{-9\text{\textgreek{e}}_{0}}-C_{\text{\textgreek{d}}_{0}}s^{2}R^{-6\text{\textgreek{e}}_{0}}-C_{\text{\textgreek{d}}_{0}}sR^{-3\text{\textgreek{e}}_{0}}\big)|\text{\textgreek{f}}|^{2}\Bigg\}\, dg-\\
 & \hphantom{\int_{\mathcal{R}(\text{\textgreek{t}}_{1},\text{\textgreek{t}}_{2})\cap\{\frac{1}{4}r_{0}\le r}+2\ge}-C_{\text{\textgreek{d}}_{0}}\int_{\mathcal{R}(\text{\textgreek{t}}_{1},\text{\textgreek{t}}_{2})\cap\tilde{\mathcal{B}}_{crit}(\text{\textgreek{d}}_{0})}e^{2s\tilde{w}_{R}}\Big\{ sR^{-3\text{\textgreek{e}}_{0}}|\nabla\text{\textgreek{f}}|_{g_{ref}}^{2}+s^{3}R^{-9\text{\textgreek{e}}_{0}}|\text{\textgreek{f}}|^{2}\Big\}\, dg.
\end{split}
\label{eq:TotalBoundOutsideErgoregionDistorted}
\end{equation}
Adding (\ref{eq:TotalBoundOutsideErgoregion}) and (\ref{eq:TotalBoundOutsideErgoregionDistorted})
and using (\ref{eq:InequalitiesForPairOfDeformedFunctions}) and (\ref{eq:InequalitiesForPair2}),
we obtain provided $s$ is large in terms of $\text{\textgreek{d}}_{0}$:
\begin{equation}
\begin{split} & \int_{\mathcal{R}(\text{\textgreek{t}}_{1},\text{\textgreek{t}}_{2})\cap\{\frac{1}{4}r_{0}\le r\le\frac{1}{2}R_{0}\}\backslash\mathscr{E}_{ext}}Re\Big\{2\text{\textgreek{q}}_{\le R}f^{-1}\nabla^{\text{\textgreek{m}}}\nabla^{\text{\textgreek{n}}}f\nabla_{\text{\textgreek{m}}}(f^{\frac{1}{2}}\text{\textgreek{f}})\nabla_{\text{\textgreek{n}}}(f^{\frac{1}{2}}\bar{\text{\textgreek{f}}})-2\text{\textgreek{q}}_{\le R}\text{\textgreek{q}}_{\ge R_{0}}r^{-1}f^{-1}(\partial_{r}f)\big|\partial_{r}(f^{\frac{1}{2}}\text{\textgreek{f}})\big|^{2}+\\
 & \hphantom{\int_{\mathcal{R}(\text{\textgreek{t}}_{1},\text{\textgreek{t}}_{2})\cap\{\frac{1}{4}r_{0}\le r}}+2(1-\text{\textgreek{q}}_{\le R})\nabla^{\text{\textgreek{m}}}\nabla^{\text{\textgreek{n}}}f\nabla_{\text{\textgreek{m}}}\text{\textgreek{f}}\nabla_{\text{\textgreek{n}}}\bar{\text{\textgreek{f}}}+2\text{\textgreek{q}}_{\ge R_{0}}\text{\textgreek{q}}_{\le R}r^{-1}(\partial_{r}f)|\partial_{r}\text{\textgreek{f}}|^{2}-2h\nabla^{\text{\textgreek{m}}}\text{\textgreek{f}}\nabla_{\text{\textgreek{m}}}\bar{\text{\textgreek{f}}}+\mathcal{A}_{f,h}^{(R)}|\text{\textgreek{f}}|^{2}\Big\}\, dg\ge\\
 & \int_{\mathcal{R}(\text{\textgreek{t}}_{1},\text{\textgreek{t}}_{2})\cap\{\frac{1}{4}r_{0}\le r\le\frac{1}{2}R_{0}\}\backslash\mathscr{E}_{ext}}Re\Big\{2\text{\textgreek{q}}_{\le R}\tilde{f}^{-1}\nabla^{\text{\textgreek{m}}}\nabla^{\text{\textgreek{n}}}f\nabla_{\text{\textgreek{m}}}(\tilde{f}^{\frac{1}{2}}\text{\textgreek{f}})\nabla_{\text{\textgreek{n}}}(\tilde{f}^{\frac{1}{2}}\bar{\text{\textgreek{f}}})-2\text{\textgreek{q}}_{\le R}\text{\textgreek{q}}_{\ge R_{0}}r^{-1}\tilde{f}^{-1}(\partial_{r}\tilde{f})\big|\partial_{r}(\tilde{f}^{\frac{1}{2}}\text{\textgreek{f}})\big|^{2}+\\
 & \hphantom{\int_{\mathcal{R}(\text{\textgreek{t}}_{1},\text{\textgreek{t}}_{2})\cap\{\frac{1}{4}r_{0}\le r}}+2(1-\text{\textgreek{q}}_{\le R})\nabla^{\text{\textgreek{m}}}\nabla^{\text{\textgreek{n}}}\tilde{f}\nabla_{\text{\textgreek{m}}}\text{\textgreek{f}}\nabla_{\text{\textgreek{n}}}\bar{\text{\textgreek{f}}}+2\text{\textgreek{q}}_{\ge R_{0}}\text{\textgreek{q}}_{\le R}r^{-1}(\partial_{r}\tilde{f})|\partial_{r}\text{\textgreek{f}}|^{2}-2\tilde{h}\nabla^{\text{\textgreek{m}}}\text{\textgreek{f}}\nabla_{\text{\textgreek{m}}}\bar{\text{\textgreek{f}}}+\mathcal{A}_{\tilde{f},\tilde{h}}^{(R)}|\text{\textgreek{f}}|^{2}\Big\}\, dg\ge\\
 & \hphantom{\int_{\mathcal{R}(\text{\textgreek{t}}_{1},\text{\textgreek{t}}_{2})\cap\{\frac{1}{4}r_{0}\le r}+2}\ge\int_{\mathcal{R}(\text{\textgreek{t}}_{1},\text{\textgreek{t}}_{2})\cap\{\frac{1}{4}r_{0}\le r\le\frac{1}{2}R_{0}\}\backslash\mathscr{E}_{ext}}(e^{2sw_{R}}+e^{2s\tilde{w}_{R}})\Bigg\{ c_{\text{\textgreek{d}}_{0}\text{\textgreek{d}}_{1}}sR^{-3\text{\textgreek{e}}_{0}}(g^{-1})_{\text{\textgreek{S}}}^{ij}\partial_{i}\text{\textgreek{f}}\partial_{j}\bar{\text{\textgreek{f}}}-CsR^{-3\text{\textgreek{e}}_{0}}|\nabla_{g_{\text{\textgreek{S}}}}\text{\textgreek{f}}|_{g_{\text{\textgreek{S}}}}|T\text{\textgreek{f}}|-\\
 & \hphantom{\hphantom{\int_{\mathcal{R}(\text{\textgreek{t}}_{1},\text{\textgreek{t}}_{2})\cap\{\frac{1}{4}r_{0}\le r}}+2\ge\int_{\mathcal{R}(\text{\textgreek{t}}_{1},\text{\textgreek{t}}_{2})\cap\{\frac{1}{4}r_{0}\le r\le\frac{1}{2}R_{0}\}\backslash\mathscr{E}_{ext}}}-CsR^{-3\text{\textgreek{e}}_{0}}|T\text{\textgreek{f}}|^{2}+\big(c_{\text{\textgreek{d}}_{0}}s^{3}R^{-9\text{\textgreek{e}}_{0}}-C_{\text{\textgreek{d}}_{0}}s^{2}R^{-6\text{\textgreek{e}}_{0}}-C_{\text{\textgreek{d}}_{0}}sR^{-3\text{\textgreek{e}}_{0}}\big)|\text{\textgreek{f}}|^{2}\Bigg\}\, dg.
\end{split}
\label{eq:TotalBoundOutsideErgoregionSum}
\end{equation}

\item In view of (\ref{eq:IntermediateRegionAfh}), (\ref{eq:TopCoefficientInIntermediateRegion}),
(\ref{eq:LowerboundCurrentAlmostAsFlat}) and Lemma \ref{lem:PositivityHessianW},
we can estimate 
\begin{equation}
\begin{split}\int_{\mathcal{R}(\text{\textgreek{t}}_{1},\text{\textgreek{t}}_{2})\cap\{\frac{1}{2}R_{0}\le r\le R_{0}\}}Re\Big\{2\text{\textgreek{q}}_{\le R} & f^{-1}\nabla^{\text{\textgreek{m}}}\nabla^{\text{\textgreek{n}}}f\nabla_{\text{\textgreek{m}}}(f^{\frac{1}{2}}\text{\textgreek{f}})\nabla_{\text{\textgreek{n}}}(f^{\frac{1}{2}}\bar{\text{\textgreek{f}}})-2\text{\textgreek{q}}_{\le R}\text{\textgreek{q}}_{\ge R_{0}}r^{-1}f^{-1}(\partial_{r}f)\big|\partial_{r}(f^{\frac{1}{2}}\text{\textgreek{f}})\big|^{2}+\\
+2( & 1-\text{\textgreek{q}}_{\le R})\nabla^{\text{\textgreek{m}}}\nabla^{\text{\textgreek{n}}}f\nabla_{\text{\textgreek{m}}}\text{\textgreek{f}}\nabla_{\text{\textgreek{n}}}\bar{\text{\textgreek{f}}}+2\text{\textgreek{q}}_{\ge R_{0}}\text{\textgreek{q}}_{\le R}r^{-1}(\partial_{r}f)|\partial_{r}\text{\textgreek{f}}|^{2}-2h\nabla^{\text{\textgreek{m}}}\text{\textgreek{f}}\nabla_{\text{\textgreek{m}}}\bar{\text{\textgreek{f}}}+\mathcal{A}_{f,h}^{(R)}|\text{\textgreek{f}}|^{2}\Big\}\, dg\ge\\
\ge & \int_{\mathcal{R}(\text{\textgreek{t}}_{1},\text{\textgreek{t}}_{2})\cap\{\frac{1}{2}R_{0}\le r\le R_{0}\}}e^{2sw_{R}}\Big\{\big(c_{\text{\textgreek{d}}_{0}}s^{2}R^{-6\text{\textgreek{e}}_{0}}-C_{\text{\textgreek{d}}_{0}}sR^{-3\text{\textgreek{e}}_{0}}\big)e^{-2sw_{R}}\big|\partial_{r}(e^{sw_{R}}\text{\textgreek{f}})\big|^{2}+\\
 & \hphantom{C\int_{\mathcal{R}(\text{\textgreek{t}}_{1},\text{\textgreek{t}}_{2})\cap\{\frac{1}{2}R_{0}\le r\le R_{0}\}}}+sR^{-3\text{\textgreek{e}}_{0}}\big(c_{\text{\textgreek{d}}_{0}}\text{\textgreek{q}}_{\ge R_{0}}(r^{-\frac{3}{2}}+O(r^{-2}))-C_{\text{\textgreek{d}}_{0}}\text{\textgreek{d}}_{1}\big)|\partial_{r}\text{\textgreek{f}}|^{2}+\\
 & \hphantom{C\int_{\mathcal{R}(\text{\textgreek{t}}_{1},\text{\textgreek{t}}_{2})\cap\{\frac{1}{2}R_{0}\le r\le R_{0}\}}}+sR^{-3\text{\textgreek{e}}_{0}}\big(c_{\text{\textgreek{d}}_{0}}\big(r^{-3}+O(r^{-4})+\text{\textgreek{q}}_{\ge R_{0}}(r^{-\frac{7}{2}}-r^{-3})\big)-C_{\text{\textgreek{d}}_{0}}\text{\textgreek{d}}_{1}\big)|\partial_{\text{\textgreek{sv}}}\text{\textgreek{f}}|^{2}-\\
 & \hphantom{C\int_{\mathcal{R}(\text{\textgreek{t}}_{1},\text{\textgreek{t}}_{2})\cap\{\frac{1}{2}R_{0}\le r\le R_{0}\}}}-C_{\text{\textgreek{d}}_{0}}sR^{-3\text{\textgreek{e}}_{0}}|T\text{\textgreek{f}}|^{2}+\big(c_{\text{\textgreek{d}}_{0}}s^{3}R^{-9\text{\textgreek{e}}_{0}}-C_{\text{\textgreek{d}}_{0}}s^{2}R^{-6\text{\textgreek{e}}_{0}}-C_{\text{\textgreek{d}}_{0}}sR^{-3\text{\textgreek{e}}_{0}}\big)|\text{\textgreek{f}}|^{2}\Bigg\}\, dg.
\end{split}
\label{eq:AlmostTotalBoundNearR_0}
\end{equation}
Provided $sR^{-3\text{\textgreek{e}}_{0}}$ is sufficiently large
in terms of $\text{\textgreek{d}}_{0}$, we can estimate 
\begin{equation}
\big(c_{\text{\textgreek{d}}_{0}}s^{2}R^{-6\text{\textgreek{e}}_{0}}-C_{\text{\textgreek{d}}_{0}}sR^{-3\text{\textgreek{e}}_{0}}\big)e^{-2sw_{R}}\big|\partial_{r}(e^{sw_{R}}\text{\textgreek{f}})\big|^{2}-C_{\text{\textgreek{d}}_{0}}\text{\textgreek{d}}_{1}sR^{-3\text{\textgreek{e}}_{0}}|\partial_{r}\text{\textgreek{f}}|^{2}\ge c_{\text{\textgreek{d}}_{0}}\text{\textgreek{d}}_{1}sR^{-3\text{\textgreek{e}}_{0}}|\partial_{r}\text{\textgreek{f}}|^{2}-C_{\text{\textgreek{d}}_{0}}\text{\textgreek{d}}_{1}s^{3}R^{-9\text{\textgreek{e}}_{0}}|\text{\textgreek{f}}|^{2}
\end{equation}
 and thus (\ref{eq:AlmostTotalBoundNearR_0}) yields: 
\begin{equation}
\begin{split}\int_{\mathcal{R}(\text{\textgreek{t}}_{1},\text{\textgreek{t}}_{2})\cap\{\frac{1}{2}R_{0}\le r\le R_{0}\}} & Re\Big\{2\text{\textgreek{q}}_{\le R}f^{-1}\nabla^{\text{\textgreek{m}}}\nabla^{\text{\textgreek{n}}}f\nabla_{\text{\textgreek{m}}}(f^{\frac{1}{2}}\text{\textgreek{f}})\nabla_{\text{\textgreek{n}}}(f^{\frac{1}{2}}\bar{\text{\textgreek{f}}})-2\text{\textgreek{q}}_{\le R}\text{\textgreek{q}}_{\ge R_{0}}r^{-1}f^{-1}(\partial_{r}f)\big|\partial_{r}(f^{\frac{1}{2}}\text{\textgreek{f}})\big|^{2}+\\
+2( & 1-\text{\textgreek{q}}_{\le R})\nabla^{\text{\textgreek{m}}}\nabla^{\text{\textgreek{n}}}f\nabla_{\text{\textgreek{m}}}\text{\textgreek{f}}\nabla_{\text{\textgreek{n}}}\bar{\text{\textgreek{f}}}+2\text{\textgreek{q}}_{\ge R_{0}}\text{\textgreek{q}}_{\le R}r^{-1}(\partial_{r}f)|\partial_{r}\text{\textgreek{f}}|^{2}-2h\nabla^{\text{\textgreek{m}}}\text{\textgreek{f}}\nabla_{\text{\textgreek{m}}}\bar{\text{\textgreek{f}}}+\mathcal{A}_{f,h}^{(R)}|\text{\textgreek{f}}|^{2}\Big\}\, dg\ge\\
\ge & \int_{\mathcal{R}(\text{\textgreek{t}}_{1},\text{\textgreek{t}}_{2})\cap\{\frac{1}{2}R_{0}\le r\le R_{0}\}}e^{2sw_{R}}\Big\{ c_{\text{\textgreek{d}}_{0}}\text{\textgreek{d}}_{1}sR^{-3\text{\textgreek{e}}_{0}}\big|\partial_{r}\text{\textgreek{f}}\big|^{2}+sR^{-3\text{\textgreek{e}}_{0}}\big(c_{\text{\textgreek{d}}_{0}}r^{-\frac{7}{2}}-C_{\text{\textgreek{d}}_{0}}\text{\textgreek{d}}_{1}\big)|\partial_{\text{\textgreek{sv}}}\text{\textgreek{f}}|^{2}-\\
 & \hphantom{\int_{\mathcal{R}(\text{\textgreek{t}}_{1},\text{\textgreek{t}}_{2})\cap\{\frac{1}{2}R_{0}\le}}-C_{\text{\textgreek{d}}_{0}}sR^{-3\text{\textgreek{e}}_{0}}|T\text{\textgreek{f}}|^{2}+\big((c_{\text{\textgreek{d}}_{0}}-C_{\text{\textgreek{d}}_{0}}\text{\textgreek{d}}_{1})s^{3}R^{-9\text{\textgreek{e}}_{0}}-C_{\text{\textgreek{d}}_{0}}s^{2}R^{-6\text{\textgreek{e}}_{0}}-C_{\text{\textgreek{d}}_{0}}sR^{-3\text{\textgreek{e}}_{0}}\big)|\text{\textgreek{f}}|^{2}\Bigg\}\, dg.
\end{split}
\label{eq:TotalBoundNearR_0}
\end{equation}

\item In view of (\ref{eq:LowerBoundAfhFirstAwayRegion}), (\ref{eq:LowerboundCurrentFirstAway})
and Lemma \ref{lem:ConstructionWR}, we can bound: 
\begin{equation}
\begin{split}\int_{\mathcal{R}(\text{\textgreek{t}}_{1},\text{\textgreek{t}}_{2})\cap\{R_{0}\le r\le R^{\text{\textgreek{e}}_{0}}\}}Re\Big\{2\text{\textgreek{q}}_{\le R} & f^{-1}\nabla^{\text{\textgreek{m}}}\nabla^{\text{\textgreek{n}}}f\nabla_{\text{\textgreek{m}}}(f^{\frac{1}{2}}\text{\textgreek{f}})\nabla_{\text{\textgreek{n}}}(f^{\frac{1}{2}}\bar{\text{\textgreek{f}}})-2\text{\textgreek{q}}_{\le R}\text{\textgreek{q}}_{\ge R_{0}}r^{-1}f^{-1}(\partial_{r}f)\big|\partial_{r}(f^{\frac{1}{2}}\text{\textgreek{f}})\big|^{2}+\\
+2( & 1-\text{\textgreek{q}}_{\le R})\nabla^{\text{\textgreek{m}}}\nabla^{\text{\textgreek{n}}}f\nabla_{\text{\textgreek{m}}}\text{\textgreek{f}}\nabla_{\text{\textgreek{n}}}\bar{\text{\textgreek{f}}}+2\text{\textgreek{q}}_{\ge R_{0}}\text{\textgreek{q}}_{\le R}r^{-1}(\partial_{r}f)|\partial_{r}\text{\textgreek{f}}|^{2}-2h\nabla^{\text{\textgreek{m}}}\text{\textgreek{f}}\nabla_{\text{\textgreek{m}}}\bar{\text{\textgreek{f}}}+\mathcal{A}_{f,h}^{(R)}|\text{\textgreek{f}}|^{2}\Big\}\, dg\ge\\
\ge & \int_{\mathcal{R}(\text{\textgreek{t}}_{1},\text{\textgreek{t}}_{2})\cap\{R_{0}\le r\le R^{\text{\textgreek{e}}_{0}}\}}e^{2sw_{R}}\Big\{\big(c_{\text{\textgreek{d}}_{0}}s^{2}R^{-6\text{\textgreek{e}}_{0}}-C_{\text{\textgreek{d}}_{0}}s\big)e^{-2sw_{R}}\big|\partial_{r}(e^{sw_{R}}\text{\textgreek{f}})\big|^{2}+\\
 & \hphantom{c\int_{\mathcal{R}(\text{\textgreek{t}}_{1},\text{\textgreek{t}}_{2})\cap\{R_{0}\le r\le R\}}e^{2sw_{R}}\Big\{}+c_{\text{\textgreek{d}}_{0}}sR^{-3\text{\textgreek{e}}_{0}}r^{-\frac{3}{2}}|\partial_{r}\text{\textgreek{f}}|^{2}+c_{\text{\textgreek{d}}_{0}}sR^{-3\text{\textgreek{e}}_{0}}r^{-\frac{7}{2}}|\partial_{\text{\textgreek{sv}}}\text{\textgreek{f}}|^{2}+\\
 & \hphantom{c\int_{\mathcal{R}(\text{\textgreek{t}}_{1},\text{\textgreek{t}}_{2})\cap\{R_{0}\le r\le R\}}e^{2sw_{R}}\Big\{}+c_{\text{\textgreek{d}}_{0}}sR^{-3\text{\textgreek{e}}_{0}}r^{-1}|T\text{\textgreek{f}}|^{2}+\big(c_{\text{\textgreek{d}}_{0}}R^{-9\text{\textgreek{e}}_{0}}s^{3}-C_{\text{\textgreek{d}}_{0}}s^{2}-C_{\text{\textgreek{d}}_{0}}s\big)|\text{\textgreek{f}}|^{2}\Big\}\, dg.
\end{split}
\label{eq:TotalBoundFirstAway}
\end{equation}

\item In view of (\ref{eq:LowerBoundAfhSecondAwayRegion}), (\ref{eq:LowerboundCurrentFirstAway})
and (\ref{eq:Bound_S^3_Terms_Final})--(\ref{eq:BoundDerivativesUpper}),
we can bound: 
\begin{equation}
\begin{split}\int_{\mathcal{R}(\text{\textgreek{t}}_{1},\text{\textgreek{t}}_{2})\cap\{R^{\text{\textgreek{e}}_{0}}\le r\le\frac{1}{2}R\}} & Re\Big\{2\text{\textgreek{q}}_{\le R}f^{-1}\nabla^{\text{\textgreek{m}}}\nabla^{\text{\textgreek{n}}}f\nabla_{\text{\textgreek{m}}}(f^{\frac{1}{2}}\text{\textgreek{f}})\nabla_{\text{\textgreek{n}}}(f^{\frac{1}{2}}\bar{\text{\textgreek{f}}})-2\text{\textgreek{q}}_{\le R}\text{\textgreek{q}}_{\ge R_{0}}r^{-1}f^{-1}(\partial_{r}f)\big|\partial_{r}(f^{\frac{1}{2}}\text{\textgreek{f}})\big|^{2}+\\
+2( & 1-\text{\textgreek{q}}_{\le R})\nabla^{\text{\textgreek{m}}}\nabla^{\text{\textgreek{n}}}f\nabla_{\text{\textgreek{m}}}\text{\textgreek{f}}\nabla_{\text{\textgreek{n}}}\bar{\text{\textgreek{f}}}+2\text{\textgreek{q}}_{\ge R_{0}}\text{\textgreek{q}}_{\le R}r^{-1}(\partial_{r}f)|\partial_{r}\text{\textgreek{f}}|^{2}-2h\nabla^{\text{\textgreek{m}}}\text{\textgreek{f}}\nabla_{\text{\textgreek{m}}}\bar{\text{\textgreek{f}}}+\mathcal{A}_{f,h}^{(R)}|\text{\textgreek{f}}|^{2}\Big\}\, dg\ge\\
\ge & \int_{\mathcal{R}(\text{\textgreek{t}}_{1},\text{\textgreek{t}}_{2})\cap\{R^{\text{\textgreek{e}}_{0}}\le r\le\frac{1}{2}R\}}e^{2sw_{R}}\Big\{ r^{-2}\big(c_{\text{\textgreek{d}}_{0}}r^{2\text{\textgreek{e}}_{0}}s^{2}R^{-2\text{\textgreek{e}}_{0}}-C_{\text{\textgreek{d}}_{0}}s(r^{\text{\textgreek{e}}_{0}}R^{-\text{\textgreek{e}}_{0}}+r^{2\text{\textgreek{e}}_{0}}R^{-2\text{\textgreek{e}}_{0}})\big)e^{-2sw_{R}}\big|\partial_{r}(e^{sw_{R}}\text{\textgreek{f}})\big|^{2}+\\
 & \hphantom{c\int_{\mathcal{R}(\text{\textgreek{t}}_{1},\text{\textgreek{t}}_{2})\cap\{R_{0}\le r\le R\}}e^{2sw_{R}}\Big\{}+c_{\text{\textgreek{d}}_{0}}r^{-\frac{5}{2}+\text{\textgreek{e}}_{0}}sR^{-\text{\textgreek{e}}_{0}}|\partial_{r}\text{\textgreek{f}}|^{2}+c_{\text{\textgreek{d}}_{0}}r^{-\frac{9}{2}+\text{\textgreek{e}}_{0}}sR^{-\text{\textgreek{e}}_{0}}|\partial_{\text{\textgreek{sv}}}\text{\textgreek{f}}|^{2}+c_{\text{\textgreek{d}}_{0}}r^{-2+\text{\textgreek{e}}_{0}}sR^{-\text{\textgreek{e}}_{0}}|T\text{\textgreek{f}}|^{2}+\\
 & \hphantom{c\int_{\mathcal{R}(\text{\textgreek{t}}_{1},\text{\textgreek{t}}_{2})\cap\{R_{0}\le r\le R\}}e^{2sw_{R}}\Big\{}+r^{-4}\big(c_{\text{\textgreek{d}}_{0}}\text{\textgreek{e}}_{0}r^{3\text{\textgreek{e}}_{0}}s^{3}R^{-3\text{\textgreek{e}}_{0}}-C_{\text{\textgreek{d}}_{0}}r^{2\text{\textgreek{e}}_{0}}s^{2}R^{-2\text{\textgreek{e}}_{0}}-C_{\text{\textgreek{d}}_{0}}r^{\text{\textgreek{e}}_{0}}sR^{-\text{\textgreek{e}}_{0}}\big)|\text{\textgreek{f}}|^{2}\Big\}\, dg.
\end{split}
\label{eq:TotalBoundSecondAway}
\end{equation}

\item In view of (\ref{eq:LowerBoundAfhThirdAwayRegion}), (\ref{eq:LowerboundCurrentFirstAway})
and (\ref{eq:LowerBound})--(\ref{eq:UpperBounds}), we can bound:
\begin{equation}
\begin{split}\int_{\mathcal{R}(\text{\textgreek{t}}_{1},\text{\textgreek{t}}_{2})\cap\{\frac{1}{2}R\le r\le R\}}Re\Big\{2\text{\textgreek{q}}_{\le R} & f^{-1}\nabla^{\text{\textgreek{m}}}\nabla^{\text{\textgreek{n}}}f\nabla_{\text{\textgreek{m}}}(f^{\frac{1}{2}}\text{\textgreek{f}})\nabla_{\text{\textgreek{n}}}(f^{\frac{1}{2}}\bar{\text{\textgreek{f}}})-2\text{\textgreek{q}}_{\le R}\text{\textgreek{q}}_{\ge R_{0}}r^{-1}f^{-1}(\partial_{r}f)\big|\partial_{r}(f^{\frac{1}{2}}\text{\textgreek{f}})\big|^{2}+\\
+2( & 1-\text{\textgreek{q}}_{\le R})\nabla^{\text{\textgreek{m}}}\nabla^{\text{\textgreek{n}}}f\nabla_{\text{\textgreek{m}}}\text{\textgreek{f}}\nabla_{\text{\textgreek{n}}}\bar{\text{\textgreek{f}}}+2\text{\textgreek{q}}_{\ge R_{0}}\text{\textgreek{q}}_{\le R}r^{-1}(\partial_{r}f)|\partial_{r}\text{\textgreek{f}}|^{2}-2h\nabla^{\text{\textgreek{m}}}\text{\textgreek{f}}\nabla_{\text{\textgreek{m}}}\bar{\text{\textgreek{f}}}+\mathcal{A}_{f,h}^{(R)}|\text{\textgreek{f}}|^{2}\Big\}\, dg\ge\\
\ge & \int_{\mathcal{R}(\text{\textgreek{t}}_{1},\text{\textgreek{t}}_{2})\cap\{\frac{1}{2}R\le r\le R\}}e^{2sw_{R}}\Big\{ R^{-2}\big(c_{\text{\textgreek{d}}_{0}}-C_{\text{\textgreek{d}}_{0}}(\text{\textgreek{d}}_{1}+s^{-1}\text{\textgreek{d}}_{1}^{-1})\big)e^{-2sw_{R}}\big|\partial_{r}(e^{sw_{R}}\text{\textgreek{f}})\big|^{2}+\\
 & \hphantom{c\int_{\mathcal{R}(\text{\textgreek{t}}_{1},\text{\textgreek{t}}_{2})\cap\{R_{0}\le r\le R\}}e^{2sw_{R}}\Big\{}+c_{\text{\textgreek{d}}_{0}}R^{-\frac{5}{2}}|\partial_{r}\text{\textgreek{f}}|^{2}+c_{\text{\textgreek{d}}_{0}}R^{-\frac{9}{2}}|\partial_{\text{\textgreek{sv}}}\text{\textgreek{f}}|^{2}+\\
 & \hphantom{c\int_{\mathcal{R}(\text{\textgreek{t}}_{1},\text{\textgreek{t}}_{2})\cap\{R_{0}\le r\le R\}}e^{2sw_{R}}\Big\{}+c_{\text{\textgreek{d}}_{0}}R^{-2}v_{s}^{\prime}(\frac{r}{R})s|T\text{\textgreek{f}}|^{2}-C_{\text{\textgreek{d}}_{0}}R^{-4}\big(v_{s}^{\prime}(\frac{r}{R})s^{3}+s^{2}+s\big)\Big\}\, dg.
\end{split}
\label{eq:LowerboundCurrentFirstAway-1}
\end{equation}

\item In view of (\ref{eq:LowerBoundAfAlmostInfinity}), (\ref{eq:LowerboundCurrentNearInfinity}),
(\ref{eq:Function_F}), (\ref{eq:hAlmostR}) and (\ref{eq:hIntermediateAway}),
we can estimate: 
\begin{equation}
\begin{split}\int_{\mathcal{R}(\text{\textgreek{t}}_{1},\text{\textgreek{t}}_{2})\cap\{R\le r\le\text{\textgreek{d}}_{2}^{-1}R\}}Re\Big\{2\text{\textgreek{q}}_{\le R} & f^{-1}\nabla^{\text{\textgreek{m}}}\nabla^{\text{\textgreek{n}}}f\nabla_{\text{\textgreek{m}}}(f^{\frac{1}{2}}\text{\textgreek{f}})\nabla_{\text{\textgreek{n}}}(f^{\frac{1}{2}}\bar{\text{\textgreek{f}}})-2\text{\textgreek{q}}_{\le R}\text{\textgreek{q}}_{\ge R_{0}}r^{-1}f^{-1}(\partial_{r}f)\big|\partial_{r}(f^{\frac{1}{2}}\text{\textgreek{f}})\big|^{2}+\\
+2( & 1-\text{\textgreek{q}}_{\le R})\nabla^{\text{\textgreek{m}}}\nabla^{\text{\textgreek{n}}}f\nabla_{\text{\textgreek{m}}}\text{\textgreek{f}}\nabla_{\text{\textgreek{n}}}\bar{\text{\textgreek{f}}}+2\text{\textgreek{q}}_{\ge R_{0}}\text{\textgreek{q}}_{\le R}r^{-1}(\partial_{r}f)|\partial_{r}\text{\textgreek{f}}|^{2}-2h\nabla^{\text{\textgreek{m}}}\text{\textgreek{f}}\nabla_{\text{\textgreek{m}}}\bar{\text{\textgreek{f}}}+\mathcal{A}_{f,h}^{(R)}|\text{\textgreek{f}}|^{2}\Big\}\, dg\ge\\
\ge & \int_{\mathcal{R}(\text{\textgreek{t}}_{1},\text{\textgreek{t}}_{2})\cap\{R\le r\le\text{\textgreek{d}}_{2}^{-1}R\}}\Big\{2\text{\textgreek{q}}_{\le R}\frac{R^{2}}{r^{2}}(\frac{9}{5}-\frac{r}{R})R^{-2}f(R)\cdot f^{-1}\big|\partial_{r}(f^{\frac{1}{2}}\text{\textgreek{f}})\big|^{2}+\\
 & \hphantom{\int_{\mathcal{R}(\text{\textgreek{t}}_{1},\text{\textgreek{t}}_{2})\cap\{R\le}}+\Big((1-\text{\textgreek{q}}_{\le R})f(R)r^{-\frac{5}{2}}-2\text{\textgreek{q}}_{\le R}\big(\frac{R^{2}}{r^{2}}(\frac{r}{R}-\frac{9}{10})(R^{-2}+O(R^{-\frac{5}{2}}))f(R)\Big)|\partial_{r}\text{\textgreek{f}}|^{2}+\\
 & \hphantom{\int_{\mathcal{R}(\text{\textgreek{t}}_{1},\text{\textgreek{t}}_{2})\cap\{R\le}}+cf(R)r^{-\frac{9}{2}}|\partial_{\text{\textgreek{sv}}}\text{\textgreek{f}}|^{2}+cf(R)r^{-2}|T\text{\textgreek{f}}|^{2}-CR^{-4}f(R)|\text{\textgreek{f}}|^{2}\Big\}\, dg.
\end{split}
\label{eq:AlmostTotalBoundNearInfinity}
\end{equation}

\begin{rem*}
Notice that the positivity of the coefficient of $\big|\partial_{r}(f^{\frac{1}{2}}\text{\textgreek{f}})\big|^{2}$
in the right hand side of (\ref{eq:AlmostTotalBoundNearInfinity})
follows from the fact that, in view of (\ref{eq:Function_F}), provided
$R$ is sufficiently large in terms of the geometry of $(\mathcal{M},g)$,
we can bound for $R\le r\le\frac{4}{3}R$ (i.\,e.~on $supp(\text{\textgreek{q}}_{\le R})\cap\{r\ge R\}$):
\begin{equation}
\partial_{r}^{2}f+r^{-\frac{3}{2}}\partial_{r}f>(r^{-1}+O(r^{-2}))\partial_{r}f+O(r^{-1})\partial_{r}^{2}f+f(R)(\frac{9}{5}-\frac{r}{R})R^{-2}.
\end{equation}

\end{rem*}
Applying the product rule and a Cauchy--Schwarz inequality on the
first term of the right hand side of (\ref{eq:AlmostTotalBoundNearInfinity}),
we obtain: 
\begin{equation}
2\text{\textgreek{q}}_{\le R}\frac{R^{2}}{r^{2}}(\frac{9}{5}-\frac{r}{R})R^{-2}f(R)\cdot f^{-1}\big|\partial_{r}(f^{\frac{1}{2}}\text{\textgreek{f}})\big|^{2}\ge2\text{\textgreek{q}}_{\le R}\frac{R^{2}}{r^{2}}(\frac{9}{5}-\frac{r}{R}-\frac{1}{100})R^{-2}f(R)|\partial_{r}\text{\textgreek{f}}|^{2}-C\text{\textgreek{q}}_{\le R}R^{-4}f(R)|\text{\textgreek{f}}|^{2}
\end{equation}
for some absolute constant $C>0$. Thus, (\ref{eq:AlmostTotalBoundNearInfinity})
yields (provided $R\gg1$): 
\begin{equation}
\begin{split}\int_{\mathcal{R}(\text{\textgreek{t}}_{1},\text{\textgreek{t}}_{2})\cap\{R\le r\le\text{\textgreek{d}}_{2}^{-1}R\}}Re\Big\{2\text{\textgreek{q}}_{\le R} & f^{-1}\nabla^{\text{\textgreek{m}}}\nabla^{\text{\textgreek{n}}}f\nabla_{\text{\textgreek{m}}}(f^{\frac{1}{2}}\text{\textgreek{f}})\nabla_{\text{\textgreek{n}}}(f^{\frac{1}{2}}\bar{\text{\textgreek{f}}})-2\text{\textgreek{q}}_{\le R}\text{\textgreek{q}}_{\ge R_{0}}r^{-1}f^{-1}(\partial_{r}f)\big|\partial_{r}(f^{\frac{1}{2}}\text{\textgreek{f}})\big|^{2}+\\
+2( & 1-\text{\textgreek{q}}_{\le R})\nabla^{\text{\textgreek{m}}}\nabla^{\text{\textgreek{n}}}f\nabla_{\text{\textgreek{m}}}\text{\textgreek{f}}\nabla_{\text{\textgreek{n}}}\bar{\text{\textgreek{f}}}+2\text{\textgreek{q}}_{\ge R_{0}}\text{\textgreek{q}}_{\le R}r^{-1}(\partial_{r}f)|\partial_{r}\text{\textgreek{f}}|^{2}-2h\nabla^{\text{\textgreek{m}}}\text{\textgreek{f}}\nabla_{\text{\textgreek{m}}}\bar{\text{\textgreek{f}}}+\mathcal{A}_{f,h}^{(R)}|\text{\textgreek{f}}|^{2}\Big\}\, dg\ge\\
\ge & \int_{\mathcal{R}(\text{\textgreek{t}}_{1},\text{\textgreek{t}}_{2})\cap\{R\le r\le\text{\textgreek{d}}_{2}^{-1}R\}}f(R)\Big\{ cr^{-\frac{5}{2}}|\partial_{r}\text{\textgreek{f}}|^{2}+cr^{-\frac{9}{2}}|\partial_{\text{\textgreek{sv}}}\text{\textgreek{f}}|^{2}+cr^{-2}|T\text{\textgreek{f}}|^{2}-CR^{-4}|\text{\textgreek{f}}|^{2}\Big\}\, dg.
\end{split}
\label{eq:TotalBoundAlmostNearInfinity}
\end{equation}

\item In view of (\ref{eq:LowerBoundAfhNearInfinity}), (\ref{eq:LowerboundCurrentNearInfinity}),
(\ref{eq:Function_F}) and (\ref{eq:hFarAway}), we can bound: 
\begin{equation}
\begin{split}\int_{\mathcal{R}(\text{\textgreek{t}}_{1},\text{\textgreek{t}}_{2})\cap\{r\ge\text{\textgreek{d}}_{2}^{-1}R\}}Re\Big\{2\text{\textgreek{q}}_{\le R} & f^{-1}\nabla^{\text{\textgreek{m}}}\nabla^{\text{\textgreek{n}}}f\nabla_{\text{\textgreek{m}}}(f^{\frac{1}{2}}\text{\textgreek{f}})\nabla_{\text{\textgreek{n}}}(f^{\frac{1}{2}}\bar{\text{\textgreek{f}}})-2\text{\textgreek{q}}_{\le R}\text{\textgreek{q}}_{\ge R_{0}}r^{-1}f^{-1}(\partial_{r}f)\big|\partial_{r}(f^{\frac{1}{2}}\text{\textgreek{f}})\big|^{2}+\\
+2( & 1-\text{\textgreek{q}}_{\le R})\nabla^{\text{\textgreek{m}}}\nabla^{\text{\textgreek{n}}}f\nabla_{\text{\textgreek{m}}}\text{\textgreek{f}}\nabla_{\text{\textgreek{n}}}\bar{\text{\textgreek{f}}}+2\text{\textgreek{q}}_{\ge R_{0}}\text{\textgreek{q}}_{\le R}r^{-1}(\partial_{r}f)|\partial_{r}\text{\textgreek{f}}|^{2}-2h\nabla^{\text{\textgreek{m}}}\text{\textgreek{f}}\nabla_{\text{\textgreek{m}}}\bar{\text{\textgreek{f}}}+\mathcal{A}_{f,h}^{(R)}|\text{\textgreek{f}}|^{2}\Big\}\, dg\ge\\
 & \hphantom{\text{\textgreek{f}}|^{2}}\ge\int_{\mathcal{R}(\text{\textgreek{t}}_{1},\text{\textgreek{t}}_{2})\cap\{r\ge\text{\textgreek{d}}_{2}^{-1}R\}}f(R)\Big\{ cr^{-2}|\partial_{r}\text{\textgreek{f}}|^{2}+cR^{-1}r^{-3}|\partial_{\text{\textgreek{sv}}}\text{\textgreek{f}}|^{2}+cr^{-2}|T\text{\textgreek{f}}|^{2}-CR^{-1}r^{-3}|\text{\textgreek{f}}|^{2}\Big\}\, dg.
\end{split}
\label{eq:TotalBoundNearInfinity}
\end{equation}

\end{enumerate} 

In view of (\ref{eq:BoundCurrentErgoregion}), (\ref{eq:TotalBoundOutsideErgoregionSum}),
(\ref{eq:TotalBoundNearR_0}), (\ref{eq:TotalBoundFirstAway}), (\ref{eq:TotalBoundSecondAway}),
(\ref{eq:TotalBoundAlmostNearInfinity}), (\ref{eq:TotalBoundNearInfinity}),
as well as the fact that $\tilde{f}\equiv f$ and $\tilde{h}\equiv h$
on $\mbox{ on }\mathcal{M}\backslash\big(\mathcal{B}_{crit}(4\text{\textgreek{d}}_{0})\cup\mathcal{H}^{-}\big)$,
we obtain from (\ref{eq:FundamentalCurrentFull}) (provided $\text{\textgreek{d}}_{0},\text{\textgreek{d}}_{2}>0$
are sufficiently small in terms of $R_{0}$ and the geometry of $(\mathcal{M},g)$,
$\text{\textgreek{d}}_{1}>0$ is sufficiently small in terms of $\text{\textgreek{d}}_{0}$
and $R_{0}$, $s$ is sufficiently large in terms of $\text{\textgreek{d}}_{0},\text{\textgreek{d}}_{1}$
and $\text{\textgreek{e}}_{0}sR^{-9\text{\textgreek{e}}_{0}}$ is
sufficiently large in terms of $\text{\textgreek{d}}_{0},\text{\textgreek{d}}_{1}$):
\begin{equation}
\begin{split}\int_{\mathcal{R}(\text{\textgreek{t}}_{1},\text{\textgreek{t}}_{2})\cap\{\frac{1}{4}r_{0}\le r\le\frac{1}{2}R_{0}\}\backslash\mathscr{E}_{ext}}(f+\tilde{f})\Bigg\{ & c_{\text{\textgreek{d}}_{0}\text{\textgreek{d}}_{1}}sR^{-3\text{\textgreek{e}}_{0}}(g^{-1})_{\text{\textgreek{S}}}^{ij}\partial_{i}\text{\textgreek{f}}\partial_{j}\bar{\text{\textgreek{f}}}-CsR^{-3\text{\textgreek{e}}_{0}}|\nabla_{g_{\text{\textgreek{S}}}}\text{\textgreek{f}}|_{g_{\text{\textgreek{S}}}}|T\text{\textgreek{f}}|-CsR^{-3\text{\textgreek{e}}_{0}}|T\text{\textgreek{f}}|^{2}+\\
 & +c_{\text{\textgreek{d}}_{0}}s^{3}R^{-9\text{\textgreek{e}}_{0}}|\text{\textgreek{f}}|^{2}\Bigg\}\, dg+\\
+c_{\text{\textgreek{d}}_{0}}\int_{\mathcal{R}(\text{\textgreek{t}}_{1},\text{\textgreek{t}}_{2})\cap\{r\ge\frac{1}{2}R_{0}\}}(f+\tilde{f})\Bigg\{ & \text{\textgreek{q}}_{\{\frac{1}{2}R_{0}\le r\le R_{0}\}}\text{\textgreek{d}}_{1}sR^{-3\text{\textgreek{e}}_{0}}+\text{\textgreek{q}}_{\{R_{0}\le r\le R^{\text{\textgreek{e}}_{0}}\}}sR^{-3\text{\textgreek{e}}_{0}}r^{-\frac{3}{2}}+\text{\textgreek{q}}_{\{R^{\text{\textgreek{e}}_{0}}\le r\le\frac{1}{2}R\}}sR^{-\text{\textgreek{e}}_{0}}r^{-\frac{5}{2}+\text{\textgreek{e}}_{0}}+\\
 & +\text{\textgreek{q}}_{\{\frac{1}{2}R\le r\le\text{\textgreek{d}}_{2}^{-1}R\}}r^{-\frac{5}{2}}+\text{\textgreek{q}}_{\{r\ge\text{\textgreek{d}}_{2}^{-1}R\}}r^{-2}\Bigg\}\big(\big|\partial_{r}\text{\textgreek{f}}\big|^{2}+r^{-2}|\partial_{\text{\textgreek{sv}}}\text{\textgreek{f}}|^{2}\big)\, dg+\\
+\int_{\mathcal{R}(\text{\textgreek{t}}_{1},\text{\textgreek{t}}_{2})\cap\{\frac{1}{2}R_{0}\le r\le\frac{1}{2}R\}}(f+\tilde{f})\Bigg\{ & \text{\textgreek{q}}_{\{\frac{1}{2}R_{0}\le r\le R_{0}\}}\Big(-C_{\text{\textgreek{d}}_{0}}sR^{-3\text{\textgreek{e}}_{0}}|T\text{\textgreek{f}}|^{2}+c_{\text{\textgreek{d}}_{0}}s^{3}R^{-9\text{\textgreek{e}}_{0}}|\text{\textgreek{f}}|^{2}\Big)+\\
 & +c_{\text{\textgreek{d}}_{0}}\text{\textgreek{q}}_{\{R_{0}\le r\le R^{\text{\textgreek{e}}_{0}}\}}\Big(sR^{-3\text{\textgreek{e}}_{0}}r^{-1}|T\text{\textgreek{f}}|^{2}+s^{3}R^{-9\text{\textgreek{e}}_{0}}|\text{\textgreek{f}}|^{2}\Big)+\\
 & +\text{\textgreek{q}}_{\{R^{\text{\textgreek{e}}_{0}}\le r\le\frac{1}{2}R\}}\Big(sR^{-\text{\textgreek{e}}_{0}}r^{-2+\text{\textgreek{e}}_{0}}|T\text{\textgreek{f}}|^{2}+c_{\text{\textgreek{d}}_{0}}\text{\textgreek{e}}_{0}s^{3}R^{-3\text{\textgreek{e}}_{0}}r^{-4+3\text{\textgreek{e}}_{0}}|\text{\textgreek{f}}|^{2}\Big)\Bigg\}\, dg+\\
+\int_{\mathcal{R}(\text{\textgreek{t}}_{1},\text{\textgreek{t}}_{2})\cap\{\frac{1}{2}R\le r\le R\}}(f+\tilde{f})v_{s}^{\prime}(\frac{r}{R})\Big( & c_{\text{\textgreek{d}}_{0}}R^{-2}s|T\text{\textgreek{f}}|^{2}-C_{\text{\textgreek{d}}_{0}}R^{-4}s^{3}|\text{\textgreek{f}}|^{2}\Big)\, dg+\\
+\int_{\mathcal{R}(\text{\textgreek{t}}_{1},\text{\textgreek{t}}_{2})\cap\{r\ge R\}}(f(R)+\tilde{f}(R))\Big( & cr^{-2}|T\text{\textgreek{f}}|^{2}-CR^{-1}r^{-3}|\text{\textgreek{f}}|^{2}\Big)\, dg\le\\
\le & C_{\text{\textgreek{d}}_{0}}\int_{\mathcal{R}(\text{\textgreek{t}}_{1},\text{\textgreek{t}}_{2})\cap\big(\mathscr{E}_{ext}\cup\{r\le\frac{1}{4}r_{0}\}\big)}(f+\tilde{f})\Big\{ s^{2}R^{-6\text{\textgreek{e}}_{0}}|\nabla\text{\textgreek{f}}|_{g_{ref}}^{2}+s^{4}R^{-12\text{\textgreek{e}}_{0}}|\text{\textgreek{f}}|^{2}\Big\}\, dg-\\
 & -\int_{\mathcal{R}(\text{\textgreek{t}}_{1},\text{\textgreek{t}}_{2})}Re\big\{ G\big(2(\nabla^{\text{\textgreek{m}}}f+\nabla^{\text{\textgreek{m}}}\tilde{f})\nabla_{\text{\textgreek{m}}}\bar{\text{\textgreek{f}}}+(\square_{g}f+\square_{g}\tilde{f}-2h-2\tilde{h})\bar{\text{\textgreek{f}}}\big)\big\}\, dg-\\
 & -\mathcal{B}_{f,h}^{(R)}[\text{\textgreek{f}};\text{\textgreek{t}}_{1},\text{\textgreek{t}}_{2}]-\mathcal{B}_{\tilde{f},\tilde{h}}^{(R)}[\text{\textgreek{f}};\text{\textgreek{t}}_{1},\text{\textgreek{t}}_{2}],
\end{split}
\label{eq:MainCarlemanEstimateFromundamentalCurrent}
\end{equation}
where, for any set $A\subset\mathcal{M}$, we denote with $\text{\textgreek{q}}_{A}$
the characteristic function of $A$.

In view of the fact that $(g^{-1})_{\text{\textgreek{S}}}$ is positive
definite on $\mathcal{M}\backslash\mathscr{E}\cup\mathcal{H}$, we
can estimate on $\mathcal{M}\backslash\mathscr{E}_{\text{\textgreek{d}}}\cup\{r\ge\frac{1}{4}r_{0}\}$
for any $\text{\textgreek{d}}>0$: 
\begin{equation}
c_{\text{\textgreek{d}}_{0}\text{\textgreek{d}}_{1}}sR^{-3\text{\textgreek{e}}_{0}}(g^{-1})_{\text{\textgreek{S}}}^{ij}\partial_{i}\text{\textgreek{f}}\partial_{j}\bar{\text{\textgreek{f}}}-CsR^{-3\text{\textgreek{e}}_{0}}|\nabla_{g_{\text{\textgreek{S}}}}\text{\textgreek{f}}|_{g_{\text{\textgreek{S}}}}|T\text{\textgreek{f}}|\ge c_{\text{\textgreek{d}}\text{\textgreek{d}}_{0}\text{\textgreek{d}}_{1}}sR^{-3\text{\textgreek{e}}_{0}}|\nabla_{g_{\text{\textgreek{S}}}}\text{\textgreek{f}}|_{g_{\text{\textgreek{S}}}}-C_{\text{\textgreek{d}}}sR^{-3\text{\textgreek{e}}_{0}}|T\text{\textgreek{f}}|^{2}.\label{eq:CauchySchwarzNearBoundaryErgoregion}
\end{equation}
Furthermore, if $\text{\textgreek{q}}_{r_{0}}:\mathcal{M}\backslash\mathcal{H}^{-}\rightarrow[0,1]$
is a smooth $T$-invariant function supported in $\{r\le r_{0}\}$
such that $\text{\textgreek{q}}_{r_{0}}\equiv1$ on $\{r\le\frac{1}{2}r_{0}\}$,
then, after integrating by parts in the identity 
\begin{equation}
\int_{\mathcal{R}(\text{\textgreek{t}}_{1},\text{\textgreek{t}}_{2})}Re\Big\{ N(\text{\textgreek{q}}_{r_{0}}\bar{\text{\textgreek{f}}})\square_{g}(\text{\textgreek{q}}_{r_{0}}\text{\textgreek{f}})\Big\}\, dg=\int_{\mathcal{R}(\text{\textgreek{t}}_{1},\text{\textgreek{t}}_{2})}Re\Big\{ N(\text{\textgreek{q}}_{r_{0}}\bar{\text{\textgreek{f}}})\big(\text{\textgreek{q}}_{r_{0}}G+2\nabla^{\text{\textgreek{m}}}\text{\textgreek{q}}_{r_{0}}\nabla_{\text{\textgreek{m}}}\text{\textgreek{f}}+\square_{g}\text{\textgreek{q}}_{r_{0}}\text{\textgreek{f}}\big)\Big\}\, dg,
\end{equation}
 using also the bounds (\ref{eq:PositiveKN}) and (\ref{eq:UpperBoundKN})
from Assumption \hyperref[Assumption 2]{G2} (as well as a Poincare-type
inequality), we readily obtain the red-shift-type estimate 
\begin{align}
c\int_{\mathcal{H}^{+}\cap\mathcal{R}(\text{\textgreek{t}}_{1},\text{\textgreek{t}}_{2})} & \big(J_{\text{\textgreek{m}}}^{N}(\text{\textgreek{f}})n_{\mathcal{H}^{+}}^{\text{\textgreek{m}}}+|\text{\textgreek{f}}|^{2}\big)\, dvol_{\mathcal{H}^{+}}+c\int_{\mathcal{R}(\text{\textgreek{t}}_{1},\text{\textgreek{t}}_{2})\cap\{r\le\frac{1}{2}r_{0}\}}\big(|\nabla\text{\textgreek{f}}|_{g_{ref}}^{2}+|\text{\textgreek{f}}|^{2}\big)\, dg\le\label{eq:FromRedShift}\\
\le C & \int_{\mathcal{R}(\text{\textgreek{t}}_{1},\text{\textgreek{t}}_{2})\cap\{\frac{1}{2}r_{0}\le r\le r_{0}\}}\big(|\nabla\text{\textgreek{f}}|_{g_{ref}}^{2}+|\text{\textgreek{f}}|^{2}\big)\, dg+C\int_{\text{\textgreek{S}}_{\text{\textgreek{t}}_{1}}\cap\{r\le r_{0}\}}|\nabla\text{\textgreek{f}}|_{g_{ref}}^{2}-\int_{\mathcal{R}(\text{\textgreek{t}}_{1},\text{\textgreek{t}}_{2})}Re\Big\{\text{\textgreek{q}}_{r_{0}}N(\text{\textgreek{q}}_{r_{0}}\bar{\text{\textgreek{f}}})\cdot G\Big\}\, dg.\nonumber 
\end{align}
Therefore, in view of (\ref{eq:CauchySchwarzNearBoundaryErgoregion}),
(\ref{eq:FromRedShift}) and the fact that 
\begin{equation}
\frac{\sup_{\{r\le\frac{1}{4}r_{0}\}}(e^{2sw_{R}}+e^{2s\tilde{w}_{R}})}{\inf_{\{r\ge\frac{1}{2}r_{0}\}}(e^{2sw_{R}}+e^{2s\tilde{w}_{R}})}>e^{csR^{-3\text{\textgreek{e}}_{0}}}
\end{equation}
(following from (\ref{eq:SmallerNearHorizon}), (\ref{eq:SmallerNearHorizonDistorted}),
(\ref{eq:Function_F}) and (\ref{eq:Function_F-1})), (\ref{eq:MainCarlemanEstimateFromundamentalCurrent})
yields (provided $sR^{-3\text{\textgreek{e}}_{0}}\gg1$): 
\begin{equation}
\begin{split}\int_{\mathcal{R}(\text{\textgreek{t}}_{1},\text{\textgreek{t}}_{2})\cap\{r\le\frac{1}{2}R_{0}\}\backslash\mathscr{E}_{\text{\textgreek{d}}}}(f+\tilde{f}+\sup_{\{r\le\frac{1}{4}r_{0}\}}f)\Bigg\{ & c_{\text{\textgreek{d}}\text{\textgreek{d}}_{0}\text{\textgreek{d}}_{1}}sR^{-3\text{\textgreek{e}}_{0}}|\nabla_{g_{\text{\textgreek{S}}}}\text{\textgreek{f}}|_{g_{\text{\textgreek{S}}}}^{2}-C_{\text{\textgreek{d}}}sR^{-3\text{\textgreek{e}}_{0}}|T\text{\textgreek{f}}|^{2}+c_{\text{\textgreek{d}}_{0}}s^{3}R^{-9\text{\textgreek{e}}_{0}}|\text{\textgreek{f}}|^{2}\Bigg\}\, dg+\\
+c_{\text{\textgreek{d}}_{0}}\int_{\mathcal{R}(\text{\textgreek{t}}_{1},\text{\textgreek{t}}_{2})\cap\{r\ge\frac{1}{2}R_{0}\}}(f+\tilde{f})\Bigg\{ & \text{\textgreek{q}}_{\{\frac{1}{2}R_{0}\le r\le R_{0}\}}\text{\textgreek{d}}_{1}sR^{-3\text{\textgreek{e}}_{0}}+\text{\textgreek{q}}_{\{R_{0}\le r\le R^{\text{\textgreek{e}}_{0}}\}}sR^{-3\text{\textgreek{e}}_{0}}r^{-\frac{3}{2}}+\text{\textgreek{q}}_{\{R^{\text{\textgreek{e}}_{0}}\le r\le\frac{1}{2}R\}}sR^{-\text{\textgreek{e}}_{0}}r^{-\frac{5}{2}+\text{\textgreek{e}}_{0}}+\\
 & +\text{\textgreek{q}}_{\{\frac{1}{2}R\le r\le\text{\textgreek{d}}_{2}^{-1}R\}}r^{-\frac{5}{2}}+\text{\textgreek{q}}_{\{r\ge\text{\textgreek{d}}_{2}^{-1}R\}}r^{-2}\Bigg\}\big(\big|\partial_{r}\text{\textgreek{f}}\big|^{2}+r^{-2}|\partial_{\text{\textgreek{sv}}}\text{\textgreek{f}}|^{2}\big)\, dg+\\
+\int_{\mathcal{R}(\text{\textgreek{t}}_{1},\text{\textgreek{t}}_{2})\cap\{\frac{1}{2}R_{0}\le r\le\frac{1}{2}R\}}(f+\tilde{f})\Bigg\{ & \text{\textgreek{q}}_{\{\frac{1}{2}R_{0}\le r\le R_{0}\}}\Big(-C_{\text{\textgreek{d}}_{0}}sR^{-3\text{\textgreek{e}}_{0}}|T\text{\textgreek{f}}|^{2}+c_{\text{\textgreek{d}}_{0}}s^{3}R^{-9\text{\textgreek{e}}_{0}}|\text{\textgreek{f}}|^{2}\Big)+\\
 & +c_{\text{\textgreek{d}}_{0}}\text{\textgreek{q}}_{\{R_{0}\le r\le R^{\text{\textgreek{e}}_{0}}\}}\Big(sR^{-3\text{\textgreek{e}}_{0}}r^{-1}|T\text{\textgreek{f}}|^{2}+s^{3}R^{-9\text{\textgreek{e}}_{0}}|\text{\textgreek{f}}|^{2}\Big)+\\
 & +\text{\textgreek{q}}_{\{R^{\text{\textgreek{e}}_{0}}\le r\le\frac{1}{2}R\}}\Big(sR^{-\text{\textgreek{e}}_{0}}r^{-2+\text{\textgreek{e}}_{0}}|T\text{\textgreek{f}}|^{2}+c_{\text{\textgreek{d}}_{0}}\text{\textgreek{e}}_{0}s^{3}R^{-3\text{\textgreek{e}}_{0}}r^{-4+3\text{\textgreek{e}}_{0}}|\text{\textgreek{f}}|^{2}\Big)\Bigg\}\, dg+\\
+\int_{\mathcal{R}(\text{\textgreek{t}}_{1},\text{\textgreek{t}}_{2})\cap\{\frac{1}{2}R\le r\le R\}}(f+\tilde{f})v_{s}^{\prime}(\frac{r}{R})\Big( & c_{\text{\textgreek{d}}_{0}}R^{-2}s|T\text{\textgreek{f}}|^{2}-C_{\text{\textgreek{d}}_{0}}R^{-4}s^{3}|\text{\textgreek{f}}|^{2}\Big)\, dg+\\
+\int_{\mathcal{R}(\text{\textgreek{t}}_{1},\text{\textgreek{t}}_{2})\cap\{r\ge R\}}(f(R)+\tilde{f}(R))\Big( & cr^{-2}|T\text{\textgreek{f}}|^{2}-CR^{-1}r^{-3}|\text{\textgreek{f}}|^{2}\Big)\, dg\le\\
\le & C_{\text{\textgreek{d}}_{0}\text{\textgreek{d}}}\int_{\mathcal{R}(\text{\textgreek{t}}_{1},\text{\textgreek{t}}_{2})\cap\mathscr{E}_{\text{\textgreek{d}}}}(f+\tilde{f})\Big\{ s^{2}R^{-6\text{\textgreek{e}}_{0}}|\nabla\text{\textgreek{f}}|_{g_{ref}}^{2}+s^{4}R^{-12\text{\textgreek{e}}_{0}}|\text{\textgreek{f}}|^{2}\Big\}\, dg-\\
 & -\int_{\mathcal{R}(\text{\textgreek{t}}_{1},\text{\textgreek{t}}_{2})}Re\big\{ G\big(2(\nabla^{\text{\textgreek{m}}}f+\nabla^{\text{\textgreek{m}}}\tilde{f})\nabla_{\text{\textgreek{m}}}\bar{\text{\textgreek{f}}}+(\square_{g}f+\square_{g}\tilde{f}-2h-2\tilde{h})\bar{\text{\textgreek{f}}}\big)\big\}\, dg+\\
 & +C\bar{\mathcal{B}}_{f,h;\tilde{f},\tilde{h}}^{(R)}[\text{\textgreek{f}};\text{\textgreek{t}}_{1},\text{\textgreek{t}}_{2}],-\mathcal{B}_{f,h}^{(R)}[\text{\textgreek{f}};\text{\textgreek{t}}_{1},\text{\textgreek{t}}_{2}]-\mathcal{B}_{\tilde{f},\tilde{h}}^{(R)}[\text{\textgreek{f}};\text{\textgreek{t}}_{1},\text{\textgreek{t}}_{2}]+\\
 & +C\sup_{\{r\le\frac{1}{4}r_{0}\}}f\int_{\text{\textgreek{S}}_{\text{\textgreek{t}}_{1}}\cap\{r\le r_{0}\}}|\nabla\text{\textgreek{f}}|_{g_{ref}}^{2},
\end{split}
\label{eq:MainCarlemanEstimateFinal}
\end{equation}
where 
\begin{equation}
\begin{split}\bar{\mathcal{B}}_{f,h;\tilde{f},\tilde{h}}^{(R)}[\text{\textgreek{f}};\text{\textgreek{t}}_{1},\text{\textgreek{t}}_{2}]\doteq & \sum_{j=1}^{2}\Bigg|\int_{\text{\textgreek{S}}_{\text{\textgreek{t}}_{j}}}Re\Big\{\Big(2\nabla^{\text{\textgreek{m}}}f\nabla_{\text{\textgreek{m}}}\bar{\text{\textgreek{f}}}\nabla_{\text{\textgreek{n}}}\text{\textgreek{f}}+(\square_{g}f-2h)\text{\textgreek{f}}\nabla_{\text{\textgreek{n}}}\bar{\text{\textgreek{f}}}-\nabla_{\text{\textgreek{n}}}f\nabla^{\text{\textgreek{m}}}\text{\textgreek{f}}\nabla_{\text{\textgreek{m}}}\bar{\text{\textgreek{f}}}+\\
 & \hphantom{\sum_{j=1}^{2}(-1)^{j}\int_{\text{\textgreek{S}}_{\text{\textgreek{t}}_{j}}}Re\big\{}+\big(\text{\textgreek{q}}_{\le R}f^{-1}\nabla_{\text{\textgreek{m}}}\nabla_{\text{\textgreek{n}}}f\nabla^{\text{\textgreek{m}}}f+\nabla_{\text{\textgreek{n}}}h-\frac{1}{2}(\nabla_{\text{\textgreek{n}}}(\square_{g}f))\big)|\text{\textgreek{f}}|^{2}\Big)n_{\text{\textgreek{S}}_{\text{\textgreek{t}}_{j}}}^{\text{\textgreek{n}}}\Big\}\, dg_{\text{\textgreek{S}}_{\text{\textgreek{t}}_{j}}}\Bigg|+\\
 & +\sum_{j=1}^{2}\Bigg|\int_{\text{\textgreek{S}}_{\text{\textgreek{t}}_{j}}}Re\Big\{\Big(2\nabla^{\text{\textgreek{m}}}\tilde{f}\nabla_{\text{\textgreek{m}}}\bar{\text{\textgreek{f}}}\nabla_{\text{\textgreek{n}}}\text{\textgreek{f}}+(\square_{g}\tilde{f}-2\tilde{h})\text{\textgreek{f}}\nabla_{\text{\textgreek{n}}}\bar{\text{\textgreek{f}}}-\nabla_{\text{\textgreek{n}}}\tilde{f}\nabla^{\text{\textgreek{m}}}\text{\textgreek{f}}\nabla_{\text{\textgreek{m}}}\bar{\text{\textgreek{f}}}+\\
 & \hphantom{+\sum_{j=1}^{2}(-1)^{j}\int_{\text{\textgreek{S}}_{\text{\textgreek{t}}_{j}}}Re\big\{}+\big(\text{\textgreek{q}}_{\le R}\tilde{f}^{-1}\nabla_{\text{\textgreek{m}}}\nabla_{\text{\textgreek{n}}}\tilde{f}\nabla^{\text{\textgreek{m}}}\tilde{f}+\nabla_{\text{\textgreek{n}}}\tilde{h}-\frac{1}{2}(\nabla_{\text{\textgreek{n}}}(\square_{g}\tilde{f}))\big)|\text{\textgreek{f}}|^{2}\Big)n_{\text{\textgreek{S}}_{\text{\textgreek{t}}_{j}}}^{\text{\textgreek{n}}}\Big\}\, dg_{\text{\textgreek{S}}_{\text{\textgreek{t}}_{j}}}\Bigg|+\\
 & +\sup_{\{r\le\frac{1}{4}r_{0}\}}f\int_{\text{\textgreek{S}}_{\text{\textgreek{t}}_{1}}\cap\{r\le r_{0}\}}|\nabla\text{\textgreek{f}}|_{g_{ref}}^{2}
\end{split}
\end{equation}
 (note that the boundary terms on $\mathcal{H}^{+}$in the right hand
side of (\ref{eq:MainCarlemanEstimateFromundamentalCurrent}) where
absorbed by the term in the left hand side of (\ref{eq:FromRedShift})).
Inequality (\ref{eq:MainCarlemanEstimate}) now readily follows from
(\ref{eq:MainCarlemanEstimateFinal}) in view of (\ref{eq:Function_F}),
(\ref{eq:Function_F-1}). \qed

\subsection{\label{sub:ProofOfCorollaryCarleman}Proof of Corollary \ref{cor:CarlemanForPsik}}

For any $1\le k\le n$ and $0<\text{\textgreek{d}}_{1},\text{\textgreek{d}}_{2},\text{\textgreek{e}}_{0}\ll1$,
let us choose the parameters $R,s$ to be sufficiently large in terms
of $\text{\textgreek{d}}_{1},\text{\textgreek{d}}_{2},\text{\textgreek{e}}_{0}$
and the geometry of $(\mathcal{M},g)$, satisfying in addition: 
\begin{gather}
R\ge C_{\text{\textgreek{d}}_{1}\text{\textgreek{e}}_{0}}\max\{1,\text{\textgreek{w}}_{k}{}^{-\frac{1}{1-9\text{\textgreek{e}}_{0}}},\big(-\log\text{\textgreek{d}}_{2}\big)^{\frac{1}{1-9\text{\textgreek{e}}_{0}}}\},\label{eq:LowerBoundR}\\
C_{\text{\textgreek{d}}_{1}\text{\textgreek{e}}_{0}}^{\frac{1}{3}}\max\big\{(1+\text{\textgreek{w}}_{k})R^{9\text{\textgreek{e}}_{0}},-\log\text{\textgreek{d}}_{2}\big\}\le s\le C_{\text{\textgreek{d}}_{1}\text{\textgreek{e}}_{0}}^{-\frac{1}{3}}R\text{\textgreek{w}}_{k},\label{eq:BoundsS}
\end{gather}
for some constant $C_{\text{\textgreek{d}}_{1}\text{\textgreek{e}}_{0}}>1$
large in terms of $\text{\textgreek{d}}_{1},\text{\textgreek{e}}_{0}$
and the geometry of $(\mathcal{M},g)$ (notice that the bound (\ref{eq:LowerBoundR})
guarantees that an $s$ satisfying (\ref{eq:BoundsS}) exists).

By approximating the functions $\text{\textgreek{y}}_{k}$, $1\le k\le n$,
by smooth solutions to (\ref{eq:WaveEquation}) with compact support
in space and using Lemma \ref{lem:DecayPsiKNearSpacelikeInfinity}
on the decay of $\text{\textgreek{y}}_{k}$ as $r\rightarrow+\infty$,
we infer that Proposition \ref{prop:GeneralCarlemanEstimate} also
applies for the functions $\text{\textgreek{y}}_{k}$. Therefore,
using the values of $s,R$ chosen above, we obtain for any $0\le\text{\textgreek{t}}_{1}\le\text{\textgreek{t}}_{2}$
and any $1\le k\le n$: 
\begin{equation}
\begin{split}\int_{\mathcal{R}(\text{\textgreek{t}}_{1},\text{\textgreek{t}}_{2})\cap\{r\le R_{0}\}\backslash\mathscr{E}_{\text{\textgreek{d}}_{1}}}(f+\inf_{\{r\ge\frac{1}{4}r_{0}\}\backslash\mathscr{E}}f)\Bigg\{ & sR^{-3\text{\textgreek{e}}_{0}}|\nabla_{g_{\text{\textgreek{S}}}}\text{\textgreek{y}}_{k}|_{g_{\text{\textgreek{S}}}}^{2}-C_{\text{\textgreek{d}}_{1}}sR^{-3\text{\textgreek{e}}_{0}}|T\text{\textgreek{y}}_{k}|^{2}+s^{3}R^{-9\text{\textgreek{e}}_{0}}|\text{\textgreek{y}}_{k}|^{2}\Bigg\}\, dg+\\
+\int_{\mathcal{R}(\text{\textgreek{t}}_{1},\text{\textgreek{t}}_{2})\cap\{R_{0}\le r\le\frac{1}{2}R\}}f\Bigg\{ & sR^{-3\text{\textgreek{e}}_{0}}r^{-\frac{5}{2}}\big(\big|\partial_{r}\text{\textgreek{y}}_{k}\big|^{2}+r^{-2}|\partial_{\text{\textgreek{sv}}}\text{\textgreek{y}}_{k}|^{2}\big)+sR^{-3\text{\textgreek{e}}_{0}}r^{-2}|T\text{\textgreek{y}}_{k}|^{2}+\text{\textgreek{e}}_{0}s^{3}R^{-9\text{\textgreek{e}}_{0}}r^{-4}|\text{\textgreek{y}}_{k}|^{2}\Bigg\}\, dg+\\
+\int_{\mathcal{R}(\text{\textgreek{t}}_{1},\text{\textgreek{t}}_{2})\cap\{\frac{1}{2}R\le r\le R\}}f\Bigg\{ & r^{-\frac{5}{2}}\big(\big|\partial_{r}\text{\textgreek{y}}_{k}\big|^{2}+r^{-2}|\partial_{\text{\textgreek{sv}}}\text{\textgreek{y}}_{k}|^{2}\big)+R\partial_{r}w_{R}\big(cR^{-2}s|T\text{\textgreek{y}}_{k}|^{2}-CR^{-4}s^{3}|\text{\textgreek{y}}_{k}|^{2}\big)\Bigg\}\, dg+\\
+\int_{\mathcal{R}(\text{\textgreek{t}}_{1},\text{\textgreek{t}}_{2})\cap\{r\ge R\}}f(R)\Bigg\{ & r^{-\frac{5}{2}}\big(\big|\partial_{r}\text{\textgreek{y}}_{k}\big|^{2}+r^{-2}|\partial_{\text{\textgreek{sv}}}\text{\textgreek{y}}_{k}|^{2}\big)+r^{-2}|T\text{\textgreek{y}}_{k}|^{2}-CR^{-1}r^{-3}|\text{\textgreek{y}}_{k}|^{2}\Bigg\}\, dg\le\\
\le & C_{\text{\textgreek{d}}_{1}}\int_{\mathcal{R}(\text{\textgreek{t}}_{1},\text{\textgreek{t}}_{2})\cap\mathscr{E}_{\text{\textgreek{d}}_{1}}}f\Big\{ s^{2}R^{-6\text{\textgreek{e}}_{0}}|\nabla\text{\textgreek{y}}_{k}|_{g_{ref}}^{2}+s^{4}R^{-12\text{\textgreek{e}}_{0}}|\text{\textgreek{y}}_{k}|^{2}\Big\}\, dg+\\
 & +C\Big|\int_{\mathcal{R}(\text{\textgreek{t}}_{1},\text{\textgreek{t}}_{2})}F_{k}\big(\nabla^{\text{\textgreek{m}}}f\nabla_{\text{\textgreek{m}}}\bar{\text{\textgreek{y}}}_{k}+O\big(\sum_{j=1}^{2}(1+r)^{j-2}|\nabla^{j}f|_{g_{ref}}\big)\bar{\text{\textgreek{y}}}_{k}\big)\, dg\Big|+\\
 & +C\sum_{j=1}^{2}\int_{\text{\textgreek{S}}_{\text{\textgreek{t}}_{j}}}\Big(|\nabla f|_{g_{ref}}|\nabla\text{\textgreek{y}}_{k}|_{g_{ref}}^{2}+\big(\sum_{j=1}^{3}(1+r)^{j-3}|\nabla^{j}f|_{g_{ref}}\big)|\text{\textgreek{y}}_{k}|^{2}\Big)\, dg_{\text{\textgreek{S}}}.
\end{split}
\label{eq:MainCarlemanEstimate-1}
\end{equation}
In view Lemma \ref{lem:DtToOmegaInequalities}, the bound (\ref{eq:MainCarlemanEstimate-1})
implies: 
\begin{equation}
\begin{split}\int_{\mathcal{R}(\text{\textgreek{t}}_{1},\text{\textgreek{t}}_{2})\cap\{r\le R_{0}\}\backslash\mathscr{E}_{\text{\textgreek{d}}_{1}}}(f+\inf_{\{r\ge\frac{1}{4}r_{0}\}\backslash\mathscr{E}}f)\Bigg\{ & sR^{-3\text{\textgreek{e}}_{0}}|\nabla_{g_{\text{\textgreek{S}}}}\text{\textgreek{y}}_{k}|_{g_{\text{\textgreek{S}}}}^{2}+sR^{-3\text{\textgreek{e}}_{0}}|T\text{\textgreek{y}}_{k}|^{2}+(s^{3}R^{-9\text{\textgreek{e}}_{0}}-C_{\text{\textgreek{d}}_{1}}\text{\textgreek{w}}_{k}^{2}sR^{-3\text{\textgreek{e}}_{0}})|\text{\textgreek{y}}_{k}|^{2}\Bigg\}\, dg+\\
+\int_{\mathcal{R}(\text{\textgreek{t}}_{1},\text{\textgreek{t}}_{2})\cap\{R_{0}\le r\le\frac{1}{2}R\}}f\Bigg\{ & sR^{-3\text{\textgreek{e}}_{0}}r^{-\frac{5}{2}}\big(\big|\partial_{r}\text{\textgreek{y}}_{k}\big|^{2}+r^{-2}|\partial_{\text{\textgreek{sv}}}\text{\textgreek{y}}_{k}|^{2}\big)+sR^{-3\text{\textgreek{e}}_{0}}r^{-2}|T\text{\textgreek{y}}_{k}|^{2}+\text{\textgreek{e}}_{0}s^{3}R^{-9\text{\textgreek{e}}_{0}}r^{-4}|\text{\textgreek{y}}_{k}|^{2}\Bigg\}\, dg+\\
+\int_{\mathcal{R}(\text{\textgreek{t}}_{1},\text{\textgreek{t}}_{2})\cap\{\frac{1}{2}R\le r\le R\}}f\Bigg\{ & r^{-\frac{5}{2}}\big(\big|\partial_{r}\text{\textgreek{y}}_{k}\big|^{2}+r^{-2}|\partial_{\text{\textgreek{sv}}}\text{\textgreek{y}}_{k}|^{2}\big)+R^{-1}s\partial_{r}w_{R}|T\text{\textgreek{y}}_{k}|^{2}+R^{-1}s\partial_{r}w_{R}\big(c\text{\textgreek{w}}_{k}^{2}-CR^{-2}s^{2}\big)|\text{\textgreek{y}}_{k}|^{2}\Bigg\}\, dg+\\
+\int_{\mathcal{R}(\text{\textgreek{t}}_{1},\text{\textgreek{t}}_{2})\cap\{r\ge R\}}f(R)\Bigg\{ & r^{-\frac{5}{2}}\big(\big|\partial_{r}\text{\textgreek{y}}_{k}\big|^{2}+r^{-2}|\partial_{\text{\textgreek{sv}}}\text{\textgreek{y}}_{k}|^{2}\big)+cr^{-2}|T\text{\textgreek{y}}_{k}|^{2}+\big(c\text{\textgreek{w}}_{k}^{2}-CR^{-2}\big)r^{-2}|\text{\textgreek{y}}_{k}|^{2}\Bigg\}\, dg\le\\
\le & C_{\text{\textgreek{d}}_{1}}\int_{\mathcal{R}(\text{\textgreek{t}}_{1},\text{\textgreek{t}}_{2})\cap\mathscr{E}_{\text{\textgreek{d}}_{1}}}f\Big\{ s^{2}R^{-6\text{\textgreek{e}}_{0}}|\nabla\text{\textgreek{y}}_{k}|_{g_{ref}}^{2}+s^{4}R^{-12\text{\textgreek{e}}_{0}}|\text{\textgreek{y}}_{k}|^{2}\Big\}\, dg+\\
 & +C\Big|\int_{\mathcal{R}(\text{\textgreek{t}}_{1},\text{\textgreek{t}}_{2})}F_{k}\big(\nabla^{\text{\textgreek{m}}}f\nabla_{\text{\textgreek{m}}}\bar{\text{\textgreek{y}}}_{k}+O\big(\sum_{j=1}^{2}(1+r)^{j-2}|\nabla^{j}f|_{g_{ref}}\big)\bar{\text{\textgreek{y}}}_{k}\big)\, dg\Big|+\\
 & +C\sum_{j=1}^{2}\int_{\text{\textgreek{S}}_{\text{\textgreek{t}}_{j}}}\Big(|\nabla f|_{g_{ref}}|\nabla\text{\textgreek{y}}_{k}|_{g_{ref}}^{2}+\big(\sum_{j=1}^{3}(1+r)^{j-3}|\nabla^{j}f|_{g_{ref}}\big)|\text{\textgreek{y}}_{k}|^{2}\Big)\, dg_{\text{\textgreek{S}}}+\\
 & +C\int_{\mathcal{R}(\text{\textgreek{t}}_{1},\text{\textgreek{t}}_{2})\cap\mathcal{H}^{+}}\Big(|\nabla f|_{g_{ref}}J_{\text{\textgreek{m}}}^{N}(\text{\textgreek{y}}_{k})n_{\mathcal{H}^{+}}^{\text{\textgreek{m}}}+\big(\sum_{j=1}^{3}(1+r)^{j-3}|\nabla^{j}f|_{g_{ref}}\big)|\text{\textgreek{y}}_{k}|^{2}\Big)\, dvol_{\mathcal{H}^{+}}.\\
 & +C\text{\textgreek{w}}_{k}^{2}(1+\text{\textgreek{w}}_{k}^{-2})\big(\log(2+\text{\textgreek{t}}_{2})\big)^{4}R_{0}^{2}\sup_{\{r\le R_{0}\}}f\cdot\mathcal{E}_{log}[\text{\textgreek{y}}]+\\
 & +C\text{\textgreek{w}}_{k}^{2}(1+\text{\textgreek{w}}_{k}^{-6})\sup_{\{r\le R\}}f\cdot\mathcal{E}_{log}[\text{\textgreek{y}}].
\end{split}
\label{eq:MainCarlemanEstimate-1-1}
\end{equation}

In view of the bound (\ref{eq:BoundsS}) for the parameters $R,s$,
as well as the properties of the function (\ref{eq:FunctionInCarlemanProposition}),
inequality (\ref{eq:MainCarlemanEstimate-1-1}) yields (using also
(\ref{eq:EnergyClassAPrioriBoundLocal}) and Lemma \ref{lem:BoundF},
combined with a Cauchy--Schwarz inequality, to estimate the second
and third terms in the right hand side of (\ref{eq:MainCarlemanEstimate-1-1})):
\begin{align}
 & \int_{\mathcal{R}(\text{\textgreek{t}}_{1},\text{\textgreek{t}}_{2})\backslash\mathscr{E}_{2\text{\textgreek{d}}_{1}}}\big((1+r)^{-\frac{5}{2}}|\nabla\text{\textgreek{y}}_{k}|_{g_{ref}}^{2}+(\text{\textgreek{w}}_{k}^{2}r^{-2}+r^{-4})|\text{\textgreek{y}}_{k}|^{2}\big)\, dg\le\label{eq:AlmostDoneWithTheUsefulCarleman}\\
 & \hphantom{++}\le\sum_{j=1}^{4}\big(|\nabla^{j}w_{R}|_{g_{ref}}+|\nabla^{j}\tilde{w}_{R}|_{g_{ref}}\big)\Bigg\{ C_{\text{\textgreek{d}}_{1}}\frac{\sup_{\mathscr{E}_{\text{\textgreek{d}}_{1}}}\big(e^{2sw_{R}}+e^{2s\tilde{w}_{R}}\big)}{\inf_{\{r\ge\frac{1}{4}r_{0}\}\backslash\mathscr{E}_{2\text{\textgreek{d}}_{1}}}\big(e^{2sw_{R}}+e^{2s\tilde{w}_{R}}\big)}(sR^{-3\text{\textgreek{e}}_{0}})\int_{\mathcal{R}(\text{\textgreek{t}}_{1},\text{\textgreek{t}}_{2})\backslash\mathscr{E}_{\text{\textgreek{d}}_{1}}}\big(|\nabla\text{\textgreek{y}}_{k}|_{g_{ref}}^{2}+|\text{\textgreek{y}}_{k}|^{2}\big)\, dg+\nonumber \\
 & \hphantom{++\le\sum_{j=1}^{4}\big(|\nabla^{j}w_{R}|_{g_{ref}}+|\nabla^{j}\tilde{w}_{R}|_{g_{ref}}\big)\Bigg\{}+C_{\text{\textgreek{d}}_{1}}(1+\text{\textgreek{w}}_{k}^{-10})\big(\log(2+\text{\textgreek{t}}_{2})\big)^{4}\frac{\sup_{\{r\le R\}}\big(e^{2sw_{R}}+e^{2s\tilde{w}_{R}}\big)}{\inf_{\{r\le R\}}\big(e^{2sw_{R}}+e^{2s\tilde{w}_{R}}\big)}\mathcal{E}_{log}[\text{\textgreek{y}}]\Bigg\}.\nonumber 
\end{align}
In view of the properties of the function \ref{eq:FunctionInCarlemanProposition},
we can estimate 
\begin{equation}
\sup_{\{r\le R\}}w_{R}-\inf_{\{r\le R\}}w_{R}+\sup_{\{r\le R\}}\tilde{w}_{R}-\inf_{\{r\le R\}}\tilde{w}_{R}\le C\text{\textgreek{e}}_{0}^{-1}R^{3\text{\textgreek{e}}_{0}},
\end{equation}
\begin{equation}
\inf_{\{\frac{1}{4}r_{0}\le R\}\backslash\mathscr{E}_{2\text{\textgreek{d}}_{1}}}w_{R}\ge\max_{\mathscr{E}_{\text{\textgreek{d}}_{1}}}w_{R}+c_{\text{\textgreek{d}}_{1}}R^{-3\text{\textgreek{e}}_{0}},
\end{equation}
\begin{equation}
\inf_{\{\frac{1}{4}r_{0}\le r\le R\}\backslash\mathscr{E}_{2\text{\textgreek{d}}_{1}}}\tilde{w}_{R}\ge\max_{\mathscr{E}_{\text{\textgreek{d}}_{1}}}\tilde{w}_{R}+c_{\text{\textgreek{d}}_{1}}R^{-3\text{\textgreek{e}}_{0}}
\end{equation}
and 
\begin{equation}
\sum_{j=1}^{4}\big(|\nabla^{j}w_{R}|_{g_{ref}}+|\nabla^{j}\tilde{w}_{R}|_{g_{ref}}\big)\le C.
\end{equation}
Therefore, inequality (\ref{eq:CarlemanEstimate}) readily follows
from \ref{eq:AlmostDoneWithTheUsefulCarleman}, provided $C_{\text{\textgreek{d}}_{1}\text{\textgreek{e}}_{0}}$
in (\ref{eq:BoundsS}) is sufficiently large in terms of $\text{\textgreek{d}}_{1},\text{\textgreek{e}}_{0}$.
\qed

\subsection{\label{sub:ProofWithBoundaryConditions}Proof of Proposition \ref{prop:GeneralCarlemanEstimate}
in the case of Dirchlet or Neumann boundary conditions}

In this section, we will briefly sketch how the proof of Proposition
\ref{prop:GeneralCarlemanEstimate} can be applied to the case when
the boundary $\partial\mathcal{M}$ of $(\mathcal{M},g)$ is allowed
to have a non-trivial timelike component $\partial_{tim}\mathcal{M}$
and equation (\ref{eq:InhomogeneousWaveEquation}) is supplemented
with Dirichlet or Neumann boundary conditions for $\text{\textgreek{f}}$
on $\partial_{tim}\mathcal{M}$. 

We will first describe the class of Lorentzian manifolds with such
a boundary component on which Proposition \ref{prop:GeneralCarlemanEstimate}
will apply. Let $(\mathcal{M}^{d+1},g)$, $d\ge2$, be a smooth Lorentzian
manifold with piecewise smooth boundary $\partial\mathcal{M}$ splitting
as 
\begin{equation}
\partial\mathcal{M}=\partial_{hor}\mathcal{M}\cup\partial_{tim}\mathcal{M},
\end{equation}
where $\partial_{hor}\mathcal{M}$ has the structure of a piecewise
smooth null hypersurface and $\partial_{tim}\mathcal{M}$ is a smooth
timelike hypersurface, with $\partial_{hor}\mathcal{M}\cap\partial_{tim}\mathcal{M}=\emptyset$.
For the discussion of this section, we will assume that $\partial_{tim}\mathcal{M}\neq\emptyset$,
but $\partial_{hor}\mathcal{M}$ will be allowed to be empty. Let
$(\widetilde{\mathcal{M}},\tilde{g})$ be the double of $(\mathcal{M},g)$
across $\partial_{tim}\mathcal{M}$, which is defined as the disjoint
union of two copies of $(\mathcal{M},g)$ glued along $\partial_{tim}\mathcal{M}$
(for the relevant definitions, see e.\,g.~\cite{Lee2003}). Let
$i_{1},i_{2}:\mathcal{M}\rightarrow\widetilde{\mathcal{M}}$ be the
two natural isometric embeddings of $(\mathcal{M},g)$ into $(\widetilde{\mathcal{M}},\tilde{g})$.
Note that $\widetilde{\mathcal{M}}=i_{1}(\mathcal{M})\cup i_{2}(\mathcal{M})$
and $i_{1}(\partial_{tim}\mathcal{M})=i_{2}(\partial_{tim}\mathcal{M})$.
Furthermore, $\widetilde{\mathcal{M}}$ is a smooth manifold, and
the metric $\tilde{g}$ is continuous and piecewise smooth on $\widetilde{\mathcal{M}}$
and smooth on $\widetilde{\mathcal{M}}\backslash i_{1}(\partial_{tim}\mathcal{M})$.%
\footnote{The metric $\tilde{g}$ is continuous across $i_{1}(\partial_{tim}\mathcal{M})$,
but fails to be $C^{1}$ at all the points of $i_{1}(\partial_{tim}\mathcal{M})$
on which the second fundamental form of $i_{1}(\partial_{tim}\mathcal{M})$
is non-zero.%
} We will always identify $\mathcal{M}$ with $i_{1}(\mathcal{M})\subset\widetilde{\mathcal{M}}$. 

We will assume that $(\widetilde{\mathcal{M}},\tilde{g})$ is a globally
hyperbolic Lorentzian manifold (with the regularity of $\tilde{g}$
as described before), satisfying Assumptions \hyperref[Assumption 1]{G1},
\hyperref[Assumption 2]{G2} and \hyperref[Assumption 3]{G3} of Section
\ref{sec:StatementAssumptionsResults} (for the discussion of this
Section, we can also allow the case $\mathscr{E}=\emptyset$). Additionally,
we will assume that the stationary Killing field $T$ of $\widetilde{\mathcal{M}}$
(defined by Assumption \hyperref[Assumption 1]{G1}) is tangent to
$i_{1}(\partial_{tim}\mathcal{M})$. Let also $\text{\textgreek{S}}_{\widetilde{\mathcal{M}}},\mathcal{S}_{\widetilde{\mathcal{M}}},\mathcal{H}_{\widetilde{\mathcal{M}}}^{\pm}\subset\widetilde{\mathcal{M}}$,
$t_{\widetilde{\mathcal{M}}}:\widetilde{\mathcal{M}}\backslash\mathcal{H}_{\widetilde{\mathcal{M}}}^{-}\rightarrow\mathbb{R}$
and $r_{\widetilde{\mathcal{M}}}:\widetilde{\mathcal{M}}\backslash\mathcal{H}_{\widetilde{\mathcal{M}}}^{-}\rightarrow[0,+\infty)$
be as defined under Assumption \hyperref[Assumption 1]{G1}. We will
assume without loss of generality that $\text{\textgreek{S}}_{\widetilde{\mathcal{M}}},\mathcal{S}_{\widetilde{\mathcal{M}}}$
intersect $i_{1}(\partial_{tim}\mathcal{M})$ transversally, and that
$\text{\textgreek{S}}_{\widetilde{\mathcal{M}}}\cap i_{1}(\partial_{tim}\mathcal{M}),\mathcal{S}_{\widetilde{\mathcal{M}}}\cap i_{1}(\partial_{tim}\mathcal{M})$
are compact. Note that the restriction of $\mathcal{H}_{\widetilde{\mathcal{M}}}$
on $\mathcal{M}$ coincides with $\partial_{hor}\mathcal{M}$. 
\begin{rem*}
We will use the notation $\text{\textgreek{S}},\mathcal{S},\mathcal{H}^{\pm},t$
and $r$ for the restriction of the hypersurfaces $\text{\textgreek{S}}_{\widetilde{\mathcal{M}}},\mathcal{S}_{\widetilde{\mathcal{M}}},\mathcal{H}_{\widetilde{\mathcal{M}}}^{\pm}$
and the functions $t_{\widetilde{\mathcal{M}}},r_{\widetilde{\mathcal{M}}}$
on $\mathcal{M}\simeq i_{1}(\mathcal{M})$. 
\end{rem*}
For any $F\in C^{\infty}(\mathcal{M})$ and any $(\text{\textgreek{f}}_{0},\text{\textgreek{f}}_{1})\in C^{\infty}(\text{\textgreek{S}})\times C^{\infty}(\text{\textgreek{S}})$,
the initial-boundary value problem 
\begin{equation}
\begin{cases}
\square_{g}\text{\textgreek{f}}=G & \mbox{ on }\{t\ge0\}\\
(\text{\textgreek{f}},T\text{\textgreek{f}})=(\text{\textgreek{f}}_{0},\text{\textgreek{f}}_{1}) & \mbox{ on }\{t=0\}\\
\text{\textgreek{f}}=0 & \mbox{ on }\partial_{tim}\mathcal{M}
\end{cases}\label{eq:DirichletProblem}
\end{equation}
is well posed on $\{t\ge0\}\subset\mathcal{M}$. This follows from
the assumption that $(\widetilde{\mathcal{M}},\tilde{g})$ is globally
hyperbolic. The Dirichlet boundary condition $\text{\textgreek{f}}|_{\partial_{tim}\mathcal{M}}=0$
in (\ref{eq:DirichletProblem}) can also be replaced by the Neumann
boundary condition 
\begin{equation}
n_{\partial_{tim}\mathcal{M}}(\text{\textgreek{f}})|_{\partial_{tim}\mathcal{M}}=0,\label{eq:NeumannBoundaryCondition}
\end{equation}
where $n_{\partial_{tim}\mathcal{M}}$ is the unit normal vector field
on $\partial_{tim}\mathcal{M}$, pointing towards the interior of
$\mathcal{M}$.

On a spacetime $(\mathcal{M},g)$ as above, we will extend Proposition
\ref{prop:GeneralCarlemanEstimate} as follows:
\begin{prop}
\label{prop:CarlemanBoundary} Let $(\mathcal{M},g)$ be a Lorentzian
manifold with boundary as above. For any $s,R\gg1$ sufficiently large
in terms of the geometry of $(\mathcal{M},g)$ and any $0<\text{\textgreek{e}}_{0}<1$,
there exists a a smooth $T$-invariant function $f:\mathcal{M}\backslash\mathcal{H}^{-}\rightarrow(0,+\infty)$
as in Proposition \ref{prop:GeneralCarlemanEstimate}, so that (provided
$\text{\textgreek{e}}_{0}sR^{-9\text{\textgreek{e}}_{0}}\gg1$), for
any $0<\text{\textgreek{d}}\ll1$ , any $0\le\text{\textgreek{t}}_{1}\le\text{\textgreek{t}}_{2}$
and any smooth function $\text{\textgreek{f}}:\mathcal{M}\backslash\mathcal{H}^{-}\rightarrow\mathbb{C}$
with compact support on the hypersurfaces $\{t=const\}$ solving (\ref{eq:InhomogeneousWaveEquation})
and satisfying on $\partial_{tim}\mathcal{M}$ either the Dirichlet
condition $\text{\textgreek{f}}=0$ or the Neumann condition $n_{\partial_{tim}\mathcal{M}}(\text{\textgreek{f}})=0$,
the estimate (\ref{eq:MainCarlemanEstimate}) holds.\end{prop}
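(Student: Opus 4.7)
The plan is to carry out the proof of Proposition \ref{prop:GeneralCarlemanEstimate} from Sections \ref{sub:Some-auxiliary-lemmas}--\ref{sub:ProofOfCarleman} directly on $\mathcal{M}$, with the sole modification that the auxiliary functions $w_{R},\tilde{w}_{R},f,\tilde{f},h,\tilde{h}$ are constructed $\iota$-equivariantly on the double $(\widetilde{\mathcal{M}},\tilde{g})$, where $\iota:\widetilde{\mathcal{M}}\rightarrow\widetilde{\mathcal{M}}$ is the involutive isometry that swaps the two copies of $\mathcal{M}$ and fixes $\partial_{tim}\mathcal{M}$ pointwise. Since $\iota$ is an isometry whose differential acts as $-\mathrm{id}$ on the normal direction to $\partial_{tim}\mathcal{M}$, any $\iota$-invariant smooth function $F$ on $\widetilde{\mathcal{M}}$ automatically satisfies $n_{\partial_{tim}\mathcal{M}}^{\nu}\nabla_{\nu}F=0$ on $\partial_{tim}\mathcal{M}$. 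Constructing $w_{R},\tilde{w}_{R}$ as $\iota$-invariant functions is straightforward: the elliptic boundary value problem (\ref{eq:EllipticBoundaryProblem}) posed on $\text{\textgreek{S}}_{\widetilde{\mathcal{M}}}\backslash\mathscr{E}_{ext}$ with $\iota$-invariant boundary data admits a (unique, hence $\iota$-invariant) solution, and the subsequent perturbations of Lemmas \ref{lem:PositivityHessianW}--\ref{lem:ConstructionWR} (the stitching diffeomorphism $\mathcal{X}$, the radial extensions, the definition of $\tilde{w}$) may all be performed equivariantly, exploiting the density of $\iota$-invariant Morse functions inside the space of $\iota$-invariant $C^{2}$ functions.

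With such $\iota$-invariant $f,\tilde{f},h,\tilde{h}$ in hand (which we restrict to $\mathcal{M}\simeq i_{1}(\mathcal{M})$), I reproduce the integration-by-parts identity (\ref{eq:FundamentalCurrentFull}) on $\mathcal{R}(\text{\textgreek{t}}_{1},\text{\textgreek{t}}_{2})\subset\mathcal{M}$; the only difference from Section \ref{sub:IntegrationsByParts} is the appearance of an additional boundary contribution on $\partial_{tim}\mathcal{M}\cap\mathcal{R}(\text{\textgreek{t}}_{1},\text{\textgreek{t}}_{2})$ of the form (\ref{eq:BoundaryTermsFinal}) contracted with the outward conormal $n_{\partial_{tim}\mathcal{M}}$. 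A direct computation shows this additional boundary contribution vanishes identically under either boundary condition. In the Dirichlet case, $\text{\textgreek{f}}|_{\partial_{tim}\mathcal{M}}=0$ wipes out every zeroth-order contribution, forces $\nabla\text{\textgreek{f}}$ to be purely normal at $\partial_{tim}\mathcal{M}$, and the remaining first-order piece is a multiple of $n^{\nu}\nabla_{\nu}f=0$. In the Neumann case, $n^{\nu}\nabla_{\nu}\text{\textgreek{f}}=0$ on $\partial_{tim}\mathcal{M}$ kills all first-order contributions involving $\nabla\text{\textgreek{f}}$, while the zeroth-order coefficient $\text{\textgreek{q}}_{\le R}f^{-1}\nabla_{\text{\textgreek{m}}}\nabla_{\text{\textgreek{n}}}f\nabla^{\text{\textgreek{m}}}f+\nabla_{\text{\textgreek{n}}}h-\tfrac{1}{2}\nabla_{\text{\textgreek{n}}}(\square_{g}f)$ contracted with $n^{\text{\textgreek{n}}}$ vanishes on $\partial_{tim}\mathcal{M}$, since $\iota$-invariance forces $n^{\nu}\nabla_{\nu}h=0$ and $n^{\nu}\nabla_{\nu}(\square_{g}f)=0$ (the latter because $\square_{g}f$ is $\iota$-invariant), and, using that $\nabla^{\text{\textgreek{m}}}f$ is tangential at $\partial_{tim}\mathcal{M}$ together with the symmetry and parity of $\nabla_{\text{\textgreek{m}}}\nabla_{\text{\textgreek{n}}}f$ under $\iota$, also $n^{\text{\textgreek{n}}}\nabla_{\text{\textgreek{m}}}\nabla_{\text{\textgreek{n}}}f\nabla^{\text{\textgreek{m}}}f=0$.

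Once this vanishing is established, the remainder of the proof---namely the estimates for the zeroth-order coefficient $\mathcal{A}_{f,h}^{(R)}$ of Section \ref{sub:PropertiesOfAfh} and the first-order integrands of the proceeding section, together with the absorption arguments of Section \ref{sub:ProofOfCarleman}---carries over without any modification, since all of these computations are local away from $\partial_{tim}\mathcal{M}$ and depend only on the properties of $w_{R},\tilde{w}_{R}$ already secured by Lemmas \ref{lem:PositivityHessianW}--\ref{lem:ConstructionWR}. The final estimate (\ref{eq:MainCarlemanEstimate}) thus follows on $\mathcal{M}$.

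The main technical obstacle will lie in the $\iota$-equivariant version of Lemma \ref{lem:DoubleExponentForTheCarleman}, where one must produce an $\iota$-invariant perturbation $\bar{w}$ of the solution of (\ref{eq:EllipticBoundaryProblem}) that is simultaneously Morse on $\overline{\text{\textgreek{S}}_{\widetilde{\mathcal{M}}}\backslash\mathscr{E}_{ext}}$ and has no critical points of local maximum type. An equivariant Sard/transversality argument (applied inside the $\iota$-invariant $C^{2}$ perturbations) produces such a function generically, and any residual critical points lying on $\partial_{tim}\mathcal{M}$---which are the only new feature relative to the boundary-less case---may be eliminated by an arbitrarily small further $\iota$-equivariant perturbation supported away from $\partial_{tim}\mathcal{M}$, using that the perturbation of Lemma \ref{lem:PositivityHessianW} only needs to be carried out in a neighborhood of the critical set.
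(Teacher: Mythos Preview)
Your Dirichlet argument works and is in fact cleaner than the paper's: with $\iota$-invariant $f$ one has $n^{\nu}\nabla_{\nu}f=0$, and since under Dirichlet conditions $\nabla\text{\textgreek{f}}$ is purely normal on $\partial_{tim}\mathcal{M}$, every surviving boundary term indeed contains a factor $n^{\nu}\nabla_{\nu}f$ and hence vanishes. (The paper instead takes $n_{\partial_{tim}\mathcal{M}}(\bar{w})>0$ and only shows the boundary term has a \emph{sign}.)

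The Neumann argument, however, has a genuine gap. Your claim that $n^{\nu}\nabla_{\nu}(\square_{g}f)=0$, $n^{\nu}\nabla_{\nu}h=0$, and $n^{\nu}\nabla_{\mu}\nabla_{\nu}f\nabla^{\mu}f=0$ on $\partial_{tim}\mathcal{M}$ relies on a parity argument that would be valid only if the corresponding scalars were $C^{1}$ across the glued boundary, i.e.\ if $\tilde{g}$ were $C^{1}$ there. But the paper explicitly notes that $\tilde{g}$ fails to be $C^{1}$ wherever the second fundamental form of $\partial_{tim}\mathcal{M}$ is non-zero. In Fermi coordinates with normal $\bar{r}$, one computes directly that $(\nabla^{2}f)_{\bar{r}i}=-K_{i}^{\,j}\partial_{j}f$ and $\partial_{\bar{r}}|\nabla w_{R}|_{g}^{2}=-2K^{ij}\partial_{i}w_{R}\partial_{j}w_{R}$ at $\bar{r}=0$ (with $K$ the second fundamental form), so neither of these quantities vanishes. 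The leading $s^{2}$ contributions to $n^{\nu}\bigl(f^{-1}\nabla_{\mu}\nabla_{\nu}f\nabla^{\mu}f-\tfrac{1}{2}\nabla_{\nu}(\square_{g}f)\bigr)$ do happen to cancel, but an $O(s)\cdot f$ remainder survives (coming e.g.\ from $H\,\partial_{\bar{r}}^{2}w_{R}$ in $\partial_{\bar{r}}\square_{g}w_{R}$), with no definite sign.

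The paper deals with the Neumann case by a completely different mechanism: it keeps $n_{\partial_{tim}\mathcal{M}}(\bar{w})>0$, so the troublesome term $-n^{\nu}\nabla_{\nu}f\,\nabla^{\mu}\text{\textgreek{f}}\nabla_{\mu}\bar{\text{\textgreek{f}}}$ does not vanish; it then introduces an auxiliary vector field $\breve{f}\,n_{\partial_{tim}\mathcal{M}}$, adds the corresponding energy identity, and checks that the residual zeroth-order boundary term is $\sim s^{3}(\partial_{\bar{r}}w)^{3}f>0$, hence can be dropped, while the new bulk terms are absorbed for large~$s$. Your $\iota$-equivariant route could conceivably be rescued by absorbing the $O(s)f$ boundary remainder into the $O(s^{3})f$ bulk via a trace inequality, but that is an additional argument you have not supplied.
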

\begin{proof}
The proof of Proposition \ref{prop:CarlemanBoundary} follows in almost
exactly the same way as the proof of Proposition \ref{prop:GeneralCarlemanEstimate},
the only difference being the following: When using the multiplier
(\ref{eq:Multiplier}) for equation (\ref{eq:InhomogeneousWaveEquation})
as in Section \ref{sub:IntegrationsByParts} and after performing
the same integration-by-parts procedure, one obtains instead of (\ref{eq:FundamentalCurrentFull})
the following relation: 
\begin{equation}
\begin{split}\int_{\mathcal{R}(\text{\textgreek{t}}_{1},\text{\textgreek{t}}_{2})}Re\Big\{2\text{\textgreek{q}}_{\le R}f^{-1}\nabla^{\text{\textgreek{m}}}\nabla^{\text{\textgreek{n}}}f\nabla_{\text{\textgreek{m}}}(f^{\frac{1}{2}}\text{\textgreek{f}})\nabla_{\text{\textgreek{n}}}(f^{\frac{1}{2}}\bar{\text{\textgreek{f}}})-2\text{\textgreek{q}}_{\le R}\text{\textgreek{q}}_{\ge R_{0}} & r^{-1}f^{-1}(\partial_{r}f)\big|\partial_{r}(f^{\frac{1}{2}}\text{\textgreek{f}})\big|^{2}+\\
\hphantom{\int_{\mathcal{R}(\text{\textgreek{t}}_{1},\text{\textgreek{t}}_{2})}Re\Big\{}+2(1-\text{\textgreek{q}}_{\le R})\nabla^{\text{\textgreek{m}}}\nabla^{\text{\textgreek{n}}}f\nabla_{\text{\textgreek{m}}}\text{\textgreek{f}}\nabla_{\text{\textgreek{n}}}\bar{\text{\textgreek{f}}}+2\text{\textgreek{q}}_{\ge R_{0}} & \text{\textgreek{q}}_{\le R}r^{-1}(\partial_{r}f)|\partial_{r}\text{\textgreek{f}}|^{2}-2h\nabla^{\text{\textgreek{m}}}\text{\textgreek{f}}\nabla_{\text{\textgreek{m}}}\bar{\text{\textgreek{f}}}+\mathcal{A}_{f,h}^{(R)}|\text{\textgreek{f}}|^{2}\Big\}\, dg=\\
=-\int_{\mathcal{R}(\text{\textgreek{t}}_{1},\text{\textgreek{t}}_{2})}Re\big\{ & G\big(2\nabla^{\text{\textgreek{m}}}f\nabla_{\text{\textgreek{m}}}\bar{\text{\textgreek{f}}}+(\square_{g}f-2h)\bar{\text{\textgreek{f}}}\big)\big\}\, dg-\mathcal{B}_{f,h}^{(R)}[\text{\textgreek{f}};\text{\textgreek{t}}_{1},\text{\textgreek{t}}_{2}]-\mathcal{B}_{f,h}^{(b)}[\text{\textgreek{f}};\text{\textgreek{t}}_{1}\text{\textgreek{t}}_{2}],
\end{split}
\label{eq:FundamentalCurrentFull-2}
\end{equation}
where 
\begin{align}
\mathcal{B}_{f,h}^{(b)}[\text{\textgreek{f}};\text{\textgreek{t}}_{1}\text{\textgreek{t}}_{2}]=\int_{\partial_{tim}\mathcal{M}\cap\mathcal{R}(\text{\textgreek{t}}_{1},\text{\textgreek{t}}_{2})}Re\Big\{ & \Big(2\nabla^{\text{\textgreek{m}}}f\nabla_{\text{\textgreek{m}}}\bar{\text{\textgreek{f}}}\nabla_{\text{\textgreek{n}}}\text{\textgreek{f}}+(\square_{g}f-2h)\text{\textgreek{f}}\nabla_{\text{\textgreek{n}}}\bar{\text{\textgreek{f}}}-\nabla_{\text{\textgreek{n}}}f\nabla^{\text{\textgreek{m}}}\text{\textgreek{f}}\nabla_{\text{\textgreek{m}}}\bar{\text{\textgreek{f}}}+\label{eq:AdditionalBoundaryTerms}\\
 & +\big(f^{-1}\nabla_{\text{\textgreek{m}}}\nabla_{\text{\textgreek{n}}}f\nabla^{\text{\textgreek{m}}}f+\nabla_{\text{\textgreek{n}}}h-\frac{1}{2}(\nabla_{\text{\textgreek{n}}}(\square_{g}f))\big)|\text{\textgreek{f}}|^{2}\Big)n_{\partial_{tim}\mathcal{M}}^{\text{\textgreek{n}}}\Big\}\, dg_{\partial_{tim}\mathcal{M}},\nonumber 
\end{align}
 $g_{\partial_{tim}\mathcal{M}}$ being the induced (Lorentzian) metric
on $\partial_{tim}\mathcal{M}$. Notice that (\ref{eq:FundamentalCurrentFull-2})
differs from (\ref{eq:FundamentalCurrentFull}) only by the term (\ref{eq:AdditionalBoundaryTerms})
in the right hand side.

Let us assume, without loss of generality, that the function $\bar{w}$
of Lemma \ref{lem:DoubleExponentForTheCarleman} has been chosen so
that it additionally satisfies $n_{\partial_{tim}\mathcal{M}}(\bar{w})>0$,
with $\bar{w}$ being constant on $\partial_{tim}\mathcal{M}$ (it
can be readily checked that Lemma \ref{lem:DoubleExponentForTheCarleman}
can be established under this additional assumption). In the case
when $\text{\textgreek{f}}$ satisfies the Dirichlet boundary condition
$\text{\textgreek{f}}|_{\partial_{tim}\mathcal{M}}=0$, it is straightforward
to check that this choice of $\bar{w}$ implies (in view of the choice
of the functions $f,h$ in Section \ref{sub:ChoiceOfSeedFunctions})
that the term (\ref{eq:AdditionalBoundaryTerms}) is non-negative,
and in particular 
\begin{align}
\int_{\partial_{tim}\mathcal{M}\cap\mathcal{R}(\text{\textgreek{t}}_{1},\text{\textgreek{t}}_{2})}Re\Big\{ & \Big(2\nabla^{\text{\textgreek{m}}}f\nabla_{\text{\textgreek{m}}}\bar{\text{\textgreek{f}}}\nabla_{\text{\textgreek{n}}}\text{\textgreek{f}}+(\square_{g}f-2h)\text{\textgreek{f}}\nabla_{\text{\textgreek{n}}}\bar{\text{\textgreek{f}}}-\nabla_{\text{\textgreek{n}}}f\nabla^{\text{\textgreek{m}}}\text{\textgreek{f}}\nabla_{\text{\textgreek{m}}}\bar{\text{\textgreek{f}}}+\label{eq:AdditionalBoundaryTerms-1}\\
 & +\big(f^{-1}\nabla_{\text{\textgreek{m}}}\nabla_{\text{\textgreek{n}}}f\nabla^{\text{\textgreek{m}}}f+\nabla_{\text{\textgreek{n}}}h-\frac{1}{2}(\nabla_{\text{\textgreek{n}}}(\square_{g}f))\big)|\text{\textgreek{f}}|^{2}\Big)n_{\partial_{tim}\mathcal{M}}^{\text{\textgreek{n}}}\Big\}\, dg_{\partial_{tim}\mathcal{M}}\ge\nonumber \\
 & \hphantom{+\big(f^{-1}\nabla_{\text{\textgreek{m}}}\nabla_{\text{\textgreek{n}}}f\nabla^{\text{\textgreek{m}}}f+\nabla_{\text{\textgreek{n}}}h-}\ge c\int_{\partial_{tim}\mathcal{M}\cap\mathcal{R}(\text{\textgreek{t}}_{1},\text{\textgreek{t}}_{2})}n_{\partial_{tim}\mathcal{M}}(f)\big|n_{\partial_{tim}\mathcal{M}}(\text{\textgreek{f}})\big|^{2}\, dg_{\partial_{tim}\mathcal{M}}\ge0\nonumber 
\end{align}
 for some $c>0$. Thus, the term (\ref{eq:AdditionalBoundaryTerms})
can be dropped from the right hand side of (\ref{eq:FundamentalCurrentFull-2})
(thus yielding (\ref{eq:FundamentalCurrentFull})) and one can proceed
as before to establish (\ref{eq:MainCarlemanEstimate}).

In the case when $\text{\textgreek{f}}$ satisfies the Neumann boundary
condition $n_{\partial_{tim}\mathcal{M}}(\text{\textgreek{f}})|_{\partial_{tim}\mathcal{M}}=0$,
(\ref{eq:AdditionalBoundaryTerms}) is not necessarily non-negative,
since the term 
\[
n_{\partial_{tim}\mathcal{M}}(f)\nabla^{\text{\textgreek{m}}}\text{\textgreek{f}}\nabla_{\text{\textgreek{m}}}\bar{\text{\textgreek{f}}}
\]
 in (\ref{eq:AdditionalBoundaryTerms}) does not nessarily have a
sign (as is the case when $\text{\textgreek{f}}|_{\partial_{tim}\mathcal{M}}=0$).
In order to absorb this term, we proceed as follows: Let $\mathcal{U}\subset\mathcal{M}$
be a (small) $T$-invariant tubular neighborhood of $\partial_{tim}\mathcal{M}$
(so that $\mathcal{U}\cap\big(\mathcal{H}\cup\mathcal{B}_{crit}(8\text{\textgreek{d}}_{0})\big)=\emptyset$),
split as $\mathcal{U}\simeq[0,1)\times\partial_{tim}\mathcal{M}$,
where the projection onto the factor $[0,1)$ is given by a smooth
function $\bar{r}:\mathcal{U}\rightarrow[0,1)$ such that $\nabla^{\text{\textgreek{m}}}\bar{r}|_{\partial_{tim}\mathcal{M}}=n_{\partial_{tim}\mathcal{M}}^{\text{\textgreek{m}}}$,
and the projection onto $\partial_{tim}\mathcal{M}$ is given by a
smooth map $\bar{\text{\textgreek{sv}}}:\mathcal{U}\rightarrow\partial_{tim}\mathcal{M}$.
We will extend $n_{\partial_{tim}\mathcal{M}}$ on the whole of $\mathcal{U}$
by the relation 
\begin{equation}
n_{\partial_{tim}\mathcal{M}}^{\text{\textgreek{m}}}=\nabla^{\text{\textgreek{m}}}\bar{r}.
\end{equation}
Let $\text{\textgreek{q}}_{c}:[0,1)\rightarrow[0,1]$ be a smooth
function satisfying $\text{\textgreek{q}}_{c}\equiv1$ on $[0,\frac{1}{4}]$
and $\text{\textgreek{q}}_{c}\equiv0$ on $[\frac{1}{2},1)$, and
let us define the function $\breve{f}:\mathcal{M}\rightarrow\mathbb{R}$
by the relation 
\[
\breve{f}(\bar{r},\bar{\text{\textgreek{sv}}})\doteq\text{\textgreek{q}}_{c}(\bar{r})\cdot\big(n_{\partial_{tim}\mathcal{M}}(f)\big)|_{\partial_{tim}\mathcal{M}}(\bar{\text{\textgreek{sv}}})\mbox{ on }\mathcal{U}\simeq[0,1)\times\partial_{tim}\mathcal{M}
\]
 (where $\big(n_{\partial_{tim}\mathcal{M}}(f)\big)|_{\partial_{tim}\mathcal{M}}$
is the value of $n_{\partial_{tim}\mathcal{M}}(f)$ on $\{\bar{r}=0\}$)
and 
\[
\breve{f}\equiv0\mbox{ on }\mathcal{M}\backslash\mathcal{U}.
\]

Adding to (\ref{eq:FundamentalCurrentFull-2}) the identity 
\begin{equation}
\begin{split}\int_{\mathcal{R}(\text{\textgreek{t}}_{1},\text{\textgreek{t}}_{2})}Re\Big\{2 & \nabla^{\text{\textgreek{m}}}(\breve{f}n_{\partial_{tim}\mathcal{M}}^{\text{\textgreek{n}}})\nabla_{\text{\textgreek{m}}}\text{\textgreek{f}}\nabla_{\text{\textgreek{n}}}\bar{\text{\textgreek{f}}}-\nabla_{\text{\textgreek{n}}}(\breve{f}n_{\partial_{tim}\mathcal{M}}^{\text{\textgreek{n}}})\nabla^{\text{\textgreek{m}}}\text{\textgreek{f}}\nabla_{\text{\textgreek{m}}}\bar{\text{\textgreek{f}}}\Big\}\, dg=\\
= & -\int_{\mathcal{R}(\text{\textgreek{t}}_{1},\text{\textgreek{t}}_{2})}Re\Big\{ G\cdot2\breve{f}n_{\partial_{tim}\mathcal{M}}(\bar{\text{\textgreek{f}}})\Big\}\, dg-\\
 & -\int_{\partial_{tim}\mathcal{M}\cap\mathcal{R}(\text{\textgreek{t}}_{1},\text{\textgreek{t}}_{2})}Re\Big\{\big(2\breve{f}n_{\partial_{tim}\mathcal{M}}(\bar{\text{\textgreek{f}}})\nabla_{\text{\textgreek{n}}}\text{\textgreek{f}}-\breve{f}(n_{\partial_{tim}\mathcal{M}})_{\text{\textgreek{n}}}\nabla^{\text{\textgreek{m}}}\text{\textgreek{f}}\nabla_{\text{\textgreek{m}}}\bar{\text{\textgreek{f}}}\big)n_{\partial_{tim}\mathcal{M}}^{\text{\textgreek{n}}}\Big\}\, dg_{\partial_{tim}\mathcal{M}}
\end{split}
\label{eq:AddedIdentityForNeumann}
\end{equation}
and using the Neumann condition $n_{\partial_{tim}\mathcal{M}}(\text{\textgreek{f}})|_{\partial_{tim}\mathcal{M}}=0$,
we thus infer: 
\begin{equation}
\begin{split}\int_{\mathcal{R}(\text{\textgreek{t}}_{1},\text{\textgreek{t}}_{2})}Re\Big\{2\text{\textgreek{q}}_{\le R}f^{-1}\nabla^{\text{\textgreek{m}}}\nabla^{\text{\textgreek{n}}}f\nabla_{\text{\textgreek{m}}}(f^{\frac{1}{2}}\text{\textgreek{f}})\nabla_{\text{\textgreek{n}}}(f^{\frac{1}{2}}\bar{\text{\textgreek{f}}})+ & 2\nabla^{\text{\textgreek{m}}}(\breve{f}n_{\partial_{tim}\mathcal{M}}^{\text{\textgreek{n}}})\nabla_{\text{\textgreek{m}}}\text{\textgreek{f}}\nabla_{\text{\textgreek{n}}}\bar{\text{\textgreek{f}}}-2\text{\textgreek{q}}_{\le R}\text{\textgreek{q}}_{\ge R_{0}}r^{-1}f^{-1}(\partial_{r}f)\big|\partial_{r}(f^{\frac{1}{2}}\text{\textgreek{f}})\big|^{2}+\\
\hphantom{\int_{\mathcal{R}(}}+2(1-\text{\textgreek{q}}_{\le R})\nabla^{\text{\textgreek{m}}}\nabla^{\text{\textgreek{n}}}f\nabla_{\text{\textgreek{m}}}\text{\textgreek{f}}\nabla_{\text{\textgreek{n}}}\bar{\text{\textgreek{f}}}+2\text{\textgreek{q}}_{\ge R_{0}}\text{\textgreek{q}}_{\le R}r^{-1} & (\partial_{r}f)|\partial_{r}\text{\textgreek{f}}|^{2}-\big(2h+\nabla_{\text{\textgreek{n}}}(\breve{f}n_{\partial_{tim}\mathcal{M}}^{\text{\textgreek{n}}})\big)\nabla^{\text{\textgreek{m}}}\text{\textgreek{f}}\nabla_{\text{\textgreek{m}}}\bar{\text{\textgreek{f}}}+\mathcal{A}_{f,h}^{(R)}|\text{\textgreek{f}}|^{2}\Big\}\, dg=\\
=-\int_{\mathcal{R}(\text{\textgreek{t}}_{1},\text{\textgreek{t}}_{2})}Re\big\{ & G\big(2\nabla^{\text{\textgreek{m}}}f\nabla_{\text{\textgreek{m}}}\bar{\text{\textgreek{f}}}+(\square_{g}f-2h)\bar{\text{\textgreek{f}}}\big)\big\}\, dg-\mathcal{B}_{f,h}^{(R)}[\text{\textgreek{f}};\text{\textgreek{t}}_{1},\text{\textgreek{t}}_{2}]-\breve{\mathcal{B}}_{f,h}^{(b)}[\text{\textgreek{f}};\text{\textgreek{t}}_{1}\text{\textgreek{t}}_{2}],
\end{split}
\label{eq:FundamentalCurrentFull-2-1}
\end{equation}
where 
\begin{equation}
\breve{\mathcal{B}}_{f,h}^{(b)}[\text{\textgreek{f}};\text{\textgreek{t}}_{1}\text{\textgreek{t}}_{2}]=\int_{\partial_{tim}\mathcal{M}\cap\mathcal{R}(\text{\textgreek{t}}_{1},\text{\textgreek{t}}_{2})}\big(f^{-1}\nabla_{\text{\textgreek{m}}}\nabla_{\text{\textgreek{n}}}f\nabla^{\text{\textgreek{m}}}f+\nabla_{\text{\textgreek{n}}}h-\frac{1}{2}(\nabla_{\text{\textgreek{n}}}(\square_{g}f))\big)|\text{\textgreek{f}}|^{2}\Big)n_{\partial_{tim}\mathcal{M}}^{\text{\textgreek{n}}}\, dg_{\partial_{tim}\mathcal{M}}.\label{eq:AdditionalBoundaryTermsNeumann}
\end{equation}
Notice that, if $sR^{-3\text{\textgreek{e}}_{0}}\gg1$, the term (\ref{eq:AdditionalBoundaryTermsNeumann})
is non-negative (in view of the properties of the functions $f,h$,
see Section \ref{sub:ChoiceOfSeedFunctions}), and thus it can be
dropped from the right hand side of (\ref{eq:FundamentalCurrentFull-2-1}).
Furthermore, if $l\gg1$ in Lemma \ref{lem:DoubleExponentForTheCarleman}
and $sR^{-3\text{\textgreek{e}}_{0}}\gg1$, the terms $2\nabla^{\text{\textgreek{m}}}(\breve{f}n_{\partial_{tim}\mathcal{M}}^{\text{\textgreek{n}}})\nabla_{\text{\textgreek{m}}}\text{\textgreek{f}}\nabla_{\text{\textgreek{n}}}\bar{\text{\textgreek{f}}}$
and $\nabla_{\text{\textgreek{n}}}(\breve{f}n_{\partial_{tim}\mathcal{M}}^{\text{\textgreek{n}}})\big)\nabla^{\text{\textgreek{m}}}\text{\textgreek{f}}\nabla_{\text{\textgreek{m}}}\bar{\text{\textgreek{f}}}$
in the left hand side of (\ref{eq:FundamentalCurrentFull-2-1}) (restricted
to the complement of $\mathscr{E}_{ext}$) can be absorbed into the
right hand side of \ref{eq:LowerBoundCurrentOutsideErgoregion}. Thus,
following exactly the same steps as we did in order to obtain (\ref{eq:MainCarlemanEstimate})
from (\ref{eq:FundamentalCurrentFull}) in the case $\partial_{tim}\mathcal{M}=\emptyset$,
we can also obtain (\ref{eq:MainCarlemanEstimate}) from (\ref{eq:FundamentalCurrentFull-2-1})
in the case when $\partial_{tim}\mathcal{M}\neq\emptyset$ and $n_{\partial_{tim}\mathcal{M}}(\text{\textgreek{f}})|_{\partial_{tim}\mathcal{M}}=0$.
\end{proof}

\section{\label{sec:Proof-of-Proposition}Proof of Proposition \ref{prop:QuantitativeDecayOutsideErgoregion}}

Let us introduce the parameters $0<\text{\textgreek{w}}_{0}\ll1$,
$\text{\textgreek{w}}_{+}\gg1$ and $\text{\textgreek{t}}_{1}\ge\bar{\text{\textgreek{t}}}_{0}+\text{\textgreek{e}}^{-2}\text{\textgreek{t}}_{*}$
depending on $\text{\textgreek{e}},\text{\textgreek{d}}_{1},R,\text{\textgreek{t}}_{*},\bar{\text{\textgreek{t}}}_{0},\mathcal{E}_{log}[\text{\textgreek{f}}],\mathcal{E}_{log}[\text{\textgreek{y}}],\mathcal{E}_{log}[T\text{\textgreek{y}}]$
and $\mathcal{E}[T^{2}\text{\textgreek{y}}]$ in the statement of
Proposition \ref{prop:QuantitativeDecayOutsideErgoregion} (we will
fix $\text{\textgreek{w}}_{0}$, $\text{\textgreek{w}}_{+}$ and $\text{\textgreek{t}}_{1}$
later), and, for $n=\lceil\log_{2}(\frac{\text{\textgreek{w}}_{+}}{\text{\textgreek{w}}_{0}})\rceil$,
let us decompose $\text{\textgreek{y}}$ and $T\text{\textgreek{y}}$
into their frequency localised components $\{\text{\textgreek{y}}_{k}\}_{k=0}^{n}$,
$\text{\textgreek{y}}_{\ge\text{\textgreek{w}}_{+}}$ and $\{(T\text{\textgreek{y}})_{k}\}_{k=0}^{n}$,
$(T\text{\textgreek{y}})_{\ge\text{\textgreek{w}}_{+}}$, respectively,
as in Section \ref{sub:Frequency-cut-off} (notice that (\ref{eq:BoundednessEnergy})
is satisfied in view of (\ref{BoundednessEnergyPsi})). 

In view of Lemma \ref{lem:NearZeroFrequencyEstimateDerivatives},
(\ref{eq:BoundForContradiction}) (and (\ref{BoundednessEnergyPsi})),
as well as a Hardy-type inequality (of the form (\ref{eq:HardyInterior})),
we obtain for any $\text{\textgreek{t}}\ge2\text{\textgreek{t}}_{*}$
and any $0<a\ll1$:
\begin{align}
\int_{\mathcal{R}(\text{\textgreek{t}}-\text{\textgreek{t}}_{*},\text{\textgreek{t}}+\text{\textgreek{t}}_{*})\cap\{r\le R\}}\big(J_{\text{\textgreek{m}}}^{N}(\text{\textgreek{y}}_{0})N^{\text{\textgreek{m}}}+|\text{\textgreek{y}}_{0}|^{2}\big) & \le C\text{\textgreek{w}}_{0}^{2}\int_{\mathcal{R}(\text{\textgreek{t}}-\text{\textgreek{t}}_{*},\text{\textgreek{t}}+\text{\textgreek{t}}_{*})\cap\{r\le R\}}\big(J_{\text{\textgreek{m}}}^{N}(\text{\textgreek{f}})N^{\text{\textgreek{m}}}+|\text{\textgreek{f}}|^{2}\big)+\label{eq:ZeroPartBound}\\
 & \hphantom{\le C\Big(\text{\textgreek{w}}_{0}^{2}}+C_{a}\Big(\text{\textgreek{w}}_{0}^{2}\big(\log(2+\text{\textgreek{t}})\big)^{4}+(1+\text{\textgreek{w}}_{0}\text{\textgreek{t}})^{-1}\Big)(1+\text{\textgreek{w}}_{0}^{-1-a})R^{2}\mathcal{E}_{log}[\text{\textgreek{f}}]\le\nonumber \\
 & \le C_{a}\Big\{\text{\textgreek{w}}_{0}^{2}R^{2}\text{\textgreek{t}}_{*}\big(\log(2+\text{\textgreek{t}})\big)^{4}+\text{\textgreek{w}}_{0}^{2}\big(\log(2+\text{\textgreek{t}})\big)^{4}(1+\text{\textgreek{w}}_{0}^{-1-a})R^{2}+\nonumber \\
 & \hphantom{\le C_{a}\Big\{\text{\textgreek{w}}_{0}^{2}R^{2}T_{*}\big(\log(2+\text{\textgreek{t}})\big)^{4}++}+(1+\text{\textgreek{w}}_{0}\text{\textgreek{t}})^{-1}(1+\text{\textgreek{w}}_{0}^{-1-a})R^{2}\Big\}\mathcal{E}_{log}[\text{\textgreek{f}}].
\end{align}
 From Lemmas \ref{lem:DtToOmegaInequalities} and \ref{lem:HighFrequencies},
we obtain for any $\text{\textgreek{t}}>0$ and any $\text{\textgreek{d}}>0$:
\begin{align}
\sum_{j=0}^{\lfloor\frac{\text{\textgreek{d}}^{-1}-1}{2}\rfloor}\int_{\mathcal{R}(\text{\textgreek{t}}+2j\text{\textgreek{t}}_{*},\text{\textgreek{t}}+2(j+1)\text{\textgreek{t}}_{*})\cap\{r\le R\}}\big(J_{\text{\textgreek{m}}}^{N}(\text{\textgreek{y}}_{\ge\text{\textgreek{w}}_{+}})N^{\text{\textgreek{m}}}+|\text{\textgreek{y}}_{\ge\text{\textgreek{w}}_{+}}|^{2}\big) & \le\int_{\mathcal{R}(\text{\textgreek{t}},\text{\textgreek{t}}+\text{\textgreek{d}}^{-1}\text{\textgreek{t}}_{*})\cap\{r\le R\}}\big(J_{\text{\textgreek{m}}}^{N}(\text{\textgreek{y}}_{\ge\text{\textgreek{w}}_{+}})N^{\text{\textgreek{m}}}+|\text{\textgreek{y}}_{\ge\text{\textgreek{w}}_{+}}|^{2}\big)\le\label{eq:HighFrequencyPidgeonholePrinciple}\\
 & \le C\text{\textgreek{d}}^{-1}\text{\textgreek{t}}_{*}\text{\textgreek{w}}_{+}^{-2}\big(\mathcal{E}_{log}[\text{\textgreek{y}}]+\mathcal{E}[T\text{\textgreek{y}}]\big)+C\mathcal{E}_{log}[\text{\textgreek{y}}].\nonumber 
\end{align}

We will assume that $\text{\textgreek{w}}_{+}$ is sufficiently large
in terms of $\text{\textgreek{e}},\text{\textgreek{t}}_{*},\mathcal{E}_{log}[\text{\textgreek{y}}],\mathcal{E}[T\text{\textgreek{y}}]$
so that 
\begin{equation}
\text{\textgreek{t}}_{*}\text{\textgreek{w}}_{+}^{-2}\big(\mathcal{E}_{log}[\text{\textgreek{y}}]+\mathcal{E}[T\text{\textgreek{y}}]\big)\ll\text{\textgreek{e}}.\label{eq:SmallnessFromHighFrequency}
\end{equation}
Let us also use the ansatz 
\begin{equation}
\text{\textgreek{w}}_{0}=\frac{\bar{\text{\textgreek{w}}}_{0}}{\big(\log(2+\text{\textgreek{t}}_{1})\big)^{8}},\label{eq:OmegaZero}
\end{equation}
and let us assume that $\bar{\text{\textgreek{w}}}_{0}$ is sufficiently
small in terms of $\text{\textgreek{e}},R,\text{\textgreek{t}}_{*},\mathcal{E}_{log}[\text{\textgreek{f}}]$,
and $\text{\textgreek{t}}_{1}$ is sufficiently large in terms of
$\text{\textgreek{e}},R,\mathcal{E}_{log}[\text{\textgreek{f}}],\bar{\text{\textgreek{w}}}_{0}$,
so that for any $\text{\textgreek{t}}_{1}\le\text{\textgreek{t}}\le100\text{\textgreek{t}}_{1}$
(having fixed an $a\in(0,1)$: 
\begin{equation}
\big(\text{\textgreek{w}}_{0}^{2}R^{2}\text{\textgreek{t}}_{*}\big(\log(2+\text{\textgreek{t}})\big)^{4}+\text{\textgreek{w}}_{0}^{2}\big(\log(2+\text{\textgreek{t}})\big)^{4}(1+\text{\textgreek{w}}_{0}^{-1-a})R^{2}+(1+\text{\textgreek{w}}_{0}\text{\textgreek{t}})^{-1}(1+\text{\textgreek{w}}_{0}^{-1-a})R^{2}\big)\mathcal{E}_{log}[\text{\textgreek{f}}]\ll\text{\textgreek{e}}\label{eq:SmallnessFromLowFrequency}
\end{equation}
(later, we will also need to assume that $\text{\textgreek{t}}_{1}$
is also sufficiently large in terms of $\text{\textgreek{w}}_{+}$).
Then, (\ref{eq:ZeroPartBound}), (\ref{eq:HighFrequencyPidgeonholePrinciple}),
(\ref{eq:SmallnessFromLowFrequency}) and (\ref{eq:SmallnessFromHighFrequency})
imply that for any $\text{\textgreek{t}}\ge0$: 
\begin{equation}
\sum_{l=0}^{\lfloor\frac{\text{\textgreek{d}}^{-1}-1}{2}\rfloor}\int_{\mathcal{R}(\text{\textgreek{t}}+2l\text{\textgreek{t}}_{*},\text{\textgreek{t}}+2(l+1)\text{\textgreek{t}}_{*})\cap\{r\le R\}}\big(J_{\text{\textgreek{m}}}^{N}(\text{\textgreek{y}}_{0})N^{\text{\textgreek{m}}}+J_{\text{\textgreek{m}}}^{N}(\text{\textgreek{y}}_{\ge\text{\textgreek{w}}_{+}})N^{\text{\textgreek{m}}}+|\text{\textgreek{y}}_{0}|^{2}+|\text{\textgreek{y}}_{\ge\text{\textgreek{w}}_{+}}|^{2}\big)\le\frac{1}{20}\text{\textgreek{e}}\text{\textgreek{d}}^{-1}+C\mathcal{E}_{log}[\text{\textgreek{y}}].\label{eq:EndBounds}
\end{equation}

Repeating the same procedure for $T\text{\textgreek{y}}$ in place
of $\text{\textgreek{y}}$ and adding the result to (\ref{eq:EndBounds}),
we obtain for any $\text{\textgreek{d}}>0$ (provided $\bar{\text{\textgreek{w}}}_{0}$
is fixed sufficiently small in terms of $\text{\textgreek{e}},R,\text{\textgreek{t}}_{*},\mathcal{E}_{log}[\text{\textgreek{f}}],\mathcal{E}_{log}[\text{\textgreek{y}}]$,
$\text{\textgreek{t}}_{1}$ is fixed sufficiently large in terms of
$\text{\textgreek{e}},R,\mathcal{E}_{log}[\text{\textgreek{f}}],\mathcal{E}_{log}[\text{\textgreek{y}}]\bar{\text{\textgreek{w}}}_{0}$
and $\text{\textgreek{w}}_{+}$ is fixed sufficiently large in terms
of $\text{\textgreek{e}},\text{\textgreek{t}}_{*},\mathcal{E}_{log}[\text{\textgreek{y}}],\mathcal{E}_{log}[T\text{\textgreek{y}}],\mathcal{E}[T^{2}\text{\textgreek{y}}]$):
\begin{align}
\sum_{l=0}^{\lfloor\frac{\text{\textgreek{d}}^{-1}-1}{2}\rfloor}\sum_{j=0}^{1}\int_{\mathcal{R}(\text{\textgreek{t}}+2l\text{\textgreek{t}}_{*},\text{\textgreek{t}}+2(l+1)\text{\textgreek{t}}_{*})\cap\{r\le R\}}\big(J_{\text{\textgreek{m}}}^{N}((T^{j}\text{\textgreek{y}})_{0})N^{\text{\textgreek{m}}} & +J_{\text{\textgreek{m}}}^{N}((T^{J}\text{\textgreek{y}})_{\ge\text{\textgreek{w}}_{+}})N^{\text{\textgreek{m}}}+|(T^{j}\text{\textgreek{y}})_{0}|^{2}+|(T^{j}\text{\textgreek{y}})_{\ge\text{\textgreek{w}}_{+}}|^{2}\big)\le\label{eq:EndBounds-1}\\
 & \le\frac{1}{10}\text{\textgreek{e}}\text{\textgreek{d}}^{-1}+C\sum_{j=0}^{1}\mathcal{E}_{log}[T^{j}\text{\textgreek{y}}].\nonumber 
\end{align}

In view of Corollary \ref{cor:CarlemanForPsik}, for any $1\le k\le n$,
any $0<\text{\textgreek{d}}_{1},\text{\textgreek{d}}_{2}<1$, any
$0<\text{\textgreek{e}}_{0}<1$ and any $\bar{\text{\textgreek{t}}}\ge\text{\textgreek{t}}_{1}$,
we can bound: 
\begin{equation}
\begin{split}\int_{(\mathcal{R}(\text{\textgreek{t}}_{1},\bar{\text{\textgreek{t}}})\backslash\mathscr{E}_{\text{\textgreek{d}}_{1}})\cap\{r\le R\}}\Big(J_{\text{\textgreek{m}}}^{N}(\text{\textgreek{y}}_{k})N^{\text{\textgreek{m}}} & +|\text{\textgreek{y}}_{k}|^{2}\Big)\le\\
\le C_{R} & \text{\textgreek{d}}_{2}\int_{\mathcal{R}(\text{\textgreek{t}}_{1},\bar{\text{\textgreek{t}}})\cap\mathscr{E}_{\text{\textgreek{d}}_{1}/2}}\Big(J_{\text{\textgreek{m}}}^{N}(\text{\textgreek{y}}_{k})N^{\text{\textgreek{m}}}+|\text{\textgreek{y}}_{k}|^{2}\Big)+\\
 & +C_{\text{\textgreek{e}}_{0}\text{\textgreek{d}}_{1}R}\text{\textgreek{w}}_{0}^{-10}\big(\log(2+\bar{\text{\textgreek{t}}})\big)^{4}e^{C_{\text{\textgreek{e}}_{0}\text{\textgreek{d}}_{1}\text{\textgreek{w}}_{+}}\max\{\text{\textgreek{w}}_{0}^{-\text{\textgreek{e}}_{0}},-\log\text{\textgreek{d}}_{2}\}}\mathcal{E}_{log}[\text{\textgreek{y}}].
\end{split}
\label{eq:CarlemanEstimate-1}
\end{equation}
Let us set 
\begin{equation}
\text{\textgreek{d}}_{2}=\text{\textgreek{w}}_{0}^{3}\text{\textgreek{w}}_{+}^{-1}\bar{\text{\textgreek{d}}}_{2},
\end{equation}
where $\bar{\text{\textgreek{d}}}_{2}$ is sufficiently small in terms
of $\text{\textgreek{e}},\text{\textgreek{e}}_{0},R,\text{\textgreek{t}}_{*},\mathcal{E}_{log}[\text{\textgreek{y}}]$.
Assuming also that $\bar{\lyxmathsym{\textgreek{w}}}_{0}$ in (\ref{eq:OmegaZero})
has been fixed sufficiently small in terms of $\text{\textgreek{e}},\text{\textgreek{e}}_{0},R,\text{\textgreek{t}}_{*},\mathcal{E}_{log}[\text{\textgreek{y}}]$,
from (\ref{eq:CarlemanEstimate-1}), (\ref{eq:EnergyClassAPrioriBoundLocal}),
(\ref{BoundednessEnergyPsi}) and the Poincare inequality
\begin{equation}
\int_{\mathcal{R}(\text{\textgreek{t}}_{1},\bar{\text{\textgreek{t}}})\cap\mathscr{E}_{\text{\textgreek{d}}_{1}}}|\text{\textgreek{y}}_{k}|^{2}\le C\int_{\mathcal{R}(\text{\textgreek{t}}_{1},\bar{\text{\textgreek{t}}})\cap\{r\le R\}}J_{\text{\textgreek{m}}}^{N}(\text{\textgreek{y}}_{k})N^{\text{\textgreek{m}}}+C\int_{(\mathcal{R}(\text{\textgreek{t}}_{1},\bar{\text{\textgreek{t}}})\backslash\mathscr{E}_{\text{\textgreek{d}}_{1}})\cap\{r\le R\}}|\text{\textgreek{y}}_{k}|^{2},
\end{equation}
we obtain after summing over all $k\in\{1,\ldots,n\}$ provided $\bar{\text{\textgreek{d}}}_{2}$
is sufficiently small in terms of $\text{\textgreek{e}},\text{\textgreek{e}}_{0},R,\text{\textgreek{t}}_{*},\mathcal{E}_{log}[\text{\textgreek{y}}]$
(recall that $n\sim\log(\text{\textgreek{w}}_{0}^{-1}\text{\textgreek{w}}_{+})$):
\begin{align}
\sum_{k=1}^{n} & \int_{(\mathcal{R}(\text{\textgreek{t}}_{1},\bar{\text{\textgreek{t}}})\backslash\mathscr{E}_{\text{\textgreek{d}}_{1}})\cap\{r\le R\}}\Big(J_{\text{\textgreek{m}}}^{N}(\text{\textgreek{y}}_{k})N^{\text{\textgreek{m}}}+|\text{\textgreek{y}}_{k}|^{2}\Big)\le\label{eq:BoundMiddle}\\
 & \le C_{\text{\textgreek{e}}_{0}R}(\bar{\text{\textgreek{t}}}-\text{\textgreek{t}}_{1})\text{\textgreek{d}}_{2}\text{\textgreek{w}}_{0}^{-2}\log(\text{\textgreek{w}}_{0}^{-1}\text{\textgreek{w}}_{+})\mathcal{E}_{log}[\text{\textgreek{y}}]+C_{\text{\textgreek{e}}_{0}\text{\textgreek{d}}_{1}R}\text{\textgreek{w}}_{0}^{-10}\big(\log(2+\bar{\text{\textgreek{t}}})\big)^{4}e^{C_{\text{\textgreek{e}}_{0}\text{\textgreek{d}}_{1}\text{\textgreek{w}}_{+}}\max\{\text{\textgreek{w}}_{0}^{-\text{\textgreek{e}}_{0}},-\log\text{\textgreek{d}}_{2}\}}\mathcal{E}_{log}[\text{\textgreek{y}}]\le\nonumber \\
 & \le\frac{1}{40\text{\textgreek{t}}_{*}}\text{\textgreek{e}}\bar{\text{\textgreek{t}}}+C_{1}\big(\log(2+\bar{\text{\textgreek{t}}})\big)^{14}e^{C_{1}\big(\log(2+\text{\textgreek{t}}_{1})\big)^{8\text{\textgreek{e}}_{0}}},\nonumber 
\end{align}
where $C_{1}$ depends on $\text{\textgreek{e}},\text{\textgreek{e}}_{0},\text{\textgreek{d}}_{1},R,\text{\textgreek{t}}_{*},\mathcal{E}_{log}[\text{\textgreek{y}}],\text{\textgreek{w}}_{+}$. 

Repeating the same procedure for $T\text{\textgreek{y}}$ in place
of $\text{\textgreek{y}}$, we obtain the following analogue of (\ref{eq:BoundMiddle}):
\begin{equation}
\sum_{k=1}^{n}\int_{(\mathcal{R}(\text{\textgreek{t}}_{1},\bar{\text{\textgreek{t}}})\backslash\mathscr{E}_{\text{\textgreek{d}}_{1}})\cap\{r\le R\}}\Big(J_{\text{\textgreek{m}}}^{N}((T\text{\textgreek{y}})_{k})N^{\text{\textgreek{m}}}+|(T\text{\textgreek{y}})_{k}|^{2}\Big)\le\frac{1}{40\text{\textgreek{t}}_{*}}\text{\textgreek{e}}\bar{\text{\textgreek{t}}}+C_{2}\big(\log(2+\bar{\text{\textgreek{t}}})\big)^{14}e^{C_{2}\big(\log(2+\text{\textgreek{t}}_{1})\big)^{8\text{\textgreek{e}}_{0}}},\label{eq:BoundMiddle-2}
\end{equation}
where $C_{2}$ depends on$\text{\textgreek{e}},\text{\textgreek{e}}_{0},\text{\textgreek{d}}_{1},R,\text{\textgreek{t}}_{*},\mathcal{E}_{log}[T\text{\textgreek{y}}],\text{\textgreek{w}}_{+}$.

From (\ref{eq:BoundMiddle}) and (\ref{eq:BoundMiddle-2}) we obtain
for any $\text{\textgreek{d}}>0$ (setting $\bar{\text{\textgreek{t}}}=\text{\textgreek{t}}_{1}+\text{\textgreek{d}}^{-1}\text{\textgreek{t}}_{*}$)
\begin{align}
\sum_{l=0}^{\lfloor\frac{\text{\textgreek{d}}^{-1}-1}{2}\rfloor}\Bigg\{\sum_{k=1}^{n}\sum_{j=0}^{1} & \int_{\mathcal{R}(\text{\textgreek{t}}_{1}+2l\text{\textgreek{t}}_{*},\text{\textgreek{t}}_{1}+2(l+1)\text{\textgreek{t}}_{*})\backslash\mathscr{E}_{\text{\textgreek{d}}_{1}})\cap\{r\le R\}}\Big(J_{\text{\textgreek{m}}}^{N}((T^{j}\text{\textgreek{y}})_{k})N^{\text{\textgreek{m}}}+|(T^{j}\text{\textgreek{y}})_{k}|^{2}\Big)\Bigg\}\le\label{eq:BoundMiddle-1}\\
 & \hphantom{\int_{\mathcal{R}(\text{\textgreek{t}}_{1}+2lT_{*},\text{\textgreek{t}}_{1}+2(l+1)}}\le\frac{1}{20\text{\textgreek{t}}_{*}}\text{\textgreek{e}}(\text{\textgreek{t}}_{1}+\text{\textgreek{d}}^{-1}\text{\textgreek{t}}_{*})+C_{3}\big(\log(\text{\textgreek{t}}_{1}+\text{\textgreek{d}}^{-1}\text{\textgreek{t}}_{*})\big)^{14}e^{C_{3}\big(\log(2+\text{\textgreek{t}}_{1})\big)^{8\text{\textgreek{e}}_{0}}},\nonumber 
\end{align}
where $C_{3}=C_{1}+C_{2}$. Adding (\ref{eq:EndBounds-1}) (for $\text{\textgreek{t}}=\text{\textgreek{t}}_{1}$)
and (\ref{eq:BoundMiddle-1}), we therefore obtain for any $\text{\textgreek{d}}>0$:
\begin{align}
\sum_{l=0}^{\lfloor\frac{\text{\textgreek{d}}^{-1}-1}{2}\rfloor}\Bigg\{\sum_{j=0}^{1} & \int_{\mathcal{R}(\text{\textgreek{t}}_{1}+2l\text{\textgreek{t}}_{*},\text{\textgreek{t}}_{1}+2(l+1)\text{\textgreek{t}}_{*})\backslash\mathscr{E}_{\text{\textgreek{d}}_{1}})\cap\{r\le R\}}\Big(J_{\text{\textgreek{m}}}^{N}(T^{j}\text{\textgreek{y}})N^{\text{\textgreek{m}}}+|T^{j}\text{\textgreek{y}}|^{2}\Big)\Bigg\}\le\label{eq:BoundBeforeFinalPidgeonhole}\\
 & \le\frac{1}{10}\text{\textgreek{e}}\text{\textgreek{d}}^{-1}+C\sum_{j=0}^{1}\mathcal{E}_{log}[T^{j}\text{\textgreek{y}}]+\frac{1}{20\text{\textgreek{t}}_{*}}\text{\textgreek{e}}(\text{\textgreek{t}}_{1}+\text{\textgreek{d}}^{-1}\text{\textgreek{t}}_{*})+C_{3}\big(\log(\text{\textgreek{t}}_{1}+\text{\textgreek{d}}^{-1}\text{\textgreek{t}}_{*})\big)^{14}e^{C_{3}\big(\log(2+\text{\textgreek{t}}_{1})\big)^{8\text{\textgreek{e}}_{0}}}.\nonumber 
\end{align}

Applying the pidgeonhole principle on (\ref{eq:BoundBeforeFinalPidgeonhole})
(assuming that $\text{\textgreek{d}}\ll1$), we infer that there exists
some $l_{0}\in\{0,\ldots,\lfloor\frac{\text{\textgreek{d}}^{-1}-1}{2}\rfloor\}$
such that 
\begin{align}
\sum_{j=0}^{1} & \int_{\mathcal{R}(\text{\textgreek{t}}_{1}+2l_{0}\text{\textgreek{t}}_{*},\text{\textgreek{t}}_{1}+2(l_{0}+1)\text{\textgreek{t}}_{*})\backslash\mathscr{E}_{\text{\textgreek{d}}_{1}})\cap\{r\le R\}}\Big(J_{\text{\textgreek{m}}}^{N}(T^{j}\text{\textgreek{y}})N^{\text{\textgreek{m}}}+|T^{j}\text{\textgreek{y}}|^{2}\Big)\le\label{eq:AlmostDoneWithCarleman}\\
 & \le\lfloor\frac{\text{\textgreek{d}}^{-1}-1}{2}\rfloor^{-1}\big(\frac{1}{5}\text{\textgreek{e}}\text{\textgreek{d}}^{-1}+\frac{1}{20}\text{\textgreek{e}}\text{\textgreek{t}}_{*}^{-1}\text{\textgreek{t}}_{1}+C\sum_{j=0}^{1}\mathcal{E}_{log}[T^{j}\text{\textgreek{y}}]+C_{3}\big(\log(\text{\textgreek{t}}_{1}+\text{\textgreek{d}}^{-1}\text{\textgreek{t}}_{*})\big)^{14}e^{C_{3}\big(\log(2+\text{\textgreek{t}}_{1})\big)^{8\text{\textgreek{e}}_{0}}}\big)\le\nonumber \\
 & \le\frac{\text{\textgreek{e}}}{2}(1+\text{\textgreek{t}}_{*}^{-1}\text{\textgreek{d}}\text{\textgreek{t}}_{1})+C\text{\textgreek{d}}\sum_{j=0}^{1}\mathcal{E}_{log}[T^{j}\text{\textgreek{y}}]+C_{3}\text{\textgreek{d}}\big(\log(\text{\textgreek{t}}_{1}+\text{\textgreek{d}}^{-1}\text{\textgreek{t}}_{*})\big)^{14}e^{C_{3}\big(\log(2+\text{\textgreek{t}}_{1})\big)^{8\text{\textgreek{e}}_{0}}}.\nonumber 
\end{align}
Thus, provided 
\begin{equation}
\text{\textgreek{d}}=\frac{\bar{\text{\textgreek{d}}}}{\text{\textgreek{t}}_{1}},
\end{equation}
where $\bar{\text{\textgreek{d}}}$ is small in terms of $\text{\textgreek{e}},\text{\textgreek{t}}_{*},\mathcal{E}_{log}[\text{\textgreek{y}}],\mathcal{E}_{log}[T\text{\textgreek{y}}]$
and the precise choice of the constants $C_{1},C_{2}$, and that $\text{\textgreek{t}}_{1}$
is chosen sufficiently large in terms of of $\text{\textgreek{e}},\text{\textgreek{e}}_{0},\text{\textgreek{t}}_{*}$
and the precise choice of $C_{1},C_{2}$ (assuming also that $\text{\textgreek{e}}_{0}$
has been fixed so that $0<\text{\textgreek{e}}_{0}<\frac{1}{8}$),
from (\ref{eq:AlmostDoneWithCarleman}) we infer: 
\begin{equation}
\sum_{j=0}^{1}\int_{\mathcal{R}(\text{\textgreek{t}}_{1}+2l_{0}\text{\textgreek{t}}_{*},\text{\textgreek{t}}_{1}+2(l_{0}+1)\text{\textgreek{t}}_{*})\backslash\mathscr{E}_{\text{\textgreek{d}}_{1}})\cap\{r\le R\}}\Big(J_{\text{\textgreek{m}}}^{N}(T^{j}\text{\textgreek{y}})N^{\text{\textgreek{m}}}+|T^{j}\text{\textgreek{y}}|^{2}\Big)<\text{\textgreek{e}}.\label{eq:AlmostDoneWithCarleman-1}
\end{equation}

Setting $\text{\textgreek{t}}_{\natural}=\text{\textgreek{t}}_{1}+(2l_{0}+1)\text{\textgreek{t}}_{*}$
(and thus $\text{\textgreek{t}}_{1}+2l_{0}\text{\textgreek{t}}_{*}=\text{\textgreek{t}}_{\natural}-\text{\textgreek{t}}_{*}$
and $\text{\textgreek{t}}_{1}+2(l_{0}+1)\text{\textgreek{t}}_{*}=\text{\textgreek{t}}_{\natural}+\text{\textgreek{t}}_{*}$),
(\ref{eq:AlmostDoneWithCarleman-1}) yields (\ref{eq:ForLocalConvergence}).
\qed

\section{\label{sec:Proof-of-Corollary}Proof of Corollary \ref{cor:VortexMain}}

The proof of Corollary \ref{cor:VortexMain} follows immediately from
Theorem \ref{thm:FriedmanInstability} applied to the quotient of
$(\mathbb{R}\times\mathcal{V}_{hyd,\text{\textgreek{d}}},g_{hyd})$
by the translations in the $z$-direction, i.\,e.~the $2+1$ dimensional
spacetime $(\mathbb{R}\times\bar{\mathcal{V}}_{hyd,\text{\textgreek{d}}},\bar{g}_{hyd})$,
where $\bar{\mathcal{V}}_{hyd,\text{\textgreek{d}}}=\mathbb{R}^{2}\backslash\{\bar{r}\le\text{\textgreek{d}}\}$
(in the polar $(\bar{r},\text{\textgreek{j}})$ coordinate system)
and 
\begin{equation}
\bar{g}_{hyd}=-\big(1-\frac{C^{2}}{\bar{r}^{2}}\big)dt^{2}+d\bar{r}^{2}-2Cdtd\text{\textgreek{j}}+\bar{r}^{2}d\text{\textgreek{j}}^{2}\label{eq:HydrodynamicVortex-1}
\end{equation}
(see also the remark below Theorem \ref{thm:FriedmanInstability},
as well as Section \ref{sub:ProofWithBoundaryConditions}, regarding
the Dirichlet or Neumann boundary conditions on $\{\bar{r}=\text{\textgreek{d}}\}$). 

In particular, in the language of Section \ref{sub:ProofWithBoundaryConditions},
$(\mathbb{R}\times\bar{\mathcal{V}}_{hyd,\text{\textgreek{d}}},\bar{g}_{hyd})$
is a smooth Lorentzian manifold with smooth timelike boundary 
\begin{equation}
\partial_{tim}\big(\mathbb{R}\times\bar{\mathcal{V}}_{hyd,\text{\textgreek{d}}}\big)=\{\bar{r}=\text{\textgreek{d}}\}.
\end{equation}
The double $(\widetilde{\mathbb{R}\times\bar{\mathcal{V}}}_{hyd,\text{\textgreek{d}}},\tilde{g}_{hyd})$
of $(\mathbb{R}\times\bar{\mathcal{V}}_{hyd,\text{\textgreek{d}}},\bar{g}_{hyd})$
across the boundary $\partial_{tim}\big(\mathbb{R}\times\bar{\mathcal{V}}_{hyd,\text{\textgreek{d}}}\big)$
is diffeomorphic to $\mathbb{R}\times\mathbb{R}\times\mathbb{S}^{1}$,
with the metric $\tilde{g}_{hyd}$ in the $(t,\bar{r},\text{\textgreek{j}})$
coordinate chart of $\mathbb{R}\times\mathbb{R}\times\mathbb{S}^{1}$
having the form: 
\begin{equation}
\tilde{g}_{hyd}=-\big(1-\frac{C^{2}}{(|\bar{r}-\text{\textgreek{d}}|+\text{\textgreek{d}})^{2}}\big)dt^{2}+d\bar{r}^{2}-2Cdtd\text{\textgreek{j}}+(|\bar{r}-\text{\textgreek{d}}|+\text{\textgreek{d}})^{2}d\text{\textgreek{j}}^{2}\label{eq:DoubleMetric}
\end{equation}
Notice that $(\widetilde{\mathbb{R}\times\mathcal{V}}_{hyd,\text{\textgreek{d}}},\tilde{g}_{hyd})$
is a globally hyperbolic spacetime without boundary, with Cauchy hypersurface
$\{t=0\}$. Let $i_{1},i_{2}:(\mathbb{R}\times\bar{\mathcal{V}}_{hyd,\text{\textgreek{d}}},\bar{g}_{hyd})\rightarrow(\widetilde{\mathbb{R}\times\bar{\mathcal{V}}}_{hyd,\text{\textgreek{d}}},\tilde{g}_{hyd})$
be the two natural inclusions (see Section \ref{sub:ProofWithBoundaryConditions}).
Then, in the coordinate charts $(t,\bar{r},\text{\textgreek{j}})$
on $\mathbb{R}\times[\text{\textgreek{d}},+\infty)\times\mathbb{S}^{1}\simeq\mathbb{R}\times\bar{\mathcal{V}}_{hyd,\text{\textgreek{d}}}$
and $\mathbb{R}\times\mathbb{R}\times\mathbb{S}^{1}\simeq\widetilde{\mathbb{R}\times\bar{\mathcal{V}}}_{hyd,\text{\textgreek{d}}}$,
we have $i_{1}\big((t,\bar{r},\text{\textgreek{j}})\big)=(t,\bar{r},\text{\textgreek{j}})$
and $i_{2}\big((t,\bar{r},\text{\textgreek{j}})\big)=(t,\text{\textgreek{d}}-\bar{r},\text{\textgreek{j}})$.

Note that $\tilde{g}_{hyd}$ is smooth everywhere except on $i_{1}\big(\partial_{tim}\big(\mathbb{R}\times\bar{\mathcal{V}}_{hyd,\text{\textgreek{d}}}\big)\big)=\{\bar{r}=\text{\textgreek{d}}\}$.
Notice also that $(\widetilde{\mathbb{R}\times\bar{\mathcal{V}}}_{hyd,\text{\textgreek{d}}},\tilde{g}_{hyd})$
has no event horizon $\mathcal{H}$ (and thus, trivially, $\mathcal{H}\cap i_{1}\big(\partial_{tim}\big(\mathbb{R}\times\bar{\mathcal{V}}_{hyd,\text{\textgreek{d}}}\big)\big)=\emptyset$),
and $i_{1}\big(\partial_{tim}\big(\mathbb{R}\times\bar{\mathcal{V}}_{hyd,\text{\textgreek{d}}}\big)\big)\cap\{t=0\}$
is compact. Thus, in view of the remark below Theorem \ref{thm:FriedmanInstability}
on spacetimes with timelike boundary, it only remains to verify that
$(\widetilde{\mathbb{R}\times\bar{\mathcal{V}}}_{hyd,\text{\textgreek{d}}},\tilde{g}_{hyd})$
satisfies Assumptions \hyperref[Assumption 1]{G1}--\hyperref[Assumption 3]{G3}
and \hyperref[Assumption 4]{A1}, and that $i_{1}\big(\partial_{tim}\big(\mathbb{R}\times\bar{\mathcal{V}}_{hyd,\text{\textgreek{d}}}\big)\big)$
is invariant with respect to the stationary Killing field of $(\widetilde{\mathbb{R}\times\bar{\mathcal{V}}}_{hyd,\text{\textgreek{d}}},\tilde{g}_{hyd})$.

\begin{enumerate}

\item The vector field $\partial_{t}$ (in the $(t,\bar{r},\text{\textgreek{j}})$
coordinate system for $(\widetilde{\mathbb{R}\times\bar{\mathcal{V}}}_{hyd,\text{\textgreek{d}}},\tilde{g}_{hyd})$
is Killing, and the metric (\ref{eq:HydrodynamicVortex-1}) is asymptotically
flat (with the asymptotically flat region $\tilde{\mathcal{I}}_{as}=\{\bar{r}\ge R_{0}\gg1\}$
consisting of two connected components) and satisfies Assumption \hyperref[Assumption 1]{G1}.
Furthermore, $\partial_{tim}\big(\mathbb{R}\times\bar{\mathcal{V}}_{hyd,\text{\textgreek{d}}}\big)$
is $\partial_{t}$-invariant. 

\item The spacetime $(\widetilde{\mathbb{R}\times\bar{\mathcal{V}}}_{hyd,\text{\textgreek{d}}},\tilde{g}_{hyd})$
has no event horizon $\mathcal{H}$, and thus Assumption \hyperref[Assumption 2]{G2}
is trivially satisfied.

\item The spacetime $(\widetilde{\mathbb{R}\times\bar{\mathcal{V}}}_{hyd,\text{\textgreek{d}}},\tilde{g}_{hyd})$
has a non empty ergoregion $\tilde{\mathscr{E}}=\{2\text{\textgreek{d}}-C<\bar{r}\le C\}$.
The boundary $\partial\tilde{\mathscr{E}}=\{\bar{r}=2\text{\textgreek{d}}-C\}\cup\{\bar{r}=C\}$
of $\tilde{\mathscr{E}}$ is a smooth hypersurface of $\widetilde{\mathbb{R}\times\bar{\mathcal{V}}}_{hyd,\text{\textgreek{d}}}$,
and $\widetilde{\mathbb{R}\times\bar{\mathcal{V}}}_{hyd,\text{\textgreek{d}}}\backslash\tilde{\mathscr{E}}$
consists of two connected components, each containing one asymptotically
flat end of $\widetilde{\mathbb{R}\times\bar{\mathcal{V}}}_{hyd,\text{\textgreek{d}}}$
(and, thus, $\tilde{\mathscr{E}}_{ext}=\tilde{\mathscr{E}}$). In
particular, Assumption \hyperref[Assumption 3]{G3} is satisfied.

\item Assumption \hyperref[Assumption 4]{A1} is readily satisfied
in view of the fact that $(\widetilde{\mathbb{R}\times\bar{\mathcal{V}}}_{hyd,\text{\textgreek{d}}},\tilde{g}_{hyd})$
is also axisymmetric, with axisymmetric Killing field $\partial_{\text{\textgreek{j}}}$
such that $[\partial_{\text{\textgreek{j}}},\partial_{t}]=0$ and
the span of $\partial_{\text{\textgreek{j}}},\partial_{t}$ contains
a timelike direction (see the discussion in Section \ref{sub:Discussion On the unique continuation assumption}).

\end{enumerate}

Thus, the proof of Corollary \ref{cor:VortexMain} is complete.

\section{\label{sec:DiscussionFriedmanHeuristics}Aside: Discussion on Friedman's
heuristic argument}

In this Section, we will briefly sketch the heuristic arguments developed
by Friedman in \cite{Friedman1978}, and we will discuss their connections
with the methods used in this paper.

\subsection{Friedman's argument}

As we already explained in the introduction, on any globally hyperbolic,
stationary and asymptotically flat spacetime $(\mathcal{M},g)$ with
a non-empty ergoregion $\mathscr{E}$ and no future event horizon
$\mathcal{H}^{+}$, Friedman constructed, in \cite{Friedman1978},
a class of smooth solutions $\text{\textgreek{y}}$ to the wave quation
(\ref{eq:WaveEquation}) satisfying 
\begin{equation}
\int_{\text{\textgreek{S}}}J_{\text{\textgreek{m}}}^{T}(\text{\textgreek{y}})n^{\text{\textgreek{m}}}=-1,\label{eq:NegativeInitialEnergyFriedman}
\end{equation}
where $\text{\textgreek{S}}$ is a Cauchy hypersurface of $(\mathcal{M},g)$,
$T$ is the stationary Killing field of $(\mathcal{M},g)$ and $n$
is the future directed unit normal to $\text{\textgreek{S}}$. In
view of the conservation of the $T$-energy flux for solutions to
(\ref{eq:WaveEquation}) on $(\mathcal{M},g)$ and the fact that $J_{\text{\textgreek{m}}}^{T}(\text{\textgreek{y}})n^{\text{\textgreek{m}}}\ge0$
on $\mathcal{M}\backslash\mathscr{E}$, from (\ref{eq:NegativeInitialEnergyFriedman})
Friedman inferred that for any $\text{\textgreek{t}}\ge0$: 
\begin{equation}
\int_{\text{\textgreek{S}}_{\text{\textgreek{t}}}\cap\mathscr{E}}J_{\text{\textgreek{m}}}^{T}(\text{\textgreek{y}})n^{\text{\textgreek{m}}}\le-1,\label{eq:NonDecayErgoregionFriedman}
\end{equation}
where $\text{\textgreek{S}}_{\text{\textgreek{t}}}$ is defined as
in Section \ref{sec:Notational-conventions} (i.\,e.~the image of
$\text{\textgreek{S}}$ under the flow of $T$ for time $\text{\textgreek{t}}$).

Proceeding to study the consequences of the bound (\ref{eq:NonDecayErgoregionFriedman})
on the (in)stability properties of equation (\ref{eq:WaveEquation}),
Friedman first noted the following dichotomy for the energy flux through
the future null infinity $\mathcal{I}^{+}$ of any solution $\text{\textgreek{y}}$
to (\ref{eq:WaveEquation}), satisfying (\ref{eq:NegativeInitialEnergyFriedman}):%
\footnote{See \cite{Moschidisc} for the definition of the Friedlander radiation
field and the energy flux of $\text{\textgreek{f}}$ through $\mathcal{I}^{+}$
on general asymptotically flat spacetimes.%
} Either 
\begin{equation}
\int_{\mathcal{I}^{+}}J_{\text{\textgreek{m}}}^{T}(\text{\textgreek{y}})n_{\mathcal{I}^{+}}^{\text{\textgreek{m}}}=+\infty,
\end{equation}
 in which case (in view of (\ref{eq:NegativeInitialEnergyFriedman})
and the conservation of the $J^{T}$-flux) there exists a sequence
of hyperboloidal hypersurcases $\mathcal{S}_{\text{\textgreek{t}}_{n}}$
terminating at $\mathcal{I}^{+}$ such that 
\begin{equation}
\limsup_{n\rightarrow+\infty}\int_{\mathcal{S}_{\text{\textgreek{t}}_{n}}}J_{\text{\textgreek{m}}}^{T}(\text{\textgreek{y}})n_{\mathcal{S}_{\text{\textgreek{t}}_{n}}}^{\text{\textgreek{m}}}=+\infty,\label{eq:InstabilityOnHyperboloids}
\end{equation}
or 
\begin{equation}
\int_{\mathcal{I}^{+}}J_{\text{\textgreek{m}}}^{T}(\text{\textgreek{y}})n_{\mathcal{I}^{+}}^{\text{\textgreek{m}}}<+\infty.\label{eq:FiniteEnergyFluxNullInfinity}
\end{equation}
In case the first scenario (\ref{eq:InstabilityOnHyperboloids}) holds,
one immediately obtains an energy instability statement for equation
(\ref{eq:WaveEquation}). In case the second scenario (\ref{eq:FiniteEnergyFluxNullInfinity}),
Friedman argued (see \cite{Friedman1978}) that $\text{\textgreek{y}}$
``settles down'' to a ``non-radiative state'' $\tilde{\text{\textgreek{y}}}$,
which is to be interpreted as a solution to (\ref{eq:WaveEquation})
such that 
\begin{equation}
\int_{\mathcal{I}^{+}}J_{\text{\textgreek{m}}}^{T}(\tilde{\text{\textgreek{y}}})n_{\mathcal{I}^{+}}^{\text{\textgreek{m}}}=0.\label{eq:ZeroEnergyFluxTildePhi}
\end{equation}
Furthermore, in view of (\ref{eq:NonDecayErgoregionFriedman}), Friedman
argued that $\tilde{\text{\textgreek{y}}}$ should also satisfy for
all $\text{\textgreek{t}}\ge0$: 
\begin{equation}
\int_{\text{\textgreek{S}}_{\text{\textgreek{t}}}\cap\mathscr{E}}J_{\text{\textgreek{m}}}^{T}(\tilde{\text{\textgreek{y}}})n^{\text{\textgreek{m}}}\le-1.\label{eq:NegativeEnergyTildePhi}
\end{equation}

Assuming that $(\mathcal{M},g)$ is globally real analytic and that
the metric $g$ has a proper asymptotic expansion in powers of $r^{-1}$
in a neighborhood of $\mathcal{I}^{+}$, Friedman inferred from (\ref{eq:ZeroEnergyFluxTildePhi})
(using an adaptation of Holmgren's uniqueness theorem for analytic
linear partial differential equations, see \cite{Holmgren1901}) that
\begin{equation}
\tilde{\text{\textgreek{y}}}\equiv0\label{eq:IdenticallyZeroFinalState}
\end{equation}
on $(\mathcal{M},g)$. Thus, (\ref{eq:NegativeEnergyTildePhi}) and
(\ref{eq:IdenticallyZeroFinalState}) yield a contradiction, implying
that the scenario (\ref{eq:FiniteEnergyFluxNullInfinity}) should
not occur on such spacetimes.

\subsection{Comparison with the proof of Theorem \ref{thm:FriedmanInstability}}

In general terms, the proof of Theorem \ref{thm:FriedmanInstability}
(see Section \ref{sec:Proof-of-Theorem}) follows the roadmap of the
heuristic arguments of Friedman. In particular, our proof proceeds
by contradiction, assuming the energy bound (\ref{eq:BoundForContradiction})
on the $\{t=\text{\textgreek{t}}\}$ hypersurfaces, which is a slightly
stronger assumption than the energy bound (\ref{eq:FiniteEnergyFluxNullInfinity})
on $\mathcal{I}^{+}$ in the second scenario considered by Friedman. 

In Lemma \ref{lem:DecayToATrappedSolution}, we show that, under the
assumption (\ref{eq:BoundForContradiction}), a function $\text{\textgreek{y}}$
solving (\ref{eq:WaveEquation}) with compactly supported initial
data indeed ``settles down'' to a function $\tilde{\text{\textgreek{y}}}$
(in a well defined way), such that $\tilde{\text{\textgreek{y}}}$
vanishes identically outside the extended ergoregion $\mathscr{E}_{ext}$.
This result makes use (through Proposition \ref{prop:QuantitativeDecayOutsideErgoregion})
of the Carleman-type estimates of Section \ref{sec:Carleman}, as
well as the bound (\ref{eq:BoundForContradiction}). Here, assuming
merely the bound (\ref{eq:FiniteEnergyFluxNullInfinity}) on $\mathcal{I}^{+}$
would not be enough. Note that, in the argument of \cite{Friedman1978},
no justification is provided (even at the heuristic level) of why
a function $\text{\textgreek{y}}$ solving (\ref{eq:WaveEquation})
and satisfying (\ref{eq:FiniteEnergyFluxNullInfinity}) is expected
to ``settle down'' to a non-radiating solution $\tilde{\text{\textgreek{y}}}$
of (\ref{eq:WaveEquation}).

The fact that $\tilde{\text{\textgreek{y}}}$ vanishes outside $\mathscr{E}_{ext}$
follows from the estimates of Section \ref{sec:Carleman}, without
any need to impose a real analyticity assumption on $(\mathcal{M},g)$
or a complete asymptotic expansion for $g$ on $\mathcal{I}^{+}$.
In general, however, it can not be inferred that $\tilde{\text{\textgreek{y}}}$
vanishes also on $\mathscr{E}$.%
\footnote{We can in fact construct spacetimes $(\mathcal{M}^{d+1},g)$, $d\ge3$,
with a smooth solution $\tilde{\text{\textgreek{y}}}$ to an equation
of the form $\square_{g}\tilde{\text{\textgreek{y}}}+V\tilde{\text{\textgreek{y}}}=0$,
such that $T(V)=0$, $\tilde{\text{\textgreek{y}}}\equiv0$ on $\mathcal{M}\backslash\mathscr{E}$
and $\tilde{\text{\textgreek{y}}}$ not identically $0$ in $\mathscr{E}$.%
} Thus, a contradiction can not be reached following the argument of
Friedman in this setting. Instead, after restricting ourselves to
spacetimes $(\mathcal{M},g)$ satisfying the unique continuation assumption
\hyperref[Assumption 4]{A1}, which guarantees that $\tilde{\text{\textgreek{y}}}$
vanishes on $(\mathcal{M}\backslash\mathscr{E}_{ext})\cup\mathcal{U}$,
we reach the desired contradiction by exploiting our freedom to choose
the initial data for $\text{\textgreek{y}}$ appropriately: We choose
$(\text{\textgreek{y}},T\text{\textgreek{y}})|_{\text{\textgreek{S}}}$
to be supported in $\text{\textgreek{S}}\cap\mathcal{U}$, so that
the support of $(\text{\textgreek{y}},T\text{\textgreek{y}})|_{\text{\textgreek{S}}}$
will be disjoint from $\mathcal{M}\backslash\mathcal{U}$, where the
support of all the time translates of $(\tilde{\text{\textgreek{y}}},T\tilde{\text{\textgreek{y}}})|_{\text{\textgreek{S}}}$
is contained. Therefore, $\text{\textgreek{y}}$ and all the time
translates of $\tilde{\text{\textgreek{y}}}$ are orthogonal with
respect to the (indefinite) $T$-inner product (\ref{eq:TInnerProductSymplecic}).
This fact leads to relation (\ref{eq:ZeroForContradiction}), from
which a contradiction follows readily in view of (\ref{eq:NegativeEnergyTrappedSolution}).

\section{Acknowledgements}

I would like to thank my advisor Mihalis Dafermos for suggesting this
problem to me and providing me with valuable assistance while this
paper was being written. I would also like to thank Igor Rodnianski
for many insightful suggestions.

\bibliographystyle{plain}
\bibliography{DatabaseExample}

\end{document}